\theoremstyle{plain}
\newtheorem{theorem}{Theorem}[section]
\theoremstyle{plain}
\newtheorem{prop}{Proposition}[section]
\theoremstyle{plain}
\newtheorem{lemma}{Lemma}[section]
\theoremstyle{plain}
\newtheorem{cor}{Corollary}[section]
\theoremstyle{plain}
\newtheorem{rem}{Remark}[section]
\theoremstyle{plain}
\newtheorem{assumption}{}
\begin{document}

\title{Inverse scattering theory for discrete Schr\"{o}dinger operators on the
hexagonal lattice}

\author{Kazunori Ando\\
  \texttt{\it{Graduate school of pure and applied science, University of Tsukuba}}\\
  \texttt{\it{Tennoudai, Tsukuba, Ibaraki 305-8571, Japan}}\\
  \texttt{\it{email: ando@math.tsukuba.ac.jp}}}

\maketitle

\begin{abstract}
We consider the spectral theory for discrete Schr\"{o}dinger operators on the
hexagonal lattice and their inverse scattering problem. We give a procedure
for reconstructing the compactly supported potential from the scattering
matrix for all energies. The same procedure is applicable for the inverse
scattering problem on the triangle lattice.
\end{abstract}

\medskip{\it{$2000$ Mathematics Subject Classification{\rm :}}}
35P25, 39A12, 47B39, 81U40

%
%
\section{Introduction}

Let us begin with recalling the study of inverse scattering problems for
Schr\"{o}dinger operators on $\mathbb{R}^n$. For $n=1$, it was solved by
Gel'fand and Levitan \cite{MR0045281}, Mar{\v{c}}henko \cite{MR0075402} in
1950's. For $n=3$, Faddeev \cite{MR0080533} established the uniqueness of the
potential with given scattering matrix by using high-energy Born approximation
soon after that. In the 1970's, Faddeev \cite{springerlink:10.1007/BF01083780}
and Newton \cite{new:1974} investigated an analogue of Gel'fand-Levitan theory
for the multidimensional case by making use of Faddeev's Green operator. In
the late 1980's, there was a breakthrough, which is called
$\overline{\partial}$-approach, in this direction by Beals and Coifman
\cite{MR812283}, Nachman and Ablowitz \cite{MR769078}, Khenkin and Novikov
\cite{MR896879}, Weder \cite{MR1108287}. See e.g. a survey article of Isozaki
\cite{MR2117108}.

In discrete settings, it was pointed out early that the theory of
Gel'fand-Levitan-Mar{\v{c}}henko is applicable for the discrete
Schr\"{o}dinger operator on $\mathbb{Z}$ (e.g. Case and Kac
\cite{MR0332065}). On $\mathbb{Z}^n$, $n\ge 2$, Isozaki and Korotyaev
\cite{2011arXiv1103.2193I} studied the inverse scattering problem for the
discrete Schr\"{o}dinger operator with a compactly supported potential,
recently. They used a kind of complex Born approximation, and derived a
procedure for reconstructing the potential. Furthermore, they got a partial
result about the inverse scattering problem from a fixed energy by using a
discrete analogue of Faddeev's Green operator.

In this paper, we consider the inverse scattering problem for the discrete
Schr\"{o}dinger operator on the hexagonal lattice, and derive a reconstruction
procedure for the potential from the scattering matrix of all energies. The
hexagonal lattice is a sort of two dimensional lattice, which covers the
plane by equilateral hexagons with honeycomb structure. We regard the
hexagonal lattice as graph. Before stating our main theorem, let us briefly
recall the graph theory.

We denote by $G=(V(G),E(G))$ the graph that consists of a vertex set $V(G)$,
whose cardinality is at most countable, and an edge set $E(G)$, each element
of which connects a pair of vertices. Let $v$, $u\in{}V(G)$ and
$e\in{}E(G)$. We denote by $v\stackrel{e}\sim{}u$, or simply $v\sim{}u$, when
$v$ is adjacent to $u$ by $e$; by $N_v=\{u\in{}V(G);v\sim{}u\}$ the set of
vertices which are adjacent to $v$. We also denote by $\deg(v)=\sharp{N_v}$
the degree of $v$. We assume that the graph $G$ is connected, which implies
that $\deg(v)>0$ for any $v\in{}V(G)$, locally finite, that is,
$\deg(v)<\infty$ for any $v\in{}V(G)$, and simple, that is, there are neither
self-loops nor multiple edges. Here, a self-loop is an edge which joins a
vertex to itself, and multiple edges are two or more edges which join the same
two vertices. The discrete Laplacian $\Delta_d$ on $G$ is defined as
\begin{equation*}
    (\Delta_d\hat{f})(v)=\frac{1}{\deg(v)}\sum_{u{\in}N_v}\hat{f}(u)-\hat{f}(v)
\end{equation*}
for the function $\hat{f}$ on $V(G)$. It is well-known that $-\Delta_d$ is a
bounded, self-adjoint operator on
\begin{equation*}
    l^2(G)=\{\hat{f}:V(G)\rightarrow\mathbb{C};\Vert\hat{f}\Vert_{l^2(G)}^2=\sum_{v{\in}V(G)}\vert\hat{f}(v)\vert^{2}\deg(v)<\infty\},
\end{equation*}
and $\sigma(-\Delta_d)\subset[0,2]$. See e.g. Chung \cite{MR1421568}.

Let $G$ be the hexagonal lattice. Figure \ref{fig_hexagonal_lattice_0}
illustrates the hexagonal lattice, where the vertices are represented as
big black and white dots and the edges as segments between two of them.
\begin{figure}[htbp]
    \begin{center}
        \includegraphics[height=3.5cm]{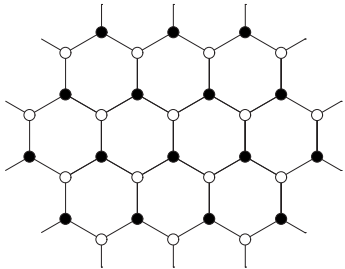}
    \end{center}
    \caption{the hexagonal lattice as graph \label{fig_hexagonal_lattice_0}}
\end{figure}
For simplicity's sake, we adopt as free Hamiltonian the operator which is
unitarily equivalent to $3(\Delta_d+1)$ on the hexagonal lattice. See Section
2 for the details. The discrete Schr\"{o}dinger operator is defined as the
free Hamiltonian plus the potential $\hat{q}$, which is a multiplication
operator by a real-valued function:
$(\hat{q}\hat{f})(v)=\hat{q}(v)\hat{f}(v)$. Our main result is
\begin{theorem}
    Assume that $\hat{q}(v)=0$ except for a finite number of
    $v\in{}V(G)$. Then from the scattering amplitude $A(\lambda)$ for all
    $\lambda\in\sigma_{ess}(H)$, one can uniquely compute
    $(\hat{q}(v))_{v\in{}V(G)}$.
\end{theorem}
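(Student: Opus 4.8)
The plan is to transport the complex Born approximation of Isozaki and Korotyaev \cite{2011arXiv1103.2193I} from $\mathbb{Z}^n$ to the honeycomb, the novelty being that the free symbol is $2\times2$ matrix-valued. First I would pass to the Floquet--Bloch picture. Each vertex of the hexagonal lattice has degree $3$ and the adjacency interchanges the two triangular sublattices $\mathcal{A},\mathcal{B}\cong\mathbb{Z}^2$, so the free Hamiltonian $3(\Delta_d+1)$ is unitarily the adjacency operator, which the discrete Fourier transform turns into multiplication by
\begin{equation*}
  h_0(x)=\begin{pmatrix}0 & p(x)\\ \overline{p(x)} & 0\end{pmatrix},\qquad p(x)=1+e^{ix_1}+e^{ix_2},\ \ x\in\mathbb{T}^2 .
\end{equation*}
Its eigenvalues are $\pm|p(x)|$, hence $\sigma_{ess}(H)=\sigma(H_0)=[-3,3]$, the value $0$ coming from the Dirac points $\{p(x)=0\}$ and $\pm3$ being the band edges; these, together with the discrete set of energies at which $1+R_0(\lambda+i0)\widehat q$ fails to be invertible, form a closed null set whose complement I call $\mathcal{G}$. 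For $\lambda\in\mathcal{G}$ the energy surface $M_\lambda=\{x:|p(x)|=|\lambda|\}$ is a smooth closed curve with $\nabla|p|\neq0$ on it, and I would prove the limiting absorption principle for $H_0$ by a nonstationary-phase argument on $M_\lambda$, then for $H$ by Fredholm theory, using that $\widehat q$ is of finite rank.

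Next I would develop the stationary scattering theory over $\mathcal{G}$. Let $\varphi_0(v,x)=e^{iv\cdot x}\,\omega(x)$ be the free generalized eigenfunction ($\omega(x)$ the relevant normalized eigenvector of $h_0(x)$), let $\varphi(\,\cdot\,,x)=\varphi_0(\,\cdot\,,x)-R_0(\lambda+i0)\widehat q\,\varphi(\,\cdot\,,x)$ be the distorted Bloch wave, let $S(\lambda)$ be the resulting unitary on $L^2(M_\lambda)$, and let $A(\lambda)$ be the kernel of $c(\lambda)\bigl(S(\lambda)-\mathrm{id}\bigr)$. The identity to record is that, for $x,y\in M_\lambda$,
\begin{equation*}
  A(\lambda)(x,y)=c(\lambda)\,\bigl\langle \widehat q\,\varphi(\,\cdot\,,y),\,\varphi_0(\,\cdot\,,x)\bigr\rangle_{\ell^2(G)}
   =c(\lambda)\sum_{v,v'\in\operatorname{supp}\widehat q}\bigl[(1+\widehat q\,R_0(\lambda+i0))^{-1}\bigr]_{vv'}\,\widehat q(v')\,\varphi_0(v',y)\,\overline{\varphi_0(v,x)},
\end{equation*}
a \emph{finite} sum because $\widehat q$ has finite support; thus $A(\lambda)$ is, up to the Bloch factors $\omega$, a trigonometric polynomial in $(x,y)$, and the frequencies occurring in it already reveal $\operatorname{supp}\widehat q$.

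The reconstruction is then the complex Born approximation. Because the right-hand side above is a trigonometric expression in $x,y$, it extends holomorphically when $x,y$ are moved off $M_\lambda$ onto the complexified characteristic variety $\Sigma_\lambda=\{z\in\mathbb{C}^2:p(z)\,\overline{p(\bar z)}=\lambda^2\}$, and these extended values are determined by $A(\lambda)$ on $M_\lambda\times M_\lambda$ once the finitely many frequencies are known --- for which one also needs the $\varphi_0(\,\cdot\,,x)$ restricted to $M_\lambda$ to be linearly independent, valid for $\lambda$ outside a further exceptional set absorbed into the complement of $\mathcal{G}$. One then chooses a curve in $\Sigma_\lambda\times\Sigma_\lambda$ along which $\operatorname{Im}z$ and $\operatorname{Im}w$ go to infinity in directions dual to $\operatorname{supp}\widehat q$ while the momentum transfer $\zeta$ stays on a prescribed real sheet; along it the Lippmann--Schwinger correction, which carries a factor $R_0(\lambda+i0)$ sandwiched between exponentially weighted vectors supported on $\operatorname{supp}\widehat q$, is dominated by the Born term, so that
\begin{equation*}
  A(\lambda)(z,w)\ \longrightarrow\ c(\lambda)\sum_{v\in V(G)}\widehat q(v)\,e^{i v\cdot\zeta}.
\end{equation*}
Letting $\lambda$ range over $\mathcal{G}$ and $z,w$ over $\Sigma_\lambda$, the transfers $\zeta$ fill an open subset of $\mathbb{T}^2$, which determines this trigonometric polynomial everywhere, and the inverse Fourier transform returns $(\widehat q(v))_{v\in V(G)}$. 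The triangular lattice is identical, the $2\times2$ symbol being replaced by its single scalar entry.

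The step I expect to be the main obstacle is exactly the limit in the last display. Two things must be shown with care. First, that $\Sigma_\lambda$ is non-empty and contains points of arbitrarily large imaginary part realizing every momentum transfer in the required range: here the matrix structure is what makes the problem genuinely different from $\mathbb{Z}^n$, since $\Sigma_\lambda$ is the complex curve $p(z)\,\overline{p(\bar z)}=\lambda^2$, and one has to track how $\operatorname{Im}z_1,\operatorname{Im}z_2\to\pm\infty$ can balance inside the product $p(z)\,\overline{p(\bar z)}$. Second, a uniform bound on $\|R_0(\lambda+i0)\|$ between suitably weighted $\ell^2$-spaces over $\operatorname{supp}\widehat q$ that beats the exponential growth of the continued Bloch waves, so that the correction really is lower order. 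The behaviour near the band edges and, above all, near the conical Dirac points (where $h_0$ has a repeated eigenvalue and the limiting absorption principle degenerates) must also be understood well enough to confirm that $\mathcal{G}$ is dense; since $A(\lambda)$ is assumed known for \emph{all} $\lambda\in\sigma_{ess}(H)$, excluding this null set costs nothing.
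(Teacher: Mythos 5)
Your strategy fixes $\lambda\in\mathbb{R}$ and complexifies the momenta $x,y$ along the variety $\Sigma_\lambda=\{z\in\mathbb{C}^2:p(z)\overline{p(\bar z)}=\lambda^2\}$; but this is the \emph{fixed-energy} inverse problem, not the all-energy ``complex Born approximation'' of Isozaki--Korotyaev that the paper actually follows, and the step you flag as the main obstacle does in fact break down. At a fixed real spectral parameter, $R_0(\lambda+i0)$ has an $O(1)$ on-diagonal kernel, so when $\Im z,\Im w\to\infty$ in a direction dual to an extremal site $v^*\in\operatorname{supp}\hat q$, the dominant contribution to the Lippmann--Schwinger correction comes from $v=v'=v^*$ and carries the \emph{same} exponential weight $e^{iv^*(\zeta_+-\zeta_-)}$ as the extremal Born contribution, multiplied by the nonvanishing scalar $\langle\hat P(v^*)|\hat R_0(\lambda+i0)|\hat P(v^*)\rangle \hat q(v^*)$. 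Therefore $A(\lambda)(z,w)$ does not converge to $c(\lambda)\sum_v\hat q(v)e^{iv\cdot\zeta}$ along your curve; instead you recover a nonlinear combination of $\hat q(v^*)$ with unknown resolvent matrix elements, which your reconstruction does not disentangle.

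The paper's route is genuinely different and avoids this. It parametrizes $\mathcal{M}_\lambda$ via $\lambda=\sqrt{8k^2+1}$ and hyperbolic variables $\theta$, then analytically continues the \emph{spectral} variable $k\to z=1+iN\in\mathbb{C}_+$ (Section 4). Along this path $|\lambda|\to\infty$, and the resolvent kernel decays: Lemma 5.2 gives $\hat P(m)\hat R(z)\hat P(n)=O(\langle z\rangle^{-2d_{ij}(m-n)-c_{ij}})$ componentwise, with $d_{ij}$ the hexagonal distances of Section~5. The Bloch exponentials then grow like $N^{2n_2}$, while every resolvent factor costs at least $N^{-1}$, so the correction $B_1\sim O(N^{4M-1})$ is strictly dominated by the Born term $B_0\sim N^{4M}$; a layer-by-layer induction on $n_2$ recovers first $\hat q_2$, then $\hat q_1$. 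To repair your proposal you would need either this complex-energy continuation (and the attendant explicit parametrization of $\mathcal{M}_\lambda$ and the resolvent bounds of Section~5), or a Faddeev-type Green operator with decay at fixed energy---and the paper's introduction already notes that the fixed-energy approach, even on $\mathbb{Z}^n$, currently yields only partial results.
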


The hexagonal lattice can be viewed  as a discrete model of graphene, which is
a two dimensional, single-layered carbon sheet with honeycomb
structure. Therefore, discrete Schr\"{o}dinger operators on the hexagonal
lattice are regarded as discrete Hamiltonians on graphene. In physics,
graphene is one of the most interesting subjects due to the peculiar behavior
of electrons, and very actively studied, recently. Discrete Schr\"{o}dinger
operators, or tight binding models of Hamiltonians, are widely used to
investigate graphene (Neto, Guinea, Peres, Novoselov and Geim
\cite{RevModPhys.81.109}). Another approach is to study quantum graphs
(Kuchment and Post \cite{MR2336365}).

If we introduce a small parameter and take the limit to zero around the Dirac
points, it is well-known that the discrete Schr\"{o}dinger operator on the
hexagonal lattice tends to the Dirac operator on $\mathbb{R}^2$ (Semenoff
\cite{PhysRevLett.53.2449}, Gonz{\'a}lez, Guinea and Vozmediano
\cite{MR1239630}). Therefore, in a sense, we are considering a discrete
version of the inverse scattering problem for Dirac operators in two
dimensions. It is not known so much about the inverse scattering theory for
Dirac operators, relative to that for Schr\"{o}dinger operators. We refer to
Isozaki \cite{MR1449193}.

The rest of this paper is organized as follows: In Section 2, we review
spectral properties of the free Hamiltonian on the hexagonal lattice, and
define a conjugate operator to derive Mourre estimates. As a consequence, we
show the absolute continuity of the discrete Schr\"{o}dinger operator on the
hexagonal lattice. In Section 3, we construct a spectral representation, and
then obtain a representation of the S-matrix. Our main result, the
reconstruction procedure for the potential, is shown in Section 6, and the key
lemmas for it, analytic continuation and estimates of the resolvent, are
proved in Sections 4 and 5, respectively. Section 7 remarks that our
reconstruction procedure also works on the triangle lattice in almost the same
way as on the hexagonal lattice and the two-dimensional square lattice. Some
technical lemmas are proved in Appendices.


%
%
\section{Discrete Schr\"{o}dinger operators on the hexagonal lattice}

\subsection{Preliminaries}

As far as the discrete Laplacian is concerned, we only have to take care of
the adjacent relations between the vertices, which enables us to illustrate
the hexagonal lattice $G$ as Figure \ref{fig_hexagonal_lattice_1},
\begin{figure}[htbp]
    \begin{center}
        \includegraphics[height=4cm]{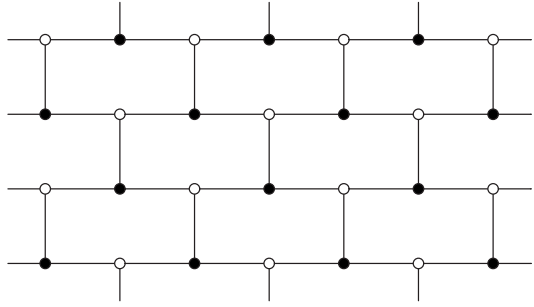}
    \end{center}
    \caption{The hexagonal lattice can be deformed as above, where the black
      and white dots are placed on $\mathbb{Z}^2$, and correspond to those in
      Figure \ref{fig_hexagonal_lattice_0},
      respectively. \label{fig_hexagonal_lattice_1}}
\end{figure}
where the vertices are placed on $\mathbb{Z}^2$. Therefore, we can regard that
\begin{equation*}
    V(G)=\mathbb{Z}^2,
\end{equation*}
\begin{align*}
    E(G)=&\{[m,n];m,n\in\mathbb{Z}^2,n_1=m_1+1,m_2=n_2\}\\
    &\cup\{[m,n];m,n\in\mathbb{Z}^2,m_1=n_1,m_2\in2\mathbb{Z}+1,n_2=m_2+1\}\\
    &\cup\{[m,n];m,n\in\mathbb{Z}^2,m_1=n_1,m_2\in2\mathbb{Z},n_2=m_2+1\},
\end{align*}
where $m=(m_1,m_2)$ and $n=(n_1,n_2)$, and the segment between $m$ and $n$ is
denoted by $[m,n]$. Also, noticing that the degree of the vertices on the
hexagonal lattice is always three, we can regard $l^2(G)$ as just the
$l^2$-space on $\mathbb{Z}^2$ equipped with the norm
$\Vert\hat{g}\Vert_{l^2(G)}^2=3\sum_{n\in\mathbb{Z}^2}\vert\hat{g}(n)\vert^2$.

We next introduce the hexagonal lattice structure on $\mathbb{Z}^2$, which is
different from the standard lattice one, in the following manner. We split
$\mathbb{Z}^2$ into two parts:
$\mathbb{Z}^2=\mathbb{Z}_e^2\cup\mathbb{Z}_o^2$, where
\begin{equation*}
    \mathbb{Z}_e^2=\{(n_1,n_2); n_1+n_2\in2\mathbb{Z}\},
\end{equation*}
\begin{equation*}
    \mathbb{Z}_o^2=\{(n_1,n_2); n_1+n_2-1\in2\mathbb{Z}\}.
\end{equation*}
In Figure \ref{fig_hexagonal_lattice_1}, $\mathbb{Z}_e^2$ and $\mathbb{Z}_o^2$
are represented as the black dots and the white ones, respectively, each of
which has a lattice structure with basis $\vec{e}_1=(1,1)$ and 
$\vec{e}_2=(-1,1)$. Therefore, there exists a canonical isomorphisms
\begin{equation*}
    \mathbb{Z}_e^2\ni(n_1,n_2)\longmapsto(m_1,m_2)\in\mathbb{Z}^2,
\end{equation*}
\begin{equation}
    \label{n_1+n_2:even}
    n_1=m_1-m_2,\ n_2=m_1+m_2,
\end{equation}
and
\begin{equation*}
    \mathbb{Z}_o^2\ni(n_1,n_2)\longmapsto(m_1,m_2)\in\mathbb{Z}^2,
\end{equation*}
\begin{equation}
    \label{n_1+n_2:odd}
    n_1=m_1-m_2,\ n_2=1+m_1+m_2.
\end{equation}
Noticing that $\mathbb{Z}_e^2$ and $\mathbb{Z}_o^2$ are the minimal periodic
lattice structures in the hexagonal lattice, we need to use a system of
difference operators to analyze the discrete Laplacian on the hexagonal
lattice.

\subsection{Discrete Laplacian}
There is a natural unitary mapping
${\cal{J}}:(l^2(\mathbb{Z}^2))^2\rightarrow{}l^2(G)$, more precisely,
\begin{equation*}
    {\cal{J}}:l^2(\mathbb{Z}^2)\oplus{}l^2(\mathbb{Z}^2)\ni\hat{f}=(\hat{f}_1,\hat{f}_2){}\longmapsto\hat{g}\in{}l^2(G),
\end{equation*}
\begin{equation*}
    \hat{g}(n_1,n_2)=({\cal{J}}\hat{f})(n_1,n_2)=
    \begin{cases}
        \displaystyle\frac{1}{\sqrt{3}}\hat{f}_1(m_1,m_2),&\text{if $(n_1,n_2)\in\mathbb{Z}_e^2$},\\
        \displaystyle\frac{1}{\sqrt{3}}\hat{f}_2(m_1,m_2),&\text{if $(n_1,n_2)\in\mathbb{Z}_o^2$},
    \end{cases}
\end{equation*}
where $(m_1,m_2)$ is defined by \eqref{n_1+n_2:even} and \eqref{n_1+n_2:odd},
respectively. Note that $\sqrt{3}$ comes from the degree of the vertices,
which contributes to the definition of the norm of $l^2(G)$.

The adjacent vertices of $(n_1,n_2)\in\mathbb{Z}_e^2$ are $(n_1\pm1,n_2)$,
$(n_1,n_2+1)$, and those of $\mathbb{Z}_o^2$ are $(n_1\pm1,n_2)$,
$(n_1,n_2-1)$. Therefore, $3(\Delta_d+1)$ on $G$ is written as
\begin{align*}
    &(3(\Delta_d+1)\hat{g})(n_1,n_2)\\
    =&
    \begin{cases}
        \hat{g}(n_1,n_2+1)+\hat{g}(n_1-1,n_2)+\hat{g}(n_1+1,n_2),&\text{if
          $(n_1,n_2)\in\mathbb{Z}_e^2$},\\
        \hat{g}(n_1,n_2-1)+\hat{g}(n_1-1,n_2)+\hat{g}(n_1+1,n_2),&\text{if
          $(n_1,n_2)\in\mathbb{Z}_o^2$},
    \end{cases}
\end{align*}
for $\hat{g}\in{}l^2(G)$. This implies that
\begin{equation*}
    ({\cal{J}}^*3(\Delta_d+1){\cal{J}}\hat{f})_1(m_1,m_2)=\hat{f}_2(m_1,m_2)+\hat{f}_2(m_1-1,m_2)+\hat{f}_2(m_1,m_2-1),
\end{equation*}
\begin{equation*}
    ({\cal{J}}^*3(\Delta_d+1){\cal{J}}\hat{f})_2(m_1,m_2)=\hat{f}_1(m_1,m_2)+\hat{f}_1(m_1+1,m_2)+\hat{f}_1(m_1,m_2+1),
\end{equation*}
for
$\hat{f}=(\hat{f}_1,\hat{f}_2)\in{}l^2(\mathbb{Z}^2)\oplus{}l^2(\mathbb{Z}^2)$.

Let us define
\begin{equation*}
    l^2(\mathbb{Z}^2;\mathbb{C}^2)=\{\hat{f}=(
    \begin{pmatrix}
        \hat{f}_1(m)\\
        \hat{f}_2(m)
    \end{pmatrix}
    )_{m\in\mathbb{Z}^2};\Vert\hat{f}\Vert_{l^2(\mathbb{Z}^2;\mathbb{C}^2)}^2=\sum_{m\in\mathbb{Z}^2}(\vert\hat{f}_1(m)\vert^2+\vert\hat{f}_2(m)\vert^2)<\infty\}.
\end{equation*}
There is a unitary mapping
${\cal{I}}:l^2(\mathbb{Z}^2;\mathbb{C}^2)\rightarrow(l^2(\mathbb{Z}^2))^2$,
under which we can naturally identify $l^2(\mathbb{Z}^2;\mathbb{C}^2)$ with
$(l^2(\mathbb{Z}^2))^2$, more precisely,
\begin{equation*}
    {\cal{I}}:l^2(\mathbb{Z}^2;\mathbb{C}^2)\ni(
    \begin{pmatrix}
        \hat{f}_1(m)\\
        \hat{f}_2(m)
    \end{pmatrix}
    )_{m\in\mathbb{Z}^2}\longmapsto((\hat{f}_1(m))_{m\in\mathbb{Z}^2},(\hat{f}_2(m))_{m\in\mathbb{Z}^2})\in{}l^2(\mathbb{Z}^2)\oplus{}l^2(\mathbb{Z}^2).
\end{equation*}
Let $\hat{H}_0={\cal{I}}^*{\cal{J}}^*3(\Delta_d+1){\cal{J}}{\cal{I}}$. Then we
can write 
\begin{equation*}
    (\hat{H}_0\hat{f})(m)=
    \begin{pmatrix}
        \hat{f_2}(m_1,m_2)+\hat{f_2}(m_1-1,m_2)+\hat{f_2}(m_1,m_2-1)\\
        \hat{f_1}(m_1,m_2)+\hat{f_1}(m_1+1,m_2)+\hat{f_1}(m_1,m_2+1)
    \end{pmatrix}
\end{equation*}
for
\begin{equation*}
    \hat{f}=(
    \begin{pmatrix}
        \hat{f}_1(m)\\
        \hat{f}_2(m)
    \end{pmatrix}
    )_{m\in\mathbb{Z}^2}\in{}l^2(\mathbb{Z}^2;\mathbb{C}^2).
\end{equation*}

We put $\mathbb{T}^2=\mathbb{R}^2/(2\pi\mathbb{Z})^2$ and
\begin{equation*}
    L^2(\mathbb{T}^2;\mathbb{C}^2)=\{f=(f_1(\xi),f_2(\xi));\Vert{f}\Vert_{L^2(\mathbb{T}^2;\mathbb{C}^2)}^2=\int_{\mathbb{T}^2}(\vert{f_1(\xi)}\vert^2+\vert{f_2(\xi)}\vert^2)d\xi<\infty\}.
\end{equation*}
We define a unitary operator
${\cal{F}}:l^2(\mathbb{Z}^2)\rightarrow{}L^2(\mathbb{T}^2)$ and its adjoint
${\cal{F}}^*$ by
\begin{equation*}
    ({\cal{F}}\hat{f})(\xi)=\frac{1}{2\pi}\sum_{n\in\mathbb{Z}^2}\hat{f}(n)e^{in\xi},
\end{equation*}
\begin{equation*}
    ({\cal{F}}^*f)(n)=\frac{1}{2\pi}\int_{\mathbb{T}^2}f(\xi)e^{-in\xi}d\xi,
\end{equation*}
and extend them naturally on $l^2(\mathbb{Z}^2;\mathbb{C}^2)$ and
$L^2(\mathbb{T}^2;\mathbb{C}^2)$. They are the usual Fourier series and
Fourier coefficients for each component. We put
\begin{equation*}
    H_0={\cal{F}}\hat{H}_0{\cal{F}}^*.
\end{equation*}
Then $H_0$ is a multiplication operator by a symmetric matrix on
$L^2(\mathbb{T}^2;\mathbb{C}^2)$:
\begin{equation*}
    (H_0f)(\xi)=H_0(\xi)f(\xi),
\end{equation*}
where
\begin{equation*}
    H_0(\xi)=
    \begin{pmatrix}
        0&\alpha(\xi)\\
        \overline{\alpha}(\xi)&0
    \end{pmatrix},
\end{equation*}
\begin{equation}
    \label{alpha}
    \alpha(\xi)=1+e^{i\xi_1}+e^{i\xi_2},\ \xi=(\xi_1,\xi_2)\in\mathbb{T}^2,
\end{equation}
\begin{equation}
    \label{overline_alpha}
    \overline{\alpha}(\xi)=\overline{\alpha(\xi)}.
\end{equation}

Next, let us define the unitary operator ${\cal{U}}$ as
\begin{equation*}
    ({\cal U}f)(\xi)=U(\xi)f(\xi)
\end{equation*}
on $L^2(\mathbb{T}^2;\mathbb{C}^2)$, where
\begin{equation*}
    U(\xi)=\frac{1}{\sqrt{2}}
    \begin{pmatrix}
        1&\alpha(\xi)/\vert\alpha(\xi)\vert\\
        1&-\alpha(\xi)/\vert\alpha(\xi)\vert
    \end{pmatrix}
\end{equation*}
is a unitary matrix for each $\xi\in\mathbb{T}^2$. Its adjoint ${\cal{U}}^*$
is a multiplication operator by the adjoint matrix $U^*(\xi)=U(\xi)^*$. We put
\begin{equation*}
    \tilde{H}_0={\cal{U}}H_0{\cal{U}}^*.
\end{equation*}
The conjugation by $U(\xi)$ diagonalizes $H_0(\xi)$, which means that
$\tilde{H}_0$ is a multiplication operator by a diagonal matrix on
$L^2(\mathbb{T}^2;\mathbb{C}^2)$:
\begin{equation*}
    (\tilde{H}_0f)(\xi)=\tilde{H}_0(\xi)f(\xi),
\end{equation*}
where
\begin{equation*}
    \tilde{H}_0(\xi)=U(\xi)H_0(\xi)U^*(\xi)=
    \begin{pmatrix}
        p(\xi)&0\\
        0&-p(\xi)
    \end{pmatrix},
\end{equation*}
\begin{equation}
    \label{p}
    p(\xi)=\vert\alpha(\xi)\vert=\sqrt{3+2\cos{\xi_1}+2\cos{\xi_2}+2\cos{(\xi_1-\xi_2)}}.
\end{equation}

One computes that
\begin{align}
    \label{p(xi)=0}
    \begin{split}
        &3+2\cos{\xi_1}+2\cos{\xi_2}+2\cos{(\xi_1-\xi_2)}\\
        =&4(\cos{\frac{\xi_1-\xi_2}{2}}+\frac{1}{2}\cos{\frac{\xi_1+\xi_2}{2}})^2-\cos^2{\frac{\xi_1+\xi_2}{2}}+1.
    \end{split}
\end{align}
We put
\begin{equation}
    \label{xi_0_1,xi_0_2}
    \xi_{0,1}=(\frac{2\pi}{3},-\frac{2\pi}{3}),\ \xi_{0,2}=(-\frac{2\pi}{3},\frac{2\pi}{3})\in\mathbb{T}^2.
\end{equation}
Then we have
\begin{prop}
    \label{properties_of_p(xi)}
    $p(\xi)$ is continuous and
    \begin{equation*}
        0\le{p(\xi)}\le{3}
    \end{equation*}
    on $\mathbb{T}^2$. Moreover,
    \begin{enumerate}
    \item $p(\xi)=0$ if and only if $\xi=\xi_{0,1}$ or
        $\xi_{0,2}$,\label{more_properties_of_p(xi)_1}
    \item $p(\xi)=3$ if and only if
        $\xi=(0,0)$,\label{more_properties_of_p(xi)_2}
    \item $p(\xi)$ is real-analytic on
        $\mathbb{T}^2\setminus\{\xi_{0,1},\xi_{0,2}\}$.\label{more_properties_of_p(xi)_3}
    \end{enumerate}
\end{prop}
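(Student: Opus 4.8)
The plan is to reduce everything to the explicit formula \eqref{p}. Write $p(\xi)=\sqrt{q(\xi)}$ with
\begin{equation*}
    q(\xi)=3+2\cos\xi_1+2\cos\xi_2+2\cos(\xi_1-\xi_2).
\end{equation*}
Since $q$ is a trigonometric polynomial it is continuous on $\mathbb{T}^2$, and rewriting \eqref{p(xi)=0} with $1-\cos^2=\sin^2$ gives
\begin{equation*}
    q(\xi)=4\bigl(\cos\tfrac{\xi_1-\xi_2}{2}+\tfrac12\cos\tfrac{\xi_1+\xi_2}{2}\bigr)^2+\sin^2\tfrac{\xi_1+\xi_2}{2}\ge 0,
\end{equation*}
so $p=\sqrt{q}$ is well defined, nonnegative, and continuous. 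For the upper bound one may either invoke the triangle inequality $p(\xi)=|\alpha(\xi)|=|1+e^{i\xi_1}+e^{i\xi_2}|\le 3$ from \eqref{alpha}, or simply note that $q(\xi)\le 3+2+2+2=9$ because each cosine is at most $1$.

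For the characterization of the zeros I would solve $\alpha(\xi)=0$ directly. Separating real and imaginary parts yields $1+\cos\xi_1+\cos\xi_2=0$ and $\sin\xi_1+\sin\xi_2=0$. The second equation forces $\xi_2\equiv-\xi_1$ or $\xi_2\equiv\pi+\xi_1\pmod{2\pi}$; in the latter case the first equation becomes $1+\cos\xi_1-\cos\xi_1=1\ne 0$, which is impossible, while in the former it becomes $1+2\cos\xi_1=0$, i.e. $\cos\xi_1=-\tfrac12$, i.e. $\xi_1\equiv\pm\tfrac{2\pi}{3}$. Hence $(\xi_1,\xi_2)$ is $(\tfrac{2\pi}{3},-\tfrac{2\pi}{3})$ or $(-\tfrac{2\pi}{3},\tfrac{2\pi}{3})$, which are exactly $\xi_{0,1}$ and $\xi_{0,2}$ from \eqref{xi_0_1,xi_0_2}; conversely $p(\xi_{0,j})=0$ by inspection. (The sum-of-squares form above leads to the same conclusion, via $\sin\tfrac{\xi_1+\xi_2}{2}=0$ and $\cos\tfrac{\xi_1-\xi_2}{2}=-\tfrac12\cos\tfrac{\xi_1+\xi_2}{2}$, but needs slightly more care with the torus identifications.) For the maximum, $q(\xi)=9$ is possible only if $\cos\xi_1=\cos\xi_2=\cos(\xi_1-\xi_2)=1$ simultaneously, which forces $\xi_1\equiv\xi_2\equiv 0$, so $\xi=(0,0)$; conversely $p(0,0)=\sqrt{9}=3$.

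Real-analyticity away from $\xi_{0,1},\xi_{0,2}$ then follows by composition: $q$ is real-analytic on all of $\mathbb{T}^2$ as a trigonometric polynomial, the scalar square root is real-analytic on $(0,\infty)$, and by the previous step $q(\xi)=p(\xi)^2>0$ precisely on $\mathbb{T}^2\setminus\{\xi_{0,1},\xi_{0,2}\}$; hence $p=\sqrt{q}$ is real-analytic there.

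All the computations are elementary; the only point demanding a little attention is making sure that, when extracting the zero set of $\alpha$ (equivalently of $q$), one accounts correctly for the periodicity, so as to conclude that $\xi_{0,1}$ and $\xi_{0,2}$ are the \emph{only} solutions and are distinct. Everything else is bookkeeping with \eqref{p(xi)=0}.
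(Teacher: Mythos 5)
Your proof is correct and follows the same overall structure as the paper's (reduce to $q=p^2$, positivity via the sum-of-squares identity \eqref{p(xi)=0}, locate the zeros, then analyticity of $\sqrt{\cdot}$ away from the zero set). The one place you diverge is the zero set: the paper forces $\cos\frac{\xi_1-\xi_2}{2}+\tfrac12\cos\frac{\xi_1+\xi_2}{2}=0$ together with $\cos\frac{\xi_1+\xi_2}{2}=\pm1$ out of \eqref{p(xi)=0}, whereas you solve $\alpha(\xi)=1+e^{i\xi_1}+e^{i\xi_2}=0$ by separating real and imaginary parts. Both work; your route avoids the half-angle bookkeeping on the torus and makes the case split transparent ($\xi_2\equiv-\xi_1$ versus $\xi_2\equiv\xi_1+\pi$), at the small cost of switching away from the form that the paper later reuses for the level sets of $p$. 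You also spell out the argument for the maximum (all three cosines must simultaneously equal $1$), which the paper dismisses as obvious; that is a worthwhile addition. No gaps.
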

\begin{proof}
    Clearly, $p(\xi)^2$ is real-analytic and
    \begin{equation*}
        0\le{}p(\xi)^2=\vert\alpha(\xi)\vert^2=3+2\cos{\xi_1}+2\cos{\xi_2}+2\cos{(\xi_1-\xi_2)}\le9
    \end{equation*}
    on $\mathbb{T}^2$, which proves the first statement.

    By using \eqref{p(xi)=0}, we have $p(\xi)=0$ if and only if
    \begin{equation*}
        \cos{\frac{\xi_1-\xi_2}{2}}+\frac{1}{2}\cos{\frac{\xi_1+\xi_2}{2}}=0,
    \end{equation*}
    \begin{equation*}
        \cos{\frac{\xi_1+\xi_2}{2}}=\pm1,
    \end{equation*}
    which means that the zeros of $p(\xi)$ are exactly $\xi_{0,j}$,
    $j\in\{1,2\}$.

    The next statement is obvious.

    Noticing that $\xi_{0,1}$ and $\xi_{0,2}$ are all the zeros of $p(\xi)^2$,
    we have the last one.
\end{proof}

We denote by $\sigma(T)$ the spectrum of a self-adjoint operator $T$; by
$\sigma_{ac}(T)$, $\sigma_{sc}(T)$, and $\sigma_{pp}(T)$ its absolutely
continuous one, its singularly continuous one, and its pure point one,
respectively. By using the above proposition, we have
\begin{prop}
    $\sigma(\tilde{H}_0)=\sigma_{ac}(\tilde{H}_0)=[-3,3]$ and
    $\sigma_{pp}(\tilde{H}_0)=\sigma_{sc}(\tilde{H}_0)=\emptyset$.
\end{prop}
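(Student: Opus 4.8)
The plan is to use that, after the unitary reductions already performed, $\tilde{H}_0$ is simply multiplication by the diagonal matrix $\mathrm{diag}\big(p(\xi),-p(\xi)\big)$ on $L^2(\mathbb{T}^2;\mathbb{C}^2)$, so all of its spectral information is encoded in the scalar symbol $p$ studied in Proposition~\ref{properties_of_p(xi)}. Writing $L^2(\mathbb{T}^2;\mathbb{C}^2)=L^2(\mathbb{T}^2)\oplus L^2(\mathbb{T}^2)$, the operator $\tilde{H}_0$ is the orthogonal sum of the multiplication operators $M_p$ and $M_{-p}$ on $L^2(\mathbb{T}^2)$, and it suffices to analyze these two.

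For the spectrum itself, I would use that the spectrum of multiplication by a continuous function equals its essential range. Since $\mathbb{T}^2$ is compact and connected with no isolated points, and $p$ is continuous with $p(\xi_{0,1})=0$ and $p(0,0)=3$, the intermediate value theorem gives $p(\mathbb{T}^2)=[0,3]$; moreover every $\lambda\in[0,3]$ lies in the essential range since the preimage under $p$ of any neighbourhood of $\lambda$ is a nonempty open set and hence of positive Lebesgue measure. Thus $\sigma(M_p)=[0,3]$, $\sigma(M_{-p})=[-3,0]$, and $\sigma(\tilde{H}_0)=[-3,3]$.

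The core of the argument is to show that the spectral measures of $M_{\pm p}$ are purely absolutely continuous, which, combined with the previous step, yields $\sigma_{ac}(\tilde{H}_0)=[-3,3]$ and $\sigma_{sc}(\tilde{H}_0)=\sigma_{pp}(\tilde{H}_0)=\emptyset$. Since the spectral projection $E_{M_p}$ satisfies $\langle E_{M_p}(B)g,g\rangle=\int_{p^{-1}(B)}|g(\xi)|^2\,d\xi$, it is enough to prove that $|p^{-1}(B)|=0$ in $\mathbb{T}^2$ for every Borel set $B\subset\mathbb{R}$ of Lebesgue measure zero. By Proposition~\ref{properties_of_p(xi)}~(\ref{more_properties_of_p(xi)_3}), $p$ is real-analytic and non-constant on the connected set $\mathbb{T}^2\setminus\{\xi_{0,1},\xi_{0,2}\}$, so its critical set $C=\{\nabla p=0\}$ is a proper real-analytic subvariety and hence Lebesgue-null, and by Sard's theorem $p(C)$ is null in $\mathbb{R}$. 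For a regular value $\lambda>0$ the level set $\{\xi:p(\xi)=\lambda\}$ is a compact real-analytic curve on which $|\nabla p|>0$, and the coarea formula gives
\begin{equation*}
    \big|p^{-1}\big(B\setminus p(C)\big)\big|=\int_{B\setminus p(C)}\Big(\int_{\{\xi:p(\xi)=\lambda\}}\frac{ds}{|\nabla p(\xi)|}\Big)\,d\lambda=0
\end{equation*}
whenever $|B|=0$; since $p^{-1}(B)\subset p^{-1}\big(B\setminus p(C)\big)\cup C\cup\{\xi_{0,1},\xi_{0,2}\}$, all of whose pieces are null, we conclude $|p^{-1}(B)|=0$, and the same reasoning applies to $M_{-p}$. (This in particular excludes eigenvalues directly: an eigenfunction for $\lambda$ would be supported on $\{p=\lambda\}\subset\{p^2=\lambda^2\}$, a level set of the non-constant real-analytic function $p^2$, which has measure zero.)

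The main obstacle I anticipate is the behaviour near the conical (Dirac) points $\xi_{0,1},\xi_{0,2}$, where $p$ is merely Lipschitz: there $p(\xi)\sim c\,|\xi-\xi_{0,j}|$, so the level sets $\{p=\lambda\}$ close up into small circles of radius $\sim\lambda/c$ and a neighbourhood of such a point has preimage of measure $O(\lambda^2)$; one must check that this, together with the fact that $|\nabla p|$ stays bounded away from $0$ on a punctured neighbourhood, does not spoil the coarea estimate — it does not. Alternatively, one may simply invoke the general principle that multiplication by a non-constant real-analytic function has purely absolutely continuous spectrum.
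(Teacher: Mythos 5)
Your argument is correct and is precisely the implicit argument the paper intends: the paper omits the proof and states the proposition as a direct consequence of Proposition~\ref{properties_of_p(xi)}, since $\tilde{H}_0$ acts as multiplication by $\mathrm{diag}(p,-p)$. One small improvement available to you: as the paper notes just below, the critical set of $p$ is not merely a proper analytic subvariety but the finite set $\{(0,0),(0,-\pi),(-\pi,0),(-\pi,-\pi)\}$, which makes the Sard/coarea step immediate.
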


\begin{rem}
    In the graph theory, the hexagonal lattice is, in general, called an
    abelian covering graph of a finite graph. For more general treatments for
    the spectrum of the discrete Laplacian on such a graph, we refer to
    Kotani, Shirai and Sunada \cite{MR1658100}.
\end{rem}

\subsection{Discrete Schr\"{o}dinger operators}
The potential on $l^2(\mathbb{Z}^2;\mathbb{C}^2)$ is denoted by $\hat{q}$,
which is a multiplication operator by real-valued, diagonal, $2\times2$
matrices:
\begin{equation*}
    \hat{q}=\sum_{n\in\mathbb{Z}^2}\hat{q}(n)\hat{P}(n),
\end{equation*}
where
\begin{equation}
    \label{q_n}
    \hat{q}(n)=
    \begin{pmatrix}
        \hat{q}_1(n)&0\\
        0&\hat{q}_2(n)
    \end{pmatrix},
\end{equation}
\begin{equation*}
    \hat{q}_j(n)\in\mathbb{R},\ j\in\{1,2\}.
\end{equation*}
Here we denote by $\hat{P}(n)$ the projection onto the site $n\in\mathbb{Z}^2$
on $l^2(\mathbb{Z}^2;\mathbb{C}^2)$:
\begin{equation*}
    (\hat{P}(n)\hat{f})(m)=
    \begin{pmatrix}
        \delta_{nm}\hat{f}_1(m)\\
        \delta_{nm}\hat{f}_2(m)
    \end{pmatrix},
\end{equation*}
where $\delta_{nm}$ is Kronecker's delta. Throughout the paper, we shall
assume that
\begin{assumption}
    \label{assumption_potential}
    $\hat{q}$ is compactly supported, that is, $\hat{q}(n)=0$ except for a
    finite number of $n\in\mathbb{Z}^2$.
\end{assumption}
Note that some parts of our arguments are extended to more general decaying
potentials.

We put
\begin{equation*}
    q={\cal{F}}\hat{q}{\cal{F}}^*,\ \tilde{q}={\cal{U}}q{\cal{U}}^*.
\end{equation*}
Then $q$ and $\tilde{q}$ are written as
\begin{equation*}
    (qf)(\xi)=\frac{1}{2\pi}
    \begin{pmatrix}
        \displaystyle\int_{\mathbb{T}^2}q_1(\xi-\zeta)f_1(\zeta)d\zeta\\
        \displaystyle\int_{\mathbb{T}^2}q_2(\xi-\zeta)f_2(\zeta)d\zeta
    \end{pmatrix},
\end{equation*}
\begin{equation*}
    (\tilde{q}f)(\xi)=
    \left(
        \begin{array}{l}
            (\tilde{q}f)_1(\xi)\\
            (\tilde{q}f)_2(\xi)
        \end{array}
    \right),
\end{equation*}
for $f=(f_1,f_2)\in{}L^2(\mathbb{T}^2;\mathbb{C}^2)$, where
\begin{equation*}
    q_j(\xi)=\frac{1}{2\pi}\sum_{n\in\mathbb{Z}^2}\hat{q}_j(n)e^{in\xi},\ j\in\{1,2\},
\end{equation*}
and
\begin{align*}
    (\tilde{q}f)_1(\xi)=&\displaystyle\frac{1}{2(2\pi)}\int_{\mathbb{T}^2}(q_1(\xi-\zeta)+\frac{\alpha(\xi)}{p(\xi)}q_2(\xi-\zeta)\frac{\overline{\alpha}(\zeta)}{p(\zeta)})f_1(\zeta)d\zeta\\
    &\displaystyle+\frac{1}{2(2\pi)}\int_{\mathbb{T}^2}(q_1(\xi-\zeta)-\frac{\alpha(\xi)}{p(\xi)}q_2(\xi-\zeta)\frac{\overline{\alpha}(\zeta)}{p(\zeta)})f_2(\zeta)d\zeta,
\end{align*}
\begin{align*}
    (\tilde{q}f)_2(\xi)=&\displaystyle\frac{1}{2(2\pi)}\int_{\mathbb{T}^2}(q_1(\xi-\zeta)-\frac{\alpha(\xi)}{p(\xi)}q_2(\xi-\zeta)\frac{\overline{\alpha}(\zeta)}{p(\zeta)})f_1(\zeta)d\zeta\\
    &\displaystyle+\frac{1}{2(2\pi)}\int_{\mathbb{T}^2}(q_1(\xi-\zeta)+\frac{\alpha(\xi)}{p(\xi)}q_2(\xi-\zeta)\frac{\overline{\alpha}(\zeta)}{p(\zeta)})f_2(\zeta)d\zeta.
\end{align*}

The discrete Schr\"{o}dinger operator is denoted by
\begin{equation*}
    \hat{H}=\hat{H}_0+\hat{q}.
\end{equation*}
We also put
\begin{equation*}
    H={\cal{F}}\hat{H}{\cal{F}}^*=H_0+q,
\end{equation*}
\begin{equation*}
    \tilde{H}={\cal{U}}H{\cal{U}}^*=\tilde{H}_0+\tilde{q}.
\end{equation*}
We denote by $\sigma_{ess}(T)$ the essential spectrum of a self-adjoint
operator $T$. Then, by noticing that the potential is a compact perturbation,
we have
\begin{prop}
    $\sigma_{ess}(\tilde{H})=[-3, 3]$.
\end{prop}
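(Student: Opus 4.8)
The plan is to reduce the claim to Weyl's theorem on the stability of the essential spectrum under compact perturbations, exactly as for the corresponding statement on $\mathbb{Z}^n$. First I would note that, by Assumption~\ref{assumption_potential}, $\hat{q}=\sum_{n\in\mathbb{Z}^2}\hat{q}(n)\hat{P}(n)$ is a finite sum of finite-rank operators, hence itself a finite-rank (in particular compact) self-adjoint operator on $l^2(\mathbb{Z}^2;\mathbb{C}^2)$. Since ${\cal{F}}$ and ${\cal{U}}$ are unitary, $\tilde{q}={\cal{U}}{\cal{F}}\hat{q}{\cal{F}}^*{\cal{U}}^*$ is unitarily equivalent to $\hat{q}$, so $\tilde{q}$ is also finite-rank and compact on $L^2(\mathbb{T}^2;\mathbb{C}^2)$; moreover $\tilde{H}_0={\cal{U}}H_0{\cal{U}}^*={\cal{U}}{\cal{F}}\hat{H}_0{\cal{F}}^*{\cal{U}}^*$ is bounded and self-adjoint.

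Next, applying Weyl's theorem to the bounded self-adjoint operator $\tilde{H}_0$ and its compact self-adjoint perturbation $\tilde{q}$, we get $\sigma_{ess}(\tilde{H})=\sigma_{ess}(\tilde{H}_0+\tilde{q})=\sigma_{ess}(\tilde{H}_0)$. It then remains to identify $\sigma_{ess}(\tilde{H}_0)$. By the preceding proposition, $\sigma(\tilde{H}_0)=\sigma_{ac}(\tilde{H}_0)=[-3,3]$ and $\sigma_{pp}(\tilde{H}_0)=\sigma_{sc}(\tilde{H}_0)=\emptyset$; in particular $\tilde{H}_0$ has no isolated eigenvalue of finite multiplicity, so its discrete spectrum is empty and hence $\sigma_{ess}(\tilde{H}_0)=\sigma(\tilde{H}_0)=[-3,3]$. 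Combining these two identities yields $\sigma_{ess}(\tilde{H})=[-3,3]$, as claimed.

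There is no real obstacle here: the only point that needs to be checked is the compactness of the potential, and that is immediate because $\hat{q}$ is finite-rank and compactness is preserved under the unitary conjugations ${\cal{F}}$, ${\cal{U}}$. If one prefers to argue on the torus side, one can note that when each $\hat{q}_j$ is compactly supported, each $q_j$ is a trigonometric polynomial, so $q$ (and therefore $\tilde{q}$) is an integral operator on the compact manifold $\mathbb{T}^2$ with smooth, hence square-integrable, kernel, and is thus Hilbert--Schmidt; but this is a detour, and the finite-rank observation on $l^2(\mathbb{Z}^2;\mathbb{C}^2)$ is the cleanest route. I would also remark that the same argument shows that the absolutely continuous spectrum structure needed later can only change on a discrete set, which is why compact support (or sufficient decay) of $\hat{q}$ is the natural hypothesis.
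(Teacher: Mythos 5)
Your proof is correct and is essentially the argument the paper has in mind: the paper simply remarks that ``the potential is a compact perturbation,'' which is exactly the Weyl-theorem reduction you spell out, combined with $\sigma_{ess}(\tilde{H}_0)=\sigma(\tilde{H}_0)=[-3,3]$ from the preceding proposition. No gaps; your finite-rank observation and the unitary-conjugation transport of compactness are the right justification of the paper's one-line remark.
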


\subsection{Sobolev spaces and Mourre estimates}

Let $L_0$ be the self-adjoint extension of the operator
\begin{equation*}
    \begin{pmatrix}
        -\Delta&0\\
        0&-\Delta
    \end{pmatrix}
\end{equation*}
on $C^{\infty}(\mathbb{T}^2;\mathbb{C}^2)=\{\mathbb{C}^2\text{-valued smooth
function on }\mathbb{T}^2\}$, where
$\Delta=(\frac{\partial^2}{{\partial\xi_1}^2}+\frac{\partial^2}{{\partial\xi_2}^2})$. We
denote by ${\cal{H}}^s$, $s\in\mathbb{R}$, the domain of the operator
$L_0^{s/2}$:
\begin{equation*}
    {\cal{H}}^s=\{f\in{\cal{D}}^{'}(\mathbb{T}^2;\mathbb{C}^2);\Vert{f}\Vert_s=\Vert{(1+L_0)^{s/2}f}\Vert<\infty\},
\end{equation*}
where ${\cal{D}}^{'}(\mathbb{T}^2;\mathbb{C}^2)=\{\mathbb{C}^2\text{-valued
  distribution on }\mathbb{T}^2\}$. Also, we denote
\begin{equation*}
    \hat{{\cal{H}}}^s={\cal{F}}^*{\cal{H}}^s.
\end{equation*}
We put ${\cal{H}}={\cal{H}}^0=L^2(\mathbb{T}^2;\mathbb{C}^2)$ and
$\hat{\cal{H}}=\hat{\cal{H}}^0=l^2(\mathbb{Z}^2;\mathbb{C}^2)$. The next
proposition is obvious by Parseval's equation.
\begin{prop}
    $f\in{\cal{H}}^s\Longleftrightarrow\sum_{n\in\mathbb{Z}^2}(1+\vert{n}\vert^2)^s\Vert{\hat{f}(n)}\Vert_{\mathbb{C}^2}^2=\sum_{n\in\mathbb{Z}^2}(1+\vert{n}\vert^2)^s(\vert{\hat{f}_1(n)}\vert^2+\vert{\hat{f}_2(n)}\vert^2)<\infty$,
    where $f={\cal{F}}\hat{f}$.
\end{prop}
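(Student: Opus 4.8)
The plan is to diagonalise $L_0$ with the Fourier series transform ${\cal F}$ and then read the claim off from Parseval's identity, which is why it was called obvious.

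First I would record the action of ${\cal F}$ on $L_0$. On $C^{\infty}(\mathbb{T}^2;\mathbb{C}^2)$ one has, componentwise, $({\cal F}(-\Delta\hat{f}_j))(n)=\vert n\vert^2({\cal F}\hat{f}_j)(n)$ with $\vert n\vert^2=n_1^2+n_2^2$, so ${\cal F}L_0{\cal F}^*$ is the multiplication operator by the diagonal matrix $\vert n\vert^2 I_2$ on $l^2(\mathbb{Z}^2;\mathbb{C}^2)$, whose self-adjoint realisation has domain $\{\hat{g};\sum_{n}(1+\vert n\vert^2)^2\Vert\hat{g}(n)\Vert_{\mathbb{C}^2}^2<\infty\}$. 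Since $L_0\ge0$, the spectral theorem then gives that ${\cal F}(1+L_0)^{s/2}{\cal F}^*$ is multiplication by $(1+\vert n\vert^2)^{s/2}I_2$, and its maximal domain in $l^2(\mathbb{Z}^2;\mathbb{C}^2)$ (resp. its natural image in the appropriate weighted sequence space when $s<0$) is exactly the set of $(\hat{g}(n))_{n}$ with $\sum_{n}(1+\vert n\vert^2)^s\Vert\hat{g}(n)\Vert_{\mathbb{C}^2}^2<\infty$.

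Next I would invoke Parseval: since ${\cal F}$ is unitary, for $\hat{f}={\cal F}^*f$,
\[
\Vert f\Vert_s^2=\Vert(1+L_0)^{s/2}f\Vert^2=\Vert{\cal F}(1+L_0)^{s/2}{\cal F}^*\hat{f}\Vert_{l^2(\mathbb{Z}^2;\mathbb{C}^2)}^2=\sum_{n\in\mathbb{Z}^2}(1+\vert n\vert^2)^s\Vert\hat{f}(n)\Vert_{\mathbb{C}^2}^2,
\]
and expanding $\Vert\hat{f}(n)\Vert_{\mathbb{C}^2}^2=\vert\hat{f}_1(n)\vert^2+\vert\hat{f}_2(n)\vert^2$ yields the stated formula. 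In particular $f\in{\cal H}^s$, i.e. $f\in\mathrm{dom}((1+L_0)^{s/2})$ with $\Vert f\Vert_s<\infty$, holds precisely when the right-hand side is finite, which is the asserted equivalence. The case $s=0$ is literally Parseval's equation for ${\cal F}$.

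I do not expect a genuine obstacle here. The only point worth a sentence of care is that for $s<0$ one must read ${\cal H}^s$ as a subspace of ${\cal D}'(\mathbb{T}^2;\mathbb{C}^2)$ and note that ${\cal F}^*$ extends to a bijection between the weighted sequence space $\{(\hat{g}(n))_n;\sum_n(1+\vert n\vert^2)^s\Vert\hat{g}(n)\Vert_{\mathbb{C}^2}^2<\infty\}$ and ${\cal H}^s$; with that identification the computation above is unchanged. Everything else is the standard fact that Fourier series turn differentiation into multiplication by $n$, together with the unitarity of ${\cal F}$.
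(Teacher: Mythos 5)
Your proof is correct and matches the paper's intent exactly: the paper simply declares the proposition ``obvious by Parseval's equation,'' and your argument is precisely the diagonalization of $L_0$ under ${\cal F}$ followed by Parseval that this remark is pointing at.
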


We derive Mourre estimates. At first, we perform formal calculations, and then
modify the conjugate operator by introducing cut-off functions to prove its
self-adjointness. Let $A$ be a first-order differential operator defined as
\begin{align*}
    A&=i[\tilde{H}_0,L_0]\\
    &=i
    \begin{pmatrix}
        \nabla{p}\cdot\nabla+\nabla\cdot\nabla{p}&0\\
        0&-(\nabla{p}\cdot\nabla+\nabla\cdot\nabla{p})
    \end{pmatrix}.
\end{align*}
By a straightforward calculation, the commutator of $\tilde{H}_0$ and $A$ is
\begin{equation*}
    i[\tilde{H}_0,A]=2\vert\nabla{p}(\xi)\vert^2I_2,
\end{equation*}
where $I_2$ is the $2\times2$ identity matrix.

A simple calculation shows that
\begin{equation}
    \label{nabla_p}
    \nabla{p}(\xi)=(\frac{-\sin{\xi_1}-\sin{(\xi_1-\xi_2)}}{p(\xi)},\frac{-\sin{\xi_2}+\sin{(\xi_1-\xi_2)}}{p(\xi)}).
\end{equation}
If $\sin{\xi_1}+\sin{(\xi_1-\xi_2)}=0$, then
\begin{equation*}
    \xi_1-\xi_2=\xi_1+\pi\text{ or }-\xi_1\pmod{2\pi}.
\end{equation*}
Also, if $\sin{\xi_2}-\sin{(\xi_1-\xi_2)}=0$, then
\begin{equation*}
    \xi_1-\xi_2=\xi_2\text{ or }-\xi_2+\pi\pmod{2\pi}.
\end{equation*}
Note that we can not define $\nabla{p}(\xi)$ at $\xi_{0,1}$ and
$\xi_{0,2}$. Therefore, we have
\begin{equation*}
    \{\xi\in\mathbb{T}^2;\nabla{p}(\xi)=0\}=\{(0,0),(0,-\pi),(-\pi,0),(-\pi,-\pi)\}.
\end{equation*}

From \eqref{p(xi)=0}, $p(\xi)=1$ if and only if $\cos{(\xi_1-\xi_2)/2}=0$ or
$\cos{(\xi_1-\xi_2)/2}=-\cos{(\xi_1+\xi_2)/2}$. If $\cos{(\xi_1-\xi_2)/2}=0$,
then
\begin{equation*}
    \frac{\xi_1-\xi_2}{2}=\frac{\pi}{2}\pmod{\pi}.
\end{equation*}
Also, if $\cos{(\xi_1-\xi_2)/2}=-\cos{(\xi_1+\xi_2)/2}$, then
\begin{equation*}
    \frac{\xi_1-\xi_2}{2}=\pm\frac{\xi_1+\xi_2}{2}+\pi\pmod{2\pi}.
\end{equation*}
Therefore, we have
\begin{equation*}
    \{\xi\in\mathbb{T}^2;p(\xi)=1\}=\{(\xi_1,\xi_2)\in[-\pi,\pi)^2;\text{$\xi_1=-\pi$
      or $\xi_2=-\pi$ or $\xi_2=\xi_1\pm\pi$}\},
\end{equation*}
which includes the set $\{\xi\in\mathbb{T}^2;\nabla{p}(\xi)=0\}$ except the
origin.

Let us define
\begin{equation}
    \label{M_E}
    {\cal{M}}_E=\{\xi\in\mathbb{T}^2;p(\xi)=E\}.
\end{equation}
Then we have
\begin{prop}
    \label{prop_analytic_mfd}
    For $E\in(0,3)\setminus\{1\}$, ${\cal{M}}_E$ is a real-analytic compact
    manifold.
\end{prop}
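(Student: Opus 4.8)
The plan is to realize $\mathcal{M}_E$ as a regular level set of $p$ and then invoke the implicit function theorem in the real-analytic category. First, since $p$ is continuous on the compact space $\mathbb{T}^2$, the set $\mathcal{M}_E=p^{-1}(\{E\})$ is closed, hence compact. Moreover, because $E>0$, Proposition \ref{properties_of_p(xi)} (\ref{more_properties_of_p(xi)_1}) together with the continuity of $p$ shows that $\mathcal{M}_E\cap\{\xi_{0,1},\xi_{0,2}\}=\emptyset$: indeed $p(\xi_{0,j})=0<E$, so each $\xi_{0,j}$ has a neighbourhood on which $p<E$. Thus $\mathcal{M}_E$ is a compact subset of the open set $\mathbb{T}^2\setminus\{\xi_{0,1},\xi_{0,2}\}$, on which $p$ is real-analytic by Proposition \ref{properties_of_p(xi)} (\ref{more_properties_of_p(xi)_3}).

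The key step is to verify that $E$ is a regular value of $p$ on $\mathbb{T}^2\setminus\{\xi_{0,1},\xi_{0,2}\}$, i.e. that $\nabla p(\xi)\neq 0$ for every $\xi\in\mathcal{M}_E$. From the explicit formula \eqref{nabla_p} and the computation following it, the critical set of $p$ is exactly $\{(0,0),(0,-\pi),(-\pi,0),(-\pi,-\pi)\}$. By Proposition \ref{properties_of_p(xi)} (\ref{more_properties_of_p(xi)_2}) we have $p(0,0)=3$, while the remaining three points belong to the set $\{\xi\in\mathbb{T}^2;p(\xi)=1\}$ identified just before \eqref{M_E}. Hence every critical value of $p$ lies in $\{1,3\}$, so for $E\in(0,3)\setminus\{1\}$ no critical point of $p$ belongs to $\mathcal{M}_E$.

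Granting this, the real-analytic implicit function theorem applies near every point of $\mathcal{M}_E$: locally $\mathcal{M}_E$ is the graph of a real-analytic function of one of the two variables, so $\mathcal{M}_E$ is a one-dimensional real-analytic embedded submanifold of $\mathbb{T}^2\setminus\{\xi_{0,1},\xi_{0,2}\}$, hence of $\mathbb{T}^2$; its compactness has already been observed. (One may also note that $\mathcal{M}_E\neq\emptyset$ for every $E\in(0,3)$ by the intermediate value theorem applied to $p$ on the connected set $\mathbb{T}^2$, although this is not needed for the manifold property.)

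The only point requiring genuine care is the determination of the critical values of $p$ — that is, pinning down the set $\{\nabla p=0\}$ and evaluating $p$ there — but this has essentially been done already in the text preceding the statement. Everything else is a routine application of the regular value theorem, so I do not expect a serious obstacle.
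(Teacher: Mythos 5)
Your proof is correct and takes essentially the same approach as the paper: the paper's proof simply notes that $\nabla p(\xi)\neq 0$ on $\mathcal{M}_E$ for $E\in(0,3)\setminus\{1\}$ and appeals to Proposition \ref{properties_of_p(xi)}, which is exactly the regular-value argument you spell out. You have merely filled in the details the paper leaves implicit (identifying the critical set, evaluating $p$ there, and invoking the real-analytic implicit function theorem).
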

\begin{proof}
    We have $\nabla{p}(\xi)\neq{0}$ on ${\cal{M}}_E$ for
    $E\in(0,3)\setminus\{1\}$. Noticing Proposition \ref{properties_of_p(xi)},
    we have the claim.
\end{proof}

Put $C_0(E)=2\inf_{\xi\in{\cal{M}}_E}\vert{\nabla{p}(\xi)}\vert^2$. Note that
$C_0(E)>0$ for $E\in(0,3)\setminus\{1\}$. Then, for small $\varepsilon>0$,
there is $\delta>0$ such that
\begin{equation*}
    \vert{\nabla{p}(\xi)}\vert^2\ge{}C_0(E)-\varepsilon>0
\end{equation*}
on $p^{-1}([E-\delta,E+\delta])$, which shall imply that
\begin{equation*}
    i[\tilde{H}_0,A]\ge{C_0(E)-\varepsilon}.
\end{equation*}

We have already shown that $p(\xi_{0,j})=0$, $j\in\{1,2\}$, which implies that
$\nabla{p}(\xi)$ has singularities at $\xi_{0,1}$ and $\xi_{0,2}$. To avoid
them, we introduce a smooth cut-off function $\chi(\xi)$: for small $\mu>0$,
\begin{equation}
    \label{cut-off_function}
    \chi(\xi)=\chi_{\mu}(\xi)=
    \begin{cases}
        0,&\text{if $\xi\in{}B_{\mu}(\xi_{0,1})\cup{}B_{\mu}(\xi_{0,2})$,}\\
        1,&\text{if $\xi\notin{}B_{2\mu}(\xi_{0,1})\cup{}B_{2\mu}(\xi_{0,2})$,}
    \end{cases}
\end{equation}
where $B_{\mu}(\xi_0)=\{\xi\in\mathbb{T}^2;\vert{\xi-\xi_0}\vert<\mu\}$. Let
us define
\begin{equation*}
    A_{\chi}=\chi{A}\chi.
\end{equation*}
Then, by Nelson's commutator theorem (Theorem X.37, Reed-Simon
\cite{MR529429}, where we take $N=L_0+1$), $A_{\chi}$ is essentially
self-adjoint on $C^{\infty}(\mathbb{T}^2;\mathbb{C}^2)$.

Let us choose sufficiently small $\mu>0$ depending on $E$ and $\delta>0$. Then
we have the Mourre estimate for $\tilde{H}_0$:
\begin{equation}
    \label{Mourre_estimate_for_H_0}
    f(\tilde{H}_0)i[\tilde{H}_0,A_{\chi}]f(\tilde{H}_0)\ge{(C_0(E)-\varepsilon)f(\tilde{H}_0)^2}
\end{equation}
for any real-valued $f\in{}C_0^{\infty}((E-\delta,E+\delta))$. Figure
\ref{fig_surface_and_contour_of_p(xi)}
\begin{figure}[htbp]    
    \begin{minipage}{0.5\hsize}
        \begin{center}
            \includegraphics[height=6cm]{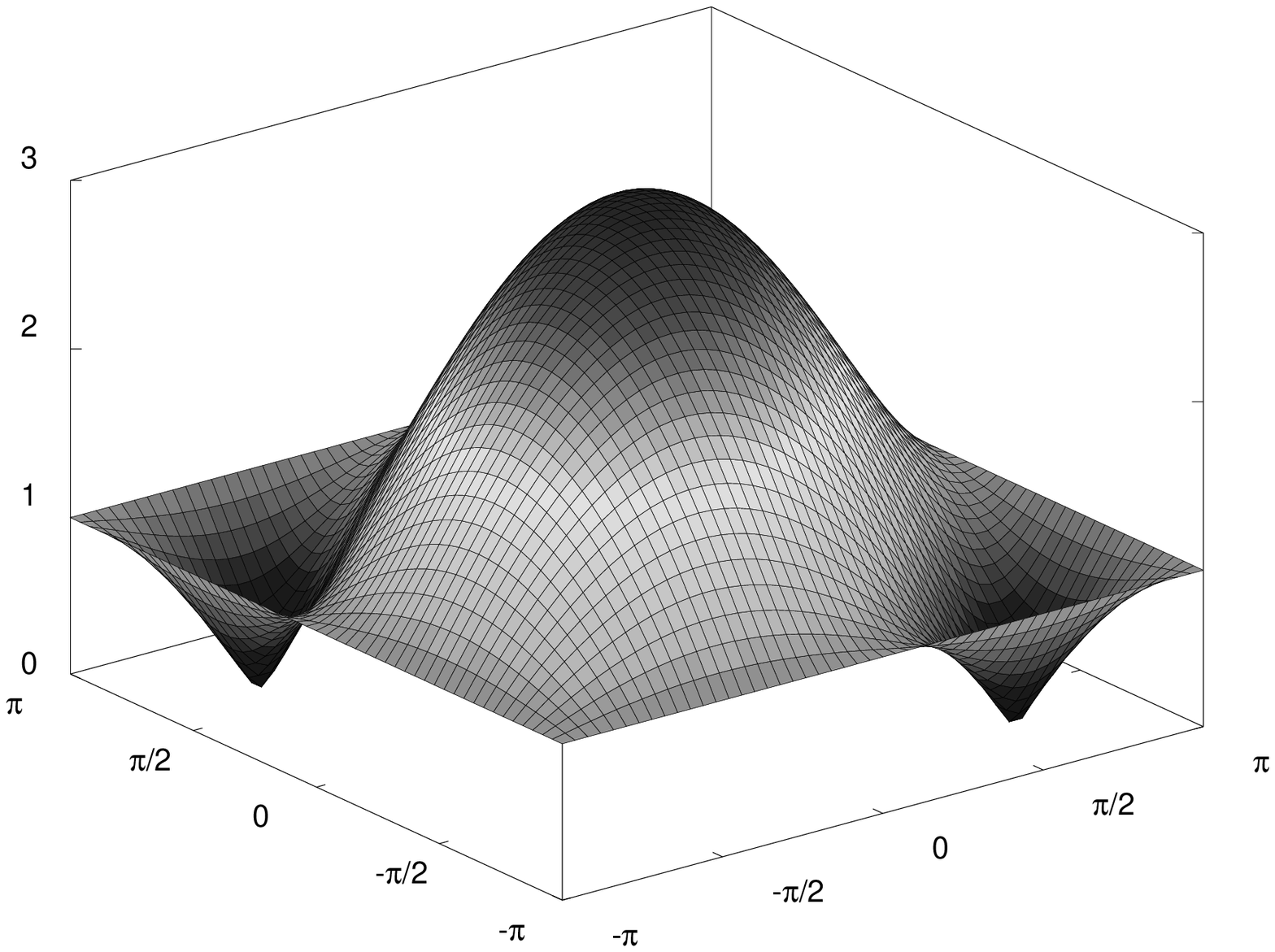}
        \end{center}
    \end{minipage}
    \quad\quad
    \begin{minipage}{0.5\hsize}
        \begin{center}
            \includegraphics[height=4.5cm]{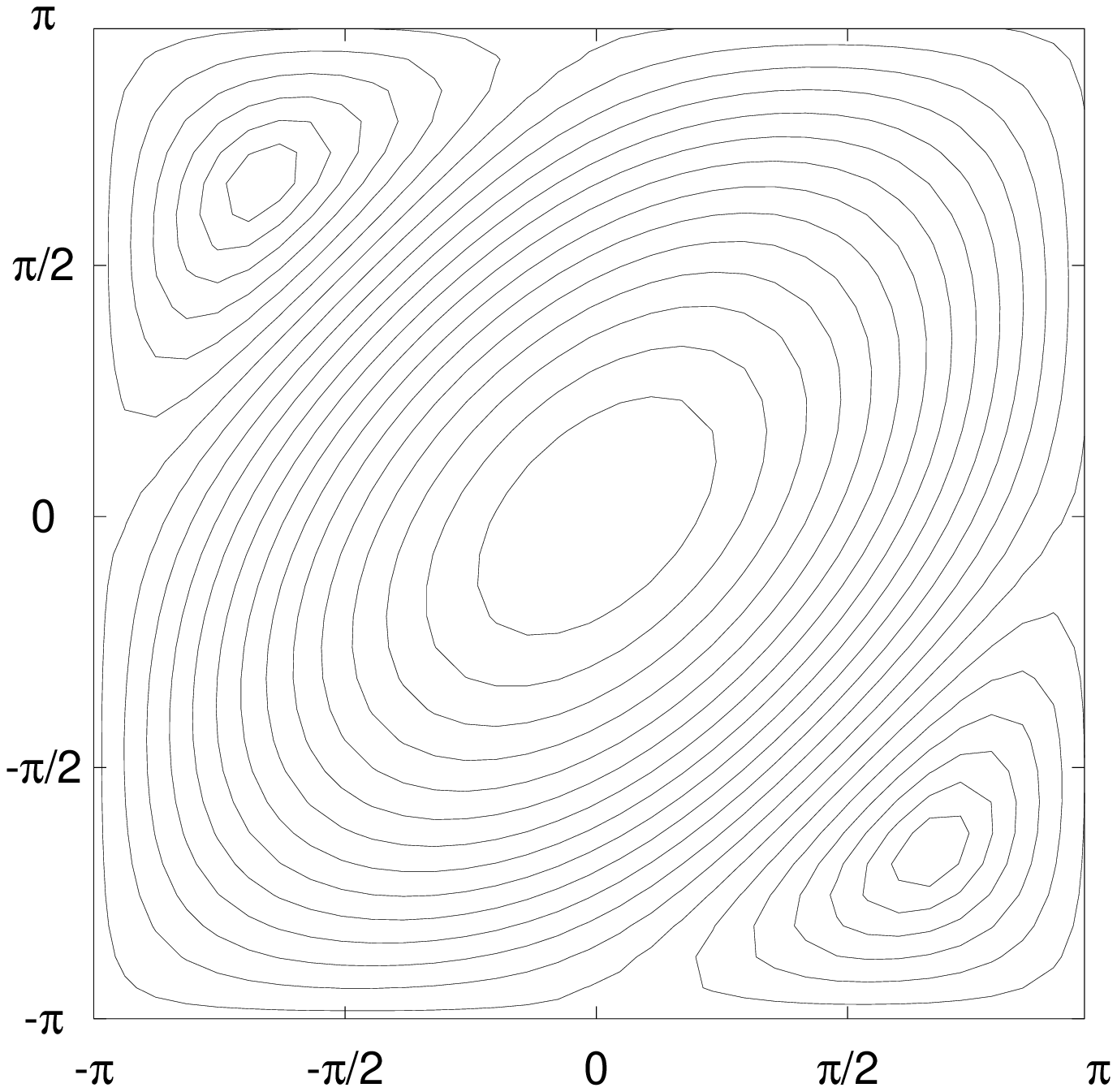}
        \end{center}
    \end{minipage}
    \caption{the surface graph and the contour lines of $p(\xi)$ on
      $[-\pi,\pi)^2$ \label{fig_surface_and_contour_of_p(xi)}}
\end{figure}
helps us to understand what we have done so far in this section.

We can write $(i[\tilde{q},A_{\chi}]f)(\xi)$ as a sum of the following terms:
for some $a(\xi)$, $b(\xi)$, $c(\xi)$, $d(\xi)\in{}L^{\infty}(\mathbb{T}^2)$,
and $j$, $k$, $l\in\{1,2\}$,
\begin{equation*}
    a(\xi)\int_{\mathbb{T}^2}q_j(\xi-\zeta)b(\zeta)f_k(\zeta)d\zeta,
\end{equation*}
\begin{equation*}
    c(\xi)\int_{\mathbb{T}^2}\frac{\partial q_j}{\partial \xi_l}(\xi-\zeta)d(\zeta)f_k(\zeta)d\zeta.
\end{equation*}
Our assumption \ref{assumption_potential} makes those operators compact. As a
result, we have the Mourre estimate for $\tilde{H}$, that is, there is a
compact operator $K=K_I$ depending on the interval $I=(E-\delta/2,E+\delta/2)$
such that
\begin{equation}
    \label{Mourre_estimate_for_H}
    E_{\tilde{H}}(I)[\tilde{H},iA_{\chi}]E_{\tilde{H}}(I)\ge{}(C_0(E)-\varepsilon)E_{\tilde{H}}(I)+K,
\end{equation}
where $E_{\tilde{H}}(\cdot)$ is the spectral projection for $\tilde{H}$.

Let $\tilde{R}(z)=(\tilde{H}-z)^{-1}$. We denote by ${\bold{B}}(X,Y)$, or
simply ${\bold{B}}(X)$ when $X=Y$, the set of all bounded operators from $X$
to $Y$, where $X$ and $Y$ are Banach spaces. We also denote
$D((1+\vert{A_{\chi}}\vert)^s)$ equipped with graph norm by
${\cal{H}}_{A_{\chi}}$. Then, with the aid of \eqref{Mourre_estimate_for_H_0}
and \eqref{Mourre_estimate_for_H}, by the well-known Mourre theory (Mourre
\cite{MR603501}), we have the following theorem.
\begin{theorem}
    \label{mourre_estimate}
    Let $I_0=(-3,3)\setminus\{\pm{1},0\}$. Then
    \begin{enumerate}
    \item The eigenvalues of $\tilde{H}$ are of finite multiplicities with
        possible accumulation points $0$, $\pm1$, $\pm3$.
    \item There is no singularly continuous spectrum:
        \begin{equation*}
            {\cal{H}}={\cal{H}}_{pp}(\tilde{H})\oplus{\cal{H}}_{ac}(\tilde{H}).
        \end{equation*}
    \item Let $s>1/2$ and $\lambda\in{}I_0$. Then there is a norm limit
        $\tilde{R}(\lambda\pm{}i0):=\lim_{\varepsilon\searrow0}\tilde{R}(\lambda\pm{i\varepsilon})$
        in ${\bold{B}}({\cal{H}}_{A_{\chi}}^s,{\cal{H}}_{A_{\chi}}^{-s})$, and 
        $I_0\ni\lambda\rightarrow\tilde{R}(\lambda\pm{}i0)\in{\bold{B}}({\cal{H}}_{A_{\chi}}^s,{\cal{H}}_{A_{\chi}}^{-s})$
        is norm continuous. Furthermore, we have
        \begin{equation*}
            \sup_{\lambda\in{}J}\Vert\tilde{R}(\lambda\pm{}i0)\Vert_{{\bold{B}}({\cal{H}}_{A_{\chi}}^s,{\cal{H}}_{A_{\chi}}^{-s})}<\infty
        \end{equation*}
        for any compact interval $J$ in $I_0\setminus\sigma_{pp}(\tilde{H})$.
    \end{enumerate}
\end{theorem}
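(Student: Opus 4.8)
The plan is to invoke the abstract Mourre theory (Mourre \cite{MR603501}, together with its standard refinements, e.g. as presented in Amrein–Boutet de Monvel–Georgescu or Cycon–Froese–Kirsch–Simon) applied to the pair $(\tilde H, A_\chi)$. The three inputs required by that machinery are: (i) $A_\chi$ is essentially self-adjoint on a suitable core, which has already been established via Nelson's commutator theorem; (ii) $\tilde H$ is of class $C^2(A_\chi)$, i.e. the multiple commutators $[\tilde H, A_\chi]$ and $[[\tilde H, A_\chi], A_\chi]$ extend to bounded operators from $\mathcal H^2$ (or $\mathcal H_{A_\chi}$) to $\mathcal H$; and (iii) the Mourre estimate \eqref{Mourre_estimate_for_H} holds on each compact subinterval of $I_0$ with a strictly positive constant modulo a compact remainder. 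Item (iii) is exactly what was assembled in the preceding discussion: \eqref{Mourre_estimate_for_H_0} gives the clean estimate for $\tilde H_0$ on $(E-\delta,E+\delta)$ with constant $C_0(E)-\varepsilon>0$ (which is positive precisely because $E\in(0,3)\setminus\{1\}$, using Proposition \ref{prop_analytic_mfd}), and the commutator $[\tilde q, A_\chi]$ was shown to be a sum of compact operators under Assumption \ref{assumption_potential}, so the compact defect $K$ is absorbed into the abstract hypothesis.

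For item (ii) I would check that $i[\tilde H_0, A_\chi]$ and the iterated commutator are differential operators of order $\le 1$ with coefficients in $L^\infty(\mathbb T^2)$ away from $\xi_{0,1},\xi_{0,2}$; the cut-off $\chi_\mu$ kills exactly the neighbourhoods of the two singular points of $\nabla p$, so all coefficients appearing (products of $\chi$, $\nabla\chi$, $\nabla p$, $\nabla\nabla p$, and $|\nabla p|^2$) are bounded on $\mathbb T^2$, and the commutators map $\mathcal H^2\to\mathcal H$ and $\mathcal H^1\to\mathcal H^{-1}$ boundedly. For the potential part, the two types of terms displayed for $i[\tilde q, A_\chi]$ are convolution operators with smooth (indeed trigonometric-polynomial, by compact support) kernels multiplied by $L^\infty$ functions, hence bounded on $\mathcal H$ and $C^2(A_\chi)$-regular; so $\tilde H\in C^2(A_\chi)$.

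Granting (i)–(iii), the abstract theorem yields the three conclusions directly. Conclusion 1 (eigenvalues in $I_0$ have finite multiplicity and can accumulate only at the ``bad'' points $\{0,\pm1,\pm3\}$ where the Mourre estimate degenerates) follows from the virial theorem plus the positivity of the Mourre constant on compacts of $I_0$. Conclusion 2 (absence of singular continuous spectrum in $I_0$) is the standard consequence of the limiting absorption principle; combined with Proposition stating $\sigma(\tilde H_0)=\sigma_{ac}=[-3,3]$ and with the decomposition $\mathcal H=\mathcal H_{pp}\oplus\mathcal H_{ac}$ on $\mathbb R\setminus\{0,\pm1,\pm3\}$, and noting $\sigma_{ess}(\tilde H)=[-3,3]$, one gets $\mathcal H_{sc}(\tilde H)=\{0\}$ as stated. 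Conclusion 3 is the limiting absorption principle in the weighted spaces $\mathcal H^s_{A_\chi}$: existence of the boundary values $\tilde R(\lambda\pm i0)$ in $\mathbf B(\mathcal H^s_{A_\chi},\mathcal H^{-s}_{A_\chi})$ for $s>1/2$, their Hölder (in particular norm) continuity in $\lambda$, and uniform boundedness on compact intervals avoiding $\sigma_{pp}(\tilde H)$.

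The main obstacle I anticipate is not the abstract step but the careful verification that the cut-off $A_\chi=\chi A\chi$ is compatible with everything at once: one must confirm that inserting $\chi_\mu$ does \emph{not} destroy the Mourre estimate on the energy window of interest, which forces the quantitative choice ``$\mu>0$ small depending on $E$ and $\delta$'' — the balls $B_{2\mu}(\xi_{0,j})$ must be disjoint from $p^{-1}([E-\delta,E+\delta])$, possible precisely because $p(\xi_{0,j})=0$ while $E>0$. One also needs that $C^\infty(\mathbb T^2;\mathbb C^2)$ is a core for $\tilde H$ and is left invariant (or essentially so) by the resolvent-type operators used in the $C^2(A_\chi)$ regularity argument, so that the formal commutator computations are justified as operator identities. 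These are technical but routine once the geometry of $\{p=E\}$ from Proposition \ref{properties_of_p(xi)} and Proposition \ref{prop_analytic_mfd} is in hand.
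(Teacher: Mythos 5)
Your proposal is correct and takes essentially the same route as the paper: the paper simply cites Mourre's theorem after establishing the Mourre estimates \eqref{Mourre_estimate_for_H_0} and \eqref{Mourre_estimate_for_H} and the essential self-adjointness of $A_\chi$, exactly the three inputs you identify. Your additional discussion of the $C^2(A_\chi)$ regularity check is a reasonable expansion of a detail the paper leaves implicit in the phrase "by the well-known Mourre theory."
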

\begin{cor}
    \label{cor_limitng_absorption_principle}
    If $s>1/2$,
    \begin{equation}
        \sup_{\lambda\in{}J}\Vert\tilde{R}(\lambda\pm{}i0)\Vert_{{\bold{B}}({\cal{H}}^s,{\cal{H}}^{-s})}<\infty.
    \end{equation}
\end{cor}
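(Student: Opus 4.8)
The plan is to read off the estimate directly from Theorem~\ref{mourre_estimate}.(3) by comparing the abstract spaces $\mathcal{H}_{A_\chi}^{\pm s}$, built from the conjugate operator $A_\chi$, with the Sobolev spaces $\mathcal{H}^{\pm s}$, built from $L_0$. It suffices to establish the continuous embedding $\mathcal{H}^s\hookrightarrow\mathcal{H}_{A_\chi}^s$: dualizing with respect to the $\mathcal{H}$-pairing then gives the continuous embedding $\mathcal{H}_{A_\chi}^{-s}\hookrightarrow\mathcal{H}^{-s}$, and for $\lambda$ in a compact subset $J$ of $I_0\setminus\sigma_{pp}(\tilde{H})$ one factors
\[
\mathcal{H}^s\hookrightarrow\mathcal{H}_{A_\chi}^s\xrightarrow{\ \tilde{R}(\lambda\pm i0)\ }\mathcal{H}_{A_\chi}^{-s}\hookrightarrow\mathcal{H}^{-s},
\]
so that the desired bound follows from the product of the two fixed embedding norms with $\sup_{\lambda\in J}\Vert\tilde{R}(\lambda\pm i0)\Vert_{\bold{B}(\mathcal{H}_{A_\chi}^s,\mathcal{H}_{A_\chi}^{-s})}$, which is finite by Theorem~\ref{mourre_estimate}.(3).

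To prove $\mathcal{H}^s\hookrightarrow\mathcal{H}_{A_\chi}^s$ I would handle the endpoints first. For $s=0$ both spaces equal $\mathcal{H}$. For $s=1$, recall $A_\chi=\chi A\chi$, where $A$ is a first-order differential operator in $\nabla_\xi$ whose coefficients are built from $\nabla p$ and its derivatives; by Proposition~\ref{properties_of_p(xi)}.(3) these are real-analytic, hence bounded together with all derivatives, on $\operatorname{supp}\chi$, a compact subset of $\mathbb{T}^2$ disjoint from $\xi_{0,1},\xi_{0,2}$. Since $\chi$ is smooth, this gives $\Vert A_\chi f\Vert\le C\Vert f\Vert_1$ for $f\in\mathcal{H}^1$, and $\Vert|A_\chi|f\Vert=\Vert A_\chi f\Vert$ by the spectral theorem, so $\mathcal{H}^1\hookrightarrow\mathcal{H}_{A_\chi}^1=D(|A_\chi|)$ continuously. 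Now for $0\le s\le 1$ both $\mathcal{H}^s$ and $\mathcal{H}_{A_\chi}^s$ are the complex interpolation spaces between their $s=0$ and $s=1$ members — this is the standard identification of domains of fractional powers of nonnegative self-adjoint operators ($1+L_0$ and $1+|A_\chi|$, both $\ge 1$, in particular having bounded imaginary powers) with interpolation spaces — so interpolating the identity map yields $\mathcal{H}^s\hookrightarrow\mathcal{H}_{A_\chi}^s$ for all $0\le s\le 1$. For $s>1$ no extra work is needed: $\mathcal{H}^s\hookrightarrow\mathcal{H}^1$ and $\mathcal{H}^{-1}\hookrightarrow\mathcal{H}^{-s}$ continuously, and $1>1/2$, so the same factorization with the exponent $1$ in the middle place gives the claim.

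The computation is otherwise routine; the one point that deserves attention is the interpolation identity $\mathcal{H}_{A_\chi}^s=[\mathcal{H},D(|A_\chi|)]_s$, which is precisely where essential self-adjointness of $A_\chi$ on $C^\infty(\mathbb{T}^2;\mathbb{C}^2)$ — obtained earlier via Nelson's commutator theorem — is used. I do not anticipate a genuine obstacle.
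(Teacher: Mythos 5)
Your argument is correct and follows exactly the paper's route: factor $\tilde{R}(\lambda\pm i0)$ through the continuous embeddings $\mathcal{H}^s\hookrightarrow\mathcal{H}_{A_\chi}^s$ and $\mathcal{H}_{A_\chi}^{-s}\hookrightarrow\mathcal{H}^{-s}$ and invoke Theorem \ref{mourre_estimate}.(3). The paper asserts these embeddings without proof, whereas you justify them by checking the endpoint $s=1$ using boundedness of the coefficients of $A_\chi$ on $\operatorname{supp}\chi$ and then interpolating; this is a correct filling-in of the same idea, not a different route.
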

\begin{proof}[Proof of the Corollary]
    We only have to note that the inclusions
    ${\cal{H}}^s\subset{\cal{H}}_{A_{\chi}}^s$ and
    $({\cal{H}}_{A_{\chi}}^s)^*\simeq{\cal{H}}_{A_{\chi}}^{-s}\subset{\cal{H}}^{-s}\simeq({\cal{H}}^s)^*$
    are continuous for $s>0$.
\end{proof}

Conjugating by ${\cal{U}}$, we have the same statements for $H$; moreover,
by ${\cal{F}}$, we also have those for $\hat{H}$, where we use
$\hat{{\cal{H}}}^s$ instead of ${\cal{H}}^s$.


%
%
\section{Eigenoperators and scattering matrix}

\subsection{Trace operators}

Let $\tilde{R}_0(z)=(\tilde{H}_0-z)^{-1}$. Then we have
\begin{equation*}
    (\tilde{R}_0(z)f,g)_{L^2(\mathbb{Z}^2;\mathbb{C}^2)}=\int_{\mathbb{T}^2}\frac{f_1(\xi)\overline{g_1(\xi)}}{p(\xi)-z}d\xi+\int_{\mathbb{T}^2}\frac{f_2(\xi)\overline{g_2(\xi)}}{-p(\xi)-z}d\xi
\end{equation*}
for $f=(f_1,f_2),g=(g_1,g_2)\in{}C^{\infty}(\mathbb{T}^2;\mathbb{C}^2)$.

It has been already shown that ${\cal{M}}_{\lambda}$ is a real-analytic
manifold for $\lambda\in(0,3)\setminus\{1\}$. Thus we can introduce local
coordinates $\omega$ on ${\cal{M}}_{\lambda}$, which induce the measure
\begin{equation}
    \label{torus_measure}
    d\xi_1d\xi_2=d{\cal M}_{\lambda}(\omega)d\lambda=J(\lambda,\omega)d\omega{d}\lambda,
\end{equation}
where $J(\lambda,\omega)$ is real-analytic with respect to $\lambda$ and
$\omega$.

We note that the set $\{\xi\in\mathbb{T}^2;p(\xi)\in\{0,1,3\}\}$, which
includes all the extreme points and the critical points of $p(\xi)$, has null
Lebesgue measure. It enables us to write
\begin{align*}
    (\tilde{R}_0(z)f,g)_{L^2(\mathbb{Z}^2;\mathbb{C}^2)}&=\int_0^3\frac{h_1(\rho)}{\rho-z}d\rho+\int_{-3}^0\frac{h_2(-\rho)}{\rho-z}d\rho\\
    &=\int_{-3}^3\frac{h(\rho)}{\rho-z}d\rho,
\end{align*}
where
\begin{equation*}
    h(\rho)=
    \begin{cases}
        h_2(-\rho),&\text{if $\rho\in(-3,0)\setminus\{-1\}$},\\
        h_1(\rho),&\text{if $\rho\in(0,3)\setminus\{1\}$},
    \end{cases}
\end{equation*}
\begin{equation*}
    h_j(\rho)=\int_{{\cal{M}}_{\vert\rho\vert}}f_j(\xi(\rho,\omega))\overline{g_j(\xi(\rho,\omega))}d{\cal{M}}_{\vert\rho\vert}(\omega),\ j\in\{1,2\}.
\end{equation*}
Then we have
\begin{equation*}
    (\tilde{R}_0(\lambda\pm{}i0)f,g)_{L^2(\mathbb{T}^2;\mathbb{Z}^2)}=\pm{i}\pi{h}(\lambda)+\text{p.v.}\int_{\lambda-\delta}^{\lambda-\delta}\frac{h(\rho)-h(\lambda)}{\rho-\lambda}d\rho+\int_{\vert\lambda-\rho\vert>\delta}\frac{h(\rho)}{\rho-\lambda}d\rho
\end{equation*}
for $\lambda\in(-3,3)\setminus\{\pm{1},0\}$, which leads us to
\begin{equation}
    \label{stone_trace}
    \frac{1}{2\pi{}i}((\tilde{R}_0(\lambda+i0)-\tilde{R}_0(\lambda-i0))f,g)_{L^2(\mathbb{T}^2;\mathbb{C}^2)}=h(\lambda).
\end{equation}

Let us define the trace operator $\tilde{\cal{F}}_0(\lambda)$ as
\begin{equation*}
    (\tilde{{\cal{F}}_0}(\lambda)f)(\omega)=
    \begin{cases}
        \begin{pmatrix}
            0\\
            f_2(\xi(-\lambda,\omega))
        \end{pmatrix},&\text{if $-3<\lambda<0$,}\\
        \begin{pmatrix}
            f_1(\xi(\lambda,\omega))\\
            0
        \end{pmatrix},&\text{if $0<\lambda<3$,}
    \end{cases}
\end{equation*}
for $f{\in}C^{\infty}(\mathbb{T}^2;\mathbb{C}^2)$. From \eqref{stone_trace},
we have
\begin{lemma}
    \label{R_0(lambda+i0)-R_0(lambda-i0)}
    For $\lambda\in(-3,3)\setminus\{\pm{1},0\}$,
    \begin{equation*}
        \frac{1}{2\pi{i}}((\tilde{R}_0(\lambda+i0)-\tilde{R}_0(\lambda-i0))f,g)_{L^2(\mathbb{T}^2;\mathbb{C}^2)}=(\tilde{{\cal{F}}}_0(\lambda)f,\tilde{{\cal{F}}}_0(\lambda)g)_{L^2({\cal{M}}_{\vert\lambda\vert};\mathbb{C}^2)}.
    \end{equation*}
\end{lemma}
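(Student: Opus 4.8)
The plan is to read the identity off directly from \eqref{stone_trace} together with the definition of the trace operator $\tilde{{\cal F}}_0(\lambda)$; essentially no analysis is left to do beyond unwinding notation. Indeed, \eqref{stone_trace} already asserts that, for $\lambda\in(-3,3)\setminus\{\pm1,0\}$,
\[
  \frac{1}{2\pi i}\big((\tilde R_0(\lambda+i0)-\tilde R_0(\lambda-i0))f,g\big)_{L^2(\mathbb{T}^2;\mathbb{C}^2)}=h(\lambda),
\]
so the whole content of the lemma is the bookkeeping identity $h(\lambda)=(\tilde{{\cal F}}_0(\lambda)f,\tilde{{\cal F}}_0(\lambda)g)_{L^2({\cal M}_{|\lambda|};\mathbb{C}^2)}$, which I would verify by splitting on the sign of $\lambda$. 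For $0<\lambda<3$, $\lambda\neq1$, the definition of $\tilde{{\cal F}}_0(\lambda)$ gives $(\tilde{{\cal F}}_0(\lambda)f)(\omega)=\big(f_1(\xi(\lambda,\omega)),0\big)^{\mathsf T}$, whence the $L^2({\cal M}_\lambda;\mathbb{C}^2)$ pairing of $\tilde{{\cal F}}_0(\lambda)f$ and $\tilde{{\cal F}}_0(\lambda)g$ equals $\int_{{\cal M}_\lambda}f_1(\xi(\lambda,\omega))\overline{g_1(\xi(\lambda,\omega))}\,d{\cal M}_\lambda(\omega)=h_1(\lambda)=h(\lambda)$. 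For $-3<\lambda<0$, $\lambda\neq-1$, one has $|\lambda|=-\lambda\in(0,3)$, the trace operator keeps only the second component, and the same computation gives $\int_{{\cal M}_{-\lambda}}f_2(\xi(-\lambda,\omega))\overline{g_2(\xi(-\lambda,\omega))}\,d{\cal M}_{-\lambda}(\omega)=h_2(-\lambda)=h(\lambda)$, again by the definition of $h$. Putting the two cases together yields the assertion.

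A couple of points warrant a remark rather than genuine work. The identity \eqref{stone_trace}, and hence the stated one, is first proved for $f,g\in C^\infty(\mathbb{T}^2;\mathbb{C}^2)$, for which all the integrals converge absolutely and the Stone/Plemelj step is legitimate because the density $h$ is continuous near $\lambda$; this continuity follows from Proposition \ref{prop_analytic_mfd} and the real-analyticity of $J(\lambda,\omega)$ and of $\xi(\lambda,\omega)$ in \eqref{torus_measure} on $(0,3)\setminus\{1\}$. The identity then extends to $f,g\in{\cal H}^s$, $s>1/2$, by density of $C^\infty(\mathbb{T}^2;\mathbb{C}^2)$: the left-hand side is continuous in that topology by the limiting absorption principle for $\tilde H_0$ (which holds by the Mourre estimate \eqref{Mourre_estimate_for_H_0} exactly as in Theorem \ref{mourre_estimate}; cf.\ also Corollary \ref{cor_limitng_absorption_principle}), and the right-hand side is continuous because $\tilde{{\cal F}}_0(\lambda)$ is bounded from ${\cal H}^s$ into $L^2({\cal M}_{|\lambda|};\mathbb{C}^2)$ for $s>1/2$ by the Sobolev trace theorem on the real-analytic submanifold ${\cal M}_{|\lambda|}$. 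The one spot requiring attention --- and it is only a matter of signs, not of difficulty --- is the negative-energy regime: since the lower-right entry of $\tilde H_0(\xi)$ is $-p(\xi)$, the energy shell for $\lambda<0$ is ${\cal M}_{|\lambda|}={\cal M}_{-\lambda}$ and the trace retains $f_2$ rather than $f_1$, which is precisely what makes the bracket reproduce $h_2(-\lambda)$. I do not foresee any substantial obstacle: the lemma is a definition-chase anchored to the already-established formula \eqref{stone_trace}.
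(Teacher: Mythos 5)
Your proof is correct and is essentially the paper's own argument: the paper likewise derives the lemma directly from \eqref{stone_trace} together with the definitions of $\tilde{{\cal F}}_0(\lambda)$ and $h(\rho)$, treating it as an immediate consequence (your explicit case split on the sign of $\lambda$ fills in exactly the bookkeeping the paper leaves implicit). One small remark: the boundedness $\tilde{{\cal F}}_0(\lambda)\in{\bold B}({\cal H}^s,L^2({\cal M}_{|\lambda|};\mathbb{C}^2))$ that you invoke via a Sobolev trace theorem is, in the paper's ordering, obtained as a \emph{corollary} of this very lemma together with Corollary \ref{cor_limitng_absorption_principle}, so the density-extension remark either uses a fact the paper derives afterwards or should be phrased as a self-contained trace estimate; this does not affect the correctness of your argument.
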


By Corollary \ref{cor_limitng_absorption_principle} and this lemma, we have
\begin{cor}
    For $s>1/2$,
    $\tilde{{\cal{F}}}_0(\lambda)\in{\bold{B}}({\cal{H}}^s,L^2({\cal{M}}_{\vert\lambda\vert};\mathbb{C}^2))$.
\end{cor}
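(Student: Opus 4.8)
The plan is to read the statement off directly from the two ingredients the author has just assembled: Lemma~\ref{R_0(lambda+i0)-R_0(lambda-i0)}, which rewrites the Stone formula for $\tilde{R}_0$ as a quadratic form in $\tilde{{\cal F}}_0(\lambda)$, together with a limiting absorption bound for the free resolvent. Putting $g=f$ with $f\in C^{\infty}(\mathbb{T}^2;\mathbb{C}^2)$ in Lemma~\ref{R_0(lambda+i0)-R_0(lambda-i0)} gives
\begin{equation*}
    \Vert\tilde{{\cal F}}_0(\lambda)f\Vert_{L^2({\cal M}_{\vert\lambda\vert};\mathbb{C}^2)}^2=\frac{1}{2\pi i}\bigl((\tilde{R}_0(\lambda+i0)-\tilde{R}_0(\lambda-i0))f,f\bigr)_{L^2(\mathbb{T}^2;\mathbb{C}^2)}.
\end{equation*}
For $f\in{\cal H}^s$ with $s>1/2$ one has $\tilde{R}_0(\lambda\pm i0)f\in{\cal H}^{-s}$ (by the analogue of Corollary~\ref{cor_limitng_absorption_principle} for $\tilde{R}_0$, addressed in the next paragraph), and the $L^2$-pairing on the right extends to the duality pairing between ${\cal H}^{-s}$ and ${\cal H}^s$; hence, by Cauchy--Schwarz, the right-hand side is dominated by $\tfrac{1}{2\pi}\bigl(\Vert\tilde{R}_0(\lambda+i0)\Vert_{{\bold B}({\cal H}^s,{\cal H}^{-s})}+\Vert\tilde{R}_0(\lambda-i0)\Vert_{{\bold B}({\cal H}^s,{\cal H}^{-s})}\bigr)\Vert f\Vert_s^2$.

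What is left to supply is the limiting absorption bound for $\tilde{R}_0$ rather than for $\tilde{R}$. I would obtain it by running the arguments behind Theorem~\ref{mourre_estimate}(3) and Corollary~\ref{cor_limitng_absorption_principle} with $\tilde{H}_0$ in place of $\tilde{H}$, starting from the strict Mourre estimate \eqref{Mourre_estimate_for_H_0} with the same conjugate operator $A_{\chi}$; since $\tilde{H}_0$ has purely absolutely continuous spectrum, $\sigma_{pp}(\tilde{H}_0)=\emptyset$, one gets $\sup_{\lambda\in J}\Vert\tilde{R}_0(\lambda\pm i0)\Vert_{{\bold B}({\cal H}^s,{\cal H}^{-s})}<\infty$ on every compact subinterval $J$ of $I_0=(-3,3)\setminus\{\pm1,0\}$. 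In particular, for each fixed $\lambda\in(-3,3)\setminus\{\pm1,0\}$ there is a constant $C_\lambda$ with $\Vert\tilde{{\cal F}}_0(\lambda)f\Vert_{L^2({\cal M}_{\vert\lambda\vert};\mathbb{C}^2)}\le C_\lambda\Vert f\Vert_s$ for all $f\in C^{\infty}(\mathbb{T}^2;\mathbb{C}^2)$.

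Finally, since $C^{\infty}(\mathbb{T}^2;\mathbb{C}^2)$ is dense in ${\cal H}^s$, the operator $\tilde{{\cal F}}_0(\lambda)$, a priori defined only on smooth functions, extends uniquely to an element of ${\bold B}({\cal H}^s,L^2({\cal M}_{\vert\lambda\vert};\mathbb{C}^2))$, which is the assertion. I do not expect a genuine obstacle here: all the substance sits in Lemma~\ref{R_0(lambda+i0)-R_0(lambda-i0)} and in the limiting absorption principle already in hand. The only points that need a careful line are the verification that the $L^2$-inner product appearing in Lemma~\ref{R_0(lambda+i0)-R_0(lambda-i0)} really does agree with the ${\cal H}^{-s}$--${\cal H}^s$ duality pairing once the argument lies in ${\cal H}^s$, and the confirmation that the identity of that lemma, stated for smooth $f$, $g$, is enough to run the extension by density.
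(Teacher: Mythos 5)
Your argument is exactly the paper's: the corollary follows by setting $g=f$ in Lemma~\ref{R_0(lambda+i0)-R_0(lambda-i0)} and bounding the right-hand side with the limiting absorption estimate of Corollary~\ref{cor_limitng_absorption_principle} (applied in the free case, which one obtains either by taking $\hat q=0$ or by re-running the Mourre theory from \eqref{Mourre_estimate_for_H_0}, as you note). The paper compresses this to one line, but the substance is the same.
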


Moreover, by Stone's formula, we have
\begin{lemma}
    \label{Stone's_formula}
    For $f$, $g\in{\cal{H}}^s$, $s>1/2$,
    \begin{equation*}
        (f,g)_{L^2(\mathbb{T}^2;\mathbb{C}^2)}=\int_{-3}^{3}(\tilde{{\cal{F}}_0}(\lambda)f,\tilde{{\cal{F}}}_0(\lambda)g)_{L^2({\cal{M}}_{\vert\lambda\vert};\mathbb{C}^2)}d\lambda.
    \end{equation*}
\end{lemma}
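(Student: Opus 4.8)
The plan is to derive Stone's formula from the resolvent identity together with Lemma~\ref{R_0(lambda+i0)-R_0(lambda-i0)}, the limiting absorption principle of Corollary~\ref{cor_limitng_absorption_principle}, and the absolute continuity of $\tilde{H}_0$ established in Proposition~2.2. First I would recall the general Stone formula for the self-adjoint operator $\tilde{H}_0$: for $f,g$ in a suitable dense set,
\begin{equation*}
    (E_{\tilde{H}_0}((a,b))f,g)_{L^2(\mathbb{T}^2;\mathbb{C}^2)}=\lim_{\varepsilon\searrow0}\frac{1}{2\pi i}\int_a^b((\tilde{R}_0(\lambda+i\varepsilon)-\tilde{R}_0(\lambda-i\varepsilon))f,g)_{L^2(\mathbb{T}^2;\mathbb{C}^2)}\,d\lambda,
\end{equation*}
valid whenever $a,b$ are not eigenvalues; here $\sigma_{pp}(\tilde{H}_0)=\emptyset$, so every $(a,b)$ works. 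Since $\sigma(\tilde{H}_0)=[-3,3]$, taking $(a,b)=(-3-\eta,3+\eta)$ gives $E_{\tilde{H}_0}((a,b))=I$ and the left side becomes $(f,g)$.

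Next I would justify passing the limit $\varepsilon\searrow0$ inside the $\lambda$-integral. For $f,g\in{\cal H}^s$ with $s>1/2$, Corollary~\ref{cor_limitng_absorption_principle} (applied to $\tilde{H}_0$, whose continuous spectrum is all of $[-3,3]$ and for which the Mourre estimate holds on $(-3,3)\setminus\{\pm1,0\}$) provides a uniform bound on $\Vert\tilde{R}_0(\lambda\pm i\varepsilon)\Vert_{{\bold B}({\cal H}^s,{\cal H}^{-s})}$ on compact subintervals of $(-3,3)\setminus\{\pm1,0\}$, together with norm-continuity of the boundary values. Since the exceptional set $\{\pm1,0\}$ together with the endpoints $\pm3$ has Lebesgue measure zero, dominated convergence lets me replace the limit of the integral by the integral of the boundary-value limit:
\begin{equation*}
    (f,g)_{L^2(\mathbb{T}^2;\mathbb{C}^2)}=\int_{-3}^{3}\frac{1}{2\pi i}((\tilde{R}_0(\lambda+i0)-\tilde{R}_0(\lambda-i0))f,g)_{L^2(\mathbb{T}^2;\mathbb{C}^2)}\,d\lambda.
\end{equation*}
Finally, substituting the identity of Lemma~\ref{R_0(lambda+i0)-R_0(lambda-i0)} into the integrand turns the right-hand side into $\int_{-3}^{3}(\tilde{{\cal F}_0}(\lambda)f,\tilde{{\cal F}}_0(\lambda)g)_{L^2({\cal M}_{\vert\lambda\vert};\mathbb{C}^2)}\,d\lambda$, which is the claim. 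The extension from a dense set (say $C^\infty(\mathbb{T}^2;\mathbb{C}^2)$) to all of ${\cal H}^s$ follows from the continuity of $\tilde{{\cal F}}_0(\lambda)$ on ${\cal H}^s$ recorded in the preceding corollary and a standard density argument, once one checks the right-hand side is finite — which again follows from the uniform resolvent bound.

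The main obstacle is the interchange of limit and integral near the bad points $\lambda\in\{\pm1,0,\pm3\}$, where the Mourre-theoretic bounds degenerate: at $\pm3$ and $0$ the manifold ${\cal M}_{|\lambda|}$ collapses (Proposition~\ref{properties_of_p(xi)}) and at $\pm1$ one has $\nabla p=0$ somewhere on ${\cal M}_1$, so $\tilde{{\cal F}}_0(\lambda)$ and the resolvent bound are not controlled uniformly there. The remedy is to cut out $\varepsilon'$-neighborhoods of these finitely many values, apply dominated convergence on the good part using Corollary~\ref{cor_limitng_absorption_principle}, and then send $\varepsilon'\to0$ using that $\lambda\mapsto(\tilde{R}_0(\lambda\pm i0)f,g)$ is locally integrable up to the bad points — for instance because $h(\rho)$ in \eqref{stone_trace} is integrable (it is the density of the spectral measure of a bounded operator) and the principal-value and regular integrals appearing there are absolutely convergent. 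One should also verify that the $L^2({\cal M}_{|\lambda|};\mathbb{C}^2)$-norm on the right depends measurably on $\lambda$ and has finite $\lambda$-integral; this is where the real-analyticity of the Jacobian $J(\lambda,\omega)$ in \eqref{torus_measure} and the trace estimate of the preceding corollary are used.
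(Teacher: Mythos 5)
Your proof is correct and is essentially the route the paper has in mind with the phrase ``by Stone's formula'': the abstract Stone formula for the spectral measure of $\tilde H_0$, combined with the boundary-value identity of Lemma~\ref{R_0(lambda+i0)-R_0(lambda-i0)}. Two remarks, though. First, Corollary~\ref{cor_limitng_absorption_principle} is stated for the perturbed resolvent $\tilde R=(\tilde H-z)^{-1}$; you are tacitly invoking its obvious specialization to $\tilde R_0$ (the case $\hat q=0$), which is fine but should be said. Second, the dominated-convergence discussion near $\{\pm1,0,\pm3\}$, which you flag as the main obstacle, can be dispensed with entirely: the coarea decomposition $d\xi=d{\cal M}_\lambda(\omega)\,d\lambda$ from \eqref{torus_measure} is already in force away from the Lebesgue-null set where $\nabla p$ vanishes or $p\in\{0,3\}$, and this alone gives
\begin{equation*}
    (f,g)_{L^2(\mathbb{T}^2;\mathbb{C}^2)}=\int_{\mathbb{T}^2}\bigl(f_1\overline{g_1}+f_2\overline{g_2}\bigr)\,d\xi=\int_{-3}^{3}h(\lambda)\,d\lambda
\end{equation*}
with no $\varepsilon$-limit to interchange at all. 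The identification $h(\lambda)=(\tilde{\cal F}_0(\lambda)f,\tilde{\cal F}_0(\lambda)g)_{L^2({\cal M}_{\vert\lambda\vert};\mathbb{C}^2)}$, which is precisely Lemma~\ref{R_0(lambda+i0)-R_0(lambda-i0)}, then finishes the proof immediately; your more elaborate argument buys nothing extra here.
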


By the definition, we also have
\begin{equation}
    \label{eigen_op_1}
    \tilde{{\cal{F}}}_0(\lambda)(\tilde{H}_0-\lambda)=0.
\end{equation}

We pass from $\tilde{H}_0$ to $H_0$. For $s\in\mathbb{R}$ and small
$\varepsilon>0$, let ${\cal{H}}_{\varepsilon}^s$ be the completion of
$C_{\varepsilon}^{\infty}(\mathbb{T}^2;\mathbb{C}^2)$ in ${\cal{H}}^s$, where
\begin{equation*}
    C_{\varepsilon}^{\infty}(\mathbb{T}^2;\mathbb{C}^2)=\{f{\in}C^{\infty}(\mathbb{T}^2;\mathbb{C}^2);\text{supp}{f}\subset\mathbb{T}^2\setminus(B_{\varepsilon}(\xi_{0,1})\cup{}B_{\varepsilon}(\xi_{0,2}))\}.
\end{equation*}
Note that
$\tilde{R}_0(z)C_{\varepsilon}^\infty(\mathbb{T}^2;\mathbb{C}^2)\subset{}C_{\varepsilon}^{\infty}(\mathbb{T}^2;\mathbb{C}^2)$. Since
${\cal{H}}_{\varepsilon}^s$ avoids the singular points of
$\alpha(\xi)/\vert\alpha(\xi)\vert$, we have
\begin{lemma}
    \label{epsilon_homeo}
    The operator ${\cal{U}}$ is defined as a homomorphism on
    ${\cal{H}}_{\varepsilon}^{s}$.
\end{lemma}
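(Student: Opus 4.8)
The content of the statement is that ${\cal{U}}$, which is multiplication by the matrix $U(\xi)$, restricts to a linear homeomorphism of ${\cal{H}}_{\varepsilon}^{s}$ onto itself, the inverse being the restriction of ${\cal{U}}^{*}$. The plan is to observe that the only singularities of the entries $\pm\alpha(\xi)/|\alpha(\xi)|$ of $U(\xi)$ sit at $\xi_{0,1}$ and $\xi_{0,2}$, which are precisely the points removed in the definition of ${\cal{H}}_{\varepsilon}^{s}$, and then to reduce to the elementary fact that a $C^{\infty}$ matrix-valued function is a multiplier on every Sobolev space ${\cal{H}}^{s}$.

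First I would record the regularity of $\alpha/|\alpha|$ away from the Dirac points. The function $\alpha(\xi)=1+e^{i\xi_{1}}+e^{i\xi_{2}}$ is a trigonometric polynomial, hence real-analytic on $\mathbb{T}^{2}$, and $|\alpha(\xi)|=p(\xi)$ by \eqref{p}. By Proposition \ref{properties_of_p(xi)}, the zeros of $p$ are exactly $\xi_{0,1}$ and $\xi_{0,2}$, and $p$ is real-analytic off these two points. Consequently, on the compact set $K_{\varepsilon}:=\mathbb{T}^{2}\setminus(B_{\varepsilon}(\xi_{0,1})\cup B_{\varepsilon}(\xi_{0,2}))$ we have $p(\xi)\ge c_{\varepsilon}>0$, and $\alpha(\xi)/|\alpha(\xi)|=\alpha(\xi)/\sqrt{\alpha(\xi)\overline{\alpha}(\xi)}$ together with $\overline{\alpha}(\xi)/|\alpha(\xi)|$ are real-analytic on a neighbourhood of $K_{\varepsilon}$, so all their derivatives are bounded there.

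Next I would globalize by a cut-off. Choose $\psi\in C^{\infty}(\mathbb{T}^{2})$ with $\psi\equiv1$ on $K_{\varepsilon}$ and $\psi\equiv0$ on $B_{\varepsilon/2}(\xi_{0,1})\cup B_{\varepsilon/2}(\xi_{0,2})$, as in \eqref{cut-off_function}. Since $\alpha$ vanishes nowhere on $\mathbb{T}^{2}\setminus\{\xi_{0,1},\xi_{0,2}\}$, the matrix-valued function
\[
    \tilde{U}(\xi)=\frac{1}{\sqrt{2}}\begin{pmatrix}1&\psi(\xi)\,\alpha(\xi)/|\alpha(\xi)|\\ 1&-\psi(\xi)\,\alpha(\xi)/|\alpha(\xi)|\end{pmatrix}
\]
is well-defined and of class $C^{\infty}$ on all of $\mathbb{T}^{2}$ (it is real-analytic off the $\varepsilon/2$-balls and constant on them); define $\tilde{V}(\xi)$ analogously, using $\psi\,\overline{\alpha}/|\alpha|$ in the lower entries, so that $\tilde{V}(\xi)=U^{*}(\xi)$ on $K_{\varepsilon}$. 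For $f\in C_{\varepsilon}^{\infty}(\mathbb{T}^{2};\mathbb{C}^{2})$ we have $\operatorname{supp}f\subset K_{\varepsilon}$, where $\psi\equiv1$, hence ${\cal{U}}f=\tilde{U}f$ and ${\cal{U}}^{*}f=\tilde{V}f$ pointwise; moreover $\tilde{U}f$ and $\tilde{V}f$ are again smooth and supported in $K_{\varepsilon}$, so they lie in $C_{\varepsilon}^{\infty}(\mathbb{T}^{2};\mathbb{C}^{2})$, i.e. ${\cal{U}}$ and ${\cal{U}}^{*}$ preserve this dense subspace of ${\cal{H}}_{\varepsilon}^{s}$.

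Finally I would invoke the multiplier bound. Multiplication by a $C^{\infty}$ matrix-valued function on $\mathbb{T}^{2}$ is bounded on ${\cal{H}}^{s}$ for every $s\in\mathbb{R}$: for $s=k\in\mathbb{N}_{0}$ this follows from Leibniz's rule and the Parseval characterization of ${\cal{H}}^{s}$; for $s<0$ it follows by duality, since the adjoint of such a multiplier is again a $C^{\infty}$ multiplier; and intermediate $s$ are covered by interpolation. Thus there is $C_{s}>0$ with $\Vert{\cal{U}}f\Vert_{s}=\Vert\tilde{U}f\Vert_{s}\le C_{s}\Vert f\Vert_{s}$ and $\Vert{\cal{U}}^{*}f\Vert_{s}=\Vert\tilde{V}f\Vert_{s}\le C_{s}\Vert f\Vert_{s}$ for all $f\in C_{\varepsilon}^{\infty}(\mathbb{T}^{2};\mathbb{C}^{2})$, so ${\cal{U}}$ and ${\cal{U}}^{*}$ extend to bounded operators of ${\cal{H}}_{\varepsilon}^{s}$ into itself. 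Since $U(\xi)$ is unitary for each $\xi$, $U^{*}(\xi)U(\xi)=U(\xi)U^{*}(\xi)=I_{2}$, whence ${\cal{U}}^{*}{\cal{U}}={\cal{U}}{\cal{U}}^{*}=\operatorname{id}$ on $C_{\varepsilon}^{\infty}(\mathbb{T}^{2};\mathbb{C}^{2})$ and therefore on ${\cal{H}}_{\varepsilon}^{s}$ by density. Hence ${\cal{U}}:{\cal{H}}_{\varepsilon}^{s}\to{\cal{H}}_{\varepsilon}^{s}$ is a homeomorphism with inverse ${\cal{U}}^{*}$. The only genuinely delicate point is the passage through the Dirac points $\xi_{0,1},\xi_{0,2}$, where $\alpha/|\alpha|$ really is singular; this is exactly what the restriction to ${\cal{H}}_{\varepsilon}^{s}$ removes, and once the cut-off $\psi$ isolates it the argument is routine, the mildly technical ingredient being the negative-order case of the Sobolev multiplier estimate.
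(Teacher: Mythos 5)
Your proof is correct and develops exactly the one-line justification the paper offers (the text preceding the lemma simply notes that $\mathcal{H}_{\varepsilon}^{s}$ avoids the singular points of $\alpha(\xi)/|\alpha(\xi)|$, and the lemma is stated without a separate proof). The cut-off device, the smooth-multiplier bound on $\mathcal{H}^{s}$ extended to negative and fractional $s$ by duality and interpolation, and the identification of the inverse as $\mathcal{U}^{*}$ via pointwise unitarity of $U(\xi)$ are all the natural details behind that remark, and your argument fills them in accurately.
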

For sufficiently small $\varepsilon>0$, for example
$0<\varepsilon<\vert\lambda\vert/2$, we have
$\tilde{R}_0(\lambda\pm{i0})\in{\bold{B}}(L^2(B_{\varepsilon}(\xi_{0,1})\cup{}B_{\varepsilon}(\xi_{0,2});\mathbb{C}^2))$;
moreover, $\tilde{R}_0(\lambda+i0)f=\tilde{R}_0(\lambda-i0)f$ for
$f\in{}L^2(B_{\varepsilon}(\xi_{0,1})\cup{}B_{\varepsilon}(\xi_{0,2});\mathbb{C}^2)$. Therefore,
we can extend $\tilde{{\cal{F}}}_0(\lambda)$ uniquely as a null operator on
$L^2(B_{\varepsilon}(\xi_{0,1})\cup{}B_{\varepsilon}(\xi_{0,2});\mathbb{C}^2)$.

Let ${\cal{U}}^{\dagger}_{\lambda}$ be a multiplication operator on
$L^2({\cal{M}}_{\vert\lambda\vert};\mathbb{C}^2)$ by the unitary
matrix
\begin{equation*}
    U^{\dagger}(\lambda,\omega)=\frac{1}{\sqrt{2}}
    \begin{pmatrix}
        1&1\\
        \vert\lambda\vert^{-1}\overline{\alpha}(\xi(\vert\lambda\vert,\omega))&-\vert\lambda\vert^{-1}\overline{\alpha}(\xi(\vert\lambda\vert,\omega))
    \end{pmatrix},
\end{equation*}
which means that ${\cal{U}}^{\dagger}_{\lambda}$ is a unitary operator. Then,
from Lemma \ref{epsilon_homeo} and its subsequent descriptions, we can define
the operator
\begin{equation*}
    {\cal{F}}_0(\lambda)={\cal{U}}^{\dagger}_{\lambda}\tilde{\cal{F}}_0(\lambda){\cal{U}},
\end{equation*}
which is written as
\begin{equation*}
    ({\cal{F}}_0(\lambda)f)_1(\omega)=\frac{1}{2}f_1(\xi(\vert\lambda\vert,\omega))+\frac{1}{2}\lambda^{-1}\alpha(\xi(\vert\lambda\vert,\omega))f_2(\xi(\vert\lambda\vert,\omega)),
\end{equation*}
\begin{equation*}
    ({\cal{F}}_0(\lambda)f)_2(\omega)=\frac{1}{2}\lambda^{-1}\overline{\alpha}(\xi(\vert\lambda\vert,\omega))f_1(\xi(\vert\lambda\vert,\omega))+\frac{1}{2}f_2(\xi(\vert\lambda\vert,\omega)),
\end{equation*}
for $f=(f_1,f_2)\in{\cal{H}}^s$.
\begin{lemma}
    \label{F_0_continuity}
    For $s>1/2$,
    ${\cal{F}}_0(\lambda)\in{\bold{B}}({\cal{H}}^s,L^2({\cal{M}}_{\vert\lambda\vert};\mathbb{C}^2))$.
\end{lemma}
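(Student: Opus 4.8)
The plan is to derive the boundedness of $\mathcal{F}_0(\lambda)$ from the already-established boundedness of $\tilde{\mathcal{F}}_0(\lambda)$ by exploiting the factorization $\mathcal{F}_0(\lambda) = \mathcal{U}^{\dagger}_{\lambda}\tilde{\mathcal{F}}_0(\lambda)\mathcal{U}$, being careful about the fact that $\mathcal{U}$ is \emph{not} bounded on all of $\mathcal{H}^s$ — it is only a homeomorphism on the subspaces $\mathcal{H}^s_{\varepsilon}$ which avoid the singular points $\xi_{0,1}$, $\xi_{0,2}$ of $\alpha(\xi)/|\alpha(\xi)|$ (Lemma \ref{epsilon_homeo}). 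So the first step is to fix $\lambda \in (-3,3)\setminus\{\pm 1, 0\}$ and choose $\varepsilon$ small, say $0 < \varepsilon < |\lambda|/2$, so that the balls $B_{\varepsilon}(\xi_{0,j})$ are disjoint from $\mathcal{M}_{|\lambda|}$ and lie in the region where $\tilde R_0(\lambda\pm i0)$ acts as an honest bounded multiplication operator with $\tilde R_0(\lambda+i0) = \tilde R_0(\lambda-i0)$ there.

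Next I would decompose any $f \in \mathcal{H}^s$ as $f = \varphi f + (1-\varphi) f$ where $\varphi$ is a smooth cutoff equal to $1$ near $\xi_{0,1}, \xi_{0,2}$ and supported in the $\varepsilon$-balls. On the piece $(1-\varphi)f \in \mathcal{H}^s_{\varepsilon}$, Lemma \ref{epsilon_homeo} gives $\mathcal{U}(1-\varphi)f \in \mathcal{H}^s_{\varepsilon}$ with norm controlled by $\|f\|_s$, then Lemma \ref{F_0_continuity}'s predecessor — the Corollary stating $\tilde{\mathcal{F}}_0(\lambda) \in \mathbf{B}(\mathcal{H}^s, L^2(\mathcal{M}_{|\lambda|};\mathbb{C}^2))$ — applies, and finally $\mathcal{U}^{\dagger}_{\lambda}$ is unitary on $L^2(\mathcal{M}_{|\lambda|};\mathbb{C}^2)$, so this contributes a bounded term. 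On the piece $\varphi f$, supported in $B_{\varepsilon}(\xi_{0,1})\cup B_{\varepsilon}(\xi_{0,2})$, I would use the observation made just before the definition of $\mathcal{U}^{\dagger}_{\lambda}$: $\tilde{\mathcal{F}}_0(\lambda)$ was extended as the \emph{null} operator on $L^2(B_{\varepsilon}(\xi_{0,1})\cup B_{\varepsilon}(\xi_{0,2});\mathbb{C}^2)$, since $\tilde R_0(\lambda+i0) - \tilde R_0(\lambda-i0) = 0$ there by \eqref{stone_trace}. Hence $\tilde{\mathcal{F}}_0(\lambda)\mathcal{U}\varphi f = 0$ regardless of the singular behaviour of $\mathcal{U}$ on that region — the point is that even though $\mathcal{U}\varphi f$ need not lie in $\mathcal{H}^s$, it still lies in $L^2$ of the small balls (in fact $\varphi f$ does, and $U(\xi)$ is a bounded-modulus unitary matrix pointwise a.e.), on which $\tilde{\mathcal{F}}_0(\lambda)$ vanishes. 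Therefore $\mathcal{F}_0(\lambda)f = \mathcal{U}^{\dagger}_{\lambda}\tilde{\mathcal{F}}_0(\lambda)\mathcal{U}(1-\varphi)f$, and the norm bound follows by composing the three bounded maps, with the operator norm of $\mathcal{F}_0(\lambda)$ dominated by $\|\mathcal{U}\|_{\mathbf{B}(\mathcal{H}^s_{\varepsilon})} \cdot \|\tilde{\mathcal{F}}_0(\lambda)\|$ uniformly once $\lambda$ stays in a compact subinterval.

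Alternatively, and perhaps more cleanly, one could bypass the factorization and verify boundedness directly from the explicit formulas for $(\mathcal{F}_0(\lambda)f)_1$ and $(\mathcal{F}_0(\lambda)f)_2$: each is a linear combination of $f_1(\xi(|\lambda|,\omega))$ and $\lambda^{-1}\alpha(\xi(|\lambda|,\omega))f_2(\xi(|\lambda|,\omega))$, where on $\mathcal{M}_{|\lambda|}$ one has $|\alpha(\xi)| = p(\xi) = |\lambda|$, so the coefficient $\lambda^{-1}\alpha(\xi(|\lambda|,\omega))$ has modulus exactly $1$ and is in particular bounded. Thus $\|\mathcal{F}_0(\lambda)f\|_{L^2(\mathcal{M}_{|\lambda|};\mathbb{C}^2)}$ is controlled by the sum of the traces $\|f_j(\xi(|\lambda|,\cdot))\|_{L^2(\mathcal{M}_{|\lambda|})}$, $j=1,2$, and these are bounded by $\|f\|_s$ for $s>1/2$ by the same trace-operator estimate underlying the earlier Corollary for $\tilde{\mathcal{F}}_0(\lambda)$ (which is in turn a consequence of the limiting absorption principle, Corollary \ref{cor_limitng_absorption_principle}, together with Lemma \ref{R_0(lambda+i0)-R_0(lambda-i0)}).

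The main obstacle is the bookkeeping at the singular points $\xi_{0,1}, \xi_{0,2}$ of $\alpha/|\alpha|$: one must make sure that applying $\mathcal{U}$ before taking the trace does not create a genuine problem, and the resolution — that $\mathcal{M}_{|\lambda|}$ stays a positive distance from those points for $\lambda \neq 0$, so the matrix $U(\xi)$ and hence $U^{\dagger}(\lambda,\omega)$ is real-analytic and uniformly bounded in a neighbourhood of $\mathcal{M}_{|\lambda|}$ — is exactly what makes the direct approach of the previous paragraph go through without fuss. I would therefore favour the direct computation, invoking $|\alpha(\xi(|\lambda|,\omega))| = |\lambda|$ on $\mathcal{M}_{|\lambda|}$ to dispatch the coefficients, and reducing to the trace estimate for $\tilde{\mathcal{F}}_0(\lambda)$ that is already in hand.
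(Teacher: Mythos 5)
Your first (factorization) approach is essentially the paper's proof: the paper also fixes a small $\varepsilon>0$, decomposes $f$ as $\chi f + (1-\chi)f$ using the cutoff $\chi=\chi_{\varepsilon}$ of \eqref{cut-off_function} (your $\varphi$ corresponds to $1-\chi$), passes $\chi f\in{\cal{H}}_{\varepsilon}^{s}$ through ${\cal{U}}$, $\tilde{\cal{F}}_0(\lambda)$ and the unitary ${\cal{U}}^{\dagger}_{\lambda}$, and annihilates $(1-\chi)f\in L^2(B_{\varepsilon}(\xi_{0,1})\cup B_{\varepsilon}(\xi_{0,2});\mathbb{C}^2)$ by the null extension of $\tilde{\cal{F}}_0(\lambda)$. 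Your alternative, direct argument is a genuinely different and cleaner route that the paper does not take: the explicit formulas for $({\cal{F}}_0(\lambda)f)_1$ and $({\cal{F}}_0(\lambda)f)_2$ involve only the restrictions of $f_1,f_2$ to ${\cal{M}}_{\vert\lambda\vert}$ multiplied by $\frac{1}{2}$, $\frac{1}{2}\lambda^{-1}\alpha$, $\frac{1}{2}\lambda^{-1}\overline{\alpha}$, whose moduli are all $\frac{1}{2}$ on ${\cal{M}}_{\vert\lambda\vert}$ because $\vert\alpha(\xi)\vert=p(\xi)=\vert\lambda\vert$ there; so the claim reduces immediately to the trace estimate, and one never applies ${\cal{U}}$ to a non-localized function nor worries about the Dirac points. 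The only bookkeeping to add: the Corollary following Lemma \ref{R_0(lambda+i0)-R_0(lambda-i0)} gives $\tilde{\cal{F}}_0(\lambda)\in\bold{B}({\cal{H}}^s,L^2({\cal{M}}_{\vert\lambda\vert};\mathbb{C}^2))$, but that operator traces only $f_1$ when $\lambda>0$ and only $f_2$ when $\lambda<0$, so to bound both $\Vert f_1(\xi(\vert\lambda\vert,\cdot))\Vert_{L^2({\cal{M}}_{\vert\lambda\vert})}$ and $\Vert f_2(\xi(\vert\lambda\vert,\cdot))\Vert_{L^2({\cal{M}}_{\vert\lambda\vert})}$ by $\Vert f\Vert_s$ you should invoke it at $\lambda$ and at $-\lambda$ (both trace onto the same curve ${\cal{M}}_{\vert\lambda\vert}$); with that noted, your preferred direct computation is complete and shorter than the paper's.
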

\begin{proof}
    Let $\chi=\chi_{\varepsilon}\in{}C^{\infty}(\mathbb{T}^2)$ be defined as
    \eqref{cut-off_function}. Then $1-\chi$ as a multiplication operator is
    continuous from ${\cal{H}}^s$ to
    $L^2(B_{\varepsilon}(\xi_{0,1})\cup{}B_{\varepsilon}(\xi_{0,2});\mathbb{C}^2)$;
    so is $\chi$ from ${\cal{H}}^s$ to ${\cal{H}}_{\varepsilon/4}^s$, which
    proves the lemma.
\end{proof}

Let $R_0(z)=(H_0-z)^{-1}$. Passing through ${\cal{U}}^{\dagger}_{\lambda}$ and
${\cal{U}}$ in Lemmas \ref{R_0(lambda+i0)-R_0(lambda-i0)} and
\ref{Stone's_formula}, we have the following two lemmas.
\begin{lemma}
    For $f$, $g\in{\cal{H}}^s$, $s>1/2$,
    \begin{equation*}
        \frac{1}{2\pi{i}}((R_0(\lambda+i0)-R_0(\lambda-i0))f,g)_{L^2(\mathbb{T}^2;\mathbb{C}^2)}=({\cal{F}}_0(\lambda)f,{\cal{F}}_0(\lambda)g)_{L^2({\cal{M}}_{\vert\lambda\vert};\mathbb{C}^2)},
    \end{equation*}
\end{lemma}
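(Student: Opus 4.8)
The plan is to deduce the identity from Lemma~\ref{R_0(lambda+i0)-R_0(lambda-i0)} by conjugating with the unitary operator ${\cal U}$, the sole complication being that the matrix $U(\xi)$, hence ${\cal U}$, is discontinuous at the Dirac points $\xi_{0,1},\xi_{0,2}$, so that ${\cal U}$ does not map ${\cal H}^s$ into itself. Fix $\lambda\in I_0=(-3,3)\setminus\{\pm1,0\}$ and $f,g\in{\cal H}^s$ with $s>1/2$. Since ${\cal U}$ is unitary on $L^2(\mathbb{T}^2;\mathbb{C}^2)$ and $H_0={\cal U}^*\tilde H_0{\cal U}$, we have $R_0(z)={\cal U}^*\tilde R_0(z){\cal U}$ for $\operatorname{Im}z\neq0$, so
\begin{equation*}
    \bigl((R_0(\lambda+i\varepsilon)-R_0(\lambda-i\varepsilon))f,g\bigr)=\bigl((\tilde R_0(\lambda+i\varepsilon)-\tilde R_0(\lambda-i\varepsilon)){\cal U}f,{\cal U}g\bigr)
\end{equation*}
for every $\varepsilon>0$. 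The left-hand side converges, as $\varepsilon\searrow0$, to $((R_0(\lambda+i0)-R_0(\lambda-i0))f,g)$ by Theorem~\ref{mourre_estimate}(3) conjugated to $H_0$ (cf. Corollary~\ref{cor_limitng_absorption_principle}); thus it remains to pass to the limit on the right.

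Next I would choose the cut-off $\chi=\chi_\mu$ of \eqref{cut-off_function} with $\mu>0$ so small that $2\mu<\vert\lambda\vert/2$ and ${\cal M}_{\vert\lambda\vert}\cap(B_{2\mu}(\xi_{0,1})\cup B_{2\mu}(\xi_{0,2}))=\emptyset$; this is possible since $p$ is continuous with $p(\xi_{0,j})=0\neq\vert\lambda\vert$. Write ${\cal U}f={\cal U}(\chi f)+{\cal U}((1-\chi)f)$ and likewise for $g$. By Lemma~\ref{epsilon_homeo}, ${\cal U}(\chi f),{\cal U}(\chi g)\in{\cal H}^s_{\mu/4}\subset{\cal H}^s$, while ${\cal U}((1-\chi)f),{\cal U}((1-\chi)g)$ lie in $L^2(B_{2\mu}(\xi_{0,1})\cup B_{2\mu}(\xi_{0,2});\mathbb{C}^2)$ because ${\cal U}$ is a multiplication operator. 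On the latter space the multiplication operators $\tilde R_0(\lambda\pm i\varepsilon)$, whose symbols are $(p(\xi)-\lambda)^{-1}$ and $(-p(\xi)-\lambda)^{-1}$, converge in operator norm as $\varepsilon\searrow0$ to one and the same bounded operator $\tilde R_0(\lambda\pm i0)$, since these symbols are then nonvanishing and continuous there; in particular $\tilde R_0(\lambda+i0)-\tilde R_0(\lambda-i0)$ annihilates $L^2(B_{2\mu}(\xi_{0,1})\cup B_{2\mu}(\xi_{0,2});\mathbb{C}^2)$.

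Expanding the right-hand side into four inner products, the one carrying ${\cal U}(\chi f)$ and ${\cal U}(\chi g)$ converges, by the limiting absorption principle for $\tilde H_0$ together with Lemma~\ref{R_0(lambda+i0)-R_0(lambda-i0)}, to $2\pi i(\tilde{\cal F}_0(\lambda){\cal U}(\chi f),\tilde{\cal F}_0(\lambda){\cal U}(\chi g))_{L^2({\cal M}_{\vert\lambda\vert};\mathbb{C}^2)}$. The remaining three converge to $0$: the two mixed terms — one after being rewritten via $\tilde R_0(z)^*=\tilde R_0(\bar z)$ so that the factor supported near $\xi_{0,j}$ stands under the resolvent difference — and the term carrying the two ${\cal U}((1-\chi)\cdot)$ factors all involve $\tilde R_0(\lambda+i0)-\tilde R_0(\lambda-i0)$ applied to a function supported in $B_{2\mu}(\xi_{0,1})\cup B_{2\mu}(\xi_{0,2})$, hence vanish in the limit. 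Collecting the limits gives
\begin{equation*}
    \frac{1}{2\pi i}\bigl((R_0(\lambda+i0)-R_0(\lambda-i0))f,g\bigr)=\bigl(\tilde{\cal F}_0(\lambda){\cal U}(\chi f),\tilde{\cal F}_0(\lambda){\cal U}(\chi g)\bigr)_{L^2({\cal M}_{\vert\lambda\vert};\mathbb{C}^2)}.
\end{equation*}

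Finally I would identify the right-hand side with $({\cal F}_0(\lambda)f,{\cal F}_0(\lambda)g)$. Because ${\cal F}_0(\lambda)$, by its explicit formula, depends only on the values of its argument on ${\cal M}_{\vert\lambda\vert}$, and $1-\chi$ vanishes there, one has ${\cal F}_0(\lambda)((1-\chi)h)=0$; hence ${\cal F}_0(\lambda)f={\cal F}_0(\lambda)(\chi f)={\cal U}^\dagger_\lambda\tilde{\cal F}_0(\lambda){\cal U}(\chi f)$, the last equality being the definition ${\cal F}_0(\lambda)={\cal U}^\dagger_\lambda\tilde{\cal F}_0(\lambda){\cal U}$, which is literal on ${\cal H}^s_{\mu/4}$. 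Since ${\cal U}^\dagger_\lambda$ is unitary on $L^2({\cal M}_{\vert\lambda\vert};\mathbb{C}^2)$, the claimed equality follows. I expect the only genuine obstacle to be the contribution localized near the Dirac points: one must verify that there $\tilde R_0$ has no jump across the real axis and that ${\cal U}$, though discontinuous, still acts as a bounded (indeed unitary) multiplication operator on $L^2$ — precisely the purpose of the cut-off decomposition above.
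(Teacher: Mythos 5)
Your proposal is correct and follows essentially the same route the paper compresses into the single phrase ``passing through ${\cal U}^\dagger_\lambda$ and ${\cal U}$ in Lemmas \ref{R_0(lambda+i0)-R_0(lambda-i0)} and \ref{Stone's_formula}.'' The machinery you exploit --- the cut-off decomposition $f=\chi f+(1-\chi)f$, the observation that $\tilde R_0(\lambda+i0)-\tilde R_0(\lambda-i0)$ annihilates $L^2$-functions supported near the Dirac points, and the extension of $\tilde{\cal F}_0(\lambda)$ as the null operator there --- is precisely what the paper has just set up in the paragraphs preceding the lemma (Lemma \ref{epsilon_homeo} and the discussion surrounding it, and the proof of Lemma \ref{F_0_continuity}), so you are spelling out the intended argument rather than taking a different path.
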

\begin{lemma}
    \label{F_0_unitary}
    For $f$, $g\in{\cal{H}}^s$, $s>1/2$,
    \begin{equation*}
        (f,g)_{L^2(\mathbb{T}^2;\mathbb{C}^2)}=\int_{-3}^{3}({\cal{F}}_0(\lambda)f,{\cal{F}}_0(\lambda)g)_{L^2({\cal{M}}_{\vert\lambda\vert};\mathbb{C}^2)}d\lambda.
    \end{equation*}
\end{lemma}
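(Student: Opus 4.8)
The plan is to derive this identity from Stone's formula for $\tilde H_0$ (Lemma~\ref{Stone's_formula}) by conjugating with $\mathcal U$ on $L^2(\mathbb T^2;\mathbb C^2)$ and with the fiberwise unitaries $\mathcal U^{\dagger}_\lambda$ on $L^2(\mathcal M_{|\lambda|};\mathbb C^2)$. At the formal level this is immediate: $\mathcal U$ is unitary, so $(f,g)_{L^2(\mathbb T^2;\mathbb C^2)}=(\mathcal U f,\mathcal U g)_{L^2(\mathbb T^2;\mathbb C^2)}$; each $\mathcal U^{\dagger}_\lambda$ is unitary, so $(\tilde{\mathcal F}_0(\lambda)\mathcal U f,\tilde{\mathcal F}_0(\lambda)\mathcal U g)=(\mathcal U^{\dagger}_\lambda\tilde{\mathcal F}_0(\lambda)\mathcal U f,\mathcal U^{\dagger}_\lambda\tilde{\mathcal F}_0(\lambda)\mathcal U g)$; and $\mathcal F_0(\lambda)=\mathcal U^{\dagger}_\lambda\tilde{\mathcal F}_0(\lambda)\mathcal U$ by definition. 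Hence applying Lemma~\ref{Stone's_formula} to $\mathcal U f$ and $\mathcal U g$ produces exactly the asserted equality.

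The single point requiring care is that $\mathcal U$ does not preserve $\mathcal H^s$: its symbol $\alpha(\xi)/|\alpha(\xi)|$ is singular at the Dirac points $\xi_{0,1},\xi_{0,2}$, so Lemma~\ref{Stone's_formula} cannot be invoked for $\mathcal U f$ in general. I would therefore first prove the identity for $f,g$ in $C^{\infty}_\varepsilon(\mathbb T^2;\mathbb C^2)$, the functions supported away from $\xi_{0,1}$ and $\xi_{0,2}$. For such $f,g$, Lemma~\ref{epsilon_homeo} gives $\mathcal U f,\mathcal U g\in\mathcal H^s_\varepsilon\subset\mathcal H^s$, Lemma~\ref{Stone's_formula} applies, and the three identities above yield $(f,g)=\int_{-3}^{3}(\mathcal F_0(\lambda)f,\mathcal F_0(\lambda)g)_{L^2(\mathcal M_{|\lambda|};\mathbb C^2)}\,d\lambda$.

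To reach arbitrary $f,g\in\mathcal H^s$ I would combine this with a density argument. Taking $f=g\in C^{\infty}_\varepsilon(\mathbb T^2;\mathbb C^2)$ in the identity just established shows that $\lambda\mapsto\mathcal F_0(\lambda)f$ lies in the direct integral $\mathfrak H=\int^{\oplus}_{(-3,3)}L^2(\mathcal M_{|\lambda|};\mathbb C^2)\,d\lambda$ with $\|\mathcal F_0(\cdot)f\|_{\mathfrak H}=\|f\|$, so $\mathcal F_0(\cdot)$ is isometric from $D:=\bigcup_{\varepsilon>0}C^{\infty}_\varepsilon(\mathbb T^2;\mathbb C^2)$ into $\mathfrak H$. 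It suffices to treat $1/2<s<1$ (if $f,g\in\mathcal H^s$ with $s\ge 1$ they already lie in $\mathcal H^{s'}$ for some $s'\in(1/2,1)$), and for such $s$ the subspace $D$ is dense in $\mathcal H^s$: for a cutoff $1-\chi_\varepsilon$ supported in the $2\varepsilon$-balls about $\xi_{0,1},\xi_{0,2}$ one has $\|(1-\chi_\varepsilon)h\|_{\mathcal H^1}$ bounded while $\|(1-\chi_\varepsilon)h\|_{\mathcal H^0}\to 0$, hence $\|(1-\chi_\varepsilon)h\|_{\mathcal H^s}\to 0$ by interpolation. Therefore the isometry extends to all of $\mathcal H^s$, and since each $\mathcal F_0(\lambda)$ is already continuous on $\mathcal H^s$ (Lemma~\ref{F_0_continuity}) the extension is $f\mapsto\mathcal F_0(\cdot)f$. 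This gives $\|f\|^2=\int_{-3}^{3}\|\mathcal F_0(\lambda)f\|^2\,d\lambda$ for all $f\in\mathcal H^s$, and polarization yields the stated identity.

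The main obstacle is precisely this bookkeeping at the Dirac points together with the behavior of $\mathcal F_0(\lambda)$ at the exceptional energies $\{0,\pm1,\pm3\}$: one has to cut off near $\xi_{0,1},\xi_{0,2}$ (to make $\mathcal U$ act on $\mathcal H^s$) while making sure the $\lambda$-integral is undisturbed near the values where $\mathcal M_{|\lambda|}$ degenerates to a point or becomes singular and the limiting absorption principle is unavailable. Everything else is the routine unitarity computation summarized by ``passing through $\mathcal U^{\dagger}_\lambda$ and $\mathcal U$''.
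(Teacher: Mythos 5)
Your plan is the paper's plan: conjugate the $\tilde H_0$ Stone formula (Lemma~\ref{Stone's_formula}) by the unitaries ${\cal U}$ and ${\cal U}^{\dagger}_{\lambda}$. The paper states the transfer in one line (``Passing through ${\cal U}^{\dagger}_{\lambda}$ and ${\cal U}$\ldots''), and what you have done is exactly what that line is silently leaning on: ${\cal U}$ fails to act on ${\cal H}^s$ because $\alpha/|\alpha|$ is discontinuous at the Dirac points, so the formula must first be verified on data supported away from $\xi_{0,1},\xi_{0,2}$ and then propagated. Your route (verify on $C^\infty_\varepsilon$, observe the resulting isometry, extend by density, recover the pointwise trace via Lemma~\ref{F_0_continuity}) is a correct way to close this gap; the paper's own setup closes it differently, by extending $\tilde{\cal F}_0(\lambda)$ by zero on $L^2(B_\varepsilon(\xi_{0,1})\cup B_\varepsilon(\xi_{0,2});\mathbb C^2)$ and summing a decomposition $f=\chi f+(1-\chi)f$.

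One small inaccuracy in the density step: for a general $h\in{\cal H}^s$ with $s<1$ the quantity $\|(1-\chi_\varepsilon)h\|_{{\cal H}^1}$ need not be finite, so ``bounded'' is meaningless as written. You should first approximate $h$ in ${\cal H}^s$ by a smooth $h'$ and then apply your cutoff-and-interpolation estimate to $h'$ (for which $\|(1-\chi_\varepsilon)h'\|_{{\cal H}^1}$ is indeed $O(1)$ because $\|\nabla(1-\chi_\varepsilon)\|_{L^2}$ stays bounded on an $\varepsilon$-annulus in two dimensions). With that fix, the reduction from general $s>1/2$ to $1/2<s<1$, the density of $\bigcup_\varepsilon C^\infty_\varepsilon$ in ${\cal H}^s$, and the identification of the continuous extension with $\lambda\mapsto{\cal F}_0(\lambda)f$ via a.e.\ convergence of a subsequence are all sound.
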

From \eqref{eigen_op_1}, we also have
\begin{equation}
    \label{eigen_op_2}
    {\cal{F}}_0(\lambda)(H_0-\lambda)=0.
\end{equation}

For $\lambda\in(-3,3)\setminus(\{\pm{1},0\}\cup\sigma_{pp}(H))$, let us define
the operator
\begin{equation*}
    {\cal{F}}^{(\pm)}(\lambda)={\cal{F}}_0(\lambda)(I-qR(\lambda\pm{i0})).
\end{equation*}
Then we have
\begin{lemma}
    For $s>1/2$ and $\lambda\in(-3,3)\setminus(\{\pm{1},0\}\cup\sigma_{pp}(H))$,
    \begin{equation*}
        {\cal{F}}^{(\pm)}(\lambda)\in{\bold{B}}({\cal{H}}^s,L^2({\cal{M}}_{\vert\lambda\vert};\mathbb{C}^2)).
    \end{equation*}
\end{lemma}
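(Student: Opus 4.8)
The plan is to exploit the obvious factorization
\begin{equation*}
    {\cal F}^{(\pm)}(\lambda)={\cal F}_0(\lambda)\circ\bigl(I-qR(\lambda\pm i0)\bigr)
\end{equation*}
and to check that each factor acts boundedly on the appropriate scale of spaces. Thus it suffices to prove that $I-qR(\lambda\pm i0)\in{\bold B}({\cal H}^s)$ for $s>1/2$, and then to quote Lemma \ref{F_0_continuity} for ${\cal F}_0(\lambda)$.

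First I would record the limiting absorption principle for $H$: since $\lambda\in(-3,3)\setminus(\{\pm1,0\}\cup\sigma_{pp}(H))=I_0\setminus\sigma_{pp}(H)$, Theorem \ref{mourre_estimate} (3) together with Corollary \ref{cor_limitng_absorption_principle}, transported from $\tilde H$ to $H$ by conjugation with ${\cal U}$, gives $R(\lambda\pm i0)\in{\bold B}({\cal H}^s,{\cal H}^{-s})$ for $s>1/2$. The only ingredient that genuinely uses Assumption \ref{assumption_potential} is the mapping property of $q$: in the physical-space picture $\hat q$ is multiplication by a matrix-valued sequence supported on a finite set $S\subset\mathbb{Z}^2$, so for any $\hat g\in\hat{\cal H}^{-s}$ the sequence $\hat q\hat g$ is again supported on $S$ and
\begin{equation*}
    \sum_{n\in\mathbb{Z}^2}(1+|n|^2)^{s}\Vert(\hat q\hat g)(n)\Vert_{\mathbb{C}^2}^2\le\Bigl(\max_{n\in S}(1+|n|^2)^{2s}\Vert\hat q(n)\Vert^2\Bigr)\sum_{n\in S}(1+|n|^2)^{-s}\Vert\hat g(n)\Vert_{\mathbb{C}^2}^2,
\end{equation*}
which shows $\hat q\in{\bold B}(\hat{\cal H}^{-s},\hat{\cal H}^{s})$ (indeed into every $\hat{\cal H}^{s'}$); conjugating by ${\cal F}$ yields $q\in{\bold B}({\cal H}^{-s},{\cal H}^{s})$.

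Putting these together, $qR(\lambda\pm i0)$ maps ${\cal H}^s$ into ${\cal H}^{-s}$ and back into ${\cal H}^s$, so $I-qR(\lambda\pm i0)\in{\bold B}({\cal H}^s)$; composing with ${\cal F}_0(\lambda)\in{\bold B}({\cal H}^s,L^2({\cal M}_{|\lambda|};\mathbb{C}^2))$ from Lemma \ref{F_0_continuity} finishes the argument. I do not expect any real obstacle here: the only non-formal point is the smoothing property of the compactly supported potential $q$, and the rest is a composition of maps already established in the text. One should merely be a little careful that the resolvent in question is that of $H={\cal F}\hat H{\cal F}^*$ acting on the scale ${\cal H}^s$, rather than that of $\tilde H$ on ${\cal H}_{A_\chi}^s$ — which is exactly what Corollary \ref{cor_limitng_absorption_principle} and the subsequent remark about conjugation by ${\cal U}$ and ${\cal F}$ provide.
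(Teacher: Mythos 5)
Your argument is correct and is essentially the paper's own: the author likewise observes that $q\in{\bold B}({\cal H}^{-s},{\cal H}^{s})$ by Assumption \ref{assumption_potential}, then combines Corollary \ref{cor_limitng_absorption_principle} with Lemma \ref{F_0_continuity} through the factorization ${\cal F}^{(\pm)}(\lambda)={\cal F}_0(\lambda)(I-qR(\lambda\pm i0))$. You have simply filled in the brief smoothing estimate for $\hat q$ and the transport of the limiting absorption principle from $\tilde H$ to $H$, which the paper leaves implicit.
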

\begin{proof}
    Noticing that $q\in{\bold{B}}({\cal{H}}^{-s},{\cal{H}}^s)$, this is an
    immediate consequence of Corollary \ref{cor_limitng_absorption_principle}
    and Lemma \ref{F_0_continuity}.
\end{proof}
By the definition of ${\cal{F}}^{(\pm)}(\lambda)$ and \eqref{eigen_op_2}, we
have
\begin{equation}
    \label{eigen_op_3}
    {\cal{F}}^{(\pm)}(\lambda)(H-\lambda)=0.
\end{equation}

The next lemma is proved by calculating
$2\pi{i}{\cal{F}}^{(\pm)}(\lambda)^*{\cal{F}}^{(\pm)}(\lambda)$ in the same
way as in Lemma 3.3 of Isozaki and Korotyaev \cite{2011arXiv1103.2193I}.
\begin{lemma}
    For $f$, $g\in{\cal{H}}^s$, $s>1/2$,
    \begin{equation*}
        \frac{1}{2\pi{i}}((R(\lambda+i0)-R(\lambda-i0))f,g)_{L^2(\mathbb{T}^2;\mathbb{C}^2)}=({\cal{F}}^{(\pm)}(\lambda)f,{\cal{F}}^{(\pm)}(\lambda)g)_{L^2({\cal{M}}_{\vert\lambda\vert};\mathbb{C}^2)}.
    \end{equation*}
\end{lemma}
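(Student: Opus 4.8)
The plan is to reproduce, in the present $\mathbb{C}^{2}$-valued setting, the computation of ${\cal{F}}^{(\pm)}(\lambda)^{*}{\cal{F}}^{(\pm)}(\lambda)$ carried out in Lemma 3.3 of Isozaki--Korotyaev \cite{2011arXiv1103.2193I}: one transports the already-established free identity $\frac{1}{2\pi i}(R_0(\lambda+i0)-R_0(\lambda-i0))={\cal{F}}_0(\lambda)^{*}{\cal{F}}_0(\lambda)$ through the distorted factors $I-qR(\lambda\mp i0)$ by means of two mutually adjoint forms of the second resolvent equation.

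First I would work at a complex energy. Fix $\varepsilon>0$, set $z=\lambda+i\varepsilon$, $\bar z=\lambda-i\varepsilon$, so every operator below is bounded on ${\cal{H}}=L^{2}(\mathbb{T}^{2};\mathbb{C}^{2})$. From $R(z)=R_0(z)-R(z)qR_0(z)$ (and the same with $\bar z$) one expands $R(z)-R(\bar z)$, regroups the cross terms, and uses that $I+qR_0(\bar z)$ is invertible with inverse $I-qR(\bar z)$ — which follows from the factorizations $H-z=(H_0-z)(I+R_0(z)q)=(I+qR_0(z))(H_0-z)$ — to obtain
\[ R(z)-R(\bar z)=(I-R(z)q)\bigl(R_0(z)-R_0(\bar z)\bigr)(I-qR(\bar z)). \]
Running the same manipulation from the other form $R(z)=R_0(z)-R_0(z)qR(z)$, and using $(I+R_0(\bar z)q)^{-1}=I-R(\bar z)q$, yields the dual factorization
\[ R(z)-R(\bar z)=(I-R(\bar z)q)\bigl(R_0(z)-R_0(\bar z)\bigr)(I-qR(z)). \]

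Next I would let $\varepsilon\searrow 0$. By Assumption \ref{assumption_potential} the sequence $\hat{q}\hat{f}$ has finite support for every $\hat{f}$, hence $qf$ is a trigonometric polynomial and $q\in{\bold{B}}({\cal{H}}^{-s},{\cal{H}}^{s})$ for all $s$; together with the limiting absorption principle (Corollary \ref{cor_limitng_absorption_principle} for $H$, and the analogous statement for $H_0$ underlying the trace-operator construction), each factor $I-qR(\lambda\pm i\varepsilon)$ converges in ${\bold{B}}({\cal{H}}^{s})$, each $I-R(\lambda\pm i\varepsilon)q$ in ${\bold{B}}({\cal{H}}^{-s})$, and $R_0(\lambda+i\varepsilon)-R_0(\lambda-i\varepsilon)$ in ${\bold{B}}({\cal{H}}^{s},{\cal{H}}^{-s})$, so that both factorizations pass to the limit as operator identities in ${\bold{B}}({\cal{H}}^{s},{\cal{H}}^{-s})$, a fortiori as sesquilinear forms on ${\cal{H}}^{s}\times{\cal{H}}^{s}$. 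Substituting $\frac{1}{2\pi i}(R_0(\lambda+i0)-R_0(\lambda-i0))={\cal{F}}_0(\lambda)^{*}{\cal{F}}_0(\lambda)$ (the $R_0$-lemma proved just above, with ${\cal{F}}_0(\lambda)\in{\bold{B}}({\cal{H}}^{s},L^{2}({\cal{M}}_{|\lambda|};\mathbb{C}^{2}))$ by Lemma \ref{F_0_continuity}) and invoking $R(\lambda\pm i0)^{*}=R(\lambda\mp i0)$ and $q^{*}=q$, one recognizes ${\cal{F}}_0(\lambda)(I-qR(\lambda\pm i0))={\cal{F}}^{(\pm)}(\lambda)$ and $(I-R(\lambda\mp i0)q){\cal{F}}_0(\lambda)^{*}=\bigl({\cal{F}}_0(\lambda)(I-qR(\lambda\pm i0))\bigr)^{*}={\cal{F}}^{(\pm)}(\lambda)^{*}$; the first factorization then gives $\frac{1}{2\pi i}(R(\lambda+i0)-R(\lambda-i0))={\cal{F}}^{(-)}(\lambda)^{*}{\cal{F}}^{(-)}(\lambda)$ and the second gives the same expression with $(+)$, which after pairing with $f,g\in{\cal{H}}^{s}$ is the asserted identity for either sign.

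The routine but slightly delicate point is the bookkeeping in Step 2: one must check that in each factorization every product is a genuine composition ${\cal{H}}^{s}\to{\cal{H}}^{s}\to{\cal{H}}^{-s}\to{\cal{H}}^{-s}$ — this is exactly where the smoothing of $q$ and the mapping property of ${\cal{F}}_0(\lambda)$ are used — so that the finite-$\varepsilon$ operator identities genuinely survive the boundary limit when tested against ${\cal{H}}^{s}$. Beyond that there is no conceptual obstacle: the algebra of the two factorizations is forced by the resolvent equation, and all the spectral input needed — absence of singularly continuous spectrum and existence and local boundedness of $R(\lambda\pm i0)$ on $(-3,3)\setminus(\{0,\pm1\}\cup\sigma_{pp}(H))$ — is precisely Theorem \ref{mourre_estimate} together with its corollary.
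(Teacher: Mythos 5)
Your proposal is correct and follows the same route as the paper: the paper's one-line proof refers to computing $2\pi i\,{\cal{F}}^{(\pm)}(\lambda)^{*}{\cal{F}}^{(\pm)}(\lambda)$ as in Lemma 3.3 of Isozaki--Korotyaev, and the two resolvent factorizations of $R(z)-R(\bar z)$ you write down, together with ${\cal{F}}_0(\lambda)^{*}{\cal{F}}_0(\lambda)=\frac{1}{2\pi i}(R_0(\lambda+i0)-R_0(\lambda-i0))$, $R(\lambda\pm i0)^{*}=R(\lambda\mp i0)$, $q^{*}=q$, and the mapping properties $q\in{\bold{B}}({\cal{H}}^{-s},{\cal{H}}^{s})$ and $R(\lambda\pm i0)\in{\bold{B}}({\cal{H}}^{s},{\cal{H}}^{-s})$, are exactly the content of that calculation.
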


\subsection{Eigenoperators and spectral representations}

The adjoint operator ${\cal{F}}_0(\lambda)^*$ is defined by
\begin{equation*}
    ({\cal{F}}_0(\lambda)f,\phi)_{L^2({\cal{M}}_{\vert\lambda\vert};\mathbb{C}^2)}=(f,{\cal{F}}_0(\lambda)^*\phi)_{L^2(\mathbb{T}^2;\mathbb{C}^2)}
\end{equation*}
for $f\in{\cal{H}}^s$, $s>1/2$, and
$\phi{\in}L^2({\cal{M}}_{\vert\lambda\vert};\mathbb{C}^2)$. By the definition,
we have
${\cal{F}}_0(\lambda)^*\in{\bold{B}}(L^2({\cal{M}}_{\vert\lambda\vert};\mathbb{C}^2),{\cal{H}}^{-s})$,
which is written as
\begin{align}
    \label{(F_0^*phi)_1}
    \begin{split}
        &({\cal{F}}_0(\lambda)^*\phi)_1(\xi(\vert\rho\vert,\omega))\\
        &=\displaystyle\frac{1}{2}\delta(p(\xi(\vert\rho\vert,\omega))-\vert\lambda\vert)\phi_1(\omega)+\frac{1}{2}\frac{\alpha(\xi(\vert\rho\vert,\omega))}{\lambda}\delta(p(\xi(\vert\rho\vert,\omega))-\vert\lambda\vert)\phi_2(\omega),
    \end{split}
\end{align}
\begin{align}
    \label{(F_0^*phi)_2}
    \begin{split}
        &({\cal{F}}_0(\lambda)^*\phi)_2(\xi(\vert\rho\vert,\omega))\\
        &=\displaystyle\frac{1}{2}\frac{\overline{\alpha}(\xi(\vert\rho\vert,\omega))}{\lambda}\delta(p(\xi(\vert\rho\vert,\omega))-\vert\lambda\vert)\phi_1(\omega)+\frac{1}{2}\delta(p(\xi(\vert\rho\vert,\omega)-\vert\lambda\vert))\phi_2(\omega),
    \end{split}
\end{align}
for
$\phi=(\phi_1(\omega),\phi_2(\omega)){\in}C^{\infty}({\cal{M}}_{\vert\lambda\vert};\mathbb{C}^2)$,
where $\delta(\cdot)$ is Dirac's delta.

By taking the adjoint of \eqref{eigen_op_2}, ${\cal{F}}_0(\lambda)^*$ is the
eigenoperator of $H_0$:
\begin{equation*}
    (H_0-\lambda){\cal{F}}_0(\lambda)^*=0.
\end{equation*}

Let us define the operator ${\cal{F}}_0$ by
\begin{equation*}
    ({\cal{F}}_0f)(\lambda,\omega)=({\cal{F}}_0(\lambda)f)(\omega)
\end{equation*}
for $f\in{\cal{H}}^s$, $s>1/2$. The proof of the next theorem as well as that
for Theorem \ref{spectral_rep_H} can be done in the same way as in Isozaki and
Korotyaev \cite{2011arXiv1103.2193I}. We shall give the sketch in
\ref{appendix_spectral_representation} for the reader's convenience.
\begin{theorem}
    \label{spec_rep_1}
    \begin{enumerate}
        \item ${\cal{F}}_0$ is uniquely extended to a unitary operator from
            $L^2(\mathbb{T}^2;\mathbb{C}^2)$ to
            $L^2((-3,3);L^2({\cal{M}}_{\vert\lambda\vert};\mathbb{C}^2);d\lambda)$.
        \item ${\cal{F}}_0$ diagonalizes $H_0$:
            \begin{equation*}
                ({\cal{F}}_0H_0f)(\lambda)=\lambda({\cal{F}}_0f)(\lambda).
            \end{equation*}
        \item For any compact interval $I\subset(-3,3)\setminus\{\pm{1},0\}$,
            \begin{equation*}
                \int_I{\cal{F}}_0(\lambda)^*g(\lambda)d\lambda\in{L^2(\mathbb{T}^2;\mathbb{C}^2)}
            \end{equation*}
            for
            $g{\in}L^2((-3,3);L^2({\cal{M}}_{\vert\lambda\vert};\mathbb{C}^2);d\lambda)$.
            Moreover, the following inversion formula holds:
            \begin{equation*}
                f=\text{s-}\lim_{N\rightarrow\infty}\int_{I_N}{\cal{F}}_0(\lambda)^*({\cal{F}}_0f)(\lambda)d\lambda
            \end{equation*}
            for $f{\in}L^2(\mathbb{T}^2;\mathbb{C}^2)$, where $I_N$ is a
            finite union of compact intervals in $(-3,3)\setminus\{\pm{1},0\}$
            such that $I_N\rightarrow(-3,3)$ as $N\rightarrow\infty$.
    \end{enumerate}
\end{theorem}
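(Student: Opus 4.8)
The plan is to prove Theorem \ref{spec_rep_1} by the standard stationary eigenfunction--expansion argument, in the same spirit as Isozaki and Korotyaev \cite{2011arXiv1103.2193I}. The real--analytic fibration \eqref{torus_measure}, available because $\nabla p\neq0$ on ${\cal M}_E$ for $E\in(0,3)\setminus\{1\}$ (Proposition \ref{prop_analytic_mfd}), provides the measurable field of Hilbert spaces $\{L^2({\cal M}_{|\lambda|};\mathbb{C}^2)\}$ obtained by gluing the local trivializations in which ${\cal M}_{|\lambda|}$ deforms smoothly; this is the structure underlying the direct integral $L^2((-3,3);L^2({\cal M}_{|\lambda|};\mathbb{C}^2);d\lambda)$. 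The three assertions of the theorem are then, respectively, that ${\cal F}_0$ is an onto isometry, that it intertwines $H_0$ with multiplication by $\lambda$, and that ${\cal F}_0(\lambda)^*$ integrates back to the identity.

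First I would establish (2) and the isometric half of (1). Since $H_0={\cal F}\hat H_0{\cal F}^*$ is multiplication by the matrix whose entries \eqref{alpha}--\eqref{overline_alpha} are trigonometric polynomials, $H_0$ preserves each ${\cal H}^s$ boundedly, so for $f\in{\cal H}^s$, $s>1/2$, the eigenoperator identity \eqref{eigen_op_2} gives ${\cal F}_0(\lambda)H_0f=\lambda{\cal F}_0(\lambda)f$, i.e. $({\cal F}_0H_0f)(\lambda)=\lambda({\cal F}_0f)(\lambda)$; this extends to all $f$ by density, giving (2). For (1), fix $f\in{\cal H}^s$: approximating $f$ in ${\cal H}^s$ by elements of $C^\infty(\mathbb{T}^2;\mathbb{C}^2)$, for which $({\cal F}_0(\lambda)f)(\omega)$ is jointly continuous in $(\lambda,\omega)$ away from the bad energies, and using Lemma \ref{F_0_continuity}, one sees that $\lambda\mapsto({\cal F}_0f)(\lambda)$ is a measurable section of the field; Lemma \ref{F_0_unitary} with $g=f$ then reads $\|{\cal F}_0f\|^2=\|f\|^2$. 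As ${\cal H}^s$ is dense in $L^2(\mathbb{T}^2;\mathbb{C}^2)$, ${\cal F}_0$ extends uniquely to an isometry of $L^2(\mathbb{T}^2;\mathbb{C}^2)$ into the direct integral.

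For (3) I would work from the jump identity $\frac{1}{2\pi i}(R_0(\lambda+i0)-R_0(\lambda-i0))={\cal F}_0(\lambda)^*{\cal F}_0(\lambda)$ proved above on ${\cal H}^s\times{\cal H}^s$, with ${\cal F}_0(\lambda)^*$ as in \eqref{(F_0^*phi)_1}--\eqref{(F_0^*phi)_2}. Duality against $h\in{\cal H}^s$ and the isometry just obtained give
\begin{equation*}
\left|\left(\int_I{\cal F}_0(\lambda)^*g(\lambda)\,d\lambda,\,h\right)\right|=\left|\int_I\bigl(g(\lambda),{\cal F}_0(\lambda)h\bigr)\,d\lambda\right|\le\|g\|\left(\int_I\|{\cal F}_0(\lambda)h\|^2\,d\lambda\right)^{1/2}\le\|g\|\,\|h\|
\end{equation*}
for every compact $I\subset(-3,3)\setminus\{\pm1,0\}$ and every $g$ in the direct integral, so $\int_I{\cal F}_0(\lambda)^*g(\lambda)\,d\lambda\in L^2(\mathbb{T}^2;\mathbb{C}^2)$ with norm $\le\|g\|$: this is the first claim of (3). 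Integrating the jump identity over a finite union $I_N$ of compact subintervals of $(-3,3)\setminus\{\pm1,0\}$ and using Stone's formula together with the absolute continuity of $H_0$ (so that $E_{H_0}$ charges none of $0,\pm1,\pm3$) yields $\int_{I_N}{\cal F}_0(\lambda)^*{\cal F}_0(\lambda)f\,d\lambda=E_{H_0}(I_N)f$ for $f\in{\cal H}^s$; letting $I_N\uparrow(-3,3)$ and using $E_{H_0}((-3,3))=I$ gives the inversion formula on ${\cal H}^s$, hence on $L^2(\mathbb{T}^2;\mathbb{C}^2)$ by density together with $\|E_{H_0}(I_N)\|\le1$. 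Finally, surjectivity of ${\cal F}_0$ --- which, with the isometry, completes (1) --- follows: for $g$ in the direct integral, the vectors $f_N=\int_{I_N}{\cal F}_0(\lambda)^*g(\lambda)\,d\lambda$ are Cauchy in $L^2(\mathbb{T}^2;\mathbb{C}^2)$ by the displayed bound applied to $g$ restricted to $I_N\setminus I_M$, and their limit $f$ satisfies ${\cal F}_0f=g$, which is checked fibre by fibre using that the explicit form of ${\cal F}_0(\lambda)$, the invertibility of $U^{\dagger}(\lambda,\omega)$ on ${\cal M}_{|\lambda|}$, and the density of restrictions of $C^\infty(\mathbb{T}^2)$ to the real--analytic manifold ${\cal M}_{|\lambda|}$ pin down $g(\lambda)$.

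I expect the main obstacle to be the book-keeping at the exceptional energies $0,\pm1,\pm3$: one must check that the measurable field of fibres $\{L^2({\cal M}_{|\lambda|};\mathbb{C}^2)\}$ is well defined even though the topology of ${\cal M}_{|\lambda|}$ changes as $|\lambda|$ crosses $1$ and the manifold degenerates at $0$ and $3$, and that the Bochner integral $\int_{I_N}{\cal F}_0(\lambda)^*{\cal F}_0(\lambda)f\,d\lambda$ genuinely converges in $L^2(\mathbb{T}^2;\mathbb{C}^2)$ and represents $E_{H_0}(I_N)f$. These points are dealt with by restricting away from $B_\varepsilon(\xi_{0,1})\cup B_\varepsilon(\xi_{0,2})$ as in Lemma \ref{epsilon_homeo} and the proof of Lemma \ref{F_0_continuity}, by invoking the uniform resolvent bound of Corollary \ref{cor_limitng_absorption_principle} on compact subintervals of $(-3,3)\setminus\{\pm1,0\}$, and by gluing the local trivializations of \eqref{torus_measure}; everything else is routine. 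A detailed version is deferred to \ref{appendix_spectral_representation}.
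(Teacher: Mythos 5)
Your overall strategy — establish the intertwining from \eqref{eigen_op_2}, get the isometry from Lemma \ref{F_0_unitary}, derive the inversion formula from the jump relation together with Stone's formula and $\sigma_{ac}(H_0)=[-3,3]$, and then deduce surjectivity — is the same stationary eigenfunction–expansion argument the paper delegates to Isozaki--Korotyaev and to the appendix, and parts (2), the isometry, and the inversion step are correct and phrased essentially as the paper would phrase them.

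The gap is in the surjectivity claim, which is exactly the step you leave to a one-sentence ``checked fibre by fibre'' remark, and that remark is not right as written. From the explicit formulas for ${\cal F}_0(\lambda)$ one computes, using $|\alpha(\xi)|=|\lambda|$ on ${\cal M}_{|\lambda|}$, that
\begin{equation*}
\bigl({\cal F}_0(\lambda)f\bigr)_2(\omega)=\lambda^{-1}\,\overline{\alpha}\bigl(\xi(|\lambda|,\omega)\bigr)\,\bigl({\cal F}_0(\lambda)f\bigr)_1(\omega)\qquad\text{on }{\cal M}_{|\lambda|},
\end{equation*}
so the closure of $\mathrm{Ran}\,{\cal F}_0(\lambda)$ is the rank-one sub-bundle $\{(\phi,\lambda^{-1}\overline{\alpha}\phi):\phi\in L^2({\cal M}_{|\lambda|})\}$ of $L^2({\cal M}_{|\lambda|};\mathbb{C}^2)$, \emph{not} the whole fiber; equivalently $\tilde{\cal F}_0(\lambda)$ only ever produces one nonzero component (the first for $\lambda>0$, the second for $\lambda<0$), and conjugating by the \emph{unitary} ${\cal U}^{\dagger}_{\lambda}$ cannot enlarge this. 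Consequently, for $g$ in the orthocomplement of that sub-bundle at a.e.\ $\lambda$, one has ${\cal F}_0(\lambda)^*g(\lambda)=0$ in ${\cal H}^{-s}$, the vector $f$ you construct is $0$, and ${\cal F}_0f\neq g$: the ``invertibility of $U^{\dagger}$ plus density of $C^\infty$ restrictions'' argument does not pin down $g(\lambda)$. The correct statement is that ${\cal F}_0$ is unitary onto $L^2((-3,3);L^2({\cal M}_{|\lambda|};\mathbb{C});d\lambda)$ (equivalently, onto the rank-one sub-bundle above); a sanity check with the coarea formula shows that the $\mathbb{C}^2$ target is too big by a factor of two. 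This imprecision is in fact inherited from the theorem statement itself (and the paper's Theorem \ref{spectral_rep_H} vacillates between $\mathbb{C}$ and $\mathbb{C}^2$), but since your proof is the place where the discrepancy becomes visible, you should either flag and correct the target space or restrict the surjectivity claim to the sub-bundle; as written the surjectivity step is not proved.

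Separately, a small point worth making explicit: your Bochner-integral bound uses Lemma \ref{F_0_unitary} applied to $h$, but to turn this into the claimed inequality you need a measurability statement for $\lambda\mapsto{\cal F}_0(\lambda)^*g(\lambda)$; this requires the uniform bound of Corollary \ref{cor_limitng_absorption_principle} on the compact interval $I$ and a selection of local trivializations of the fibration \eqref{torus_measure}, which you mention but should tie to the construction of the direct integral before using duality.
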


Let us also define the operators ${\cal{F}}^{(\pm)}$ as
\begin{equation*}
    ({\cal{F}}^{(\pm)}f)(\lambda,\omega)=({\cal{F}}^{(\pm)}(\lambda)f)(\omega)
\end{equation*}
for $f\in{\cal H}^s$, $s>1/2$.
\begin{theorem}
    \label{spectral_rep_H}
    \begin{enumerate}
        \item ${\cal{F}}^{(\pm)}$ are uniquely extended to partial isometries
            from $L^2(\mathbb{T}^2;\mathbb{C}^2)$ to
            $L^2((-3,3);L^2({\cal{M}}_{\vert\lambda\vert};\mathbb{C});d\lambda)$
            with the initial set ${\cal{H}}_{ac}(H)$ and the final set
            $L^2((-3,3);L^2({\cal{M}}_{\vert\lambda\vert};\mathbb{C}^2);d\lambda)$. Moreover,
            ${\cal{F}}^{(\pm)}$ diagonalize $H$:
            \begin{equation*}
                ({\cal{F}}^{(\pm)}Hf)(\lambda)=\lambda({\cal{F}}^{(\pm)}f)(\lambda)
            \end{equation*}
            for $f{\in}L^2(\mathbb{T}^2;\mathbb{C}^2)$.
        \item The following inversion formula holds:
            \begin{equation*}
                f=\text{s-}\lim_{N\rightarrow\infty}\int_{I_N}{\cal{F}}^{(\pm)}(\lambda)^*({\cal{F}}^{(\pm)}f)(\lambda)d\lambda
            \end{equation*}
            for $f\in{\cal{H}}_{ac}(H)$, where $I_N$ is a finite union of
            compact intervals in
            $(-3,3)\setminus\bigl(\{\pm{1},0\}\cup\sigma_{pp}(H)\bigr)$ such
            that $I_N\rightarrow(-3,3)$ as $N\rightarrow\infty$.
        \item
            ${\cal{F}}^{(\pm)}\in{\bold{B}}(L^2({\cal{M}}_{\vert\lambda\vert};\mathbb{C}^2),{\cal{H}}^{-s})$ 
            are the eigenoperators for $H$ in the following sense:
            \begin{equation*}
                (H-\lambda){\cal{F}}^{(\pm)}(\lambda)^*\phi=0
            \end{equation*}
            for $\phi{\in}L^2({\cal{M}}_{\vert\lambda\vert};\mathbb{C}^2)$.
    \end{enumerate}
\end{theorem}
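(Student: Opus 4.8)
The ingredients are already in place: the stationary identity for the spectral density of $H$ recorded just before the theorem, namely
\[
    \frac{1}{2\pi i}\bigl((R(\lambda+i0)-R(\lambda-i0))f,g\bigr)_{L^2(\mathbb{T}^2;\mathbb{C}^2)}=\bigl({\cal F}^{(\pm)}(\lambda)f,{\cal F}^{(\pm)}(\lambda)g\bigr)_{L^2({\cal M}_{|\lambda|};\mathbb{C}^2)};
\]
Stone's formula; the limiting absorption principle (Theorem \ref{mourre_estimate} and Corollary \ref{cor_limitng_absorption_principle}, conjugated back to $H$); the unitarity and diagonalization of ${\cal F}_0$ (Theorem \ref{spec_rep_1}); and the two forms of the second resolvent equation. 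These are exactly the tools used in Isozaki--Korotyaev \cite{2011arXiv1103.2193I}. First I would dispose of the algebraic assertions. Taking adjoints in ${\cal F}^{(\pm)}(\lambda)={\cal F}_0(\lambda)(I-qR(\lambda\pm i0))$ and using $q^*=q$, $R(z)^*=R(\bar z)$ gives ${\cal F}^{(\pm)}(\lambda)^*=(I-R(\lambda\mp i0)q){\cal F}_0(\lambda)^*\in{\bold B}(L^2({\cal M}_{|\lambda|};\mathbb{C}^2),{\cal H}^{-s})$, using ${\cal F}_0(\lambda)^*\in{\bold B}(L^2({\cal M}_{|\lambda|};\mathbb{C}^2),{\cal H}^{-s})$ shown above, $R(\lambda\mp i0)\in{\bold B}({\cal H}^s,{\cal H}^{-s})$, and $q\in{\bold B}({\cal H}^{-s},{\cal H}^s)$ (in fact $q\in{\bold B}({\cal H}^{-s},{\cal H}^t)$ for all $t$, by Assumption \ref{assumption_potential}). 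Since $(H_0-\lambda){\cal F}_0(\lambda)^*=0$ and $(H-\lambda)R(\lambda\mp i0)=I$ on ${\cal H}^s$ ($H$ being bounded), we get
\[
    (H-\lambda){\cal F}^{(\pm)}(\lambda)^*\phi=(H_0-\lambda){\cal F}_0(\lambda)^*\phi+q{\cal F}_0(\lambda)^*\phi-(H-\lambda)R(\lambda\mp i0)\,q{\cal F}_0(\lambda)^*\phi=0,
\]
which is (3). The diagonalization in (1) is immediate from \eqref{eigen_op_3}: ${\cal F}^{(\pm)}(\lambda)Hf=\lambda{\cal F}^{(\pm)}(\lambda)f$ for $f$ in a core of $H$ with $f,Hf\in{\cal H}^s$, and this extends to ${\cal H}_{ac}(H)$ by density once (1) is established.

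Next I would show that ${\cal F}^{(\pm)}$ is a partial isometry with initial set ${\cal H}_{ac}(H)$ and deduce (2). Since $\sigma_{pp}(H)\cap(-3,3)$ is countable, with accumulation only possibly at $\{0,\pm1,\pm3\}$ (Theorem \ref{mourre_estimate}), and $\lambda\mapsto R(\lambda\pm i0)$ is norm continuous and locally norm bounded in ${\bold B}({\cal H}^s,{\cal H}^{-s})$ on $(-3,3)\setminus(\{\pm1,0\}\cup\sigma_{pp}(H))$, Stone's formula and dominated convergence give, for every compact interval $I$ in that set and $f,g\in{\cal H}^s$,
\[
    \bigl(E_H(I)f,g\bigr)_{L^2(\mathbb{T}^2;\mathbb{C}^2)}=\int_I\bigl({\cal F}^{(\pm)}(\lambda)f,{\cal F}^{(\pm)}(\lambda)g\bigr)_{L^2({\cal M}_{|\lambda|};\mathbb{C}^2)}\,d\lambda.
\]
Letting such $I$ exhaust $(-3,3)$ minus the (Lebesgue-null) exceptional set, and using the absence of singular continuous spectrum (Theorem \ref{mourre_estimate}), $\sigma_{ac}(H)\subset\sigma_{ess}(H)=[-3,3]$, and the fact that an absolutely continuous measure charges no point, one obtains $\|{\cal F}^{(\pm)}f\|^2=\|P_{ac}f\|^2$ for $f\in{\cal H}^s$, where $P_{ac}$ is the orthogonal projection onto ${\cal H}_{ac}(H)$. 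Thus ${\cal F}^{(\pm)}$ is a contraction on the dense subspace ${\cal H}^s$, extends uniquely to $L^2(\mathbb{T}^2;\mathbb{C}^2)$, and by continuity the identity persists, so ${\cal F}^{(\pm)}$ is a partial isometry with initial set ${\cal H}_{ac}(H)$; in particular $({\cal F}^{(\pm)})^*{\cal F}^{(\pm)}=P_{ac}$. The inversion formula (2) then follows as in Theorem \ref{spec_rep_1}(3): for a finite union $I_N$ of compact intervals in $(-3,3)\setminus(\{\pm1,0\}\cup\sigma_{pp}(H))$ with $I_N\uparrow(-3,3)$, the integral $\int_{I_N}{\cal F}^{(\pm)}(\lambda)^*({\cal F}^{(\pm)}f)(\lambda)\,d\lambda$ converges in ${\cal H}^{-s}$ (hence in $L^2(\mathbb{T}^2;\mathbb{C}^2)$) by the mapping property of ${\cal F}^{(\pm)}(\lambda)^*$ and the local boundedness of its norm, equals $E_H(I_N)P_{ac}f$, and tends to $f$ for $f\in{\cal H}_{ac}(H)$.

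The only substantive point left is that the final set of ${\cal F}^{(\pm)}$ is the full target space of Theorem \ref{spec_rep_1}. From the two forms of the second resolvent equation one verifies, as bounded operators on ${\cal H}^s$ for $\lambda\in(-3,3)\setminus(\{\pm1,0\}\cup\sigma_{pp}(H))$,
\[
    (I-qR(\lambda\pm i0))(I+qR_0(\lambda\pm i0))=(I+qR_0(\lambda\pm i0))(I-qR(\lambda\pm i0))=I,
\]
so $I-qR(\lambda\pm i0)$ is boundedly invertible on ${\cal H}^s$ and, consequently,
\[
    {\cal F}^{(\pm)}(\lambda)\bigl(I+qR_0(\lambda\pm i0)\bigr)={\cal F}_0(\lambda),\qquad {\cal F}_0(\lambda)^*=\bigl(I+R_0(\lambda\mp i0)q\bigr){\cal F}^{(\pm)}(\lambda)^*.
\]
Because ${\cal F}_0$ is unitary, ${\cal F}_0(\lambda)$ is onto its fiber and ${\cal F}_0(\lambda)^*$ is injective for almost every $\lambda$; by the two displayed identities the same is true of ${\cal F}^{(\pm)}(\lambda)$ and ${\cal F}^{(\pm)}(\lambda)^*$. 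Promoting this fiberwise surjectivity to surjectivity of the global operator ${\cal F}^{(\pm)}$ — equivalently, injectivity of $({\cal F}^{(\pm)})^*$, equivalently triviality of $(\operatorname{ran}{\cal F}^{(\pm)})^\perp$ — is the main obstacle, since the fiber right inverses are not uniformly bounded near $\{0,\pm1,\pm3\}$. I would settle it by the standard stationary scattering theory: the limiting absorption principle for $H_0$ and $H$ yields the existence and completeness of the wave operators $W^{(\pm)}=\text{s-}\lim_{t\to\pm\infty}e^{itH}e^{-itH_0}$, so that $W^{(\pm)}$ is a partial isometry from $L^2(\mathbb{T}^2;\mathbb{C}^2)$ onto ${\cal H}_{ac}(H)$, and one has the intertwining ${\cal F}^{(\pm)}={\cal F}_0(W^{(\pm)})^*$; since ${\cal F}_0$ is unitary this makes ${\cal F}^{(\pm)}$ a partial isometry with initial set ${\cal H}_{ac}(H)$ and final set the whole target space, completing (1). (Alternatively one stays entirely stationary, using the identities above and a direct-integral argument to eliminate $(\operatorname{ran}{\cal F}^{(\pm)})^\perp$.) Both routes are carried out in Isozaki--Korotyaev \cite{2011arXiv1103.2193I}, which is what the appendix follows; outside this one step the proof is routine bookkeeping with the resolvent estimates already in hand.
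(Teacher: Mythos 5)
Your proof is correct and follows essentially the same route the paper takes (it defers the proof to an appendix that sketches the argument ``in the same way as in Isozaki and Korotyaev''): the spectral-density identity together with Stone's formula and the limiting absorption principle give the partial isometry onto ${\cal H}_{ac}(H)$ and the inversion formula, the second resolvent equation gives the eigenoperator property, and the final-set claim is settled by completeness of the wave operators via the stationary relation ${\cal F}^{(\pm)}={\cal F}_0 (W^{(\pm)})^*$. You correctly flag the one genuinely non-routine point, namely that the fiberwise invertibility $(I-qR(\lambda\pm i0))(I+qR_0(\lambda\pm i0))=I$ is not enough by itself because those inverses are not uniformly bounded near $\{0,\pm 1,\pm 3\}$, and both of your suggested resolutions (wave operators, or a direct-integral argument) are sound.
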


Now, we turn to the spectral representation for $\hat{H}$ on the lattice. For
$\omega\in{\cal{M}}_{\vert\lambda\vert}$, let us define the distribution
$\delta_{\omega}(\cdot)\in{\cal{D}}^{'}({\cal{M}}_{\vert\lambda\vert})$ by
\begin{equation*}
    \langle\delta_{\omega},\phi\rangle=\phi(\omega)
\end{equation*}
for $\phi\in{}C^{\infty}({\cal{M}}_{\vert\lambda\vert})$. Note that
${\cal{M}}_{\vert\lambda\vert}$ is an analytic manifold for
$\lambda\in(-3,3)\setminus\{\pm{1},0\}$. Then, from \eqref{(F_0^*phi)_1} and
\eqref{(F_0^*phi)_2}, we have the matrix-valued distribution kernel of
${\cal{F}}_0(\lambda)^*$, which is written as
\begin{align*}
    \psi^{(0)}(\xi;\lambda,\theta)=&\psi^{(0)}(\xi(\vert\rho\vert,\omega);\lambda,\theta)\\
    =&\frac{1}{2}
    \begin{pmatrix}
        1&\lambda^{-1}\alpha(\xi(\vert\rho\vert,\omega))\\
        \lambda^{-1}\overline{\alpha}(\xi(\vert\rho\vert,\omega))&1
    \end{pmatrix}
    \delta(p(\xi(\vert\rho\vert,\omega))-\vert\lambda\vert)\otimes\delta_{\omega}(\theta),
\end{align*}
where $\omega\in{\cal{M}}_{\vert\rho\vert}$ and
$\theta\in{\cal{M}}_{\vert\lambda\vert}$.

In the lattice space, we consider the Fourier coefficients of
$\psi^{(0)}(\xi;\lambda,\theta)$:
\begin{equation}
    \label{hat_psi}
    \hat{\psi}(\lambda,\theta)=(\hat{\psi}^{(0)}(n;\lambda,\theta))_{n\in\mathbb{Z}^2},
\end{equation}
where 
\begin{align}
    \label{hat_psi^(0)}
    \begin{split}
        \hat{\psi}^{(0)}(n;\lambda,\theta)&=\frac{1}{2\pi}\int_{\mathbb{T}^2}\psi^{(0)}(\xi;\lambda,\theta)e^{-in\xi}d\xi\\
        &=\frac{1}{2}\frac{1}{2\pi}J(\vert\lambda\vert,\theta)e^{in\xi(\vert\lambda\vert,\theta)}
        \begin{pmatrix}
            1&\lambda^{-1}\alpha(\xi(\vert\lambda\vert,\theta))\\
            \lambda^{-1}\overline{\alpha}(\xi(\vert\lambda\vert,\theta))&1
        \end{pmatrix},
    \end{split}
\end{align}
and $J(\vert\lambda\vert,\theta)$ is the density introduced in
\eqref{torus_measure}. We put
\begin{equation*}
    \hat{{\cal{F}}}_0={\cal{F}}_0{\cal{F}}.
\end{equation*}
Then we have
\begin{align}
    \label{cal_F_0_1}
    \begin{split}
        &({\cal{F}}_0(\lambda){\cal{F}}\hat{f})_1(\theta)\\
        =&\displaystyle\frac{1}{2}\frac{1}{2\pi}\sum_{n\in\mathbb{Z}^2}e^{in\xi(\vert\lambda\vert,\theta)}\hat{f}_1(n)+\frac{1}{2}\frac{1}{2\pi}\frac{\alpha(\xi(\vert\lambda\vert,\theta))}{\lambda}\sum_{n\in\mathbb{Z}^2}e^{in\xi(\vert\lambda\vert,\theta)}\hat{f}_2(n),
    \end{split}
\end{align}
\begin{align}
    \label{cal_F_0_2}
    \begin{split}
        &({\cal{F}}_0(\lambda){\cal{F}}\hat{f})_2(\theta)\\
        =&\displaystyle\frac{1}{2}\frac{1}{2\pi}\frac{\overline{\alpha}(\xi(\vert\lambda\vert,\theta))}{\lambda}\sum_{n\in\mathbb{Z}^2}e^{in\xi(\vert\lambda\vert,\theta)}\hat{f}_1(n)+\frac{1}{2}\frac{1}{2\pi}\sum_{n\in\mathbb{Z}^2}e^{in\xi(\vert\lambda\vert,\theta)}\hat{f}_2(n),
    \end{split}
\end{align}
for $\hat{f}=(\hat{f}_1,\hat{f}_2)\in\hat{{\cal{H}}}^s$, $s>1/2$. Also, we
have
\begin{equation*}
    \hat{{\cal{F}}}_0^*\phi(\lambda)=(\hat{{\cal{F}}}_0^*\phi(n;\lambda))_{n\in\mathbb{Z}^2},
\end{equation*}
for $\phi=(\phi_1,\phi_2){\in}L^2({\cal{M}}_{\vert\lambda\vert};\mathbb{C}^2)$, where
\begin{align*}
    \hat{{\cal{F}}}_0^*\phi(n;\lambda)&=(\hat{{\cal{F}}}_0(\lambda)^*\phi)(n)\\
    &=\int_{{\cal{M}}_{\vert\lambda\vert}}\hat{\psi}^{(0)}(n;\lambda,\theta)\phi(\theta)d{\cal{M}}_{\vert\lambda\vert}(\theta)\\
    &=
    \begin{pmatrix}
        (\hat{{\cal{F}}}_0(\lambda)^*\phi)_1(n)\\
        (\hat{{\cal{F}}}_0(\lambda)^*\phi)_2(n)
    \end{pmatrix},
\end{align*}
\begin{align}
    \label{cal_F_*_0_1}
    \begin{split}
        (\hat{{\cal{F}}}_0(\lambda)^*\phi)_1(n)&=\frac{1}{2\pi}\int_{{\cal{M}}_{\vert\lambda\vert}}\frac{1}{2}e^{-in\xi(\vert\lambda\vert,\theta)}\phi_1(\theta)d{\cal{M}}_{\vert\lambda\vert}(\theta)\\
        &\quad+\frac{1}{2\pi}\int_{{\cal{M}}_{\vert\lambda\vert}}\frac{1}{2}\frac{\alpha(\xi(\vert\lambda\vert,\theta))}{\lambda}e^{-in\xi(\vert\lambda\vert,\theta)}\phi_2(\theta)d{\cal{M}}_{\vert\lambda\vert}(\theta),
    \end{split}
\end{align}
\begin{align}
    \label{cal_F_*_0_2}
    \begin{split}
        (\hat{{\cal{F}}}_0(\lambda)^*\phi)_2(n)&=\displaystyle\frac{1}{2\pi}\int_{{\cal{M}}_{\vert\lambda\vert}}\frac{1}{2}\frac{\overline{\alpha}(\xi(\vert\lambda\vert,\theta))}{\lambda}e^{-in\xi(\vert\lambda\vert,\theta)}\phi_1(\theta)d{\cal{M}}_{\vert\lambda\vert}(\theta)\\
        &\quad+\displaystyle\frac{1}{2\pi}\int_{{\cal{M}}_{\vert\lambda\vert}}\frac{1}{2}e^{-in\xi(\vert\lambda\vert,\theta)}\phi_2(\theta)d{\cal{M}}_{\vert\lambda\vert}(\theta).
    \end{split}
\end{align}

A direct calculation shows that the resolvent $R_0(z)=(H_0-z)^{-1}$ is a
multiplication operator by the matrix
\begin{equation*}
    R_0(z,\xi)=\frac{1}{z^2-\vert\alpha(\xi)\vert}
    \begin{pmatrix}
        -z&-\alpha(\xi)\\
        -\overline{\alpha}(\xi)&-z
    \end{pmatrix}.
\end{equation*}
Let $\hat{r}_0(z)=(\hat{r}_0(z,n))_{n\in\mathbb{Z}^2}$ be the Fourier
coefficients of $R_0(z,\xi)$, which is written as
\begin{align}\label{hat_r_0}
    \begin{split}
        \hat{r}_0(z,n)&=
        \begin{pmatrix}
            \hat{r}_{0,11}(z,n)&\hat{r}_{0,12}(z,n)\\
            \hat{r}_{0,21}(z,n)&\hat{r}_{0,22}(z,n)
        \end{pmatrix}\\
        &=\frac{1}{2\pi}\int_{\mathbb{T}^2}R_0(z,\xi)e^{-in\xi}d\xi.
    \end{split}
\end{align}
Then the resolvent $\hat{R}_0(z)=(\hat{H}_0-z)^{-1}$ is a convolution
operator by $\hat{r}_0(z)$:
\begin{align}
    \label{hat_R_0_hat_f}
    \begin{split}
        (\hat{R}_0(z)\hat{f})(n)&=
        \begin{pmatrix}
            (\hat{r}_{0,11}(z)*\hat{f}_1)(n)+(\hat{r}_{0,12}(z)*\hat{f}_2)(n)\\
            (\hat{r}_{0,21}(z)*\hat{f}_1)(n)+(\hat{r}_{0,22}(z)*\hat{f}_2)(n)
        \end{pmatrix}\\
        &=
        \begin{pmatrix}
            \displaystyle\sum_{m\in\mathbb{Z}^2}\hat{r}_{0,11}(z,n-m)\hat{f}_1(m)+\sum_{m\in\mathbb{Z}^2}\hat{r}_{0,12}(z,n-m)\hat{f}_2(m)\\
            \displaystyle\sum_{m\in\mathbb{Z}^2}\hat{r}_{0,21}(z,n-m)\hat{f}_1(m)+\sum_{m\in\mathbb{Z}^2}\hat{r}_{0,22}(z,n-m)\hat{f}_2(m)
        \end{pmatrix},
    \end{split}
\end{align}
for $\hat{f}=(\hat{f}_1,\hat{f}_2){\in}l^2(\mathbb{Z}^2;\mathbb{C}^2)$.

We put
\begin{equation*}
    \hat{{\cal{F}}}^{(\pm)}={\cal{F}}^{(\pm)}{\cal{F}}.
\end{equation*}
Passing to the Fourier transform ${\cal{F}}$ in Theorem \ref{spectral_rep_H},
we have
\begin{theorem}
    \begin{enumerate}
        \item $\hat{{\cal{F}}}^{(\pm)}$ are uniquely extended to partial
            isometries from $l^2(\mathbb{Z}^2;\mathbb{C}^2)$ to
            $L^2((-3,3);L^2({\cal{M}}_{\vert\lambda\vert};\mathbb{C}^2);d\lambda)$
            with the initial set ${\cal{H}}_{ac}(\hat{H})$ and the final set
            $L^2((-3,3);L^2({\cal{M}}_{\vert\lambda\vert};\mathbb{C}^2);d\lambda)$. Moreover,
            $\hat{{\cal{F}}}^{(\pm)}$ diagonalize $\hat{H}$:
            \begin{equation*}
                (\hat{{\cal{F}}}^{(\pm)}\hat{H}\hat{f})(\lambda)=\lambda(\hat{{\cal{F}}}^{(\pm)}\hat{f})(\lambda)
            \end{equation*}
            for $f{\in}l^2(\mathbb{Z}^2;\mathbb{C}^2)$.
        \item The following inversion formula holds:
            \begin{equation*}
                \hat{f}=\text{s-}\lim_{N\rightarrow\infty}\int_{I_N}\hat{{\cal{F}}}^{(\pm)}(\lambda)^*(\hat{{\cal{F}}}^{(\pm)}\hat{f})(\lambda)d\lambda
            \end{equation*}
            for $f\in{\cal{H}}_{ac}(\hat{H})$, where $I_N$ is a finite union
            of compact intervals in
            $(-3,3)\setminus\bigl(\{\pm{1},0\}\cup\sigma_{pp}(H)\bigr)$ such
            that $I_N\rightarrow(-3,3)$ as $N\rightarrow\infty$.
        \item $\hat{{\cal{F}}}^{(\pm)}$ diagonalize $\hat{H}$ in the following
            sense:
            \begin{equation*}
                (\hat{{\cal{F}}}^{(\pm)}\hat{H}\hat{f})(\lambda)=\lambda(\hat{{\cal{F}}}^{(\pm)}\hat{f})(\lambda)
            \end{equation*}
            for $\hat{f}\in{\cal{H}}_{ac}(\hat{H})$.
    \end{enumerate}
\end{theorem}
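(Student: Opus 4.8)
The plan is to transport Theorem~\ref{spectral_rep_H} to the lattice side by conjugating with the unitary Fourier transform $\mathcal{F}\colon l^2(\mathbb{Z}^2;\mathbb{C}^2)\to L^2(\mathbb{T}^2;\mathbb{C}^2)$. Recall that $\mathcal{F}$ is unitary, that $H=\mathcal{F}\hat{H}\mathcal{F}^*$ (equivalently $\hat{H}=\mathcal{F}^*H\mathcal{F}$), and that by definition $\hat{\mathcal{F}}^{(\pm)}=\mathcal{F}^{(\pm)}\mathcal{F}$, so $\hat{\mathcal{F}}^{(\pm)}(\lambda)=\mathcal{F}^{(\pm)}(\lambda)\mathcal{F}$ for a.e.\ $\lambda$. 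Since the composition of a partial isometry with a unitary on the source space is again a partial isometry with the same final space, the first claim reduces to identifying the initial space: $\mathcal{F}^{(\pm)}$ has initial set $\mathcal{H}_{ac}(H)$, hence $\hat{\mathcal{F}}^{(\pm)}$ has initial set $\mathcal{F}^{*}\mathcal{H}_{ac}(H)$, and since $\mathcal{F}$ provides a unitary equivalence between $\hat{H}$ and $H$ their absolutely continuous subspaces correspond, i.e.\ $\mathcal{F}^*\mathcal{H}_{ac}(H)=\mathcal{H}_{ac}(\hat{H})$. The final set $L^2((-3,3);L^2(\mathcal{M}_{|\lambda|};\mathbb{C}^2);d\lambda)$ is untouched because $\mathcal{F}$ acts only on the source.

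For the diagonalization (items~1 and~3), for $\hat{f}\in l^2(\mathbb{Z}^2;\mathbb{C}^2)$ one computes directly, using $\mathcal{F}\hat{H}=H\mathcal{F}$ and the diagonalization in Theorem~\ref{spectral_rep_H}, that $(\hat{\mathcal{F}}^{(\pm)}\hat{H}\hat{f})(\lambda)=(\mathcal{F}^{(\pm)}H\mathcal{F}\hat{f})(\lambda)=\lambda(\mathcal{F}^{(\pm)}\mathcal{F}\hat{f})(\lambda)=\lambda(\hat{\mathcal{F}}^{(\pm)}\hat{f})(\lambda)$; restricting to $\hat{f}\in\mathcal{H}_{ac}(\hat{H})$ gives item~3. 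For the inversion formula (item~2) I would first note that, because $\hat{\mathcal{H}}^s=\mathcal{F}^*\mathcal{H}^s$ by definition, $\mathcal{F}$ extends to an isometric isomorphism $\hat{\mathcal{H}}^s\to\mathcal{H}^s$ for every $s\in\mathbb{R}$, and hence $\mathcal{F}^*$ extends to an isometric isomorphism $\mathcal{H}^{-s}\to\hat{\mathcal{H}}^{-s}$. Consequently $\hat{\mathcal{F}}^{(\pm)}(\lambda)^*=\mathcal{F}^*\mathcal{F}^{(\pm)}(\lambda)^*$ as elements of ${\bold{B}}(L^2(\mathcal{M}_{|\lambda|};\mathbb{C}^2),\hat{\mathcal{H}}^{-s})$, so that $\int_{I_N}\hat{\mathcal{F}}^{(\pm)}(\lambda)^*(\hat{\mathcal{F}}^{(\pm)}\hat{f})(\lambda)\,d\lambda=\mathcal{F}^*\int_{I_N}\mathcal{F}^{(\pm)}(\lambda)^*(\mathcal{F}^{(\pm)}f)(\lambda)\,d\lambda$ with $f=\mathcal{F}\hat{f}\in\mathcal{H}_{ac}(H)$. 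Applying the inversion formula of Theorem~\ref{spectral_rep_H} to $f$ and using boundedness, hence strong continuity, of $\mathcal{F}^*$ to pull the strong limit outside then yields $\hat{f}=\text{s-}\lim_{N\to\infty}\int_{I_N}\hat{\mathcal{F}}^{(\pm)}(\lambda)^*(\hat{\mathcal{F}}^{(\pm)}\hat{f})(\lambda)\,d\lambda$.

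There is no substantial obstacle here: the statement is a mechanical transcription of Theorem~\ref{spectral_rep_H} under a unitary change of variables. The only points requiring a line of justification are (i) the identity $\mathcal{F}^*\mathcal{H}_{ac}(H)=\mathcal{H}_{ac}(\hat{H})$, which is the general fact that unitary equivalence preserves the absolutely continuous part of the spectral decomposition, and (ii) the interpretation of the adjoint trace operators and the strong limit in the negative-order spaces $\hat{\mathcal{H}}^{-s}$, handled by the commutation relation $\mathcal{F}^*\mathcal{F}^{(\pm)}(\lambda)^*=\hat{\mathcal{F}}^{(\pm)}(\lambda)^*$ together with the continuity of $\mathcal{F}^*$ on those spaces. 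The explicit kernel representations of $\hat{\mathcal{F}}_0(\lambda)$ and $\hat{\mathcal{F}}_0(\lambda)^*$ needed later have already been recorded in \eqref{cal_F_0_1}--\eqref{cal_F_*_0_2}.
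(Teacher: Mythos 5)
Your proof is correct and takes the same route as the paper, which simply says ``Passing to the Fourier transform $\mathcal{F}$ in Theorem \ref{spectral_rep_H}, we have [the result]''; you transport the partial-isometry, diagonalization, and inversion statements through the unitary $\mathcal{F}$ in exactly the intended way. The extra justifications you supply (unitary equivalence preserving $\mathcal{H}_{ac}$, the identity $\hat{\mathcal{F}}^{(\pm)}(\lambda)^{*}=\mathcal{F}^{*}\mathcal{F}^{(\pm)}(\lambda)^{*}$, and the strong continuity of $\mathcal{F}^{*}$) are precisely the routine checks the paper leaves implicit.
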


\subsection{Scattering matrix}
\label{section_scattering_matrix}

Since $\hat{q}(n)$ is compactly supported, we can use the trace class
perturbation theory for $\hat{H}$ and $\hat{H}_0$, that is, the wave operators
\begin{equation*}
    W_{\pm}=\text{s-}\lim_{t\pm\infty}e^{it\hat{H}}e^{-it\hat{H}_0}
\end{equation*}
exist, and they are complete (Theorem XI.8, Reed-Simon \cite{MR529429}). The
scattering operator is defined by
\begin{equation*}
    S=W_{+}^*W_{-}.
\end{equation*}

We put
\begin{equation*}
    \hat{S}=\hat{{\cal{F}}}_0S\hat{{\cal{F}}}_0^*.
\end{equation*}
Since $S$ commutes with $\hat{H}_0$ and $\hat{{\cal{F}}}_0$ diagonalizes
$\hat{H_0}$, $\hat{S}$ is decomposable on
$L^2((-3,3);L^2({\cal{M}}_{\vert\lambda\vert};\mathbb{C}^2);d\lambda)=\int_{-3}^3\oplus{}L^2({\cal{M}}_{\vert\lambda\vert};\mathbb{C}^2)d\lambda$:
\begin{equation*}
    \hat{S}=\int_{-3}^3\oplus\hat{S}(\lambda)d\lambda,
\end{equation*}
where $\hat{S}(\lambda)$ is the S-matrix, which is unitary on
$L^2({\cal{M}}_{\vert\lambda\vert};\mathbb{C}^2)$.

Let us define the scattering amplitude $A(\lambda)$ by
\begin{equation*}
    \hat{S}(\lambda)=I-2{\pi}iA(\lambda).
\end{equation*}
Then, by using abstract stationary scattering theory (Kato and Kuroda
\cite{MR0385604}, Kuroda \cite{MR0326435,MR0326436}), we have
\begin{equation*}
    A(\lambda)=\hat{{\cal{F}}}_0(\lambda)\hat{q}\hat{{\cal{F}}}_0(\lambda)^*-\hat{{\cal{F}}}_0(\lambda)\hat{q}\hat{R}(\lambda+i0)\hat{q}\hat{{\cal{F}}}_0(\lambda)^*.
\end{equation*}

Let $A(\lambda,\theta,\theta^{\prime})$ be the integral kernel of
$A(\lambda)$, which is written as
\begin{equation*}
    A(\lambda,\theta,\theta^{\prime})=
    \begin{pmatrix}
        A_{11}(\lambda,\theta,\theta^{\prime})&A_{12}(\lambda,\theta,\theta^{\prime})\\
        A_{21}(\lambda,\theta,\theta^{\prime})&A_{22}(\lambda,\theta,\theta^{\prime})
    \end{pmatrix}.
\end{equation*}
The potential $\hat{q}$ is a multiplication operator by the $2{\times}2$
matrices $(\hat{q}(n))_{n\in\mathbb{Z}^2}$ defined by \eqref{q_n}, and the
action of $\hat{R}(z)$ has already been given by \eqref{hat_r_0} and
\eqref{hat_R_0_hat_f}. Then, by using \eqref{cal_F_0_1}, \eqref{cal_F_0_2},
\eqref{cal_F_*_0_1} and \eqref{cal_F_*_0_2}, we have the following
representations for $A_{jj}(\lambda,\theta,\theta^{\prime})$, $j\in\{1,2\}$:
\begin{align*}
    \begin{split}
        &A_{11}(\lambda,\theta,\theta^{\prime})\\
        =&\frac{1}{4}\frac{1}{(2\pi)^2}J(\vert\lambda\vert,\theta)J(\vert\lambda\vert,\theta^{\prime})\sum_{n\in\mathbb{Z}^2}e^{in(\xi(\vert\lambda\vert,\theta)-\xi(\vert\lambda\vert,\theta^{\prime}))}\\
        &\quad\cdot\bigl(\hat{q}_1(n)+\frac{\alpha(\xi(\vert\lambda\vert,\theta))}{\lambda}\frac{\overline{\alpha}(\xi(\vert\lambda\vert,\theta^{\prime}))}{\lambda}\hat{q}_2(n)\bigr)\\
        &+\frac{1}{2}\frac{1}{2\pi}J(\vert\lambda\vert,\theta)\Bigl(\sum_{n\in\mathbb{Z}^2}e^{in\xi(\vert\lambda\vert,\theta)}\hat{q}_1(n)\bigl(\hat{R}(\lambda+i0)\hat{q}\hat{\psi}^{(0)}(\lambda,\theta^{\prime})\bigr)_{11}(n)\\
        &\quad\quad+\frac{\alpha(\xi(\vert\lambda\vert,\theta))}{\lambda}\sum_{n\in\mathbb{Z}^2}e^{in\xi(\vert\lambda\vert,\theta)}\hat{q}_2(n)\bigl(\hat{R}(\lambda+i0)\hat{q}\hat{\psi}^{(0)}(\lambda,\theta^{\prime})\bigr)_{21}(n)\Bigr),
    \end{split}
\end{align*}
\begin{align*}
    \begin{split}
        &A_{22}(\lambda,\theta,\theta^{\prime})\\
        =&\frac{1}{4}\frac{1}{(2\pi)^2}J(\vert\lambda\vert,\theta)J(\vert\lambda\vert,\theta^{\prime})\sum_{n\in\mathbb{Z}^2}e^{in(\xi(\vert\lambda\vert,\theta)-\xi(\vert\lambda\vert,\theta^{\prime}))}\\
        &\quad\cdot\bigl(\frac{\overline{\alpha}(\xi(\vert\lambda\vert,\theta))}{\lambda}\frac{\alpha(\xi(\vert\lambda\vert,\theta^{\prime}))}{\lambda}\hat{q}_1(n)+\hat{q}_2(n)\bigr)\\
        &+\frac{1}{2}\frac{1}{2\pi}J(\vert\lambda\vert,\theta)\Bigl(\frac{\overline{\alpha}(\xi(\vert\lambda\vert,\theta))}{\lambda}\sum_{n\in\mathbb{Z}^2}e^{in\xi(\vert\lambda\vert,\theta)}\hat{q}_1(n)\bigl(\hat{R}(\lambda+i0)\hat{q}\hat{\psi}^{(0)}(\lambda,\theta^{\prime})\bigr)_{12}(n)\\
        &\quad\quad+\sum_{n\in\mathbb{Z}^2}e^{in\xi(\vert\lambda\vert,\theta)}\hat{q}_2(n)\bigl(\hat{R}(\lambda+i0)\hat{q}\hat{\psi}^{(0)}(\lambda,\theta^{\prime})\bigr)_{22}(n)\Bigr).
    \end{split}
\end{align*}
We omit the formulas for $A_{ij}(\lambda,\theta,\theta^{\prime})$,
$(i,j)=(1,2)$ or $(2,1)$, since we do not use them later.


%
%
\section{Analytic continuation}
\label{analytic_continuation}

A simple calculation shows that if $\xi\in{\cal{M}}_{\lambda}$
\begin{equation*}
    \cos^2{\frac{\xi_1}{2}}\cos^2{\frac{\xi_2}{2}}+\sin{\frac{\xi_1}{2}}\sin{\frac{\xi_2}{2}}\cos{\frac{\xi_1}{2}}\cos{\frac{\xi_2}{2}}=\frac{1}{8}(\lambda^2-1),
\end{equation*}
which is rewritten as
\begin{equation}
    \label{cosh^2-sinh^2=1}
    (\cos{\frac{\xi_1}{2}}\cos{\frac{\xi_2}{2}}+\frac{1}{2}\sin{\frac{\xi_1}{2}}\sin{\frac{\xi_2}{2}})^2-(\frac{1}{2}\sin{\frac{\xi_1}{2}}\sin{\frac{\xi_2}{2}})^2=\frac{1}{8}(\lambda^2-1).
\end{equation}
We let $\lambda=\sqrt{8k^2+1}$ so that $k$ varies over $(-1,0)\cup(0,1)$. Then
we have
\begin{equation}
    \label{k_cosh_theta}
    \cos{\frac{\xi_1}{2}}\cos{\frac{\xi_2}{2}}+\frac{1}{2}\sin{\frac{\xi_1}{2}}\sin{\frac{\xi_2}{2}}=\pm{k}\cosh{\theta},
\end{equation}
\begin{equation}
    \label{k_sinh_theta}
    \frac{1}{2}\sin{\frac{\xi_1}{2}}\sin{\frac{\xi_2}{2}}=\pm{k}\sinh{\theta},
\end{equation}
where $\theta\in\mathbb{R}$. We parametrize $\xi\in{\cal{M}}_{\lambda}$ by
using \eqref{k_cosh_theta} and \eqref{k_sinh_theta}, for which we need to
solve \eqref{k_cosh_theta} and \eqref{k_sinh_theta} with respect to $\xi_1$
and $\xi_2$.

From \eqref{k_cosh_theta} and \eqref{k_sinh_theta}, we have
\begin{equation}
    \label{cos^2_xi_1/2cos^2_xi_2/2}
    \cos^2{\frac{\xi_1}{2}}\cos^2{\frac{\xi_2}{2}}=k^2e^{-2\theta},
\end{equation}
\begin{equation}
    \label{sin^2_xi_1/2sin^2_xi_2/2}
    \sin^2{\frac{\xi_1}{2}}\sin^2{\frac{\xi_2}{2}}=4k^2\sinh^2{\theta}.
\end{equation}
By solving \eqref{cos^2_xi_1/2cos^2_xi_2/2} and
\eqref{sin^2_xi_1/2sin^2_xi_2/2} with respect to $\sin^2{\frac{\xi_j}{2}}$ and
$\cos^2{\frac{\xi_j}{2}}$, $j\in\{1,2\}$, we put
\begin{equation}
    \label{sin^2_xi_1/2}
    \sin^2{\frac{\xi_1}{2}}=\frac{1}{2}\left\{1-\left(2-e^{2\theta}\right)k^2+\sqrt{\left(2-e^{2\theta}\right)^2k^4-2\left(2e^{-2\theta}-\left(2-e^{2\theta}\right)\right)k^2+1}\right\},
\end{equation}
\begin{equation}
    \label{sin^2_xi_2/2}
    \sin^2{\frac{\xi_2}{2}}=\frac{1}{2}\left\{1-\left(2-e^{2\theta}\right)k^2-\sqrt{\left(2-e^{2\theta}\right)^2k^4-2\left(2e^{-2\theta}-\left(2-e^{2\theta}\right)\right)k^2+1}\right\},
\end{equation}
\begin{equation}
    \label{cos^2_xi_1/2}
    \cos^2{\frac{\xi_1}{2}}=\frac{1}{2}\left\{1+\left(2-e^{2\theta}\right)k^2-\sqrt{\left(2-e^{2\theta}\right)^2k^4-2\left(2e^{-2\theta}-\left(2-e^{2\theta}\right)\right)k^2+1}\right\},
\end{equation}
\begin{equation}
    \label{cos^2_xi_2/2}
    \cos^2{\frac{\xi_2}{2}}=\frac{1}{2}\left\{1+\left(2-e^{2\theta}\right)k^2+\sqrt{\left(2-e^{2\theta}\right)^2k^4-2\left(2e^{-2\theta}-\left(2-e^{2\theta}\right)\right)k^2+1}\right\}.
\end{equation}
In the following, we consider the case that $\xi_1,\xi_2\in(0,\pi)$ so that
$\sin{\frac{\xi_1}{2}}>0$ and $\sin{\frac{\xi_2}{2}}>0$, and define
$\xi_1(k,\theta)$, $\xi_2(k,\theta)$ by the equations \eqref{sin^2_xi_1/2},
\eqref{sin^2_xi_2/2}, \eqref{cos^2_xi_1/2} and \eqref{cos^2_xi_2/2}. By a
simple calculation, if $0<\vert{k}\vert<1$ and $0<\theta<\frac{1}{2}\log{2}$,
we have
\begin{equation*}
    0<\frac{1}{2}\left\{1\pm\left(2-e^{2\theta}\right)k^2\pm\sqrt{\left(2-e^{2\theta}\right)^2k^4-2\left(2e^{-2\theta}-\left(2-e^{2\theta}\right)\right)k^2+1}\right\}<1.
\end{equation*}
Note that
\begin{equation*}
    2-e^{2\theta}>0
\end{equation*}
if $0<\theta<\frac{1}{2}\log{2}$.

We put
\begin{equation}
    \label{a(theta)}
    a=a(\theta)=\sqrt{2-e^{2\theta}},
\end{equation}
\begin{equation}
    \label{b(theta)}
    b=b(\theta)=\frac{2\sinh{\theta}}{\sqrt{2-e^{2\theta}}}.
\end{equation}
Then we have
\begin{lemma}
    \label{lemma_analytic_continuation}
    Assume $0<\theta<\frac{1}{2}\log{2}$. Then $\xi_j(k,\theta)$ has the
    analytic continuations $\zeta_j(z,\theta)$, $j\in\{1,2\}$, with the
    following properties:
    \begin{enumerate}
        \item $\zeta_j(z,\theta)$, $j\in\{1,2\}$, are analytic with respect to
            $z\in\mathbb{C}_+=\{z\in\mathbb{C};\Im{z}>0\}$.
        \item If $k\in(-1,0)\cup(0,1)$, we have
            $\zeta_j(k+i0,\theta)=\xi_j(k,\theta)$, $j\in\{1,2\}$.
        \item If $0<\xi_j<\pi$, $j\in\{1,2\}$, there exist $m_1$,
            $m_2\in\mathbb{Z}$ such that
            \begin{equation}
                \label{Re_zeta_1}
                \Re{\zeta_1(1+iN,\theta)}=2m_1\pi+{\cal{O}}(N^{-3}),
            \end{equation}
            \begin{equation}
                \label{Im_zeta_1}
                \Im{\zeta_1(1+iN,\theta)}=2\log{(b+\sqrt{b^2+1})}+{\cal{O}}(N^{-2}),
            \end{equation}
            \begin{equation}
                \label{Re_zeta_2}
                \Re{\zeta_2(1+iN,\theta)}=(2m_2+1)\pi+{\cal{O}}(N^{-1}),
            \end{equation}
            \begin{equation}
                \label{Im_zeta_2}
                \Im{\zeta_2(1+iN,\theta)}=2\log{N}+\log{(4a^2)}+{\cal{O}}(N^{-2}),
            \end{equation}
            as $N\rightarrow+\infty$.
        \end{enumerate}
\end{lemma}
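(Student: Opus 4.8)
Let me write $a^2=2-e^{2\theta}$ (positive since $\theta<\tfrac12\log 2$) and $D(k,\theta)=a^4k^4-2(2e^{-2\theta}-a^2)k^2+1$ for the radicand, so that $\cos\xi_j=1-2\sin^2\tfrac{\xi_j}{2}=a^2k^2\mp\sqrt{D}$. The plan is to continue these explicit algebraic expressions into $z\in\mathbb{C}_+$ with all branches chosen coherently, and then substitute $z=1+iN$. A direct computation gives the factorization $D(k,\theta)=a^4(k^2-e^{-2\theta})(k^2-e^{2\theta}a^{-4})$, so all four zeros of $D(\cdot,\theta)$, namely $\pm e^{-\theta}$ and $\pm e^{\theta}a^{-2}$, are real; hence $\sqrt{D(z,\theta)}$ has a single-valued analytic branch on the simply connected domain $\mathbb{C}_+$, which I fix to agree with the positive root on $(0,e^{-\theta})$ — the interval on which $\xi_1,\xi_2$ are genuinely defined. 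With this choice $\sin^2\tfrac{\xi_j}{2}$, $\cos^2\tfrac{\xi_j}{2}$ and $\cos\xi_j=a^2z^2\mp\sqrt{D(z,\theta)}$ continue analytically to $\mathbb{C}_+$. Using the continuations of \eqref{cos^2_xi_1/2cos^2_xi_2/2}, i.e. $\cos^2\tfrac{\zeta_1}{2}\cos^2\tfrac{\zeta_2}{2}=z^2e^{-2\theta}$, together with $\cos^2\tfrac{\zeta_1}{2}+\cos^2\tfrac{\zeta_2}{2}=1+a^2z^2$, one checks that any degeneracy $\cos^2\tfrac{\xi_j}{2}\in\{0,1\}$ (that is $\cos\xi_j=\pm1$) forces $z=0\notin\mathbb{C}_+$. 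Thus $\cos\xi_j\ne\pm1$ on $\mathbb{C}_+$, so $\operatorname{arccos}$ can be continued along the image of $z\mapsto a^2z^2\mp\sqrt{D(z,\theta)}$, yielding analytic functions $\zeta_j(z,\theta)$ on $\mathbb{C}_+$ that agree with $\xi_j(k,\theta)$ on the real interval by continuity of the boundary values; this proves (1) and (2).

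For (3), fix $0<\xi_j<\pi$ and let $z=1+iN$, $N\to+\infty$. First I locate $\sqrt{D}$ at infinity: along the imaginary axis $D(iy,\theta)=a^4y^4+2(2e^{-2\theta}-a^2)y^2+1>0$ for every $y$, because $2e^{-2\theta}-a^2=e^{-2\theta}\bigl((e^{2\theta}-1)^2+1\bigr)>0$, so our branch stays real and positive there; hence $\sqrt{D(z,\theta)}=\pm\bigl(a^2z^2-\tfrac{2e^{-2\theta}-a^2}{a^2}\bigr)+\mathcal{O}(z^{-2})$ near $z=i\infty$ with a definite sign, and the same expansion holds at $z=1+iN$ since $|(1+iN)-iN|=1\ll N$. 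Inserted into $\cos\xi_j=a^2z^2\mp\sqrt{D}$: for exactly one choice of sign the leading term $a^2z^2$ cancels, producing a continuation — which we call $\zeta_1$ to match the statement — with $\cos\zeta_1=\tfrac{2e^{-2\theta}-a^2}{a^2}+\mathcal{O}(z^{-2})$, while the other, $\zeta_2$, has $\cos\zeta_2=2a^2z^2+\mathcal{O}(1)$. Two elementary identities, both from $a^2=2-e^{2\theta}$, now drive everything:
\[
\frac{2e^{-2\theta}-a^2}{a^2}=1+\frac{8\sinh^2\theta}{a^2}=1+2b^2,\qquad
\cosh\bigl(2\log(b+\sqrt{b^2+1})\bigr)=1+2b^2,
\]
with $b=2\sinh\theta/\sqrt{2-e^{2\theta}}$ as in \eqref{b(theta)}: the first gives $\cos\zeta_1\to1+2b^2$, which with the second (and $1+2b^2>1$) produces \eqref{Im_zeta_1}, while the leading term $2a^2z^2=2(2-e^{2\theta})z^2$ of $\cos\zeta_2$ produces the $\log(4a^2)$ in \eqref{Im_zeta_2}.

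It remains to invert $\operatorname{arccos}$ and track errors, using that for $z=1+iN$ one has $\Re(z^{-2})=\mathcal{O}(N^{-2})$ but $\Im(z^{-2})=\mathcal{O}(N^{-3})$. For $\zeta_1$: write $\cos\zeta_1=1+2b^2+E_1$ with $E_1=\mathcal{O}(z^{-2})$; since $1+2b^2>1$, the expansion $\operatorname{arccos}(C+if)=\operatorname{arccos}(C)+\mathcal{O}(f)$, whose correction is purely \emph{real} when $C>1$ because $\operatorname{arccos}'(C)\in i\mathbb{R}$ there, gives $\Re\zeta_1=2m_1\pi+\mathcal{O}(\Im E_1)=2m_1\pi+\mathcal{O}(N^{-3})$ and $\Im\zeta_1=2\log(b+\sqrt{b^2+1})+\mathcal{O}(\Re E_1)=2\log(b+\sqrt{b^2+1})+\mathcal{O}(N^{-2})$ — the exchange of the roles of real and imaginary part is precisely the fact that inverse cosine swaps the two directions near an argument exceeding $1$. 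For $\zeta_2$: from $\cos\zeta_2=2a^2z^2+\mathcal{O}(1)$ one gets $e^{i\zeta_2}=(\cos\zeta_2+\sqrt{\cos^2\zeta_2-1})^{-1}$, hence $\zeta_2=i\log(2\cos\zeta_2)+\mathcal{O}(N^{-4})=i\bigl(\log(4a^2)+2\log(1+iN)+\mathcal{O}(N^{-2})\bigr)+2\pi m$ (branch fixed by $\Im\zeta_2>0$); inserting
\[
\log(1+iN)=\tfrac12\log(1+N^2)+i\arctan N=\log N+\tfrac{i\pi}{2}-\tfrac{i}{N}+\mathcal{O}(N^{-2})
\]
and separating parts yields \eqref{Im_zeta_2} with error $\mathcal{O}(N^{-2})$ (from $\tfrac12\log(1+N^2)=\log N+\mathcal{O}(N^{-2})$) and \eqref{Re_zeta_2} with error $\mathcal{O}(N^{-1})$ (from $\arctan N=\tfrac{\pi}{2}-\tfrac1N+\mathcal{O}(N^{-3})$).

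The main obstacle is the branch bookkeeping: one must keep the branches of $\sqrt{D}$, of the outer square roots defining $\cos\tfrac{\xi_j}{2},\sin\tfrac{\xi_j}{2}$, and of $\operatorname{arccos}$ and $\log$ mutually consistent along the continuation into $\mathbb{C}_+$, and in particular decide which of the two continuations has bounded imaginary part at $z=1+iN$ — this hinges on the sign of $\sqrt{D(1+iN,\theta)}$, which the imaginary-axis computation pins down. The remaining work is to carry the expansions of $\sqrt{D}$, $(1+iN)^{-2}$, $\arctan N$ and $\log(1+N^2)$ to exactly the order at which the four distinct error exponents in \eqref{Re_zeta_1}--\eqref{Im_zeta_2} become visible; these all originate in the real/imaginary asymmetry of $(1+iN)^{-2}$ and of $\log(1+iN)$.
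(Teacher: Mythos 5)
Your proof is correct and follows the same continuation strategy as the paper, but extracts the asymptotics by a genuinely different route.  The paper's own proof never touches $\cos\xi_j$ or $\operatorname{arccos}$; it works with the half-angle quantities $\sin\frac{\zeta_1}{2}$ and $\cos\frac{\zeta_2}{2}$ (see \eqref{sin^2_zeta_1/2}--\eqref{cos_zeta_2/2}), writes $\sin\frac{E_1}{2}=x_{s,1}+iy_{s,1}$ and $\cos\frac{E_2}{2}=x_{c,2}+iy_{c,2}$ at $z=1+iN$, and then decouples $\eta_j,\kappa_j$ from the coupled real trigonometric--hyperbolic system \eqref{Re_sin_E_1/2}--\eqref{Im_cos_E_2/2}, deferring the bookkeeping to two appendices.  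You instead continue $\cos\xi_j = a^2z^2 \mp\sqrt{D}$ directly, check that the continuation avoids $\pm1$ on $\mathbb{C}_+$, and invert via $\operatorname{arccos}$.  Your factorization $D = a^4(z^2-e^{-2\theta})(z^2-e^{2\theta}a^{-4})$ (which is correct, and makes the ``all zeros real'' claim explicit where the paper only asserts it), the identities $\tfrac{2e^{-2\theta}-a^2}{a^2}=1+2b^2=\cosh\bigl(2\log(b+\sqrt{b^2+1})\bigr)$, and above all the observation that $\operatorname{arccos}'(C)$ is \emph{purely imaginary} for $C>1$ — so perturbing the argument swaps the roles of real and imaginary corrections — give a cleaner explanation of why \eqref{Re_zeta_1} is $O(N^{-3})$ while \eqref{Im_zeta_1} is only $O(N^{-2})$: it is exactly the $\Re z^{-2}=O(N^{-2})$ versus $\Im z^{-2}=O(N^{-3})$ asymmetry, routed through an $i$-valued derivative.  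This buys you a more self-contained argument for part $(3)$ that makes the source of the four distinct error exponents transparent, at the cost of handling the full-angle $\operatorname{arccos}$ branch (instead of the paper's half-angle square roots, which pin down $\sin\frac{\zeta_1}{2}$ and $\cos\frac{\zeta_2}{2}$ without a separate inverse-trigonometric step).

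One point you leave somewhat loose is exactly the one the paper also leaves loose: the global determination of signs.  You write ``which we call $\zeta_1$ to match the statement,'' and ``branch fixed by $\Im\zeta_2>0$,'' in effect choosing labels and signs to reproduce \eqref{Re_zeta_1}--\eqref{Im_zeta_2} rather than deriving them from the original definitions \eqref{sin^2_xi_1/2}--\eqref{cos^2_xi_2/2}.  If one actually tracks the branch of $\sqrt{D}$ from $k\in(0,e^{-\theta})$ through $\mathbb{C}_+$ (it stays positive on $i\mathbb{R}_+$, so $\sqrt{D(z)}=-a^2z^2+\tfrac{2e^{-2\theta}-a^2}{a^2}+O(z^{-2})$ at $z=i\infty$), then $\cos\zeta_1=a^2z^2-\sqrt D$ is the \emph{unbounded} continuation and $\cos\zeta_2=a^2z^2+\sqrt D$ the bounded one — opposite to the lemma's labels; and since $\partial_k\xi_1<0$ and $\partial_k\xi_2>0$ on $(0,e^{-\theta})$, the fixed signs of $\Im\zeta_j$ on $\mathbb{C}_+$ inherited from these derivatives do not match the lemma as printed either.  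This appears to be an internal labeling/sign slip in the paper itself: the two continuations and their asymptotic modules are correct, and that is all Section~6 ultimately uses, but a proof that takes \eqref{sin^2_xi_1/2}--\eqref{cos^2_xi_2/2} at face value cannot land exactly on \eqref{Re_zeta_1}--\eqref{Im_zeta_2} without a relabeling.  Since you explicitly acknowledged that you were renaming to match the statement, I would not call this a gap in your argument; it is a caveat that applies to the lemma as stated.
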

\begin{proof}
    We put
    \begin{equation*}
        f_{\pm}(x)=\frac{1}{2}\left\{1-(2-e^{2\theta})x\pm\sqrt{\left(1-\left(2-e^{2\theta}\right)x^2\right)^2-4^2x^2\sinh^2{\theta}}\right\}
    \end{equation*}
    for $x\ge0$. Note that
    \begin{equation*}
        0\le{}f_{\pm}(x)\le1,
    \end{equation*}
    \begin{equation*}
        f_-(0)=0,\ f_+(0)=1,
    \end{equation*}
    \begin{equation*}
        \lim_{x\rightarrow+\infty}f_-(x)=1,\ \lim_{x\rightarrow+\infty}f_+(x)=0.
    \end{equation*}
    A simple calculation shows that $f_-(x)$ (resp. $f_+(x)$) are
    increasing (resp. decreasing) monotonically.

    We also put
    \begin{align*}
        g(z)&=\left(1-\left(2-e^{2\theta}\right)z^2\right)^2-4^2z^2\sinh^2{\theta}\\
        &=(2-e^{2\theta})^2z^4-2\left(2e^{-2\theta}-\left(2-e^{2\theta}\right)\right)z^2+1
    \end{align*}
    Another simple calculation shows that all the zeros of $g(z)$ belong to
    $\mathbb{R}$. Then we have the analytic continuations of $f_{\pm}(z^2)$
    with respect to $z\in\mathbb{C}_+$.

    Furthermore, another simple calculation shows that
    $0\le{}f_{\pm}(z^2)\le{1}$ if and only if $z\in\mathbb{R}$. Therefore, we
    have the analytic continuations $\zeta_j(z,\theta)$ of $\xi_j(k,\theta)$,
    $j\in\{1,2\}$, $z\in\mathbb{C}_+$, which have the imaginary parts of fixed
    signs.

    Let $\vert{z}\vert\rightarrow\infty$. Then we have the following
    asymptotic behaviors:
    \begin{equation}
        \label{sin^2_zeta_1/2}
        \sin^2{\frac{\zeta_1}{2}}=-4\frac{\sinh^2{\theta}}{2-e^{2\theta}}-4\frac{e^{-2\theta}\sinh^2{\theta}}{(2-e^{2\theta})^3}z^{-2}+{\cal{O}}(\vert{z}\vert^{-4}),
    \end{equation}
    \begin{equation}
        \label{sin^2_zeta_2/2}
        \sin^2{\frac{\zeta_2}{2}}=-(2-e^{2\theta})z^2+\frac{e^{-2\theta}}{2-e^{2\theta}}+4\frac{e^{-2\theta}\sinh^2{\theta}}{(2-e^{2\theta})^3}z^{-2}+{\cal{O}}(\vert{z}\vert^{-4}),
    \end{equation}
    \begin{equation}
        \label{cos^2_zeta_1/2}
        \cos^2{\frac{\zeta_1}{2}}=\frac{e^{-2\theta}}{2-e^{2\theta}}+4\frac{e^{-2\theta}\sinh^2{\theta}}{(2-e^{2\theta})^3}z^{-2}+{\cal{O}}(\vert{z}\vert^{-4}),
    \end{equation}
    \begin{equation}
        \label{cos^2_zeta_2/2}
        \cos^2{\frac{\zeta_2}{2}}=(2-e^{2\theta})z^2-4\frac{\sinh^2{\theta}}{2-e^{2\theta}}-4\frac{e^{-2\theta}\sinh^2{\theta}}{(2-e^{2\theta})^3}z^{-2}+{\cal{O}}(\vert{z}\vert^{-4}).
    \end{equation}
    Note that $\sin{\frac{\xi_j}{2}}>0$ and $\cos{\frac{\xi_j}{2}}>0$, since
    $0<\xi_j<\pi$, $j\in\{1,2\}$. Therefore, we have
    \begin{equation}
        \label{sin_zeta_1/2}
        \sin{\frac{\zeta_1}{2}}=i\frac{2\sinh{\theta}}{\sqrt{2-e^{2\theta}}}\left\{1+\frac{e^{-2\theta}}{2\left(2-e^{2\theta}\right)^2}z^{-2}+{\cal{O}}(\vert{z}\vert^{-4})\right\},
    \end{equation}
    \begin{equation}
        \label{sin_zeta_2/2}
        \sin{\frac{\zeta_2}{2}}=i\sqrt{2-e^{2\theta}}z\left\{1-\frac{e^{-2\theta}}{2\left(2-e^{2\theta}\right)^2}z^{-2}+{\cal{O}}(\vert{z}\vert^{-4})\right\},
    \end{equation}
    \begin{equation}
        \label{cos_zeta_1/2}
        \cos{\frac{\zeta_1}{2}}=\frac{e^{-\theta}}{\sqrt{2-e^{2\theta}}}\left\{1+2\frac{\sinh^2{\theta}}{\left(2-e^{2\theta}\right)^2}z^{-2}+{\cal{O}}(\vert{z}\vert^{-4})\right\},
    \end{equation}
    \begin{equation}
        \label{cos_zeta_2/2}
        \cos{\frac{\zeta_2}{2}}=\sqrt{2-e^{2\theta}}z\left\{1-2\frac{\sinh^2{\theta}}{\left(2-e^{2\theta}\right)^2}z^{-2}+{\cal{O}}(\vert{z}\vert^{-4})\right\}.
    \end{equation}

    Let $E_j=E_j(z)$ be the analytic continuations of $\xi_j$,
    $j\in\{1,2\}$. We put $z=1+iN$. Then, from \eqref{sin_zeta_1/2} and
    \eqref{cos_zeta_2/2}, we have
    \begin{equation}
        \label{sin_E_1/2}
        \sin{\frac{E_1(z)}{2}}=x_{s,1}+iy_{s,1},
    \end{equation}
    \begin{equation}
        \label{cos_E_2/2}
        \cos{\frac{E_2(z)}{2}}=x_{c,2}+iy_{c,2},
    \end{equation}
    where
    \begin{equation}
        \label{x_s,1}
        x_{s,1}=b(\theta)\left\{\frac{e^{-2\theta}}{\left(2-e^{2\theta}\right)^2}N^{-3}+{\cal{O}}(N^{-5})\right\},
    \end{equation}
    \begin{equation}
        \label{y_s,1}
        y_{s,1}=b(\theta)\left\{1-\frac{e^{-2\theta}}{2\left(2-e^{2\theta}\right)^2}N^{-2}+{\cal{O}}(N^{-4})\right\},
    \end{equation}
    \begin{equation}
        \label{x_c,2}
        x_{c,2}=a(\theta)\left\{1-2\frac{\sinh^2{\theta}}{\left(2-e^{2\theta}\right)^2}N^{-2}+{\cal{O}}(N^{-4})\right\},
    \end{equation}
    \begin{equation}
        \label{y_c,2}
        y_{c,2}=a(\theta)\left\{N+2\frac{\sinh^2{\theta}}{\left(2-e^{2\theta}\right)^2}N^{-1}+{\cal{O}}(N^{-3})\right\},
    \end{equation}
    and $a=a(\theta)$ and $b=b(\theta)$ is defined by \eqref{a(theta)} and
    \eqref{b(theta)}, respectively. We also put $E_j=\eta_j+i\kappa_j$,
    $j\in\{1,2\}$. Then, from \eqref{sin_E_1/2} and \eqref{cos_E_2/2}, we have
    \begin{equation}
        \label{Re_sin_E_1/2}
        \sin{\frac{\eta_1}{2}}\cosh{\frac{\kappa_1}{2}}=x_{s,1},
    \end{equation}
    \begin{equation}
        \label{Im_sin_E_1/2}
        \cos{\frac{\eta_1}{2}}\sinh{\frac{\kappa_1}{2}}=y_{s,1},
    \end{equation}
    \begin{equation}
        \label{Re_cos_E_2/2}
        \cos{\frac{\eta_2}{2}}\cosh{\frac{\kappa_2}{2}}=x_{c,2},
    \end{equation}
    \begin{equation}
        \label{Im_cos_E_2/2}
        -\sin{\frac{\eta_2}{2}}\sinh{\frac{\kappa_2}{2}}=y_{c,2}.
    \end{equation}
 
    For sufficiently large $N>0$, from \eqref{x_s,1} and \eqref{x_c,2}, we
    have $x_{s,1}>0$ and $x_{c,2}>0$, which implies, from \eqref{Re_sin_E_1/2}
    and \eqref{Re_cos_E_2/2}, that $\sin{\frac{\eta_1}{2}}>0$ and
    $\cos{\frac{\eta_2}{2}}>0$, since $\cosh{\frac{\kappa_j}{2}}>0$,
    $j\in\{1,2\}$. We can now prove \eqref{Re_zeta_1}, \eqref{Im_zeta_1},
    \eqref{Re_zeta_2}, and \eqref{Im_zeta_2} in the same way as in Isozaki and
    Korotyaev \cite{2011arXiv1103.2193I} by carefully investigating the
    asymptotic behaviors as $N\rightarrow+\infty$. We shall give the proof of
    \eqref{Re_zeta_1} and \eqref{Im_zeta_1} in
    \ref{proof_of_the_asymptotics_of_zeta_1}, and that of \eqref{Re_zeta_1}
    and \eqref{Im_zeta_1} in \ref{proof_of_the_asymptotics_of_zeta_2},
    respectively.
\end{proof}

\begin{rem}
    In the case that $\xi_1,\xi_2\in(-\pi,0)$, we have
    $\sin{\frac{\xi_j}{2}}<0$ and $\cos{\frac{\xi_j}{2}}>0$,
    $j\in\{1,2\}$. Therefore, also in this case, we can prove the same lemma
    as Lemma \ref{lemma_analytic_continuation}, where we replace the third
    statement in Lemma \ref{lemma_analytic_continuation} with the following
    one:
    \begin{enumerate}
        \renewcommand{\labelenumi}{3$^{\prime}$}
        \item If $-\pi<\xi_j<0$, $j\in\{1,2\}$, there exist $m_3$,
            $m_4\in\mathbb{Z}$ such that
            \begin{equation}
                \label{Re_zeta_1}
                \Re{\zeta_1(1+iN,\theta)}=2m_3\pi+{\cal{O}}(N^{-3}),
            \end{equation}
            \begin{equation}
                \label{Im_zeta_1}
                \Im{\zeta_1(1+iN,\theta)}=-2\log{(b+\sqrt{b^2+1})}+{\cal{O}}(N^{-2}),
            \end{equation}
            \begin{equation}
                \label{Re_zeta_2}
                \Re{\zeta_2(1+iN,\theta)}=(2m_4+1)\pi+{\cal{O}}(N^{-1}),
            \end{equation}
            \begin{equation}
                \label{Im_zeta_2}
                \Im{\zeta_2(1+iN,\theta)}=-2\log{N}-\log{(4a^2)}+{\cal{O}}(N^{-2}),
            \end{equation}
            as $N\rightarrow+\infty$.
    \end{enumerate}
\end{rem}

%
%
\section{Resolvent estimates}

We define the following non-negative quantities: for
$n=(n_1,n_2)\in\mathbb{Z}^2$,
\begin{equation}
    \label{definition_d(n)}
    d(n)=d_{11}(n)=d_{22}(n)=
    \begin{cases}
        \vert{n_1}\vert+\vert{n_2}\vert,&\text{if $n_1\cdot{n_2}\ge{0}$},\\
        \max{\{\vert{n_1}\vert,\vert{n_2}\vert\}},&\text{if
          $n_1\cdot{n_2}\le{0}$},
    \end{cases}
\end{equation}
\begin{equation}
    \label{definition_d_12(n)}
    d_{12}(n)=d_{21}(-n)=
    \begin{cases}
        \vert{n_1}\vert+\vert{n_2}\vert-1,&\text{if $n_1>0,n_2>0$},\\
        \max{\{\vert{n_1}\vert-1,\vert{n_2}\vert\}},&\text{if
          $n_1>0,n_2\le{0}$},\\
        \vert{n_1}\vert+\vert{n_2}\vert,&\text{if $n_1\le0,n_2\le{0}$},\\
        \max{\{\vert{n_1}\vert,\vert{n_2}\vert-1\}},&\text{if
          $n_1\le0,n_2>0$}.
    \end{cases}
\end{equation}
The contour lines of $d_{ij}(\cdot)$, $i,j\in\{1,2\}$, are shown in Figure
\ref{fig_contour_line_d_ij(n)=s}. The quantity $d(\cdot)$ is used to estimate
the support of the Fourier coefficients of $\vert\alpha(\xi)\vert^2$ in Lemma
\ref{Fourier_coefficients_r}, where $\alpha(\xi)$ is defined by \eqref{alpha}.
Also, $d_{ij}(\cdot)$ , $(i,j)=(1,2)$ or $(2,1)$, are used for
$\vert\alpha(\xi)\vert^2\alpha(\xi)$ or
$\vert\alpha(\xi)\vert^2\overline{\alpha}(\xi)$ in Lemmas
\ref{Fourier_coefficients_r_alpha} or
\ref{Fourier_coefficients_r_overline_alpha}, respectively. By making use of
those Lemmas, we can derive the resolvent estimates  at the end this section.
\begin{figure}[htbp]
    \begin{minipage}{0.32\hsize}
        \begin{center}
            \includegraphics[height=3cm]{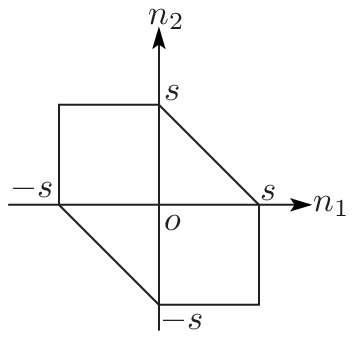}
        \end{center}
    \end{minipage}
    \begin{minipage}{0.32\hsize}
        \begin{center}
            \includegraphics[height=3cm]{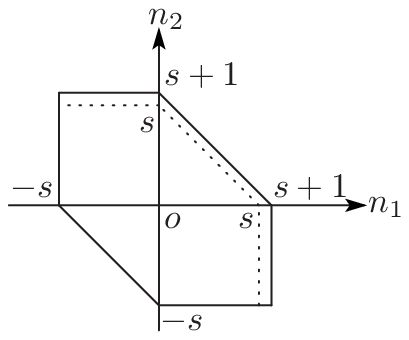}
        \end{center}
    \end{minipage}
    \begin{minipage}{0.32\hsize}
        \begin{center}
            \includegraphics[height=3cm]{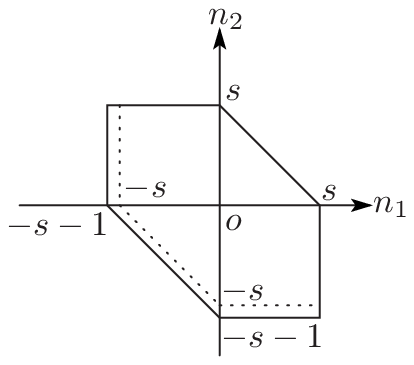}
        \end{center}
    \end{minipage}
    \caption{The shape of contour lines of $d_{ij}(n)=s$; $(i,j)=(1,1)$ or
      $(2,2)$, $(1,2)$, and $(2,1)$ from the left to the
      right.\label{fig_contour_line_d_ij(n)=s}}
\end{figure}

The following two lemmas are immediate consequences from the definitions
\eqref{definition_d(n)} and \eqref{definition_d_12(n)}.
\begin{lemma}
    \label{lemma_d_ij(n)>=|n_2|}
    For $n=(n_1,n_2)\in\mathbb{Z}^2$,
    \begin{equation*}
        d(n)\ge\vert{n_2}\vert,
    \end{equation*}
    \begin{equation*}
        d_{12}(n)\ge
        \begin{cases}
            \vert{n_2}\vert-1,&\text{if $n_2>0$},\\
            \vert{n_2}\vert,&\text{if $n_2\le{0}$},
        \end{cases}
    \end{equation*}
    \begin{equation*}
        d_{21}(n)\ge
        \begin{cases}
            \vert{n_2}\vert,&\text{if $n_2\ge0$},\\
            \vert{n_2}\vert-1,&\text{if $n_2<0$}.
        \end{cases}
    \end{equation*}
\end{lemma}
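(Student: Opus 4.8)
The plan is simply to read off each of the three inequalities from the piecewise defining formulas \eqref{definition_d(n)} and \eqref{definition_d_12(n)}, splitting into the branches appearing there. There is no real difficulty; the only points requiring attention are the sign conventions separating the branches of $d_{12}$ and the elementary remark that a nonzero integer $n_1$ satisfies $|n_1|\ge1$.

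First I would treat $d(n)\ge|n_2|$. If $n_1 n_2\ge0$, then $d(n)=|n_1|+|n_2|\ge|n_2|$ by \eqref{definition_d(n)}; if $n_1 n_2\le0$, then $d(n)=\max\{|n_1|,|n_2|\}\ge|n_2|$. (On the overlap $n_1 n_2=0$ the two formulas agree, so $d$ is well defined and the bound is unambiguous.) Next, for $d_{12}(n)$ I would go through the four regions of \eqref{definition_d_12(n)}. When $n_2>0$: if $n_1>0$ then $|n_1|\ge1$ gives $d_{12}(n)=|n_1|+|n_2|-1\ge|n_2|\ge|n_2|-1$, while if $n_1\le0$ then $d_{12}(n)=\max\{|n_1|,|n_2|-1\}\ge|n_2|-1$. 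When $n_2\le0$: if $n_1>0$ then $d_{12}(n)=\max\{|n_1|-1,|n_2|\}\ge|n_2|$, and if $n_1\le0$ then $d_{12}(n)=|n_1|+|n_2|\ge|n_2|$. Collecting these gives exactly the stated two-case lower bound for $d_{12}$.

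Finally, for $d_{21}$ I would invoke the relation $d_{21}(n)=d_{12}(-n)$ contained in \eqref{definition_d_12(n)} and apply the bound just obtained to the point $-n=(-n_1,-n_2)$, whose second coordinate is $-n_2$. If $n_2\ge0$ then $-n_2\le0$, hence $d_{21}(n)=d_{12}(-n)\ge|-n_2|=|n_2|$; if $n_2<0$ then $-n_2>0$, hence $d_{21}(n)=d_{12}(-n)\ge|-n_2|-1=|n_2|-1$. This exhausts all cases. The only genuine ``obstacle'' is bookkeeping: one must check that the half-open sign conditions in \eqref{definition_d_12(n)} partition $\mathbb{Z}^2$ so that each $n$ lies in exactly one branch, which they do, and that the two expressions for $d(n)$ agree on $\{n_1n_2=0\}$.
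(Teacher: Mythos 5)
Your proof is correct and follows exactly the route the paper has in mind: the paper merely remarks that Lemmas \ref{lemma_d_ij(n)>=|n_2|} and \ref{d-1=<d_ij=<d} are ``immediate consequences from the definitions'' \eqref{definition_d(n)} and \eqref{definition_d_12(n)}, and your case-by-case check (plus the reduction $d_{21}(n)=d_{12}(-n)$) is precisely that verification.
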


\begin{lemma}
    \label{d-1=<d_ij=<d}
    For $i,j\in\{1,2\}$,
    \begin{equation*}
        d(n)-1\le{d_{ij}(n)}\le{d(n)}.
    \end{equation*}
\end{lemma}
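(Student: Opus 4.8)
The plan is to verify the two-sided bound by a direct case analysis following the partition of $\mathbb{Z}^2$ built into the definitions \eqref{definition_d(n)} and \eqref{definition_d_12(n)}. Since $d_{11}=d_{22}=d$ by definition, the statement is trivial for $(i,j)=(1,1)$ and $(2,2)$, so only $(i,j)=(1,2)$ and $(2,1)$ require work.

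I would first reduce the $(2,1)$ case to the $(1,2)$ case. Indeed $d_{21}(n)=d_{12}(-n)$ by definition, and $d(-n)=d(n)$ because the right-hand side of \eqref{definition_d(n)} depends only on $|n_1|$, $|n_2|$ and on the sign of $n_1 n_2=(-n_1)(-n_2)$; hence $d(n)-1=d(-n)-1\le d_{12}(-n)=d_{21}(n)\le d(-n)=d(n)$ as soon as the bound is established for $d_{12}$.

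For $d_{12}$ I would run through the four sign regions appearing in \eqref{definition_d_12(n)}. In the two regions $n_1>0,\,n_2>0$ and $n_1\le0,\,n_2\le0$ one has $n_1 n_2\ge0$, so $d(n)=|n_1|+|n_2|$, while $d_{12}(n)$ equals $d(n)-1$ in the first region and $d(n)$ in the second, which is exactly the asserted bound. In the two mixed regions $n_1>0,\,n_2\le0$ and $n_1\le0,\,n_2>0$ one has $n_1 n_2\le0$, so $d(n)=\max\{|n_1|,|n_2|\}$, whereas $d_{12}(n)$ is $\max\{|n_1|-1,|n_2|\}$ or $\max\{|n_1|,|n_2|-1\}$; in this situation the claim reduces to the elementary fact $\max\{a,b\}-1\le\max\{a-1,b\}\le\max\{a,b\}$ for reals $a,b$, which I would check by distinguishing which of $a$, $b$ is the larger. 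There is essentially no obstacle here: the only points deserving a moment's attention are that the case lists in both definitions partition $\mathbb{Z}^2$ without overlap and that the two formulas defining $d$ coincide on the boundary $n_1 n_2=0$, so that no inter-case consistency needs to be verified separately.
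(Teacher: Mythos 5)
Your proof is correct. The paper gives no proof for this lemma, declaring it an ``immediate consequence of the definitions,'' and your sign-region case analysis (together with the reduction of $d_{21}$ to $d_{12}$ via $d_{21}(n)=d_{12}(-n)$ and $d(-n)=d(n)$, and the elementary bound $\max\{a,b\}-1\le\max\{a-1,b\}\le\max\{a,b\}$) is precisely the routine verification the author is implicitly invoking.
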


We can easily show that $d(an)=\vert{a}\vert{}d(n)$ for $a\in\mathbb{Z}$;
also, that $d(n)=0$ if and only if $n=(0,0)$. Moreover, we have
\begin{lemma}
    \label{lemma_d_triangle_ineq}
    $d(\cdot)$ is a norm on $\mathbb{Z}^2$. In particular, it satisfies the
    triangle inequality:
    \begin{equation}
        \label{triangle_inequality_d(n)}
        d(m-n)\le{}d(m-l)+d(l-n).
    \end{equation}
\end{lemma}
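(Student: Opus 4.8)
The plan is to reduce the whole statement to the single pointwise identity
\begin{equation*}
    d(n)=\max\{\vert n_1\vert,\ \vert n_2\vert,\ \vert n_1+n_2\vert\},\qquad n=(n_1,n_2)\in\mathbb{Z}^2,
\end{equation*}
which exhibits $d$ as a maximum of absolute values of three linear functionals; from such a representation subadditivity is immediate. First I would verify this identity by the two cases of the definition \eqref{definition_d(n)}. If $n_1n_2\ge0$, then $n_1$ and $n_2$ have the same sign (or one of them vanishes), so $\vert n_1+n_2\vert=\vert n_1\vert+\vert n_2\vert\ge\max\{\vert n_1\vert,\vert n_2\vert\}$ and the right-hand side is $\vert n_1\vert+\vert n_2\vert=d(n)$. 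If $n_1n_2\le0$, then $\vert n_1+n_2\vert=\vert\,\vert n_1\vert-\vert n_2\vert\,\vert\le\max\{\vert n_1\vert,\vert n_2\vert\}$, so the right-hand side is $\max\{\vert n_1\vert,\vert n_2\vert\}=d(n)$. On the coordinate axes both cases apply and give the same value, so there is no ambiguity.

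Granting the identity, for $a,b\in\mathbb{Z}^2$ each of $\vert a_1+b_1\vert$, $\vert a_2+b_2\vert$, $\vert (a_1+a_2)+(b_1+b_2)\vert$ is at most the corresponding sum of the two quantities $\max\{\vert a_1\vert,\vert a_2\vert,\vert a_1+a_2\vert\}=d(a)$ and $\max\{\vert b_1\vert,\vert b_2\vert,\vert b_1+b_2\vert\}=d(b)$, using only the elementary fact that $\max_i(p_i+q_i)\le\max_i p_i+\max_i q_i$. Hence $d(a+b)\le d(a)+d(b)$. Combined with the already-noted properties $d(an)=\vert a\vert\,d(n)$ for $a\in\mathbb{Z}$ and $d(n)=0\iff n=(0,0)$, this shows that $d$ is a norm on $\mathbb{Z}^2$, and the stated inequality \eqref{triangle_inequality_d(n)} then follows by writing $m-n=(m-l)+(l-n)$ and applying subadditivity with $a=m-l$, $b=l-n$.

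An equivalent, more geometric packaging of the same argument — worth recording because it explains the shape of the contour lines in Figure \ref{fig_contour_line_d_ij(n)=s} — is to extend $d$ to $\mathbb{R}^2$ by the same piecewise formula and observe that its unit ball is the closed convex hexagon with vertices $(\pm1,0)$, $(0,\pm1)$, $(1,-1)$, $(-1,1)$: it is the $\ell^1$-ball intersected with the first and third quadrants, glued along the axes to the $\ell^\infty$-ball intersected with the second and fourth. Since this hexagon is symmetric about the origin, compact, convex, and contains the origin in its interior, $d$ is the Minkowski gauge of a convex body, hence a norm on $\mathbb{R}^2$, and its restriction to $\mathbb{Z}^2$ is the function under study. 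I do not expect any genuine obstacle here; the only points requiring care are the bookkeeping on the coordinate axes, where the two branches of the definition must be checked to coincide, and — in the geometric version — the verification that the $\ell^1$ and $\ell^\infty$ pieces fit together into a convex set with no re-entrant corner. In either form the proof is short.
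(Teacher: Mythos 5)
Your proposal is correct and takes a genuinely different, and considerably cleaner, route than the paper. The paper proves \eqref{triangle_inequality_d(n)} by a case-by-case analysis: it first fixes the sign pattern of $m-n$ (reducing to two representative cases by symmetry), and for each fixes one of nine sign patterns for $m-l$ and $l-n$, checking the inequality separately each time. Your approach instead rests on the single identity
\begin{equation*}
    d(n)=\max\{\vert n_1\vert,\ \vert n_2\vert,\ \vert n_1+n_2\vert\},
\end{equation*}
which I verified against both branches of \eqref{definition_d(n)} exactly as you indicate: when $n_1n_2\ge0$ the three quantities are dominated by $\vert n_1+n_2\vert=\vert n_1\vert+\vert n_2\vert$, and when $n_1n_2\le0$ the third term $\vert n_1+n_2\vert=\bigl\vert\,\vert n_1\vert-\vert n_2\vert\,\bigr\vert$ is dominated by $\max\{\vert n_1\vert,\vert n_2\vert\}$; the two branches agree on the axes. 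Subadditivity, absolute homogeneity, and positive definiteness then all follow at once from the representation of $d$ as a maximum of absolute values of linear functionals, so the norm property is immediate and \eqref{triangle_inequality_d(n)} is a formal consequence via $m-n=(m-l)+(l-n)$. What your argument buys is conceptual clarity and brevity: it replaces an eighteen-case verification by a two-line algebraic identity, it simultaneously proves the homogeneity and definiteness that the paper merely asserts, and — as you note in the geometric reformulation via the Minkowski gauge of the convex hexagon $\{\vert x_1\vert\le1\}\cap\{\vert x_2\vert\le1\}\cap\{\vert x_1+x_2\vert\le1\}$ — it explains \emph{why} the contour lines in Figure \ref{fig_contour_line_d_ij(n)=s} are hexagonal rather than leaving that observation as a remark. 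The one thing the paper's longer route offers that yours does not automatically yield is that its case machinery is reused, nearly verbatim, for the quasi-triangle inequalities of Lemmas \ref{lemma_d_ij=<d_ii+d_ij} and \ref{lemma_d_ii=<d_ij+d_ji+1} involving the off-diagonal quantities $d_{12}$, $d_{21}$, which do not admit an equally clean linear-functional representation because they are not symmetric under $n\mapsto -n$; so for those a separate or adapted argument would still be needed.
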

\begin{proof}
    We only have to prove \eqref{triangle_inequality_d(n)}. Assume that
    $m_1-n_1\ge0$ and $m_2-n_2\ge0$, where
    $m=(m_1,m_2),n=(n_1,n_2)\in\mathbb{Z}^2$. We treat the following nine
    cases according to the locations of $l=(l_1,l_2)\in\mathbb{Z}^2$:
    \begin{enumerate}[{(}1{)}]
    \item $l_1-n_1\ge0$, $l_2-n_2\ge0$, $m_1-l_1\ge0$, $m_2-l_2\ge0$,
        \label{l_1-n_1=>0,l_2-n_2=>0,m_1-l_1=>0,m_2-l_2=>0}
    \item $l_1-n_1\ge0$, $l_2-n_2\ge0$, $m_1-l_1\ge0$, $m_2-l_2\le0$,
        \label{l_1-n_1=>0,l_2-n_2=>0,m_1-l_1=>0,m_2-l_2=<0}
    \item $l_1-n_1\ge0$, $l_2-n_2\ge0$, $m_1-l_1\le0$, $m_2-l_2\ge0$,
        \label{l_1-n_1=>0,l_2-n_2=>0,m_1-l_1=<0,m_2-l_2=>0}
    \item $l_1-n_1\ge0$, $l_2-n_2\ge0$, $m_1-l_1\le0$, $m_2-l_2\le0$,
        \label{l_1-n_1=>0,l_2-n_2=>0,m_1-l_1=<0,m_2-l_2=<0}
    \item $l_1-n_1\le0$, $l_2-n_2\ge0$, $m_1-l_1\ge0$, $m_2-l_2\ge0$,
        \label{l_1-n_1=<0,l_2-n_2=>0,m_1-l_1=>0,m_2-l_2=>0}
    \item $l_1-n_1\le0$, $l_2-n_2\ge0$, $m_1-l_1\ge0$, $m_2-l_2\le0$,
        \label{l_1-n_1=<0,l_2-n_2=>0,m_1-l_1=>0,m_2-l_2=<0}
    \item $l_1-n_1\ge0$, $l_2-n_2\le0$, $m_1-l_1\ge0$, $m_2-l_2\ge0$,
        \label{l_1-n_1=>0,l_2-n_2=<0,m_1-l_1=>0,m_2-l_2=>0}
    \item $l_1-n_1\ge0$, $l_2-n_2\le0$, $m_1-l_1\le0$, $m_2-l_2\ge0$,
        \label{l_1-n_1=>0,l_2-n_2=<0,m_1-l_1=<0,m_2-l_2=>0}
    \item $l_1-n_1\le0$, $l_2-n_2\le0$, $m_1-l_1\le0$, $m_2-l_2\le0$.
        \label{l_1-n_1=<0,l_2-n_2=<0,m_1-l_1=<0,m_2-l_2=<0}
    \end{enumerate}
    \begin{figure}[htbp]
        \begin{minipage}{0.5\hsize}
            \begin{center}
                \includegraphics[height=4.5cm]{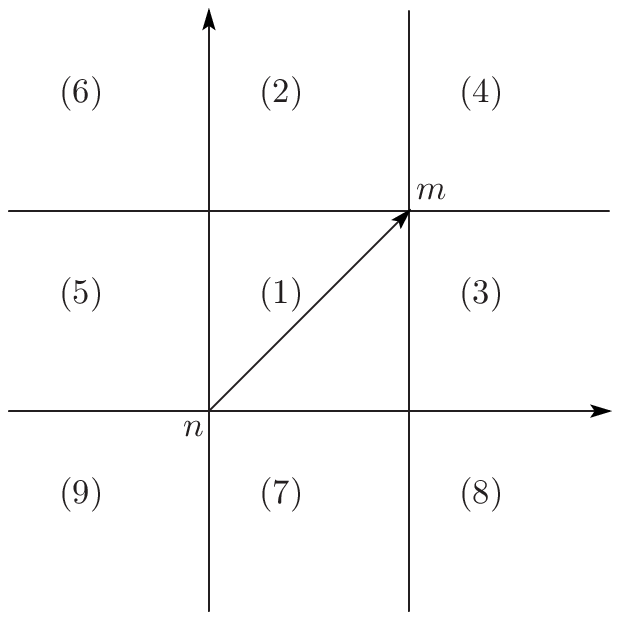}
            \end{center}
        \end{minipage}
        \begin{minipage}{0.5\hsize}
            \begin{center}
                \includegraphics[height=4.5cm]{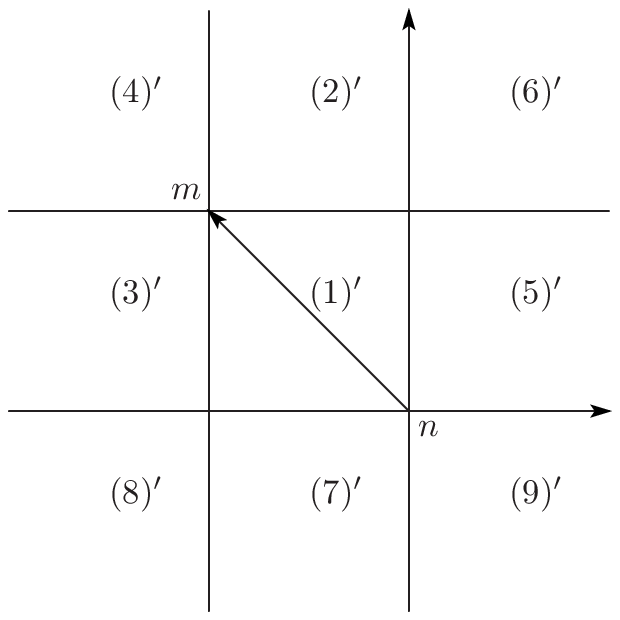}
            \end{center}
        \end{minipage}
        \caption{The left figure shows the location of $l\in\mathbb{Z}^2$
          corresponding to the case from
          \eqref{l_1-n_1=>0,l_2-n_2=>0,m_1-l_1=>0,m_2-l_2=>0} to
          \eqref{l_1-n_1=<0,l_2-n_2=<0,m_1-l_1=<0,m_2-l_2=<0}; the right one
          corresponds that of $l\in\mathbb{Z}^2$ from
          \eqref{l_1-n_1=<0,l_2-n_2=>0,m_1-l_1=<0,m_2-l_2=>0'}$^{\prime}$ to
          \eqref{l_1-n_1=>0,l_2-n_2=<0,m_1-l_1=<0,m_2-l_2=>0'}$^{\prime}$.\label{fig_hexagonal_norm_case}}
    \end{figure}
    The left figure in Figure \ref{fig_hexagonal_norm_case} illustrates the
    location of $l\in\mathbb{Z}^2$ which corresponds to the above nine cases.

    Let us consider, for example, the case
    \eqref{l_1-n_1=>0,l_2-n_2=>0,m_1-l_1=<0,m_2-l_2=>0}. Then we have
    \begin{align*}
        d(m-n)&=\vert{m_1-n_1}\vert+\vert{m_2-n_2}\vert\\
        &=(m_1-l_1)+(l_1-n_1)+\vert{m_2-n_2}\vert.
    \end{align*}
    By the assumption, we have $m_1-l_1\le{0}$, which implies
    \begin{align*}
        d(m-n)&\le\vert{l_1-n_1}\vert+\vert{m_2-l_2}\vert+\vert{l_2-n_2}\vert\\
        &\le\max{\{\vert{m_1-l_1}\vert,\vert{m_2-l_2}\vert\}}+\vert{l_1-n_1}\vert+\vert{l_2-n_2}\vert\\
        &=d(m-l)+d(l-n).
    \end{align*}

    We can treat the other cases in the same way.

    Next, assume that $m_1-n_1\le0$ and $m_2-n_2\ge0$. This time, we treat the
    following nine cases according to the location of $l$:
    \begin{enumerate}[{(}1{)}$^{\prime}$]
    \item $l_1-n_1\le0$, $l_2-n_2\ge0$, $m_1-l_1\le0$, $m_2-l_2\ge0$,
        \label{l_1-n_1=<0,l_2-n_2=>0,m_1-l_1=<0,m_2-l_2=>0'}
    \item $l_1-n_1\le0$, $l_2-n_2\ge0$, $m_1-l_1\le0$, $m_2-l_2\le0$,
        \label{l_1-n_1=<0,l_2-n_2=>0,m_1-l_1=<0,m_2-l_2=<0'}
    \item $l_1-n_1\le0$, $l_2-n_2\ge0$, $m_1-l_1\ge0$, $m_2-l_2\ge0$,
        \label{l_1-n_1=<0,l_2-n_2=>0,m_1-l_1=>0,m_2-l_2=>0'}
    \item $l_1-n_1\le0$, $l_2-n_2\ge0$, $m_1-l_1\ge0$, $m_2-l_2\le0$,
        \label{l_1-n_1=<0,l_2-n_2=>0,m_1-l_1=>0,m_2-l_2=<0'}
    \item $l_1-n_1\ge0$, $l_2-n_2\ge0$, $m_1-l_1\le0$, $m_2-l_2\ge0$,
        \label{l_1-n_1=>0,l_2-n_2=>0,m_1-l_1=<0,m_2-l_2=>0'}
    \item $l_1-n_1\ge0$, $l_2-n_2\ge0$, $m_1-l_1\le0$, $m_2-l_2\le0$,
        \label{l_1-n_1=>0,l_2-n_2=>0,m_1-l_1=<0,m_2-l_2=<0'}
    \item $l_1-n_1\le0$, $l_2-n_2\le0$, $m_1-l_1\le0$, $m_2-l_2\ge0$,
        \label{l_1-n_1=<0,l_2-n_2=<0,m_1-l_1=<0,m_2-l_2=>0'}
    \item $l_1-n_1\le0$, $l_2-n_2\le0$, $m_1-l_1\ge0$, $m_2-l_2\ge0$,
        \label{l_1-n_1=<0,l_2-n_2=<0,m_1-l_1=>0,m_2-l_2=>0'}
    \item $l_1-n_1\ge0$, $l_2-n_2\le0$, $m_1-l_1\le0$, $m_2-l_2\ge0$.
        \label{l_1-n_1=>0,l_2-n_2=<0,m_1-l_1=<0,m_2-l_2=>0'}
    \end{enumerate}
    The right figure in Figure \ref{fig_hexagonal_norm_case} illustrates the
    location of $l\in\mathbb{Z}^2$ which corresponds to the above nine cases.

    Let us consider, for example, the case
    \eqref{l_1-n_1=<0,l_2-n_2=>0,m_1-l_1=>0,m_2-l_2=>0'}$^{\prime}$. Then we
    have
    \begin{align*}
        d(m-n)&=\max{\{\vert{m_1-n_1}\vert,\vert{m_2-n_2}\vert\}}\\
        &=\max{\{(n_1-l_1)+(l_1-m_1),\vert{m_2-n_2}\vert\}}.
    \end{align*}
    By the assumption, we have $l_1-m_1\le{0}$, which implies
    \begin{align*}
        d(m-n)&\le\max{\{\vert{l_1-n_1}\vert,\vert{m_2-l_2}\vert+\vert{l_2-n_2}\vert\}}\\
        &\le\vert{m_2-l_2}\vert+\max{\{\vert{l_1-n_1}\vert,\vert{l_2-n_2}\vert\}}\\
        &\le\vert{m_1-l_2}\vert+\vert{m_2-l_2}\vert+\max{\{\vert{l_1-n_1}\vert,\vert{l_2-n_2}\vert\}}\\
        &=d(m-l)+d(l-n).
    \end{align*}

    We can treat the other cases in the same way.

    The case that $m_1-n_1\le{0}$ and $m_2-n_2\le{0}$ can be reduced to the
    first one; the remaining one to the second.
\end{proof}

\begin{rem}
    One can convince oneself of the above lemma if one bears the shape of the
    contour lines of $d_{ij}(\cdot)$ in mind and shifts them. One can also have
    insight into Lemmas \ref{lemma_d_ij=<d_ii+d_ij} and
    \ref{lemma_d_ii=<d_ij+d_ji+1} in the same way.
\end{rem}

We can prove the following two lemmas in the same way as above, whose proofs
will be given in \ref{appendix_quasi_triangles_inequality}.
\begin{lemma}
    \label{lemma_d_ij=<d_ii+d_ij}
    For $i$, $j\in\{1,2\}$,
    \begin{equation}
        \label{d_ij=<d_ii+d_ij}
        d_{ij}(m-n)\le d_{ii}(m-l)+d_{ij}(l-n),
    \end{equation}
    \begin{equation}
        \label{d_ij=<d_ij+d_jj}
        d_{ij}(m-n)\le d_{ij}(m-l)+d_{jj}(l-n).
    \end{equation}
\end{lemma}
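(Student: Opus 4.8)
The plan is to peel off the trivial cases, reduce the rest to a single combinatorial inequality, and then prove that one inequality from a description of $d_{12}$ as a minimum of shifted copies of $d$. First, for $(i,j)=(1,1)$ and $(i,j)=(2,2)$ one has $d_{ij}=d$, so both \eqref{d_ij=<d_ii+d_ij} and \eqref{d_ij=<d_ij+d_jj} are just the triangle inequality for $d$ already proved in Lemma~\ref{lemma_d_triangle_ineq}. For $(i,j)\in\{(1,2),(2,1)\}$ I would set $x=m-l$, $y=l-n$ and observe: \eqref{d_ij=<d_ii+d_ij} with $(i,j)=(1,2)$ is precisely
\begin{equation*}
    d_{12}(x+y)\le d(x)+d_{12}(y),\qquad x,\,y\in\mathbb{Z}^2; \tag{$\star$}
\end{equation*}
\eqref{d_ij=<d_ij+d_jj} with $(i,j)=(1,2)$ is $(\star)$ with $x$ and $y$ interchanged; and the two relations with $(i,j)=(2,1)$ follow from $(\star)$ by the changes of variable $(x,y)\mapsto(-x,-y)$ and $(x,y)\mapsto(-y,-x)$ together with the defining symmetry $d_{21}(z)=d_{12}(-z)$ from \eqref{definition_d_12(n)} and the evenness of $d$. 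So the whole lemma rests on $(\star)$.

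The heart of the matter is the identity
\begin{equation*}
    d_{12}(n)=\min\bigl\{\,d(n),\ d\bigl(n-(1,0)\bigr),\ d\bigl(n-(0,1)\bigr)\,\bigr\},\qquad n\in\mathbb{Z}^2,
\end{equation*}
which formalizes the fact that the level curves of $d_{12}$ are those of $d$ slid in the coordinate directions $(1,0)$ and $(0,1)$ -- visible in Figure~\ref{fig_contour_line_d_ij(n)=s} and the content of the hint in the Remark after Lemma~\ref{lemma_d_triangle_ineq}. I would prove it by a finite computation: split $\mathbb{Z}^2$ into the four regions of \eqref{definition_d_12(n)}, and in each region compute $d\bigl(n-(1,0)\bigr)$ and $d\bigl(n-(0,1)\bigr)$ from \eqref{definition_d(n)}, being careful about which branch of $d$ applies after the shift, and take the minimum. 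The coordinate axes, where the four regions meet, have to be inspected on their own. As a consistency check, the identity instantly yields Lemmas~\ref{lemma_d_ij(n)>=|n_2|} and \ref{d-1=<d_ij=<d}.

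Granted the identity, $(\star)$ is one line: by the triangle inequality for $d$ (Lemma~\ref{lemma_d_triangle_ineq}),
\begin{equation*}
    d_{12}(x+y)=\min_{v\in\{(0,0),(1,0),(0,1)\}}d(x+y-v)\le\min_{v}\bigl(d(x)+d(y-v)\bigr)=d(x)+d_{12}(y),
\end{equation*}
and feeding this back into the reductions above gives \eqref{d_ij=<d_ii+d_ij} and \eqref{d_ij=<d_ij+d_jj} for every $i,j$. I expect the only real work to be the region-by-region verification of the min-representation; it is routine but must be carried out with attention to the branch switches of $d$ along the axes, where a sign slip would wreck the identity. An equivalent but longer route -- presumably the one taken in \ref{appendix_quasi_triangles_inequality} -- is to bypass the identity and argue \eqref{d_ij=<d_ii+d_ij}, \eqref{d_ij=<d_ij+d_jj} directly by case analysis on the signs of the components of $x$, $y$ and $x+y$, exactly in the style of the proof of Lemma~\ref{lemma_d_triangle_ineq}; the min-representation merely repackages that casework.
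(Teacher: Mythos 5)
Your proof is correct, and it takes a genuinely different route from the paper's. The paper states the lemma is proved ``in the same way as above'' and the Remark preceding it hints at shifting contour lines, so the appendix proof runs the same sign-pattern case analysis (nine subcases per quadrant of $m-n$) that was used for Lemma~\ref{lemma_d_triangle_ineq}, now with the four-way split in \eqref{definition_d_12(n)} making the bookkeeping heavier. You instead (i) use $d_{ii}=d$ and the triangle inequality to dispose of the diagonal cases, (ii) observe that all four off-diagonal inequalities are equivalent, via $d_{21}(z)=d_{12}(-z)$, evenness of $d$, and swapping $x\leftrightarrow y$, to the single statement $d_{12}(x+y)\le d(x)+d_{12}(y)$, and (iii) derive that statement from the identity $d_{12}(n)=\min\{d(n),d(n-(1,0)),d(n-(0,1))\}$ plus the already-proved triangle inequality for $d$. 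I checked the min-representation in all four regions of \eqref{definition_d_12(n)} (including the axes, where the branch of $d$ switches) and it holds; your reduction chain is also correct. What your version buys: the only casework is confined to verifying a transparent structural fact about $d_{12}$, the rest is a two-line formal deduction, and as a bonus the same identity instantly reproves Lemmas~\ref{lemma_d_ij(n)>=|n_2|} and \ref{d-1=<d_ij=<d}. What the paper's version buys: it needs no new identity and stays entirely inside the combinatorial framework already set up for Lemma~\ref{lemma_d_triangle_ineq}, at the cost of repeating the nine-case analysis with asymmetric weights.
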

\begin{lemma}
    \label{lemma_d_ii=<d_ij+d_ji+1}
    For $i$, $j\in\{1,2\}$,
    \begin{equation*}
        d_{ii}(m-n)\le d_{ij}(m-l)+d_{ji}(l-n)+1.
    \end{equation*}
\end{lemma}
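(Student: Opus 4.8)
The plan is to strip off the trivial indices and reduce the whole family to one quantitative sharpening of the triangle inequality for $d(\cdot)$. If $i=j$ there is nothing to prove: since $d_{ii}=d$, Lemma~\ref{lemma_d_triangle_ineq} already gives $d_{ii}(m-n)\le d_{ii}(m-l)+d_{ii}(l-n)$, which is stronger than the asserted bound. So assume $i\ne j$. Using $d_{11}=d_{22}=d$, the relation $d_{21}(n)=d_{12}(-n)$ from \eqref{definition_d_12(n)}, and $d(-n)=d(n)$, one checks that both cases $(i,j)=(1,2)$ and $(i,j)=(2,1)$ reduce to the single assertion
\begin{equation*}
    d(a-b)\le d_{12}(a)+d_{12}(b)+1\qquad\text{for all }a,b\in\mathbb{Z}^2 ,
\end{equation*}
where for $(1,2)$ one puts $a=m-l$, $b=n-l$ (so $m-n=a-b$) and for $(2,1)$ one puts $a=l-m$, $b=l-n$ (so $m-n=-(a-b)$).

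Two elementary facts, each obtained by inspecting the finitely many sign-sectors of $(n_1,n_2)$, will drive the rest. First, \eqref{definition_d(n)} can be rewritten as
\begin{equation*}
    d(n)=\max\bigl\{\,|n_1|,\ |n_2|,\ |n_1+n_2|\,\bigr\},
\end{equation*}
which re-proves Lemma~\ref{lemma_d_triangle_ineq} and records $|n_1|\le d(n)$, $|n_2|\le d(n)$, $|n_1+n_2|\le d(n)$ at one stroke. Second, comparing \eqref{definition_d(n)} with \eqref{definition_d_12(n)} over the four quadrants gives
\begin{equation*}
    d_{12}(n)=
    \begin{cases}
        d(n)-1,&\text{if }n_1+n_2\ge 1,\\
        d(n),&\text{if }n_1+n_2\le 0 .
    \end{cases}
\end{equation*}

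Now I split into cases. If $a_1+a_2\le 0$ or $b_1+b_2\le 0$ --- say the former --- then $d_{12}(a)=d(a)$ while $d_{12}(b)\ge d(b)-1$ by Lemma~\ref{d-1=<d_ij=<d}, so $d_{12}(a)+d_{12}(b)+1\ge d(a)+d(b)\ge d(a-b)$ by Lemma~\ref{lemma_d_triangle_ineq}, and we are done. In the remaining case $a_1+a_2\ge 1$ and $b_1+b_2\ge 1$ the second fact turns the target into $d(a-b)\le d(a)+d(b)-1$, which I would prove by bounding each of the three entries of
\begin{equation*}
    d(a-b)=\max\bigl\{\,|a_1-b_1|,\ |a_2-b_2|,\ |(a_1+a_2)-(b_1+b_2)|\,\bigr\}
\end{equation*}
by $d(a)+d(b)-1$ separately. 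For the last entry: since $1\le a_1+a_2\le d(a)$ and $1\le b_1+b_2\le d(b)$, the difference of these two positive integers has modulus at most $\max\{d(a),d(b)\}-1\le d(a)+d(b)-1$. For $|a_1-b_1|$: if $|a_1|\le d(a)-1$ or $|b_1|\le d(b)-1$ then $|a_1-b_1|\le|a_1|+|b_1|\le d(a)+d(b)-1$; otherwise $|a_1|=d(a)$ and $|b_1|=d(b)$, and then the inequalities $|a_1|\ge|a_2|$ and $|a_1|\ge a_1+a_2\ge 1$ force $a_1>0$ (and likewise $b_1>0$), whence $|a_1-b_1|<a_1+b_1=d(a)+d(b)$ and so $|a_1-b_1|\le d(a)+d(b)-1$ by integrality; $|a_2-b_2|$ is symmetric. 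Taking the maximum yields $d(a-b)\le d(a)+d(b)-1$, completing the reduction and the lemma.

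The only step requiring real care is this last one --- the three-entry estimate --- while everything preceding it is bookkeeping; in particular the closed form for $d$ makes the sign analysis short. An equivalent but longer route, presumably the one carried out in \ref{appendix_quasi_triangles_inequality} ``in the same way as above'', is to imitate the proof of Lemma~\ref{lemma_d_triangle_ineq} directly, running through the sign configurations of the coordinates of $m-n$, $m-l$, $l-n$ and invoking \eqref{definition_d(n)} and \eqref{definition_d_12(n)} in each.
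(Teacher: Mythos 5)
Your proof is correct and takes a genuinely different route from what the paper signals. The paper handles Lemmas~\ref{lemma_d_ij=<d_ii+d_ij} and~\ref{lemma_d_ii=<d_ij+d_ji+1} ``in the same way as'' Lemma~\ref{lemma_d_triangle_ineq}, i.e.\ by running through the nine sign-sectors of $l$ relative to $m$ and $n$ (see the appendix referenced); that is pure combinatorial bookkeeping carried out once per index pair $(i,j)$. You instead distill two structural facts — the closed form $d(n)=\max\{|n_1|,|n_2|,|n_1+n_2|\}$ and the identity $d_{12}(n)=d(n)-\mathbf{1}_{\{n_1+n_2\ge 1\}}$ — which lets you collapse all four choices of $(i,j)$ into a single strengthened triangle inequality $d(a-b)\le d(a)+d(b)-1$ under the hypothesis $a_1+a_2\ge 1$, $b_1+b_2\ge 1$, and then prove that inequality entry by entry against the three-term $\max$. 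The reduction step (setting $a=m-l$, $b=n-l$ resp.\ $a=l-m$, $b=l-n$ and using $d_{21}(n)=d_{12}(-n)$, $d(-n)=d(n)$) is clean, and the ``otherwise'' subcase ($|a_1|=d(a)$, $|b_1|=d(b)$), where you deduce $a_1>0$ and $b_1>0$ and then use integrality to improve $|a_1-b_1|<d(a)+d(b)$ to $\le d(a)+d(b)-1$, is the only place where something nontrivial happens; it checks out. What your approach buys is a case-free, conceptually transparent argument that also exposes exactly when equality holds; what the paper's approach buys is uniformity with the already-written proof of Lemma~\ref{lemma_d_triangle_ineq}, at the cost of a longer enumeration. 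One minor presentational point: in the case where $a_1+a_2\le 0$ or $b_1+b_2\le 0$, you do not actually need your refined formula for $d_{12}$ on the $a$ side — Lemma~\ref{d-1=<d_ij=<d} alone gives $d_{12}(a)+d_{12}(b)+1\ge (d(a)-1)+(d(b)-1)+1=d(a)+d(b)-1$, which is not quite enough; so you \emph{do} need the exact identity $d_{12}(a)=d(a)$ on one side, and you correctly invoke it, but it is worth flagging that this case and the main case both rely on your Claim 2 rather than only on the coarse two-sided bound.
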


We put $r(\xi)=\vert{\alpha(\xi})\vert^2$, where $\alpha(\xi)$ is defined in
\eqref{alpha}. Then we have
\begin{lemma}
    \label{Fourier_coefficients_r}
    Let $s\in\mathbb{Z}$, $s\ge{0}$. Then the Fourier coefficients of
    $r(\xi)^s$ are supported on the set $\{n\in\mathbb{Z}^2;d(n)\le{s}\}$,
    that is,
    \begin{equation*}
        \int_{\mathbb{T}^2}r(\xi)^se^{-in\xi}d\xi=0,
    \end{equation*}
    when $d(n)>s$.
\end{lemma}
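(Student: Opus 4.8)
The plan is to view $r(\xi)^s$ as an explicit finite trigonometric polynomial and to bound the $d$-norm of the exponents that can occur, using that $d(\cdot)$ is a norm. First I would record the exponent support of $r$ itself: writing the characters as $e^{iv\xi}$ with $v\in\mathbb{Z}^2$ and recalling \eqref{p(xi)=0},
\begin{equation*}
    r(\xi)=|\alpha(\xi)|^2=3+2\cos\xi_1+2\cos\xi_2+2\cos(\xi_1-\xi_2)=\sum_{v\in S}c_v\,e^{iv\xi},
\end{equation*}
where $c_v\in\mathbb{R}$ and
\begin{equation*}
    S=\{(0,0),\ (1,0),\ (-1,0),\ (0,1),\ (0,-1),\ (1,-1),\ (-1,1)\}\subset\mathbb{Z}^2 .
\end{equation*}
A one-line check against \eqref{definition_d(n)} shows $d(v)\le1$ for every $v\in S$: $d(0,0)=0$; $d(\pm(1,0))=d(\pm(0,1))=1$ because the product of the two coordinates is $0\ge0$; and $d(\pm(1,-1))=\max\{1,1\}=1$ because that product is $-1\le0$.

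Next I would expand the $s$-th power and use orthogonality of the characters on $\mathbb{T}^2$:
\begin{equation*}
    r(\xi)^s=\sum_{v_1,\ldots,v_s\in S}c_{v_1}\cdots c_{v_s}\,e^{i(v_1+\cdots+v_s)\xi},
\end{equation*}
so that
\begin{equation*}
    \int_{\mathbb{T}^2}r(\xi)^se^{-in\xi}\,d\xi=(2\pi)^2\sum_{\substack{v_1,\ldots,v_s\in S\\ v_1+\cdots+v_s=n}}c_{v_1}\cdots c_{v_s}.
\end{equation*}
Thus the left-hand side can be nonzero only when $n$ is a sum of $s$ elements of $S$. But if $n=v_1+\cdots+v_s$ with all $v_i\in S$, then Lemma \ref{lemma_d_triangle_ineq} (subadditivity of the norm $d$, i.e. the triangle inequality iterated) together with $d(v_i)\le1$ gives $d(n)\le\sum_{i=1}^{s}d(v_i)\le s$. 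Equivalently, $d(n)>s$ forces the Fourier coefficient to vanish, which is exactly the assertion. (The case $s=0$ is the trivial one, $r^0=1$.)

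I do not expect a real obstacle here; the only points requiring attention are getting the exponent set $S$ of $r(\xi)$ right and the short verification that each generator has $d$-norm at most $1$, after which the subadditivity already packaged in Lemma \ref{lemma_d_triangle_ineq} does all the work. The same scheme will then carry over to Lemmas \ref{Fourier_coefficients_r_alpha} and \ref{Fourier_coefficients_r_overline_alpha} for $r(\xi)^s\alpha(\xi)$ and $r(\xi)^s\overline{\alpha}(\xi)$, where one multiplies by the three extra characters coming from $\alpha$ and invokes the quasi-triangle inequalities of Lemmas \ref{lemma_d_ij=<d_ii+d_ij} and \ref{lemma_d_ii=<d_ij+d_ji+1} in place of the clean triangle inequality for $d$.
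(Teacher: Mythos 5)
Your argument is correct and is essentially the paper's proof: both identify the seven exponents of $r(\xi)$ as having $d$-norm at most $1$ and then propagate this through the $s$-th power via the triangle inequality of Lemma \ref{lemma_d_triangle_ineq}. The only cosmetic difference is that the paper packages the iterated subadditivity as an induction on $s$, while you unwind it directly via the multinomial expansion and orthogonality of characters.
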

\begin{proof}
    The claim in the lemma is obvious if $s=0$.
    
    By the definition of $r(\xi)$, we have
    \begin{equation}
        \label{r(xi)}
        r(\xi)=3+e^{i\xi_1}+e^{-i\xi_1}+e^{i\xi_2}+e^{-i\xi_2}+e^{i(\xi_1-\xi_2)}+e^{-i(\xi_1-\xi_2)},
    \end{equation}
    which means that $r(\xi)$ is a linear combination of $e^{in_0\xi}$,
    $d(n_0)\le1$.

    Assume that we have proved the claim for $s\le{}s_0$. Then $r(\xi)^{s_0}$
    is a linear combination of $e^{in\xi}$, $d(n)\le{}s_0$. Therefore,
    $r(\xi)^{s_0+1}$ is a linear combination of $e^{in\xi}r(\xi)$,
    $d(n)\le{}s_0$, which means, from \eqref{r(xi)}, that it is that of
    $e^{im\xi}$, $m=n+n_0$, where $d(n)\le{}s_0$ and $d(n_0)\le1$. By using
    the triangle inequality \eqref{triangle_inequality_d(n)}, we have
    \begin{align*}
        d(m)&=d(n+n_0)\\
        &\le{}d(n)+d(n_0)\\
        &\le{}s_0+1,
    \end{align*}
    which proves the lemma by the induction argument with respect to
    $s\in\mathbb{Z}$, $s\ge0$.
\end{proof}

In a similar way, we have
\begin{lemma}
    \label{Fourier_coefficients_r_alpha}
    Let $s\in\mathbb{Z}$, $s\ge{0}$. Then the Fourier coefficients of
    $r(\xi)^s\alpha(\xi)$ are supported on the set
    $\{n\in\mathbb{Z}^2;d_{12}(n)\le{s}\}$, that is,
    \begin{equation*}
        \int_{\mathbb{T}^2}r(\xi)^s\alpha(\xi)e^{-in\xi}d\xi=0,
    \end{equation*}
    when $d_{12}(n)>s$.
\end{lemma}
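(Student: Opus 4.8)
The plan is to copy the structure of the proof of Lemma \ref{Fourier_coefficients_r} almost verbatim, carrying out an induction on $s\in\mathbb{Z}_{\ge0}$, with the only change being that the triangle inequality \eqref{triangle_inequality_d(n)} for $d(\cdot)$ is replaced by the mixed inequality \eqref{d_ij=<d_ii+d_ij} of Lemma \ref{lemma_d_ij=<d_ii+d_ij} for the index pair $(i,j)=(1,2)$. First I would dispose of the base case $s=0$: since $\alpha(\xi)=1+e^{i\xi_1}+e^{i\xi_2}$ by \eqref{alpha}, the function $\alpha(\xi)$ is a linear combination of the exponentials $e^{in\xi}$ with $n\in\{(0,0),(1,0),(0,1)\}$, and one checks directly from the definition \eqref{definition_d_12(n)} that $d_{12}(0,0)=0$, $d_{12}(1,0)=\max\{|1|-1,0\}=0$ (the case $n_1>0$, $n_2\le0$), and $d_{12}(0,1)=\max\{0,|1|-1\}=0$ (the case $n_1\le0$, $n_2>0$). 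Hence the Fourier coefficients of $\alpha(\xi)=r(\xi)^0\alpha(\xi)$ are supported on $\{n\in\mathbb{Z}^2; d_{12}(n)\le0\}$.

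For the inductive step, I would assume the statement holds for some $s_0\ge0$, so that $r(\xi)^{s_0}\alpha(\xi)$ is a finite linear combination of $e^{in\xi}$ with $d_{12}(n)\le s_0$. Writing $r(\xi)^{s_0+1}\alpha(\xi)=r(\xi)\cdot\bigl(r(\xi)^{s_0}\alpha(\xi)\bigr)$ and recalling from \eqref{r(xi)} that $r(\xi)$ is a linear combination of $e^{in_0\xi}$ with $d(n_0)\le1$, the product $r(\xi)^{s_0+1}\alpha(\xi)$ is a linear combination of exponentials $e^{i(n_0+n)\xi}$ with $d(n_0)\le1$ and $d_{12}(n)\le s_0$. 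Applying \eqref{d_ij=<d_ii+d_ij} with $(i,j)=(1,2)$ (and using $d_{11}=d$ from \eqref{definition_d(n)}), one gets
\begin{equation*}
    d_{12}(n_0+n)\le d_{11}(n_0)+d_{12}(n)=d(n_0)+d_{12}(n)\le 1+s_0,
\end{equation*}
which completes the induction and hence the proof.

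There is no real obstacle here: the only substantive ingredient is the mixed "quasi-triangle" inequality \eqref{d_ij=<d_ii+d_ij}, whose proof is deferred to the appendix, and once that is granted the argument is a routine induction identical in spirit to Lemma \ref{Fourier_coefficients_r}. The one point that warrants a line of care is the verification that $\alpha(\xi)$ already lives at the level $d_{12}\le0$ (the base case), since the asymmetry of $d_{12}(\cdot)$ in its two arguments — stemming from the $-1$ shift in \eqref{definition_d_12(n)} for $n_1>0$ or $n_2>0$ — is exactly what makes the bound tight against the monomials $e^{i\xi_1}$ and $e^{i\xi_2}$ appearing in $\alpha(\xi)$.
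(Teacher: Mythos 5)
Your proof is correct, and it does take a genuinely different (though closely related) route from the paper's. The paper writes $r(\xi)^s\alpha(\xi)$ as $(r(\xi)^s)\cdot\alpha(\xi)$, invokes Lemma \ref{Fourier_coefficients_r} directly to know that $r^s$ lives on $\{d(n)\le s\}$, and then argues (via the definition \eqref{definition_d_12(n)} and the contour-line picture) that translating such an $n$ by any $n_0\in\{(0,0),(1,0),(0,1)\}$ yields $d_{12}(n+n_0)\le s$; there is no induction on $s$ in the proof of this lemma itself, and no appeal to the quasi-triangle inequality \eqref{d_ij=<d_ii+d_ij}. You instead factor as $r(\xi)\cdot\bigl(r(\xi)^{s_0}\alpha(\xi)\bigr)$ and run a self-contained induction whose inductive step is carried by Lemma \ref{lemma_d_ij=<d_ii+d_ij}. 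The trade-off is that the paper's version is shorter and reuses Lemma \ref{Fourier_coefficients_r}, but its key geometric step is left to the reader (``by the definition \dots see the figure''), whereas your version is somewhat longer but replaces that step with an explicit appeal to \eqref{d_ij=<d_ii+d_ij}, which is proved in the appendix; both are sound, and your careful base-case verification that $d_{12}(0,0)=d_{12}(1,0)=d_{12}(0,1)=0$ correctly pins down why the $-1$ shifts in \eqref{definition_d_12(n)} matter.
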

\begin{proof}
    We have already proved that $r(\xi)^s$ is a linear combination of
    $e^{in\xi}$, $d(n)\le{}s$. From \eqref{alpha}, $\alpha(\xi)$ is a sum of
    $e^{in_0\xi}$, where $n_0\in\{(0,0),(1,0),(0,1)\}$. Therefore,
    $r(\xi)^s\alpha(\xi)$ is a linear combination of $e^{in\xi}\alpha(\xi)$,
    $d(n)\le{}s$, which means that it is that of $e^{im\xi}$, $m=n+n_0$, where
    $d(n)\le{}s$ and $n_0\in\{(0,0),(1,0),(0,1)\}$. By the definition
    \eqref{definition_d_12(n)}, we have $d_{12}(m)\le{}s$ (see also the shape
    of the contour line of $d_{12}(\cdot)$ in Figure
    \ref{fig_contour_line_d_ij(n)=s}), which proves the lemma.
\end{proof}

Noticing that
\begin{equation*}
    \overline{\int_{\mathbb{T}^2}r(\xi)^s\alpha(\xi)e^{-in\xi}d\xi}=\int_{\mathbb{T}^2}r(\xi)^s\overline{\alpha}(\xi)e^{in\xi}d\xi
\end{equation*}
and that $d_{12}(n)=d_{21}(-n)$, we have
\begin{lemma}
    \label{Fourier_coefficients_r_overline_alpha}
    Let $s\in\mathbb{Z}$, $s\ge{0}$. Then the Fourier coefficients of
    $r(\xi)^s\overline{\alpha}(\xi)$ are supported on the set
    $\{n\in\mathbb{Z}^2;d_{21}(n)\le{s}\}$, that is,
    \begin{equation*}
        \int_{\mathbb{T}^2}r(\xi)^s\overline{\alpha}(\xi)e^{-in\xi}d\xi=0,
    \end{equation*}
    when $d_{21}(n)>s$.
\end{lemma}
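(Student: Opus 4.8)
The plan is to deduce this lemma directly from Lemma \ref{Fourier_coefficients_r_alpha} by complex conjugation, using that $r(\xi)=\vert\alpha(\xi)\vert^2$ is real-valued. Since $r(\xi)^s$ is real, the complex conjugate of the $n$-th Fourier coefficient of $r(\xi)^s\overline{\alpha}(\xi)$ equals
\begin{equation*}
    \overline{\int_{\mathbb{T}^2} r(\xi)^s \overline{\alpha}(\xi) e^{-in\xi} d\xi} = \int_{\mathbb{T}^2} r(\xi)^s \alpha(\xi) e^{in\xi} d\xi,
\end{equation*}
and the right-hand side is exactly the Fourier coefficient of $r(\xi)^s\alpha(\xi)$ at the lattice point $-n$.

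Then I would apply Lemma \ref{Fourier_coefficients_r_alpha}: that coefficient vanishes whenever $d_{12}(-n)>s$. Invoking the definition \eqref{definition_d_12(n)}, which already contains the identity $d_{12}(n)=d_{21}(-n)$ (equivalently $d_{12}(-n)=d_{21}(n)$), I conclude that the Fourier coefficient of $r(\xi)^s\overline{\alpha}(\xi)$ at $n$ is zero whenever $d_{21}(n)>s$, which is the assertion.

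There is essentially no obstacle here; the only two points to keep straight are that $r(\xi)$ is real (so that conjugation leaves the factor $r(\xi)^s$ untouched) and that the reflection $n\mapsto -n$ produced by conjugating the exponential is precisely compensated by the relation $d_{12}(n)=d_{21}(-n)$. This last point is transparent from Figure \ref{fig_contour_line_d_ij(n)=s}, in which the contour lines of $d_{12}(\cdot)$ and $d_{21}(\cdot)$ are mirror images of one another through the origin.
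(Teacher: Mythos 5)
Your argument is precisely the one the paper gives: it deduces the statement from Lemma \ref{Fourier_coefficients_r_alpha} by conjugating the Fourier coefficient (using $r(\xi)\in\mathbb{R}$) and invoking $d_{12}(n)=d_{21}(-n)$ from \eqref{definition_d_12(n)}. The proposal is correct and matches the paper's proof.
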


Now, let us investigate the asymptotic behavior of the resolvents as
$\vert{z}\vert\rightarrow\infty$. We begin with $\hat{R}_0(z)$. We will also
use $\hat{P}(n)$ as the projection onto the site $n\in\mathbb{Z}^2$ on
$l^2(\mathbb{Z}^2)$.
\begin{lemma}
    \label{estimate_R_0}
    If $\hat{f}=(\hat{f}_1,\hat{f}_2)\in l^2(\mathbb{Z}^2;\mathbb{C}^2)$ is
    compactly supported, then we have
    \begin{equation*}
        \hat{R}_0(z)\hat{f}=
        \begin{pmatrix}
            \displaystyle\sum_{s=0}^{\infty}z^{-2s-1}\hat{R}_{0,s,11}\hat{f}_1+\sum_{s=0}^{\infty}z^{-2s-2}\hat{R}_{0,s,12}\hat{f}_2\\
            \displaystyle\sum_{s=0}^{\infty}z^{-2s-2}\hat{R}_{0,s,21}\hat{f}_1+\sum_{s=0}^{\infty}z^{-2s-1}\hat{R}_{0,s,22}\hat{f}_2
        \end{pmatrix}
    \end{equation*}
    for sufficiently large $\vert{z}\vert$, where
    $\hat{R}_{0,s,ij}\in{\bold{B}}(l^2(\mathbb{Z}^2))$ with the properties
    \begin{equation*}
        \hat{P}(m)\hat{R}_{0,s,ij}(z)\hat{P}(n)=0,
    \end{equation*}
    if $d_{ij}(m-n)>s$, $i, j\in\{1,2\}$.
\end{lemma}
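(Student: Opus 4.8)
The plan is to expand the resolvent multiplier in a Neumann series in powers of $z^{-2}$ and then pass to Fourier coefficients. First I would recall that, via \eqref{hat_r_0} and \eqref{hat_R_0_hat_f}, $\hat{R}_0(z)$ is the convolution operator whose symbol is the $2\times 2$ matrix
\begin{equation*}
    R_0(z,\xi)=\frac{1}{z^{2}-r(\xi)}
    \begin{pmatrix}
        -z&-\alpha(\xi)\\
        -\overline{\alpha}(\xi)&-z
    \end{pmatrix},\qquad r(\xi)=|\alpha(\xi)|^{2},
\end{equation*}
and that $0\le r(\xi)\le 9$ on $\mathbb{T}^{2}$ by Proposition \ref{properties_of_p(xi)}. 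For $|z|>3$ I would then use the geometric expansion $\bigl(z^{2}-r(\xi)\bigr)^{-1}=\sum_{s\ge 0}r(\xi)^{s}z^{-2s-2}$, which converges uniformly in $\xi$, and multiply it through the matrix: the diagonal entries of $R_0(z,\xi)$ become $-\sum_{s\ge 0}r(\xi)^{s}z^{-2s-1}$, while the $(1,2)$- and $(2,1)$-entries become $-\sum_{s\ge 0}r(\xi)^{s}\alpha(\xi)z^{-2s-2}$ and $-\sum_{s\ge 0}r(\xi)^{s}\overline{\alpha}(\xi)z^{-2s-2}$.

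Next I would \emph{define} (independently of $z$) $\hat{R}_{0,s,11}=\hat{R}_{0,s,22}$ to be the convolution operator on $l^{2}(\mathbb{Z}^{2})$ with kernel $-\frac{1}{2\pi}\int_{\mathbb{T}^{2}}r(\xi)^{s}e^{-in\xi}\,d\xi$, and $\hat{R}_{0,s,12}$, $\hat{R}_{0,s,21}$ the convolution operators with kernels $-\frac{1}{2\pi}\int_{\mathbb{T}^{2}}r(\xi)^{s}\alpha(\xi)e^{-in\xi}\,d\xi$ and $-\frac{1}{2\pi}\int_{\mathbb{T}^{2}}r(\xi)^{s}\overline{\alpha}(\xi)e^{-in\xi}\,d\xi$. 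Since $r^{s}$, $r^{s}\alpha$ and $r^{s}\overline{\alpha}$ are bounded on $\mathbb{T}^{2}$ with sup-norms at most a fixed constant times $9^{s}$, Parseval's equation shows $\hat{R}_{0,s,ij}\in{\bold{B}}(l^{2}(\mathbb{Z}^{2}))$ with $\Vert\hat{R}_{0,s,ij}\Vert\le C\cdot 9^{s}$; hence for $|z|>3$ the four operator series $\sum_{s}z^{-2s-1}\hat{R}_{0,s,ii}$ and $\sum_{s}z^{-2s-2}\hat{R}_{0,s,ij}$ converge absolutely in ${\bold{B}}(l^{2}(\mathbb{Z}^{2}))$. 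Taking Fourier coefficients term by term in the above expansion of $R_0(z,\xi)$ --- legitimate by the uniform convergence --- and comparing with \eqref{hat_r_0} and \eqref{hat_R_0_hat_f}, I would obtain the asserted formula for $\hat{R}_0(z)\hat{f}$; the compact support of $\hat{f}$ only serves to turn each convolution in \eqref{hat_R_0_hat_f} into a finite sum.

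Finally, the support property would follow directly from the Fourier-support lemmas already proved: Lemma \ref{Fourier_coefficients_r} shows that the kernel of $\hat{R}_{0,s,11}=\hat{R}_{0,s,22}$ vanishes whenever $d(n)>s$, and since $d=d_{11}=d_{22}$ by \eqref{definition_d(n)} this is exactly $\hat{P}(m)\hat{R}_{0,s,11}\hat{P}(n)=0$ for $d_{11}(m-n)>s$ (and similarly for the $(2,2)$ block), while Lemma \ref{Fourier_coefficients_r_alpha} and Lemma \ref{Fourier_coefficients_r_overline_alpha} give the same conclusion for $(i,j)=(1,2)$ and $(2,1)$ with $d_{12}$ and $d_{21}$. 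I do not expect a genuine obstacle here; the argument is essentially bookkeeping, and the only points needing a word of justification are the two interchanges of the $s$-summation (with the Fourier integral and with the convolution in \eqref{hat_R_0_hat_f}), both harmless by absolute convergence once $|z|>3$, together with the elementary observation that $d_{11}$ and $d_{22}$ coincide with the norm $d$ of Lemma \ref{lemma_d_triangle_ineq}.
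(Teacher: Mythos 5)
Your proposal is correct and follows essentially the same route as the paper: expand $(z^{2}-r(\xi))^{-1}$ as $\sum_{s\ge 0}r(\xi)^{s}z^{-2s-2}$, read off the $\hat{R}_{0,s,ij}$ as convolution operators whose kernels are the Fourier coefficients of $r^{s}$, $r^{s}\alpha$, and $r^{s}\overline{\alpha}$, and invoke Lemmas~\ref{Fourier_coefficients_r}, \ref{Fourier_coefficients_r_alpha}, and \ref{Fourier_coefficients_r_overline_alpha} for the support property. The only difference is that you spell out the convergence of the Neumann series for $|z|>3$ and the legitimacy of interchanging sums, which the paper leaves implicit.
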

\begin{proof}
    From \eqref{hat_r_0} and \eqref{hat_R_0_hat_f}, we have
    \begin{equation*}
        (\hat{R}_{0,s,ij}\hat{f}_j)(n)=\sum_{m\in\mathbb{Z}^2}\hat{r}_{0,s,ij}(n-m)\hat{f}_j(m),
    \end{equation*}
    where
    \begin{equation*}
        \hat{r}_{0,s,11}(k)=\hat{r}_{0,s,22}(k)=\int_{\mathbb{T}^2}r(\xi)^se^{-ik\xi}d\xi,
    \end{equation*}
    \begin{equation*}
        \hat{r}_{0,s,12}(k)=\int_{\mathbb{T}^2}r(\xi)^s\alpha(\xi)e^{-ik\xi}d\xi,
    \end{equation*}
    \begin{equation*}
        \hat{r}_{0,s,21}(k)=\int_{\mathbb{T}^2}r(\xi)^s\overline{\alpha}(\xi)e^{-ik\xi}d\xi.
    \end{equation*}
    Lemmas \ref{Fourier_coefficients_r}, \ref{Fourier_coefficients_r_alpha},
    and \ref{Fourier_coefficients_r_overline_alpha} imply the above properties
    of $\hat{R}_{0,s,ij}$, $i,j\in\{1,2\}$, which proves the lemma.
\end{proof}

Next, we consider the asymptotic behavior of $\hat{R}(z)$ for large
$\vert{z}\vert$.
\begin{lemma}
    \label{lemma_resolvent_estimate_for_R(z)}
    For sufficiently large $\vert{z}\vert$,
    \begin{equation}
        \label{resolvent_estimate_for_R(z)}
        \hat{P}(m)\hat{R}(z)\hat{P}(n)=
        \begin{pmatrix}
            {\cal{O}}(\langle{z}\rangle^{-2d(m-n)-1})&{\cal{O}}(\langle{z}\rangle^{-2d_{12}(m-n)-2})\\
            {\cal{O}}(\langle{z}\rangle^{-2d_{21}(m-n)-2})&{\cal{O}}(\langle{z}\rangle^{-2d(m-n)-1})
        \end{pmatrix}.
    \end{equation}
\end{lemma}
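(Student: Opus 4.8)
The plan is to expand $\hat{R}(z)$ in a Neumann series about $\hat{R}_0(z)$ and to estimate the resulting $2\times2$ matrix products entry by entry, the bookkeeping of the exponents of $\langle z\rangle$ being governed by the (quasi-)triangle inequalities of Lemmas \ref{lemma_d_triangle_ineq}, \ref{lemma_d_ij=<d_ii+d_ij} and \ref{lemma_d_ii=<d_ij+d_ji+1}. Since $\hat{q}$ is bounded and $\sigma(\hat{H}_0)=[-3,3]$, we have $\Vert\hat{q}\hat{R}_0(z)\Vert<1$ for $\vert z\vert$ large, so that
\begin{equation*}
    \hat{R}(z)=\sum_{k=0}^{\infty}(-1)^k\hat{R}_0(z)\bigl(\hat{q}\hat{R}_0(z)\bigr)^k
\end{equation*}
converges in operator norm on $l^2(\mathbb{Z}^2;\mathbb{C}^2)$. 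Writing $\Lambda\subset\mathbb{Z}^2$ for the finite support of $\hat{q}$ and $\hat{q}=\sum_{l\in\Lambda}\hat{P}(l)\hat{q}(l)$, one sandwiches this series between $\hat{P}(m)$ and $\hat{P}(n)$ and expands each factor $\hat{q}$ to obtain
\begin{equation*}
    \hat{P}(m)\hat{R}(z)\hat{P}(n)=\sum_{k=0}^{\infty}(-1)^k\sum_{l_1,\dots,l_k\in\Lambda}M^{(0)}\hat{q}(l_1)M^{(1)}\hat{q}(l_2)\cdots\hat{q}(l_k)M^{(k)},
\end{equation*}
where $M^{(j)}=\hat{P}(p_j)\hat{R}_0(z)\hat{P}(p_{j+1})$ with $p_0=m$, $p_{k+1}=n$ and $p_j=l_j$ for $1\le j\le k$, each $M^{(j)}$ being a $2\times2$ matrix and each $\hat{q}(l_j)$ a diagonal $2\times2$ matrix with entries bounded independently of $l_j$.

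First I would record the zeroth-order estimate, i.e.\ the statement of the lemma for $\hat{R}_0$ itself. Setting $e(i,i)=0$ and $e(i,i')=1$ for $i\ne i'$, Lemma \ref{estimate_R_0} gives that the $(i,i')$ entry of $\hat{P}(m)\hat{R}_0(z)\hat{P}(n)$ equals $\sum_{s\ge d_{ii'}(m-n)}z^{-2s-1-e(i,i')}\hat{r}_{0,s,ii'}(m-n)$, where $\vert\hat{r}_{0,s,ii'}(k)\vert\le C\,9^s$ uniformly in $k$ because $0\le r(\xi)\le9$ and $\vert\alpha(\xi)\vert\le3$; summing this geometric-type series for $\vert z\vert$ large yields
\begin{equation*}
    \hat{P}(m)\hat{R}_0(z)\hat{P}(n)=
    \begin{pmatrix}
        {\cal{O}}(\langle{z}\rangle^{-2d(m-n)-1})&{\cal{O}}(\langle{z}\rangle^{-2d_{12}(m-n)-2})\\
        {\cal{O}}(\langle{z}\rangle^{-2d_{21}(m-n)-2})&{\cal{O}}(\langle{z}\rangle^{-2d(m-n)-1})
    \end{pmatrix}
\end{equation*}
with constants uniform in $m$, $n$; in particular $\vert(M^{(j)})_{ii'}\vert\le C\langle z\rangle^{-2d_{ii'}(p_j-p_{j+1})-1-e(i,i')}$, which already settles the case $k=0$.

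The core step is the exponent count for a general term. Fixing an index path $i_0,\dots,i_{k+1}\in\{1,2\}$, the corresponding summand in the $(i_0,i_{k+1})$ entry of the $k$-th Neumann term has modulus at most $C^{k+1}\langle z\rangle^{-\Phi-(k+1)}$ times a bounded product of $k$ potential values, where
\begin{equation*}
    \Phi=\sum_{j=0}^{k}\bigl(2d_{i_j i_{j+1}}(p_j-p_{j+1})+e(i_j,i_{j+1})\bigr).
\end{equation*}
I would prove, by induction on the number of ``gaps'', that merging the last two gaps of the path never increases $\Phi$: with $a=p_{k-1}-p_k$ and $b=p_k-p_{k+1}$, one checks the four patterns of $(i_{k-1},i_k,i_{k+1})$ (up to the symmetry $1\leftrightarrow2$) --- all three indices equal, use Lemma \ref{lemma_d_triangle_ineq}; exactly two consecutive ones equal, use the two inequalities of Lemma \ref{lemma_d_ij=<d_ii+d_ij}; $i_{k-1}=i_{k+1}\ne i_k$, use Lemma \ref{lemma_d_ii=<d_ij+d_ji+1}, whose extra ``$+1$'' is exactly compensated by $e(i_{k-1},i_k)+e(i_k,i_{k+1})=2$. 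Iterating down to a single gap gives $\Phi\ge2d_{i_0 i_{k+1}}(m-n)+e(i_0,i_{k+1})$, hence the summand is ${\cal{O}}(\langle z\rangle^{-2d_{i_0 i_{k+1}}(m-n)-e(i_0,i_{k+1})-1-k})$.

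Finally I would sum over the at most $2^k$ index paths and the at most $\vert\Lambda\vert^k$ choices of $l_1,\dots,l_k$: the $k$-th Neumann term of $\hat{P}(m)\hat{R}(z)\hat{P}(n)$ is, entrywise, ${\cal{O}}\bigl((C\vert\Lambda\vert\langle z\rangle^{-1})^k\bigr)\cdot{\cal{O}}(\langle z\rangle^{-2d_{i_0 i_{k+1}}(m-n)-e(i_0,i_{k+1})-1})$. For $\vert z\vert$ large the geometric factor in $k$ is summable and the remaining factor is independent of $k$, which gives \eqref{resolvent_estimate_for_R(z)}. The only genuine work is the four-case verification of the monotonicity of $\Phi$ --- this is where Lemmas \ref{lemma_d_triangle_ineq}--\ref{lemma_d_ii=<d_ij+d_ji+1} enter and where the extra power $z^{-1}$ carried by the off-diagonal blocks of $\hat{R}_0$ must be matched against the ``$+1$'' defect of the $d_{ii}$-inequality; everything else is organisation of the Neumann series together with routine geometric-series estimates.
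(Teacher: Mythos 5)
Your argument is correct, and it uses exactly the same core ingredients as the paper's proof: Lemma \ref{estimate_R_0} for the term-by-term structure of $\hat{R}_0(z)$, and the three distance inequalities of Lemmas \ref{lemma_d_triangle_ineq}, \ref{lemma_d_ij=<d_ii+d_ij}, \ref{lemma_d_ii=<d_ij+d_ji+1} to propagate the exponents through the $\hat{q}$ insertions. Your four-case ``merging'' step, where the $+1$ defect of Lemma \ref{lemma_d_ii=<d_ij+d_ji+1} is cancelled by $e(i,j)+e(j,i)=2$ when exiting and re-entering the off-diagonal block, is precisely the content of the paper's one-step induction from $k_0-1$ to $k_0$.

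The one place you genuinely depart from the paper is the choice to retain the full Neumann series rather than truncating. The paper fixes $p=d(m-n)$, writes $\hat{R}(z)$ as the first $2p$ iterated terms plus a remainder ${\cal{O}}(\langle z\rangle^{-2p-1})$, and estimates each explicit term inductively. Your version --- summing the entire series, bounding the $k$-th term by a geometric factor $(C|\Lambda|\langle z\rangle^{-1})^k$ times the $k$-independent power $\langle z\rangle^{-2d_{i_0i_{k+1}}(m-n)-e(i_0,i_{k+1})-1}$ --- accomplishes the same thing with slightly less bookkeeping and has a small hidden advantage: the paper's remainder ${\cal{O}}(\langle z\rangle^{-2p-1})$ is not quite small enough to cover the claimed off-diagonal bound ${\cal{O}}(\langle z\rangle^{-2d_{12}(m-n)-2})$ in the case $d_{12}(m-n)=p$, where one would really want a remainder ${\cal{O}}(\langle z\rangle^{-2p-2})$ (i.e.\ one more factor of $\hat{R}_0\hat{q}$). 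By never truncating, you avoid that issue entirely. One small caution in your write-up: when you invoke Lemma \ref{lemma_d_ii=<d_ij+d_ji+1} for the pattern $i_{k-1}=i_{k+1}\neq i_k$ and Lemma \ref{lemma_d_ij=<d_ii+d_ij} for the mixed patterns, you should make explicit that the resulting inequality is applied with the difference $p_{k-1}-p_{k+1}=a+b$, so that after collapsing all inner gaps the surviving argument is indeed $m-n$; this is implicit in your statement but worth spelling out since it is the crux of the exponent count.
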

\begin{proof}
    Let $p=d(m-n)$. Then, by using the resolvent equation repeatedly, we have
    the following expansion:
    \begin{equation}
        \label{expansion_R}
        \hat{R}(z)=\hat{R}_0(z)-\hat{R}_0(z)\hat{q}\hat{R}_0(z)+\cdots+(-1)^{2p-1}\underbrace{\hat{R}_0(z)\hat{q}\hat{R}_0(z)\cdots\hat{q}\hat{R}_0(z)}_{\text{$2p$
            $\hat{R}_0(z)$'s and $2p-1$ $\hat{q}$'s}}+{\cal{O}}(\langle{z}\rangle^{-2p-1}).
    \end{equation}

    By multiplying $\hat{P}(m)$ and $\hat{P}(n)$ from the left and the right
    of \eqref{expansion_R}, respectively, we have
    \begin{equation*}
        \hat{P}(m)\hat{R}_0(z)\hat{P}(n)=
        \begin{pmatrix}
            \displaystyle\sum_{s=0}^{\infty}z^{-2z-1}\hat{P}(m)\hat{R}_{0,s,11}\hat{P}(n)&\displaystyle\sum_{s=0}^{\infty}z^{-2z-2}\hat{P}(m)\hat{R}_{0,s,12}\hat{P}(n)\\
            \displaystyle\sum_{s=0}^{\infty}z^{-2z-2}\hat{P}(m)\hat{R}_{0,s,21}\hat{P}(n)&\displaystyle\sum_{s=0}^{\infty}z^{-2z-1}\hat{P}(m)\hat{R}_{0,s,22}\hat{P}(n)
        \end{pmatrix}
    \end{equation*}
    as the fist term of the right hand side. Then, from Lemma
    \ref{estimate_R_0}, we have
    \begin{align*}
        \sum_{s=0}^{\infty}z^{-2z-1}\hat{P}(m)\hat{R}_{0,s,ii}\hat{P}(n)&=\sum_{s\ge{}d(m-n)}z^{-2z-1}\hat{P}(m)\hat{R}_{0,s,ii}\hat{P}(n)\\
        &={\cal{O}}(\langle{z}\rangle^{-2d(m-n)-1}),
    \end{align*}
    \begin{align*}
        \sum_{s=0}^{\infty}z^{-2z-2}\hat{P}(m)\hat{R}_{0,s,ij}\hat{P}(n)&=\sum_{s\ge{}d_{ij}(m-n)}z^{-2z-2}\hat{P}(m)\hat{R}_{0,s,ij}\hat{P}(n)\\
        &={\cal{O}}(\langle{z}\rangle^{-2d_{ij}(m-n)-2}),
    \end{align*}
    for $(i,j)=(1,2)$ or $(2,1)$.

    About the $k$-th term, $1\le{k}\le{p}$, we use inductions. Such a term is
    written as a sum of
    \begin{equation}
        \label{k-th_term}
        \hat{P}(m)\hat{R}_0(z)\hat{P}(l^{(1)})\hat{q}(l^{(1)})\hat{P}(l^{(1)})\hat{R}_0(z)\cdots\hat{P}(l^{(k)})\hat{q}(l^{(k)})\hat{P}(l^{(k)})\hat{R}_0(z)\hat{P}(n),
    \end{equation}
    since
    $\hat{q}=\sum_{\sharp\{l\in\mathbb{Z}^2\}<\infty}{\hat{P}(l)\hat{q}(l)\hat{P}(l)}$.

    Assume that \eqref{k-th_term} is estimated as
    \eqref{resolvent_estimate_for_R(z)} for $k=k_0-1$. Then, by the hypothesis
    of the induction, we have
    \begin{align*}
        &\hat{P}(m)\hat{R}_0(z)\hat{P}(l^{(1)})\hat{q}(l^{(1)})\hat{P}(l^{(1)})\hat{R}_0(z)\cdots\hat{P}(l^{(k_0-1)})\hat{q}(l^{(k_0-1)})\hat{P}(l^{(k_0-1)})\hat{R}_0(z)\hat{P}(l^{(k_0)})\\
        &\quad\cdot\hat{q}(l^{(k_0)})\cdot\hat{P}(l^{(k_0)})\hat{R}_0(z)\hat{P}(n)\\
        &=
        \begin{pmatrix}
            {\cal{O}}(\langle{z}\rangle^{-2d(m-l^{(k_0)})-1})&{\cal{O}}(\langle{z}\rangle^{-2d_{12}(m-l^{(k_0)})-2})\\
            {\cal{O}}(\langle{z}\rangle^{-2d_{21}(m-l^{(k_0)})-2})&{\cal{O}}(\langle{z}\rangle^{-2d(m-l^{(k_0)})-1})
        \end{pmatrix}\\
        &\quad\quad\cdot
        \begin{pmatrix}
            \hat{q}_1(l^{(k_0)})&0\\
            0&\hat{q}_2(l^{(k_0)})
        \end{pmatrix}
        \cdot
        \begin{pmatrix}
            {\cal{O}}(\langle{z}\rangle^{-2d(l^{(k_0)}-n)-1})&{\cal{O}}(\langle{z}\rangle^{-2d_{12}(l^{(k_0)}-n)-2})\\
            {\cal{O}}(\langle{z}\rangle^{-2d_{21}(l^{(k_0)}-n)-2})&{\cal{O}}(\langle{z}\rangle^{-2d(l^{(k_0)}-n)-1})
        \end{pmatrix},
    \end{align*}
    which reads, from Lemmas \ref{lemma_d_triangle_ineq},
    \ref{lemma_d_ij=<d_ii+d_ij}, and \ref{lemma_d_ii=<d_ij+d_ji+1}, that the
    upper left element is
    \begin{align*}
        &{\cal{O}}(\langle{z}\rangle^{-2d(m-l^{(k_0)})-2d(l^{(k_0)}-n)-2})+{\cal{O}}(\langle{z}\rangle^{-2d_{12}(m-l^{(k_0)})-2d_{21}(l^{(k_0)}-n)-4})\\
        &={\cal{O}}(\langle{z}\rangle^{-2d(m-n)-2}),
    \end{align*}
    and that the upper right one is
    \begin{align*}
        &{\cal{O}}(\langle{z}\rangle^{-2d(m-l^{(k_0)})-2d_{12}(l^{(k_0)}-n)-3})+{\cal{O}}(\langle{z}\rangle^{-2d_{12}(m-l^{(k_0)})-2d(l^{(k_0)}-n)-3})\\
        &={\cal{O}}(\langle{z}\rangle^{-2d_{12}(m-n)-3}).
    \end{align*}

    We also have the lower left and right elements as desired, which proves
    the lemma.
\end{proof}

\begin{rem}
    A more detailed argument in the induction above gives us the estimate
    \eqref{resolvent_estimate_for_R(z)} for the non-diagonal $\hat{q}(n)$'s.
\end{rem}


%
%
\section{Proof of the main theorem}

We will essentially follow the idea of Isozaki and Korotyaev
\cite{2011arXiv1103.2193I}. Assume that $\lambda>0$; for $\lambda<0$, we can
argue in the same way. Put
\begin{equation*}
    B(k,\theta,\theta^{\prime})=4(2\pi)^2\bigl(J(k,\theta)J(k,\theta^{\prime})\bigr)^{-1}A(k,\theta,\theta^{\prime}),
\end{equation*}
where $A(k,\theta,\theta^{\prime})$ is the integral kernel of the scattering
amplitude defined in Section \ref{section_scattering_matrix}. Then we have
$B(k,\theta,\theta^{\prime})=B_0(k,\theta,\theta^{\prime})-B_1(k,\theta,\theta^{\prime})$, where
\begin{align*}
    B_0(k,\theta,\theta^{\prime})&=4(2\pi)^2\bigl(J(k,\theta)J(k,\theta^{\prime})\bigr)^{-1}\hat{{\cal{F}}}_0(\lambda)\hat{q}\hat{{\cal{F}}}_0(\lambda)^*\\
    &=
    \begin{pmatrix}
        B_{0,11}(k,\theta,\theta^{\prime})&B_{0,12}(k,\theta,\theta^{\prime})\\
        B_{0,21}(k,\theta,\theta^{\prime})&B_{0,22}(k,\theta,\theta^{\prime})
    \end{pmatrix},
\end{align*}
\begin{equation*}
    B_{0,11}(k,\theta,\theta^{\prime})=\sum_{n\in\mathbb{Z}^2}e^{in(\xi(k,\theta)-\xi(k,\theta^{\prime}))}\bigl(\hat{q}_1(n)+\frac{\alpha(\xi(k,\theta))}{\sqrt{8k^2+1}}\frac{\overline{\alpha}(\xi(k,\theta^{\prime}))}{\sqrt{8k^2+1}}\hat{q}_2(n)\bigr),
\end{equation*}
\begin{equation*}
    B_{0,22}(k,\theta,\theta^{\prime})=\sum_{n\in\mathbb{Z}^2}e^{in(\xi(k,\theta)-\xi(k,\theta^{\prime}))}\bigl(\frac{\overline{\alpha}(\xi(k,\theta))}{\sqrt{8k^2+1}}\frac{\alpha(\xi(k,\theta^{\prime}))}{\sqrt{8k^2+1}}\hat{q}_1(n)+\hat{q}_2(n)\bigr)
\end{equation*}
($B_{0,12}$ and $B_{0,21}$ are omitted), and
\begin{align*}
    B_1(k,\theta,\theta^{\prime})&=4(2\pi)^2\bigl(J(k,\theta)J(k,\theta^{\prime})\bigr)^{-1}\hat{{\cal{F}}}_0(\lambda)\hat{q}\hat{R}(\lambda+i0)\hat{q}\hat{{\cal{F}}}_0(\lambda)^*\\
    &=
    \begin{pmatrix}
        B_{1,11}(k,\theta,\theta^{\prime})&B_{1,12}(k,\theta,\theta^{\prime})\\
        B_{1,21}(k,\theta,\theta^{\prime})&B_{1,22}(k,\theta,\theta^{\prime})
    \end{pmatrix},
\end{align*}
\begin{align*}
    B_{1,11}(k,\theta,\theta^{\prime})&=\sum_{n\in\mathbb{Z}^2}e^{in\xi(k,\theta)}\hat{q}_1(n)\bigl(\hat{R}(\lambda+i0)\hat{q}\hat{\varphi}^{(0)}(k,\theta^{\prime})\bigr)_{11}(n)\\
    &\quad+\frac{\alpha(\xi(k,\theta))}{\sqrt{8k^2+1}}\sum_{n\in\mathbb{Z}^2}e^{in\xi(k,\theta)}\hat{q}_2(n)\bigl(\hat{R}(\lambda+i0)\hat{q}\hat{\varphi}^{(0)}(k,\theta^{\prime})\bigr)_{21}(n),
\end{align*}
\begin{align*}
    B_{1,22}(k,\theta,\theta^{\prime})&=\frac{\overline{\alpha}(\xi(k,\theta))}{\sqrt{8k^2+1}}\sum_{n\in\mathbb{Z}^2}e^{in\xi(k,\theta)}\hat{q}_1(n)\bigl(\hat{R}(\lambda+i0)\hat{q}\hat{\varphi}^{(0)}(k,\theta^{\prime})\bigr)_{12}(n)\\
    &\quad+\sum_{n\in\mathbb{Z}^2}e^{in\xi(k,\theta)}\hat{q}_2(n)\bigl(\hat{R}(\lambda+i0)\hat{q}\hat{\varphi}^{(0)}(k,\theta^{\prime})\bigr)_{22}(n)
\end{align*}
($B_{1,12}$ and $B_{1,21}$ are omitted). Here we put
\begin{align*}
    \hat{\varphi}^{(0)}(k,\theta^{\prime})&=2(2\pi)J(k,\theta^{\prime})^{-1}\hat{\psi}^{(0)}(k,\theta^{\prime})\\
    &=(\hat{\varphi}^{(0)}(n;k,\theta^{\prime}))_{n\in\mathbb{Z}^2},
\end{align*}
where
\begin{align*}
    \hat{\varphi}^{(0)}(n;k,\theta^{\prime})&=2(2\pi)J(k,\theta^{\prime})^{-1}\hat{\psi}^{(0)}(n;k,\theta^{\prime})\\
    &=e^{-in\xi(k,\theta^{\prime})}\hat{\eta}^{(0)}(k,\theta^{\prime}),\
    n\in\mathbb{Z}^2,
\end{align*}
\begin{equation}
    \label{asymptotic_of_eta}
    \hat{\eta}^{(0)}(k,\theta^{\prime})=
    \begin{pmatrix}
        1&\displaystyle\frac{\alpha(\xi(k,\theta^{\prime}))}{\sqrt{8k^2+1}}\\
        \displaystyle\frac{\overline{\alpha}(\xi(k,\theta^{\prime}))}{\sqrt{8k^2+1}}&1
    \end{pmatrix},
\end{equation}
and $\hat{\psi}(k,\theta^{\prime})$ and $\hat{\psi}(n;k,\theta^{\prime})$ are
defined by \eqref{hat_psi} and \eqref{hat_psi^(0)}, respectively

Let $\zeta_{\pm}(z,\theta)=(\zeta_{\pm,1}(z,\theta),\zeta_{\pm,2}(z,\theta))$
be the analytic continuation of
$\xi(k,\theta)=(\xi_1(k,\theta),\xi_2(k,\theta))$ in Lemma
\ref{lemma_analytic_continuation}. Then $B_0(k,\theta,\theta^{\prime})$ and
$B_1(k,\theta,\theta^{\prime})$ have the analytic continuations
$B_0(z,\theta,\theta^{\prime})$ and $B_1(z,\theta,\theta^{\prime})$ for
$z\in\mathbb{C}_+$, respectively, which are defined with $k$ replaced by $z$,
$\xi(k,\theta)$ by $\zeta_+(z,\theta)$ and $\xi(k,\theta^{\prime})$ by
$\zeta_-(z,\theta^{\prime})$.

Let us take $-\pi<\xi_j(k,\theta)<0$ and $0<\xi_j(k,\theta^{\prime})<\pi$ for
$j\in\{1,2\}$. From Lemma \ref{lemma_analytic_continuation}, we have
\begin{equation}
    \label{asymptotic_e^{zeta_+}}
    e^{in\zeta_+(z,\theta)}\sim N^{2n_2}b_1(\theta)^{n_1}a_1(\theta)^{n_2},
\end{equation}
\begin{equation}
    \label{asymptotic_e^{zeta_-}}
    e^{-in\zeta_-(z,\theta^{\prime})}\sim{N^{2n_2}b_1(\theta^{\prime})^{n_1}a_1(\theta^{\prime})^{n_2}}
\end{equation}
as $N\rightarrow\infty$, where $z=1+iN$ and
\begin{equation}
    \label{a_1}
    a_1(\theta)=(2a(\theta))^2=4(2-e^{2\theta}),
\end{equation}
\begin{equation}
    \label{b_1}
    b_1(\theta)=(b(\theta)+\sqrt{b(\theta)^2+1})^2=\frac{e^{2\theta}}{2-e^{2\theta}}.
\end{equation}
We also have
\begin{equation}
    \label{asymptotic_of_alpha_zeta_+/sqrt_8z^2+1}
    \frac{\alpha(\zeta_+(z,\theta))}{\sqrt{8z^2+1}}=\frac{1+e^{i\zeta_{+,1}(z,\theta)}+e^{i\zeta_{+,2}(z,\theta)}}{\sqrt{8z^2+1}}\sim{}a_2(\theta)N+{\cal{O}}(1),
\end{equation}
\begin{equation}
    \label{asymptotic_of_adjoint_alpha_zeta_+/sqrt_8z^2+1}
    \frac{\overline{\alpha}(\zeta_+(z,\theta))}{\sqrt{8z^2+1}}=\frac{1+e^{-i\zeta_{+,1}(z,\theta)}+e^{-i\zeta_{+,2}(z,\theta)}}{\sqrt{8z^2+1}}\sim{}b_2(\theta)N^{-1}+{\cal{O}}(N^{-2}),
\end{equation}
\begin{equation}
    \label{asymptotic_of_alpha_zeta_-/sqrt_8z^2+1}
    \frac{\alpha(\zeta_-(z,\theta^{\prime}))}{\sqrt{8z^2+1}}=\frac{1+e^{i\zeta_{-,1}(z,\theta)}+e^{i\zeta_{-,2}(z,\theta)}}{\sqrt{8z^2+1}}\sim{}b_2(\theta^{\prime})N^{-1}+{\cal{O}}(N^{-2}),
\end{equation}
\begin{equation}
    \label{asymptotic_of_adjoint_alpha_zeta_-/sqrt_8z^2+1}
    \frac{\overline{\alpha}(\zeta_-(z,\theta^{\prime}))}{\sqrt{8z^2+1}}=\frac{1+e^{-i\zeta_{-,1}(z,\theta)}+e^{-i\zeta_{-,2}(z,\theta)}}{\sqrt{8z^2+1}}\sim{}a_2(\theta^{\prime})N+{\cal{O}}(1),
\end{equation}
where
\begin{equation*}
    a_2(\theta)=-\frac{a_1(\theta)}{2\sqrt{2}}i=-\sqrt{2}(2-e^{2\theta})i,
\end{equation*}
\begin{equation*}
    b_2(\theta)=-\frac{1+b_1(\theta)^{-1}}{2\sqrt{2}}i=-\frac{e^{-2\theta}}{\sqrt{2}}i.
\end{equation*}

By our assumption \ref{assumption_potential}, the potential $\hat{q}$ is
compactly supported; more precisely, for some $M\ge0$, $M\in\mathbb{Z}$,
\begin{equation*}
    \vert{n}\vert_{l^1}=\vert{n_1}\vert+\vert{n_2}\vert>M\Longrightarrow\hat{q}(n)=0.
\end{equation*}
We put
\begin{equation*}
    n^{(M)}=(0,M)\in\mathbb{Z}^2.
\end{equation*}
Then, by using \eqref{asymptotic_e^{zeta_+}}, \eqref{asymptotic_e^{zeta_-}},
\eqref{asymptotic_of_adjoint_alpha_zeta_+/sqrt_8z^2+1}, and
\eqref{asymptotic_of_alpha_zeta_-/sqrt_8z^2+1}, we have the following
asymptotic expansion:
\begin{equation}
    \label{B_0,22-1}
    B_{0,22}(z,\theta,\theta^{\prime})\sim{N^{4M}}(a_1(\theta)a_1(\theta^{\prime}))^{4M}\hat{q}_2(n^{(M)}).
\end{equation}

Let us investigate the asymptotic behavior of
$B_{1,22}(z,\theta,\theta^{\prime})$. By introducing Dirac's notation, we have
\begin{equation*}
    \hat{P}(m)=\vert{\hat{p}_m}\rangle\langle\hat{p}_m\vert,
\end{equation*}
where
\begin{equation*}
    (\hat{p}_m)(n)=
    \begin{pmatrix}
        \delta_{mn}\\
        \delta_{mn}
    \end{pmatrix}
    \in{}l^2(\mathbb{Z}^2;\mathbb{C}^2).
\end{equation*}
From \eqref{asymptotic_of_alpha_zeta_-/sqrt_8z^2+1} and
\eqref{asymptotic_of_adjoint_alpha_zeta_-/sqrt_8z^2+1}, we note that
\begin{equation}
    \label{asymptotic_of_eta}
        \hat{\eta}^{(0)}(z,\theta^{\prime})=
        \begin{pmatrix}
            1&\displaystyle\frac{\alpha(\zeta_-(z,\theta^{\prime}))}{\sqrt{8z^2+1}}\\
            \displaystyle\frac{\overline{\alpha}(\zeta_-(z,\theta^{\prime}))}{\sqrt{8z^2+1}}&1
        \end{pmatrix}
        \sim
        \begin{pmatrix}
            1&{\cal{O}}(N^{-1})\\
            {\cal{O}}(N)&1
        \end{pmatrix}.
\end{equation}
From Lemma \ref{lemma_resolvent_estimate_for_R(z)}, by using
\eqref{asymptotic_of_eta}, we have
\begin{equation}
    \label{estimate_for_<P(n)|R(z)|P(m)>q(m)_eta}
    \langle\hat{P}(n)\vert\hat{R}(z)\vert\hat{P}(m)\rangle\hat{q}(m)\hat{\eta}^{(0)}(z,\theta^{\prime})=
    \begin{pmatrix}
        {\cal{O}}_{11}&{\cal{O}}_{12}\\
        {\cal{O}}_{21}&{\cal{O}}_{22}
    \end{pmatrix},
\end{equation}
where
\begin{equation*}
    {\cal{O}}_{11}={\cal{O}}(\langle{z}\rangle^{-2d(n-m)-1})\hat{q}_1(m)+{\cal{O}}(\langle{z}\rangle^{-2d_{12}(n-m)-2})\hat{q}_2(m)\displaystyle\frac{\overline{\alpha}(\zeta_-(z,\theta^{\prime}))}{\sqrt{8z^{2}+1}},
\end{equation*}
\begin{equation*}
    {\cal{O}}_{12}={\cal{O}}(\langle{z}\rangle^{-2d(n-m)-1})\hat{q}_1(m)\displaystyle\frac{\alpha(\zeta_-(z,\theta^{\prime}))}{\sqrt{8z^2+1}}+{\cal{O}}(\langle{z}\rangle^{-2d_{12}(n-m)-2})\hat{q}_2(m),
\end{equation*}
\begin{equation*}
    {\cal{O}}_{21}={\cal{O}}(\langle{z}\rangle^{-2d_{21}(n-m)-2})\hat{q}_1(m)+{\cal{O}}(\langle{z}\rangle^{-2d(n-m)-1})\hat{q}_2(m)\displaystyle\frac{\overline{\alpha}(\zeta_-(z,\theta^{\prime}))}{\sqrt{8z^2+1}},
\end{equation*}
\begin{equation*}
    {\cal{O}}_{22}={\cal{O}}(\langle{z}\rangle^{-2d_{21}(n-m)-2})\hat{q}_1(m)\displaystyle\frac{\alpha(\zeta_-(z,\theta^{\prime}))}{\sqrt{8z^2+1}}+{\cal{O}}(\langle{z}\rangle^{-2d(n-m)-1})\hat{q}_2(m),
\end{equation*}
which reads, by using \eqref{asymptotic_of_alpha_zeta_-/sqrt_8z^2+1} and
\eqref{asymptotic_of_adjoint_alpha_zeta_-/sqrt_8z^2+1}, that
\begin{equation}
    \label{asymptotic_of_<P(n)|R(z)|P(m)>q(m)eta(z,theta)}
    \langle\hat{P}(n)\vert\hat{R}(z)\vert\hat{P}(m)\rangle\hat{q}(m)\hat{\eta}^{(0)}(z,\theta^{\prime})\sim
    \begin{pmatrix}
        {\cal{O}}(N^{-1})&{\cal{O}}(N^{-2})\\
        {\cal{O}}(1)&{\cal{O}}(N^{-1})
    \end{pmatrix},
\end{equation}
since $d_{ij}(n)\ge{0}$. Then we have
\begin{align*}
    B_{1,22}(z,\theta,\theta^{\prime})=&\frac{\overline{\alpha}(\zeta_+(z,\theta))}{\sqrt{8z^2+1}}\sum_{n_2\le{M}}e^{in\zeta_+(z,\theta)}\hat{q}_1(n)\sum_{m_2\le{M}}e^{-im\zeta_-(z,\theta^{\prime})}\\
    &\quad\cdot\bigl(\langle\hat{P}(n)\vert\hat{R}(z)\vert\hat{P}(m)\rangle\hat{q}(m)\hat{\eta}^{(0)}(z,\theta^{\prime})\bigr)_{12}\\
    &+\sum_{n_2\le{M}}e^{in\zeta_+(z,\theta)}\hat{q}_2(n)\sum_{m_2\le{M}}e^{-im\zeta_-(z,\theta^{\prime})}\\
    &\quad\quad\cdot\bigl(\langle\hat{P}(n)\vert\hat{R}(z)\vert\hat{P}(m)\rangle\hat{q}(m)\hat{\eta}^{(0)}(z,\theta^{\prime})\bigr)_{22}\\
    \sim&{\cal{O}}(N^{4M-1}).
\end{align*}

Therefore, noticing that $a_1(\cdot)$ in \eqref{B_0,22-1} is defined as
\eqref{a_1}, we can compute $\hat{q}_2(n^{(M)})$ from the asymptotic expansion
of $B_{22}(z,\theta,\theta^{\prime})$. 

We turn to $B_{11}(z,\theta,\theta^{\prime})$. By using
\eqref{asymptotic_e^{zeta_+}}, \eqref{asymptotic_e^{zeta_-}},
\eqref{asymptotic_of_alpha_zeta_+/sqrt_8z^2+1},
\eqref{asymptotic_of_adjoint_alpha_zeta_-/sqrt_8z^2+1}, and
\eqref{asymptotic_of_<P(n)|R(z)|P(m)>q(m)eta(z,theta)}, we have the following
asymptotic expansion:
\begin{align}
    \label{asymptotic_B_0,11}
    \begin{split}
        B_{0,11}(z,\theta,\theta^{\prime})&\sim{N^{4M}}(a_1(\theta)a_1(\theta^{\prime}))^{M}\hat{q}_1(n^{(M)})\\
        &\quad+e^{in^{(M)}(\zeta_+(z,\theta)-\zeta_-(z,\theta^{\prime}))}\frac{\alpha(\zeta_+(z,\theta))}{\sqrt{8z^2+1}}\frac{\overline{\alpha}(\zeta_-(z,\theta^{\prime}))}{\sqrt{8z^2+1}}\hat{q}_2(n^{(M)})\\
        &\quad+{\cal{O}}(N^{4M-2}).
    \end{split}
\end{align}
Note that the asymptotic behavior of the second term of the right hand side is
known, since $\hat{q}_2(n^{(M)})$ has already been computed.

Let us investigate the asymptotic behavior of
$B_{1,11}(z,\theta,\theta^{\prime})$. We put
\begin{equation*}
    \hat{q}_{(\le r,\le s)}=\sum_{n_2\le r}\hat{P}(n)
    \begin{pmatrix}
        \hat{q}_1(n)&0\\
        0&0
    \end{pmatrix}
    \hat{P}(n)+\sum_{n_2\le s}\hat{P}(n)
    \begin{pmatrix}
        0&0\\
        0&\hat{q}_2(n)
    \end{pmatrix}
    \hat{P}(n),
\end{equation*}
\begin{equation*}
    \hat{q}_{(>r,>s)}=\hat{q}-\hat{q}_{(\le r,\le s)},
\end{equation*}
\begin{equation*}
    \hat{H}_{(>r,>s)}=\hat{H}_0+\hat{q}_{(>r,>s)},
\end{equation*}
\begin{equation*}
    \hat{R}_{(>r,>s)}(z)=(\hat{H}_{(>r,>s)}-z)^{-1}.
\end{equation*}
If $r=s$, we write $\hat{q}_{\le r}=\hat{q}_{(\le r,\le r)}$,
$\hat{q}_{>r}=\hat{q}_{(>r,>r)}$, $\hat{H}_{>r}=\hat{H}_{(>r,>r)}$, and
$\hat{R}_{>r}(z)=\hat{R}_{(>r,>r)}(z)$ for brevity.

From \eqref{asymptotic_of_<P(n)|R(z)|P(m)>q(m)eta(z,theta)}, we have
\begin{equation*}
    \bigl(\langle\hat{P}(n)\vert\hat{R}(z)\vert\hat{P}(m)\rangle\hat{q}(m)\hat{\eta}^{(0)}(z,\theta^{\prime})\bigr)_{11}\sim{\cal{O}}(N^{-1}),
\end{equation*}
which implies, by using \eqref{asymptotic_e^{zeta_+}} and
\eqref{asymptotic_e^{zeta_-}}, the following asymptotic expansion of the fist
term of $B_{1,11}(z,\theta,\theta^{\prime})$:
\begin{align*}
    &\sum_{n_2\le{M}}e^{in\zeta_{+}(z,\theta)}\hat{q}_1(n)\sum_{m_2\le{M}}e^{-im\zeta_-(z,\theta^{\prime})}\bigl(\langle\hat{P}(n)\vert\hat{R}(z)\vert\hat{P}(m)\rangle\hat{q}(m)\hat{\eta}^{(0)}(z,\theta^{\prime})\bigr)_{11}\\
    &\sim{\cal{O}}(N^{4M-1}).
\end{align*}

We put the second term of $B_{1,11}(z,\theta,\theta^{\prime})$ as
\begin{equation*}
    I=\frac{\alpha(\zeta_+(z,\theta))}{\sqrt{8z^2+1}}\sum_{n\in\mathbb{Z}^2}e^{in\zeta_+(z,\theta)}\hat{q}_2(n)\sum_{m\in\mathbb{Z}^2}e^{-im\zeta_-(z,\theta^{\prime})}\bigl(\langle\hat{P}(n)\vert\hat{R}(z)\vert\hat{P}(m)\rangle\hat{q}(m)\hat{\eta}^{(0)}(z,\theta^{\prime})\bigr)_{21},
\end{equation*}
and split it into four parts: $I=I_1+I_2+I_3+I_4$, where
\begin{equation*}
    I_1=\sum_{n=n^{(M)}}\sum_{m=n^{(M)}},\
    I_2=\sum_{n=n^{(M)}}\sum_{m_2\le{M-1}},\
    I_3=\sum_{n_2\le{M-1}}\sum_{m=n^{(M)}},\
    I_4=\sum_{n_2\le{M-1}}\sum_{m_2\le{M-1}}.
\end{equation*}

By using \eqref{asymptotic_e^{zeta_+}}, \eqref{asymptotic_e^{zeta_-}},
\eqref{asymptotic_of_alpha_zeta_+/sqrt_8z^2+1}, and
\eqref{asymptotic_of_<P(n)|R(z)|P(m)>q(m)eta(z,theta)}, we have
$I_j\sim{\cal{O}}(N^{4M-1})$ for $j=2,3$; also, 
$I_4\sim{\cal{O}}(N^{4M-3})$.

About $I_1$, by using the resolvent equation
\begin{equation}
    \label{R(z)=R_>M,>M-1(z)-R_>M,>M-1(z)qR(z)}
    \hat{R}(z)=\hat{R}_{(>M,>M-1)}(z)-\hat{R}_{(>M,>M-1)}(z)\hat{q}_{(\le{M},\le{M-1})}\hat{R}(z),
\end{equation}
we split it into four parts: $I_1=I_{1,1}+I_{1,2,1}-I_{1,2,2,1}-I_{1,2,2,2}$,
where 
\begin{align*}
    I_{1,1}=&\frac{\alpha(\zeta_+(z,\theta))}{\sqrt{8z^2+1}}e^{in^{(M)}(\zeta_+(z,\theta)-\zeta_-(z,\theta^{\prime}))}\hat{q}_2(n^{(M)})\\
    &\cdot\bigl(\langle\hat{P}(n^{(M)})\vert\hat{R}(z)\vert\hat{P}(n^{(M)})\rangle
    \begin{pmatrix}
        \hat{q}_1(n^{(M)})&0\\
        0&0
    \end{pmatrix}
    \hat{\eta}^{(0)}(z,\theta^{\prime})\bigr)_{21},
\end{align*}
\begin{align*}
    I_{1,2,1}=&\frac{\alpha(\zeta_+(z,\theta))}{\sqrt{z^2+1}}e^{in^{(M)}(\zeta_+(z,\theta)-\zeta_-(z,\theta^{\prime}))}\hat{q}_2(n^{(M)})\\
    &\cdot\bigl(\langle\hat{P}(n^{(M)})\vert\hat{R}_{(>M,>M-1)}(z)\vert\hat{P}(n^{(M)})\rangle
    \begin{pmatrix}
        0&0\\
        0&\hat{q}_2(n^{(M)})
    \end{pmatrix}
    \hat{\eta}^{(0)}(z,\theta^{\prime})\bigr)_{21},
\end{align*}
\begin{align*}
    I_{1,2,2,1}=&\frac{\alpha(\zeta_+(z,\theta))}{\sqrt{8z^2+1}}e^{in^{(M)}(\zeta_+(z,\theta)-\zeta_-(z,\theta^{\prime}))}\hat{q}_2(n^{(M)})\\
    &\cdot\bigl(\langle\hat{P}(n^{(M)})\vert\hat{R}_{(>M,>M-1)}(z)\hat{q}_{\le{M-1}}\hat{R}(z)\vert\hat{P}(n^{(M)})\rangle\\
    &\quad\cdot
    \begin{pmatrix}
        0&0\\
        0&\hat{q}_2(n^{(M)})
    \end{pmatrix}
    \hat{\eta}^{(0)}(z,\theta^{\prime})\bigr)_{21},
\end{align*}
\begin{align*}
    I_{1,2,2,2}=&\frac{\alpha(\zeta_+(z,\theta))}{\sqrt{8z^2+1}}e^{in^{(M)}(\zeta_+(z,\theta)-\zeta_-(z,\theta^{\prime}))}\hat{q}(n^{(M)})\\
    &\cdot\bigl(\langle\hat{P}(n^{(M)})\vert\hat{R}_{(>M,>M-1)}(z)\hat{P}(n^{(M)})
    \begin{pmatrix}
        \hat{q}_1(n^{(M)})&0\\
        0&0
    \end{pmatrix}
    \hat{P}(n^{(M)})\hat{R}(z)\vert\hat{P}(n^{(M)})\rangle\\
    &\quad\cdot
    \begin{pmatrix}
        0&0\\
        0&\hat{q}_2(n^{(M)})
    \end{pmatrix}
    \hat{\eta}^{(0)}(z,\theta^{\prime})\bigr)_{21}.
\end{align*}

We know $\hat{R}_{(>M,>M-1)}(z)$, since we have already computed
$\hat{q}_2(n^{(M)})$, which implies that $I_{1,2,1}$ is a known term.

By using \eqref{estimate_for_<P(n)|R(z)|P(m)>q(m)_eta} and
\eqref{asymptotic_of_eta}, we have
\begin{equation*}
    \bigl(\langle\hat{P}(n^{(M)})\vert\hat{R}(z)\vert\hat{P}(n^{(M)})\rangle
    \begin{pmatrix}
        \hat{q}_1(n^{(M)})&0\\
        0&0
    \end{pmatrix}
    \hat{\eta}^{(0)}(z,\theta^{\prime})\bigr)_{21}\sim{\cal{O}}(N^{-2}),
\end{equation*}
which implies, by using \eqref{asymptotic_e^{zeta_+}},
\eqref{asymptotic_e^{zeta_-}}, and
\eqref{asymptotic_of_alpha_zeta_+/sqrt_8z^2+1}, that
$I_{1,1}\sim{\cal{O}}(N^{4M-1})$.

We can write $I_{1,2,2,1}$ as a sum of the following terms:
\begin{align*}
    &\frac{\alpha(\zeta_+(z,\theta))}{\sqrt{8z^2+1}}e^{in^{(M)}(\zeta_+(z,\theta)-\zeta_-(z,\theta^{\prime}))}\\
    &\cdot\bigl(\langle\hat{P}(n^{(M)})\vert\hat{R}_{(>M,>M-1)}(z)\vert\hat{P}(k)\rangle\hat{q}(k)\langle\hat{P}(k)\vert\hat{R}(z)\vert\hat{P}(n^{(M)})\rangle\\
    &\quad\cdot
    \begin{pmatrix}
        0&0\\
        0&\hat{q}(n^{(M)})
    \end{pmatrix}
    \hat{\eta}^{(0)}(z,\theta^{\prime})\bigr)_{21},
\end{align*}
where $k=(k_1,k_2)\in\mathbb{Z}^2$, $k_2\le{M-1}$. From Lemma
\ref{lemma_d_ij(n)>=|n_2|}, we have
\begin{equation}
    \label{d(n^M-k)=>1}
    d(n^{(M)}-k)=d(k-n^{(M)})\ge1,
\end{equation}
\begin{equation}
    \label{d_21(n^M-k)=>1}
    d_{21}(n^{(M)}-k)=d_{12}(k-n^{(M)})\ge1,
\end{equation}
since $(n^{(M)}-k)_2>0$ and $(k-n^{(M)})_2<0$. Then, by using
\eqref{resolvent_estimate_for_R(z)}, \eqref{q_n},
\eqref{estimate_for_<P(n)|R(z)|P(m)>q(m)_eta},
\eqref{asymptotic_of_adjoint_alpha_zeta_-/sqrt_8z^2+1},
\eqref{d(n^M-k)=>1}, and \eqref{d_21(n^M-k)=>1}, we have
\begin{align*}
    &\bigl(\langle\hat{P}(n^{(M)})\vert\hat{R}_{(>M,>M-1)}(z)\vert\hat{P}(k)\rangle\hat{q}(k)\langle\hat{P}(k)\vert\hat{R}(z)\vert\hat{P}(n^{(M)})\rangle\\
    &\ \cdot
    \begin{pmatrix}
        0&0\\
        0&\hat{q}_2(n^{(M)})
    \end{pmatrix}
    \hat{\eta}^{(0)}(z,\theta^{\prime})\bigr)_{21}\\
    &={\cal{O}}(\langle{z}\rangle^{-2d_{21}(n^{(M)}-k)-2})\hat{q}_1(k){\cal{O}}(\langle{z}\rangle^{-2d_{12}(k-n^{(M)})-2})\hat{q}_2(n^{(M)})\frac{\overline{\alpha}({\zeta_-(z,\theta^{\prime})})}{\sqrt{8z^2+1}}\\
    &\quad+{\cal{O}}(\langle{z}\rangle^{-2d(n^{(M)}-k)-1})\hat{q}_2(k){\cal{O}}(\langle{z}\rangle^{-2d(k-n^{(M)})-1})\hat{q}_2(n^{(M)})\frac{\overline{\alpha}(\zeta_-(z,\theta^{\prime}))}{\sqrt{8z^2+1}}\\
    &\sim{\cal{O}}(N^{-5}),
\end{align*}
which infers, by using \eqref{asymptotic_e^{zeta_+}},
\eqref{asymptotic_e^{zeta_-}}, and
\eqref{asymptotic_of_alpha_zeta_+/sqrt_8z^2+1}, that
$I_{1,2,2,1}\sim{\cal{O}}(N^{4M-4})$.

In the same way as above, we have
\begin{align*}
    &\bigl(\langle\hat{P}(n^{(M)})\vert\hat{R}_{(>M,>M-1)}(z)\vert\hat{P}(n^{(M)})\rangle
    \begin{pmatrix}
        \hat{q}_1(n^{(M)})&0\\
        0&0
    \end{pmatrix}\\
    &\ \cdot\langle\hat{P}(n^{(M)})\vert\hat{R}(z)\vert\hat{P}(n^{(M)})\rangle
    \begin{pmatrix}
        0&0\\
        0&\hat{q}_2(n^{(M)})
    \end{pmatrix}
    \hat{\eta}^{(0)}(z,\theta^{\prime})\bigr)_{21}\\
    &={\cal{O}}(\langle{z}\rangle^{-2d_{21}((0,0))-2d_{12}((0,0))-4})\frac{\overline{\alpha}(\zeta_-(z,\theta^{\prime}))}{\sqrt{8z^2+1}}\\
    &\sim{\cal{O}}(N^{-3}),
\end{align*}
which implies $I_{1,2,2,2}\sim{\cal{O}}(N^{-4M-2})$.

Therefore, by taking the known terms into consideration, we can compute
$\hat{q}_1(n^{(M)})$ from the asymptotic expansion of
$B_{11}(z,\theta,\theta^{\prime})$.

Suppose that we have computed $\hat{q}(n)$ for $n_2\ge{p+1}$. Then, by using
\eqref{asymptotic_e^{zeta_+}}, \eqref{asymptotic_e^{zeta_-}},
\eqref{asymptotic_of_alpha_zeta_+/sqrt_8z^2+1}, and
\eqref{asymptotic_of_alpha_zeta_-/sqrt_8z^2+1}, we have
\begin{align}
    \label{asymptotic_B_0,22_for_n_2=<p}
    \begin{split}
        &B_{0,22}(z,\theta,\theta^{\prime})-\sum_{n_2\ge{p+1}}e^{in(\zeta_+(z,\theta)-\zeta_-(z,\theta^{\prime}))}\bigl(\frac{\overline{\alpha}(\zeta_+(z,\theta))}{\sqrt{8z^2+1}}\frac{\alpha(\zeta_-(z,\theta^{\prime}))}{\sqrt{8z^2+1}}\hat{q}_1(n)+\hat{q}_2(n)\bigr)\\
        &\sim{N^{4p}}(a_1(\theta)a_1(\theta^{\prime}))^p\sum_{n_2=p}(b_1(\theta)b_1(\theta^{\prime}))^{n_1}\hat{q}_2(n).
    \end{split}
\end{align}

Let us investigate the asymptotic behavior of
$B_{1,22}(z,\theta,\theta^{\prime})=I\hspace{-.1em}I+I\hspace{-.1em}I\hspace{-.1em}I$,
where
\begin{align*}
    I\hspace{-.1em}I=&\frac{\overline{\alpha}(\zeta_+(z,\theta))}{\sqrt{8z^2+1}}\sum_{n\in\mathbb{Z}^2}e^{in\zeta_+(z,\theta)}\hat{q}_1(n)\\
    &\cdot\sum_{m\in\mathbb{Z}^2}e^{-im\zeta_-(z,\theta^{\prime})}\bigl(\langle\hat{P}(n)\vert\hat{R}(z)\vert\hat{P}(m)\rangle\hat{q}(m)\hat{\eta}^{(0)}(z,\theta^{\prime})\bigr)_{12},
\end{align*}
\begin{equation*}
    I\hspace{-.1em}I\hspace{-.1em}I=\sum_{n\in\mathbb{Z}^2}e^{in\zeta_+(z,\theta)}\hat{q}_2(n)\sum_{m\in\mathbb{Z}^2}e^{-im\zeta_-(z,\theta^{\prime})}\bigl(\langle\hat{P}(n)\vert\hat{R}(z)\vert\hat{P}(m)\rangle\hat{q}(m)\hat{\eta}^{(0)}(z,\theta^{\prime})\bigr)_{22}.
\end{equation*}
The following lemma is proved in the same way as above in
\ref{appendix_asymptotics_of_scattering_amplitude}; so is Lemma
\ref{lemma_asymptotic_of_IV_and_V}.
\begin{lemma}\label{lemma_asymptotic_of_II_and_III}
    We have $I\hspace{-.1em}I\sim{\cal{O}}(N^{4p-1})$ up to a known term,
    which is written as
    \begin{align*}
        &\frac{\overline{\alpha}(\zeta_+(z,\theta))}{\sqrt{8z^2+1}}\sum_{n_2\ge{p+1}}e^{in\zeta_+(z,\theta)}\hat{q}_1(n)\sum_{m_2\ge{p+1}}e^{-im\zeta_-(z,\theta^{\prime})}\\
        &\cdot\bigl(\langle\hat{P}(n)\vert\hat{R}_{>p}(z)\vert\hat{P}(m)\rangle\hat{q}_{>p}(m)\hat{\eta}^{(0)}(z,\theta^{\prime})\bigr)_{12},
    \end{align*}
    and $I\hspace{-.1em}I\hspace{-.1em}I\sim{\cal{O}}(N^{4p-1})$ up to a known
    term, which is
    \begin{equation*}
        \sum_{n_2\ge{p+1}}e^{in\zeta_+(z,\theta)}\hat{q}_2(n)\sum_{m_2\ge{p+1}}e^{-im\zeta_-(z,\theta^{\prime})}\bigl(\langle\hat{P}(n)\vert\hat{R}_{>p}(z)\vert\hat{P}(m)\rangle\hat{q}_{>p}(m)\hat{\eta}^{(0)}(z,\theta^{\prime})\bigr)_{22}.
    \end{equation*}
\end{lemma}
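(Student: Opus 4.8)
The plan is to decompose the double sums defining $I\hspace{-.1em}I$ and $I\hspace{-.1em}I\hspace{-.1em}I$ according to whether the second coordinates $n_2$, $m_2$ lie above the level $p$, and then to treat the resulting pieces exactly as the base case $p=M$ was treated above for $B_{1,11}$ and $B_{1,22}$. For $I\hspace{-.1em}I$, write $I\hspace{-.1em}I=I\hspace{-.1em}I'+I\hspace{-.1em}I''$, where $I\hspace{-.1em}I'$ collects the terms with $n_2\ge p+1$ \emph{and} $m_2\ge p+1$ and $I\hspace{-.1em}I''$ the remaining terms (so that $\min\{n_2,m_2\}\le p$ on $I\hspace{-.1em}I''$); decompose $I\hspace{-.1em}I\hspace{-.1em}I$ analogously. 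All sums are finite by Assumption \ref{assumption_potential}.

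First I would dispose of $I\hspace{-.1em}I''$ and $I\hspace{-.1em}I\hspace{-.1em}I''$. For a single term indexed by $n$ (with $\hat q_j(n)\neq 0$) and $m$ (where $\hat q(m)$ acts), combine the prefactor $\overline{\alpha}(\zeta_+(z,\theta))/\sqrt{8z^2+1}={\cal O}(N^{-1})$ from \eqref{asymptotic_of_adjoint_alpha_zeta_+/sqrt_8z^2+1} (present only in $I\hspace{-.1em}I$), the growth $e^{in\zeta_+(z,\theta)}={\cal O}(N^{2n_2})$ and $e^{-im\zeta_-(z,\theta^{\prime})}={\cal O}(N^{2m_2})$ from \eqref{asymptotic_e^{zeta_+}} and \eqref{asymptotic_e^{zeta_-}}, and the relevant entry of $\langle\hat P(n)\vert\hat R(z)\vert\hat P(m)\rangle\hat q(m)\hat\eta^{(0)}(z,\theta^{\prime})$, which by \eqref{estimate_for_<P(n)|R(z)|P(m)>q(m)_eta} and \eqref{asymptotic_of_alpha_zeta_-/sqrt_8z^2+1} carries a factor $\langle z\rangle^{-2d(n-m)-2}$ or $\langle z\rangle^{-2d_{12}(n-m)-2}$ in the $(1,2)$ case relevant to $I\hspace{-.1em}I$, resp. $\langle z\rangle^{-2d_{21}(n-m)-3}$ or $\langle z\rangle^{-2d(n-m)-1}$ in the $(2,2)$ case relevant to $I\hspace{-.1em}I\hspace{-.1em}I$. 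By Lemmas \ref{lemma_d_ij(n)>=|n_2|} and \ref{d-1=<d_ij=<d} one has $d_{ij}(n-m)\ge|n_2-m_2|-1$, so the exponent of $N$ in each such term is at most $2n_2+2m_2-2|n_2-m_2|-1=4\min\{n_2,m_2\}-1\le 4p-1$; hence $I\hspace{-.1em}I''$ and $I\hspace{-.1em}I\hspace{-.1em}I''$ are ${\cal O}(N^{4p-1})$.

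Next I would treat $I\hspace{-.1em}I'$ and $I\hspace{-.1em}I\hspace{-.1em}I'$. Since $m_2\ge p+1$ there, $\hat q(m)=\hat q_{>p}(m)$, so these pieces differ from the claimed known terms only by the appearance of $\hat R(z)$ in place of $\hat R_{>p}(z)$. Inserting the resolvent identity $\hat R(z)=\hat R_{>p}(z)-\hat R_{>p}(z)\hat q_{\le p}\hat R(z)$, the analogue of \eqref{R(z)=R_>M,>M-1(z)-R_>M,>M-1(z)qR(z)}, the $\hat R_{>p}(z)$ contribution is exactly the stated known term — known because the already-computed values $\hat q(n)$, $n_2\ge p+1$, determine $\hat q_{>p}$, hence $\hat H_{>p}$ and $\hat R_{>p}(z)$, while $\hat\eta^{(0)}(z,\theta^{\prime})$ and $\zeta_{\pm}$ are explicit by Lemma \ref{lemma_analytic_continuation}. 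For the correction I would write $\hat q_{\le p}=\sum_{l_2\le p}\hat P(l)\hat q(l)\hat P(l)$ (a finite sum) and estimate each summand $\langle\hat P(n)\vert\hat R_{>p}(z)\vert\hat P(l)\rangle\,\hat q(l)\,\langle\hat P(l)\vert\hat R(z)\vert\hat P(m)\rangle\,\hat q(m)\,\hat\eta^{(0)}(z,\theta^{\prime})$ by applying Lemma \ref{lemma_resolvent_estimate_for_R(z)} to both resolvent factors (it holds for $\hat R_{>p}(z)$ as well, its potential being compactly supported): since $l_2\le p<n_2$ and $l_2\le p<m_2$, Lemma \ref{lemma_d_ij(n)>=|n_2|} gives $d_{ij}(n-l)\ge n_2-l_2-1$ and $d_{ij}(l-m)\ge m_2-l_2-1$, and together with the ${\cal O}(N)$ entry at position $(2,1)$ of $\hat\eta^{(0)}(z,\theta^{\prime})$ (see \eqref{asymptotic_of_adjoint_alpha_zeta_-/sqrt_8z^2+1}) and the factors $e^{in\zeta_+}e^{-im\zeta_-}$, all the $N$-powers cancel down to ${\cal O}(N^{4l_2-2})={\cal O}(N^{4p-2})$, which is absorbed in the error.

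The step I expect to be the main obstacle is the bookkeeping of exponents inside those $2\times 2$ matrix products: each of $\hat R_{>p}(z)$, $\hat R(z)$ contributes a four-entry matrix with decay offset $-1$ (diagonal) or $-2$ (off-diagonal) attached to one of $d$, $d_{12}$, $d_{21}$, and $\hat\eta^{(0)}(z,\theta^{\prime})$ itself has one entry of size ${\cal O}(N)$, so one must check that the worst admissible pairing — generically an off-diagonal resolvent entry (with its $d_{12}$ or $d_{21}$ and the $\pm1$ slack of Lemma \ref{d-1=<d_ij=<d}) combined with the ${\cal O}(N)$ entry of $\hat\eta^{(0)}$ — still lands at or below $N^{4p-1}$. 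This is precisely the computation performed for $I_{1,1}$, $I_{1,2,2,1}$ and $I_{1,2,2,2}$ above, now with $M$ replaced by $p$ and the single site $n^{(M)}$ replaced by the sites $l$ with $l_2\le p$.
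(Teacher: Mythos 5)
Your decomposition and estimates are correct and match the method the paper uses in the base case $p=M$ and in the corresponding appendix: split the double sum according to whether both $n_2,m_2\ge p+1$ or not, bound the low part directly by the exponent count $2n_2+2m_2-2d_{ij}(n-m)-c\le 4\min\{n_2,m_2\}-1\le 4p-1$ (via Lemmas \ref{lemma_d_ij(n)>=|n_2|}, \ref{d-1=<d_ij=<d}), and for the high part insert the resolvent identity $\hat R=\hat R_{>p}-\hat R_{>p}\hat q_{\le p}\hat R$, identify the $\hat R_{>p}$ piece as the stated known term (here $\hat q(m)=\hat q_{>p}(m)$ automatically since $m_2\ge p+1$), and bound the correction by ${\cal O}(N^{4l_2-2})\le{\cal O}(N^{4p-2})$.

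One small bookkeeping remark (not a gap): since $I\hspace{-.1em}I$ requires the $(1,2)$ entry and $I\hspace{-.1em}I\hspace{-.1em}I$ the $(2,2)$ entry of the matrix product, only the second \emph{column} of $\hat\eta^{(0)}(z,\theta^\prime)$ enters, whose entries are ${\cal O}(N^{-1})$ and ${\cal O}(1)$. The ${\cal O}(N)$ entry $\hat\eta^{(0)}_{21}$ that you flag as the main hurdle actually never appears here — it shows up only in the $B_{1,11}$ pieces $I$, $I\hspace{-.1em}V$, $V$ (Lemma \ref{lemma_asymptotic_of_IV_and_V}), which use the \emph{first} column. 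The exponent bound ${\cal O}(N^{4l_2-2})$ you arrived at is nevertheless correct, so this misattribution does not invalidate the argument; the $I\hspace{-.1em}I$, $I\hspace{-.1em}I\hspace{-.1em}I$ case is in fact slightly easier than you anticipated.
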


Therefore, we can compute $\hat{q}_2(n)$'s for $n_2=p$ from
\eqref{asymptotic_B_0,22_for_n_2=<p}, since the set
$\{b_1(\theta)b_1(\theta^{\prime})\in\mathbb{R};\theta,\theta^{\prime}\in(0,(1/2)\log{2})\}$
contains open intervals, where $b_1(\cdot)$ is defined by \eqref{b_1}.

Finally, we compute $\hat{q}_1(n)$ for $n_2=p$. By using
\eqref{asymptotic_e^{zeta_+}}, \eqref{asymptotic_e^{zeta_-}},
\eqref{asymptotic_of_alpha_zeta_+/sqrt_8z^2+1}, and
\eqref{asymptotic_of_adjoint_alpha_zeta_-/sqrt_8z^2+1}, we have
\begin{align}
    \label{asymptotic_B_0,11:2}
    \begin{split}
        &B_{0,11}(z,\theta,\theta^{\prime})-\sum_{n_2\ge{p+1}}e^{in(\zeta_+(z,\theta)-\zeta_-(z,\theta^{\prime}))}\bigl(\hat{q}_1(n)+\frac{\alpha(\zeta_+(z,\theta))}{\sqrt{8z^2+1}}\frac{\overline{\alpha}(\zeta_-(z,\theta^{\prime}))}{\sqrt{8z^2+1}}\hat{q}_2(n)\bigr)\\
        &\sim{}N^{4p}(a_1(\theta)a_1(\theta^{\prime}))^p\sum_{n_2=p}(b_1(\theta)b_1(\theta^{\prime}))^{n_1}\hat{q}_1(n)\\
        &\quad+\sum_{n_2=p}e^{in(\zeta_+(z,\theta)-\zeta_-(z,\theta^{\prime}))}\frac{\alpha(\zeta_+(z,\theta))}{\sqrt{8z^2+1}}\frac{\overline{\alpha}(\zeta_-(z,\theta^{\prime}))}{\sqrt{8z^2+1}}\hat{q}_2(n)\\
        &\quad+{\cal{O}}(N^{4p-2}).
    \end{split}
\end{align}
Note that the second term of the right-hand side in \eqref{asymptotic_B_0,11:2}
is known, since we have already computed $\hat{q}_2(n)$'s for $n_2=p$.

Let us investigate the asymptotic behavior of
$B_{1,11}(z,\theta,\theta^{\prime})=I\hspace{-.1em}V+V$, where
\begin{equation*}
    I\hspace{-.1em}V=\sum_{n\in\mathbb{Z}^2}e^{in\zeta_+(z,\theta)}\hat{q}_1(n)\sum_{m\in\mathbb{Z}^2}e^{-im\zeta_-(z,\theta^{\prime})}\bigl(\langle\hat{P}(n)\vert\hat{R}(z)\vert\hat{P}(m)\rangle\hat{q}(m)\hat{\eta}^{(0)}(z,\theta^{\prime})\bigr)_{11},
\end{equation*}
and $V=I$. By using the next lemma, we can compute $\hat{q}_1(n)$'s for $n_2=p$ from
\eqref{asymptotic_B_0,11:2} as before.
\begin{lemma}
    \label{lemma_asymptotic_of_IV_and_V}
    We have $I\hspace{-.1em}V\sim{\cal{O}}(N^{4p-1})$ up to a known term,
    which is written as
    \begin{align*}
        &\sum_{n_2\ge{p+1}}e^{in\zeta_+(z,\theta)}\hat{q}_1(n)\sum_{m_2\ge{p+1}}e^{-im\zeta_-(z,\theta^{\prime})}\\
        &\cdot\bigl(\langle\hat{P}(n)\vert\hat{R}_{(>p,>p-1)}(z)\vert\hat{P}(m)\rangle\hat{q}_{(>p,>p-1)}(m)\hat{\eta}^{(0)}(z,\theta^{\prime})\bigr)_{11},
    \end{align*}
    and $V\sim{\cal{O}}(N^{4p-1})$ up to a known term, which is
    \begin{align*}
        &\frac{\alpha(\zeta_+(z,\theta))}{\sqrt{8z^2+1}}\sum_{n_2\ge{p}}e^{in\zeta_+(z,\theta)}\hat{q}_2(n)\sum_{m_2\ge{p+1}}e^{-im\zeta_-(z,\theta^{\prime})}\\
        &\cdot\bigl(\langle\hat{P}(n)\vert\hat{R}_{(>p,>p-1)}(z)\vert\hat{P}(m)\rangle\hat{q}_{(>p,>p-1)}(m)\hat{\eta}^{(0)}(z,\theta^{\prime})\bigr)_{21}.
    \end{align*}
\end{lemma}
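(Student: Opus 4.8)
The plan is to repeat, now at a general level $p$, the argument already used for $I=I_1+I_2+I_3+I_4$ in the base step $p=M$ and for $B_{1,22}=I\hspace{-.1em}I+I\hspace{-.1em}I\hspace{-.1em}I$ in Lemma \ref{lemma_asymptotic_of_II_and_III}; the one new ingredient is to keep track of which entries of $\hat{q}$ are known at the $p$-th stage, namely $\hat{q}_1(n),\hat{q}_2(n)$ for $n_2\ge p+1$ (the inductive hypothesis) together with $\hat{q}_2(n)$ for $n_2=p$ (obtained from \eqref{asymptotic_B_0,22_for_n_2=<p}), equivalently the operators $\hat{q}_{(>p,>p-1)}$, $\hat{H}_{(>p,>p-1)}$ and $\hat{R}_{(>p,>p-1)}(z)$. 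In both $I\hspace{-.1em}V$ and $V$ I would first insert the resolvent identity \eqref{R(z)=R_>M,>M-1(z)-R_>M,>M-1(z)qR(z)} with $M$ replaced by $p$, that is $\hat{R}(z)=\hat{R}_{(>p,>p-1)}(z)-\hat{R}_{(>p,>p-1)}(z)\hat{q}_{(\le p,\le p-1)}\hat{R}(z)$, and split the inner potential as $\hat{q}(m)=\hat{q}_{(>p,>p-1)}(m)+\hat{q}_{(\le p,\le p-1)}(m)$.

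The leading piece --- $\hat{R}_{(>p,>p-1)}(z)$ placed between the outer factor and $\hat{q}_{(>p,>p-1)}(m)$ --- is a sandwich of known operators, and restricting it to the sites where the outer potential factor is already known (for $I\hspace{-.1em}V$ this forces $n_2\ge p+1$; for $V$ it only requires $n_2\ge p$, since there the outer factor is $\hat{q}_2(n)$) reproduces the known term in the statement; the complementary low summation is then to be estimated directly. Thus the remainder consists of (a) this low part of the leading piece, (b) the part of the leading piece carrying the inner $\hat{q}_{(\le p,\le p-1)}(m)$, and (c) the correction $-\hat{R}_{(>p,>p-1)}(z)\hat{q}_{(\le p,\le p-1)}\hat{R}(z)$, and the claim is that each of (a), (b), (c) is ${\cal{O}}(N^{4p-1})$. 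For (a) and (b) I would bound $\langle\hat{P}(n)\vert\hat{R}_{(>p,>p-1)}(z)\vert\hat{P}(m)\rangle$ by Lemma \ref{lemma_resolvent_estimate_for_R(z)} (which applies verbatim to $\hat{R}_{(>p,>p-1)}(z)$), the factors $e^{in\zeta_+(z,\theta)}$ and $e^{-im\zeta_-(z,\theta^{\prime})}$ by \eqref{asymptotic_e^{zeta_+}} and \eqref{asymptotic_e^{zeta_-}}, and the factors $\alpha(\zeta_{\pm})/\sqrt{8z^2+1}$, $\overline{\alpha}(\zeta_{\pm})/\sqrt{8z^2+1}$ by \eqref{asymptotic_of_alpha_zeta_+/sqrt_8z^2+1}--\eqref{asymptotic_of_adjoint_alpha_zeta_-/sqrt_8z^2+1}; by Lemma \ref{lemma_d_ij(n)>=|n_2|} one has $d_{ij}(n-m)\ge\vert n_2-m_2\vert$ when the second coordinate moves favorably and loses at most one unit otherwise, and that unit of slack is incurred exactly when $\min\{n_2,m_2\}$ drops by the same amount, so the $N$-exponent of every summand with an unknown outer factor stays $\le 4p-1$. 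For (c) the inserted $\hat{q}_{(\le p,\le p-1)}$ pins an intermediate site $k$ with $k_2\le p$, so whenever the neighbouring index is $\ge p+1$ one has $d(n-k)\ge 1$, $d_{21}(n-k)\ge 1$, and so on, exactly as in \eqref{d(n^M-k)=>1}--\eqref{d_21(n^M-k)=>1}; feeding this together with the quasi-triangle inequalities of Lemmas \ref{lemma_d_triangle_ineq}, \ref{lemma_d_ij=<d_ii+d_ij} and \ref{lemma_d_ii=<d_ij+d_ji+1} into the chain $n\to k\to m$, and iterating the resolvent identity once more where needed (just as for $I_{1,2,2,1}$ and $I_{1,2,2,2}$), gives (c) $={\cal{O}}(N^{4p-1})$, in fact lower. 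Summing the known term and (a)--(c) then yields the two assertions.

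The hard part will be the power counting in (a)--(c): one must distribute the polynomial decays $\langle z\rangle^{-2d_{ij}(\cdot)-1}$ and $\langle z\rangle^{-2d_{ij}(\cdot)-2}$ of the resolvent kernels along the chains $n\to m$ and $n\to k\to m$ by means of Lemmas \ref{lemma_d_triangle_ineq}--\ref{lemma_d_ii=<d_ij+d_ji+1}, and balance them against the growth $N^{2n_2}$, $N^{2m_2}$ of $e^{in\zeta_+(z,\theta)}$, $e^{-im\zeta_-(z,\theta^{\prime})}$ and the linear growth of $\alpha(\zeta_+(z,\theta))/\sqrt{8z^2+1}$ and $\overline{\alpha}(\zeta_-(z,\theta^{\prime}))/\sqrt{8z^2+1}$, while recording at each crossing of the level $p$ the one-unit drop in the appropriate $d_{ij}$ and the ${\cal{O}}(N^{-2})$ it buys, and which of $\hat{q}_1$, $\hat{q}_2$ is already computed. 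This is precisely what is carried out in \ref{appendix_asymptotics_of_scattering_amplitude}, as for Lemma \ref{lemma_asymptotic_of_II_and_III}.
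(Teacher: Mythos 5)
Your proposal is correct and follows the same route as the paper: insert the resolvent identity $\hat{R}(z)=\hat{R}_{(>p,>p-1)}(z)-\hat{R}_{(>p,>p-1)}(z)\hat{q}_{(\le p,\le p-1)}\hat{R}(z)$, split the inner potential into $\hat{q}_{(>p,>p-1)}+\hat{q}_{(\le p,\le p-1)}$, recognize the fully-known sandwich $\hat{R}_{(>p,>p-1)}\hat{q}_{(>p,>p-1)}\hat{\eta}^{(0)}$ restricted to the outer sites where $\hat{q}_1(n)$ (resp.\ $\hat{q}_2(n)$ for $V$, hence $n_2\ge p$) is already known, and show each remainder is ${\cal{O}}(N^{4p-1})$ via power counting with Lemma \ref{lemma_resolvent_estimate_for_R(z)}, the quasi-triangle inequalities of Lemmas \ref{lemma_d_triangle_ineq}--\ref{lemma_d_ii=<d_ij+d_ji+1}, Lemma \ref{lemma_d_ij(n)>=|n_2|}, and the asymptotics \eqref{asymptotic_e^{zeta_+}}--\eqref{asymptotic_of_adjoint_alpha_zeta_-/sqrt_8z^2+1}. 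This is exactly the argument carried out in the appendix for Lemma \ref{lemma_asymptotic_of_II_and_III} and referenced for this lemma, applied with the correct bookkeeping of which $\hat{q}_j(n)$ are known at stage $p$.
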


Thus, we can compute all the $q(n)$'s inductively.


%
%
\section{The triangle lattice}

Let $G$ be the triangle lattice. Then, in the same way as in Section 2, we
have $\hat{H}_0\simeq{}6(\Delta_d+1)$ on $\l^2(\mathbb{Z}^2)\simeq{}l^2(G)$ as
follows:
\begin{align*}
    (\hat{H}_0\hat{f})(n)=&\hat{f}(n_1+1,n_2)+\hat{f}(n_1-1,n_2)+\hat{f}(n_1,n_2+1)+\hat{f}(n_1,n_2-1)\\
    &+\hat{f}(n_1-1,n_2+1)+\hat{f}(n_1+1,n_2-1)
\end{align*}
for $\hat{f}=(\hat{f}(n))_{n\in\mathbb{Z}^2}$. See Figure
\ref{fig_triangle_lattice}.
\begin{figure}[htbp]
    \begin{center}
        \includegraphics[height=3cm]{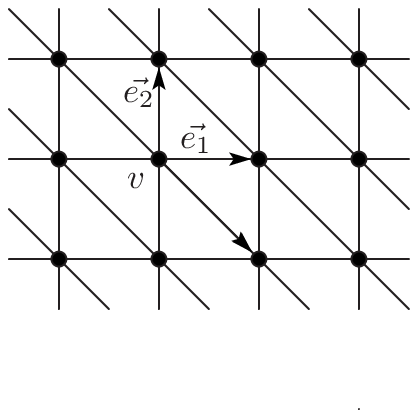}
    \end{center}
    \caption{the triangle lattice\label{fig_triangle_lattice}}
\end{figure}

In this case, the Fourier transform ${\cal{F}}$ is just the Fourier series on
$\mathbb{Z}^2$, which implies that $H_0={\cal{F}}\hat{H}_0{\cal{F}}^*$ is a
multiplication operator by
\begin{equation}
    \label{p(xi):triangle}
    p(\xi)=2(\cos{\xi_1}+\cos{\xi_2}+\cos{(\xi_1-\xi_2)})
\end{equation}
on $L^2(\mathbb{T}^2)$. Here, we note the similarity between
\eqref{p(xi):triangle} and the square of \eqref{p}. Therefore, Mourre estimate
is obtained with the conjugate operator
$A=\nabla{p}\cdot\nabla+\nabla\cdot\nabla{p}$ on any closed interval
$I\subset(-3,6)\setminus\{0\}$. Note also that we no longer need cut-off
functions which appear in the hexagonal lattice case. By using limiting
absorption principle, we have a spectral representation for
$\hat{H}=\hat{H}_0+\hat{q}$; also, a representation for the kernel of the
scattering amplitude $A(\lambda;\theta,\theta^{\prime})$, where $\theta$ and
$\theta^{\prime}\in\mathbb{R}$ are local coordinates of
${\cal{M}_{\lambda}}=\{\xi\in\mathbb{T}^2; p(\xi)=\lambda\}$, which satisfy
\begin{equation*}
    2(\cos{\xi_1(k,\theta)}+\cos{\xi_2(k,\theta)}+\cos{(\xi_1(k,\theta)-\xi_2(k,\theta))})=\pm\sqrt{k}.
\end{equation*}

We have the analytic continuations
$\zeta_{\pm}(z,\theta)=(\zeta_{\pm,1}(z,\theta),\zeta_{\pm,2}(z,\theta))$ from
$\xi(\lambda,\theta)=(\xi_1(\lambda,\theta),\xi_2(\lambda,\theta))$ in the
same way as in Section 4; also, the resolvent estimates as in Isozaki and
Korotyaev \cite{2011arXiv1103.2193I}, where we only have to replace
$\vert\cdot\vert_{l^1}$ with $d(\cdot)$. Therefore, we can adopt the same
strategy for our reconstruction procedure for the potential as in the
hexagonal lattice.


%
%
%
\section*{Acknowledgment}

I would like to thank Professors Hiroshi Isozaki and Evgeny Korotyaev for
their encouragements and invaluable suggestions about the scattering and the
inverse scattering theories for discrete Schr\"{o}dinger operators. I also
thank Mr. Hisashi Morioka for helpful discussions.

%
%
\appendix

%
%
\def\thesection{Appendix \Alph{section}}
\section{}
\label{appendix_spectral_representation}

\def\thesection{\Alph{section}}
\subsection{Proof of Theorem  \ref{spec_rep_1}}

From Lemma \ref{F_0_unitary}, it is uniquely extended to
\begin{equation*}
    {\cal{F}}_0:L^2(\mathbb{T}^2;\mathbb{C}^2)\rightarrow{}L^2((-3,3);L^2({\cal{M}}_{\vert\lambda\vert};\mathbb{C}^2);d\lambda)
\end{equation*}
as an isometry. To prove that ${\cal{F}}_0$ is onto, we note that
${\cal{F}}_0$ maps
$\cup_{\varepsilon>0}C_{\varepsilon}^{\infty}(\mathbb{T}^2;\mathbb{C}^2)$ to a
dense set in 
$L^2((-3,3);L^2({\cal{M}}_{\vert\lambda\vert};\mathbb{C}^2);d\lambda)$.

The second statement is already proved in \eqref{eigen_op_2}.

We note that
\begin{align*}
    (\int_{I_N}{\cal{F}}_0(\lambda)^*g(\lambda)d\lambda,f)_{L^2(\mathbb{T}^2;\mathbb{C}^2)}&=\int_{I_N}({\cal{F}}_0(\lambda)^*g(\lambda),f)_{L^2(\mathbb{T}^2;\mathbb{C}^2)}d\lambda\\
    &=\int_{I_N}(g(\lambda),{\cal{F}}_0(\lambda)f)_{L^2({\cal{M}}_{\vert\lambda\vert};\mathbb{C}^2)}d\lambda
\end{align*}
for $f\in{\cal{H}}^s$, $s>1/2$, and
$g\in{}L^2((-3,3);L^2({\cal{M}}_{\vert\lambda\vert};\mathbb{C}^2);d\lambda)$
(Reed-Simon \cite{MR0493421}), which implies that
\begin{equation*}
    \vert(\int_{I_N}{\cal{F}}_0(\lambda)^*g(\lambda)d\lambda,f)_{L^2(\mathbb{T}^2;\mathbb{C}^2)})\vert\le\Vert{g}\Vert_{L^2((-3,3);L^2({\cal{M}}_{\vert\lambda\vert};\mathbb{C}^2);d\lambda)}\Vert{f}\Vert_{L^2(\mathbb{T}^2;\mathbb{C}^2)}.
\end{equation*}
Then, by Riesz's theorem, we have
\begin{equation*}
    \int_{I_N}{\cal{F}}_0(\lambda)^*g(\lambda)d\lambda\in{}L^2(\mathbb{T}^2;\mathbb{C}^2),
\end{equation*}
\begin{equation*}
    \Vert\int_{I_N}{\cal{F}}_0(\lambda)^*g(\lambda)d\lambda\Vert\le\Vert{g}\Vert_{L^2((-3,3);L^2({\cal{M}}_{\vert\lambda\vert};\mathbb{C}^2);d\lambda)}.
\end{equation*}

Let $I_N$, $N=1,2,\cdots$, be a finite union of compact intervals as such in
the third statement. Then we have
\begin{align*}
    &\Vert\int_{I_N}{\cal{F}}_0(\lambda)^*({\cal{F}}_0f)(\lambda)d\lambda-\int_{I_M}{\cal{F}}_0(\lambda)^*({\cal{F}}_0f)(\lambda)d\lambda\Vert_{L^2(\mathbb{T}^2;\mathbb{C}^2)}^2\\
    &=\Vert\int_{I_N\vartriangle{}I_M}{\cal{F}}_0(\lambda)^*({\cal{F}}_0f)(\lambda)d\lambda\Vert_{L^2(\mathbb{T}^2;\mathbb{C}^2)}^2\\
    &\le\Vert\chi_{I_N\vartriangle{}I_M}{\cal{F}}_0f\Vert_{L^2((-3,3);L^2({\cal{M}}_{\vert\lambda\vert};\mathbb{C}^2);d\lambda)}^2\\
    &=\int_{I_N\vartriangle{}I_M}\Vert{\cal{F}}_0f\Vert_{L^2({\cal{M}}_{\vert\lambda\vert};\mathbb{C}^2)}d\lambda\\
    &\rightarrow{0}
\end{align*}
as $M$, $N\rightarrow\infty$, since
${\cal{F}}_0f\in{}L^2((-3,3);L^2({\cal{M}}_{\vert\lambda\vert};\mathbb{C}^2;d\lambda))$,
which implies that
$\int_{I_N}{\cal{F}}_0(\lambda)^*({\cal{F}}_0f)(\lambda)d\lambda$,
$N=1,2,\cdots$, is Cauchy.

Therefore, there exists $f_0\in{}L^2(\mathbb{T}^2;\mathbb{C}^2)$ such that
\begin{equation*}
    f_0=\text{s-}\lim_{N\rightarrow\infty}\int_{I_N}{\cal{F}}_0(\lambda)^*({\cal{F}}_0f)(\lambda)d\lambda.
\end{equation*}
By Parseval's formula, we have $f=f_0$, since
\begin{align*}
    (f_0,h)_{L^2(\mathbb{T}^2;\mathbb{C}^2)}&=\lim_{N\rightarrow\infty}\int_{I_N}({\cal{F}}_0(\lambda)^*({\cal{F}}_0f)(\lambda),h)_{L^2({\cal{M}}_{\vert\lambda\vert};\mathbb{C}^2)}d\lambda\\
    &=\lim_{N\rightarrow\infty}\int_{I_N}({\cal{F}}_0(\lambda)f,{\cal{F}}_0(\lambda)h)_{L^2({\cal{M}}_{\vert\lambda\vert};\mathbb{C}^2)}d\lambda\\
    &=({\cal{F}}_0f,{\cal{F}}_0h)_{L^2((-3,3);L^2({\cal{M}}_{\vert\lambda\vert};\mathbb{C}^2);d\lambda)}\\
    &=(f,h)_{L^2(\mathbb{T}^2;\mathbb{C}^2)}
\end{align*}
for any $h\in{}C^{\infty}(\mathbb{T}^2;\mathbb{C}^2)$.

\subsection{Proof of Theorem \ref{spectral_rep_H}}

By the limiting absorption principle and the virial theorem, the eigenvalues
of $H$ in $(-3,3)\setminus\{\pm{1},0\}$ accumulate, if possible, only at
$\pm{3}$, $\pm{1}$ or $0$ and there is no singularly continuous spectrum.

Let $-3<a<b<3$. Assume that the closed interval $[a,b]$ does not contain the
eigenvalues of $H$. Then, by Stone's theorem, we have
\begin{align*}
    &\frac{1}{2\pi}\int_a^b((R(\lambda+i0)-R(\lambda-i0))f,g)_{L^2(\mathbb{T}^2;\mathbb{C}^2)}\\
    =&((E_{H_{ac}}((-\infty,b])-E_{H_{ac}}((-\infty,a]))f,g)_{L^2(\mathbb{T}^2;\mathbb{C}^2)},
\end{align*}
which implies that ${\cal{F}}^{(\pm)}$ are partial isometries in the same way
as Theorem \ref{spec_rep_1}.

From \eqref{eigen_op_3}, ${\cal{F}}^{(\pm)}$ diagonalize $H$, which proves the
first statement.

Let $f^{\perp}\in{\cal{H}}_{ac}(H)^{\perp}$. Then, for any
$g\in{}L^2((-3,3);L^2({\cal{M}}_{\vert\lambda\vert};\mathbb{C}^2);d\lambda)$,
we have
\begin{align*}
    (\int_{I_N}{\cal{F}}^{(\pm)}(\lambda)^*g(\lambda),f^{\perp})_{L^2(\mathbb{T}^2;\mathbb{C}^2)}&=\int_{I_N}({\cal{F}}^{(\pm)}(\lambda)^*g(\lambda),f^{\perp})_{L^2(\mathbb{T}^2;\mathbb{C}^2)}d\lambda\\
    &=\int_{I_N}(g(\lambda),{\cal{F}}^{(\pm)}(\lambda)f^{\perp})_{L^2({\cal{M}}_{\vert\lambda\vert};\mathbb{C}^2)}d\lambda\\
    &=(g,{\cal{F}}^{(\pm)}f^{\perp})_{L^2((-3,3);L^2({\cal{M}}_{\vert\lambda\vert};\mathbb{C}^2);d\lambda)}\\
    &=0,
\end{align*}
since ${\cal F}^{(\pm)}f^{\perp}=0$, which implies, By Riesz's theorem, that
\begin{equation*}
    \int_{I_N}{\cal{F}}^{(\pm)}(\lambda)^*g(\lambda)d\lambda\in{}{\cal{H}}_{ac}(H).
\end{equation*}

Thus we can prove the second statement in the same way as in Theorem
\ref{spec_rep_1}.

To prove the third statement, we only have to take the adjoint of
\eqref{eigen_op_3}.


%
%
\def\thesection{Appendix \Alph{section}}
\section{}
\label{appendix_analytic_continuations}

\def\thesection{\Alph{section}}
\subsection{Proof of \eqref{Re_zeta_1} and \eqref{Im_zeta_1}}
\label{proof_of_the_asymptotics_of_zeta_1}

Let $\cos{\frac{E_1}{2}}=x_{c,1}+iy_{c,1}$. Then, from \eqref{cos_zeta_1/2},
we have
\begin{equation}
    \label{x_c,1}
    x_{c,1}=c(\theta)\left\{1-2\frac{\sinh^2{\theta}}{\left(2-e^{2\theta}\right)^2}N^{-2}+{\cal{O}}(N^{-4})\right\},
\end{equation}
\begin{equation}
    \label{y_c,1}
    y_{c,1}=c(\theta)\left\{-4\frac{\sinh^2{\theta}}{\left(2-e^{2\theta}\right)^2}N^{-3}+{\cal{O}}(N^{-5})\right\},
\end{equation}
where
\begin{equation*}
    c=c(\theta)=\frac{e^{-\theta}}{\sqrt{2-e^{2\theta}}}.
\end{equation*}
Also, we have
\begin{equation}
    \label{re_1}
    \cos{\frac{\eta_1}{2}}\cosh{\frac{\kappa_1}{2}}=x_{c,1},
\end{equation}
\begin{equation}
    \label{im_1}
    -\sin{\frac{\eta_1}{2}}\sinh{\frac{\kappa_1}{2}}=y_{c,1},
\end{equation}
where $E_1=\eta_1+i\kappa_1$. From \eqref{y_c,1}, we have $y_{c,1}<0$ for
sufficiently large $N>0$, which implies that $\sinh{\frac{\kappa_1}{2}}>0$,
since we have already proved that $\sin{\frac{\eta_1}{2}}>0$ in Section
\ref{analytic_continuation}.

From \eqref{re_1} and \eqref{im_1}, we have
\begin{equation}
    \label{equation_sin^2_eta_1/2}
    (1-\sin^2{\frac{\eta_1}{2}})(\sin^2{\frac{\eta_1}{2}}+{y_{c,1}}^2)={x_{c,1}}^2\sin^2{\frac{\eta_1}{2}}.
\end{equation}
We put
\begin{equation}
    \label{t_1=sin^2_eta_1/2}
    t_1=\sin^2{\frac{\eta_1}{2}}.
\end{equation}
Then, from \eqref{equation_sin^2_eta_1/2}, we have
\begin{equation}
    \label{equation_t_1}
    {t_1}^2+({x_{c,1}}^2+{y_{c,1}}^2-1)t_1-{y_{c,1}}^2=0.
\end{equation}
By solving \eqref{equation_t_1}, we have
\begin{equation}
    \label{t_1}
    t_1=\frac{1}{2}\left\{-\left({x_{c,1}}^2+{y_{c,1}}^2-1\right)+\sqrt{\left({x_{c,1}}^2+{y_{c,1}}^2-1\right)^2+4{y_{c,1}}^2}\right\},
\end{equation}
since $t_1>0$. By using \eqref{x_c,1} and \eqref{y_c,1}, we can calculate
\begin{equation*}
    {x_{c,1}}^2+{y_{c,1}}^2-1=c^2-1-4c^2\frac{\sinh^2{\theta}}{(2-e^{2\theta})^2}N^{-2}+{\cal{O}}(N^{-4}),
\end{equation*}
which implies
\begin{equation}
    \label{x_c,1^2+y_c,1^2-1>0}
    {x_{c,1}}^2+{y_{c,1}}^2-1>0
\end{equation}
for sufficiently large $N>0$, since
\begin{equation*}
    c^2-1=4\frac{\sinh^2{\theta}}{2-e^{2\theta}}>0.
\end{equation*}
Noticing \eqref{x_c,1^2+y_c,1^2-1>0} and
\begin{equation*}
    {y_{c,1}}^2({x_{c,1}}^2+{y_{c,1}}^2-1)^{-2}={\cal{O}}(N^{-6}),
\end{equation*}
we have, from \eqref{t_1},
\begin{align}
    \label{asymptotic_t_1}
    \begin{split}
        t_1&=\frac{1}{2}\Bigl\{-\left({x_{c,1}}^2+{y_{c,1}}^2-1\right)\Bigr.\\
        &\quad\quad\
        \Bigl.+\left({x_{c,1}}^2+{y_{c,1}}^2-1\right)\left(1+2{y_{c,1}}^2\left({x_{c,1}}^2+{y_{c,1}}^2-1\right)^{-2}+{\cal{O}}(N^{-12})\right)\Bigr\}\\
        &={y_{c,1}}^2({x_{c,1}}^2+{y_{c,1}}^2-1)+{\cal{O}}(N^{-12}).
    \end{split}
\end{align}
Since $\sin{\frac{\eta_1}{2}}>0$ and $y_{c,1}<0$ for sufficiently large $N>0$,
we have, from \eqref{t_1=sin^2_eta_1/2} and \eqref{asymptotic_t_1},
\begin{align*}
    \sin{\frac{\eta_1}{2}}&=-y_{c,1}({x_{c,1}}^2+{y_{c,1}}^2-1)^{-1/2}+{\cal{O}}(N^{-9})\\
    &=2\frac{c}{\sqrt{c^2-1}}\frac{\sinh{\theta}}{(2-e^{2\theta})^2}N^{-3}+{\cal{O}}(N^{-5})\\
    &=\frac{e^{-\theta}}{(2-e^{2\theta})^2}N^{-3}+{\cal{O}}(N^{-5}).
\end{align*}
Therefore, we have
\begin{equation*}
    \eta_1=2m_1\pi+{\cal{O}}(N^{-3})
\end{equation*}
for some $m_1\in\mathbb{Z}$.

From \eqref{y_c,1} and \eqref{im_1}, we can calculate
\begin{align}
    \label{sinh_kappa_1/2}
    \begin{split}
        \sinh{\frac{\kappa_1}{2}}&=-\frac{y_{c,1}}{\sin{\frac{\eta_1}{2}}}\\
        &=({x_{c,1}}^2+{y_{c,1}}^2-1)^{-1/2}+{\cal{O}}(N^{-6})\\
        &=\sqrt{c^2-1}+{\cal{O}}(N^{-2})\\
        &=b+{\cal{O}}(N^{-2}),
    \end{split}
\end{align}
where $b=b(\theta)$ is defined by \eqref{b(theta)}. We put
\begin{equation*}
    s_1=\sinh{\frac{\kappa_1}{2}}.
\end{equation*}
From \eqref{sinh_kappa_1/2}, we note that $s_1\rightarrow{b}>0$ as
$N\rightarrow+\infty$. Then we have
\begin{align*}
    e^{\kappa_1/2}&=s_1+\sqrt{{s_1}^2+1}\\
    &=b+\sqrt{b^2+1}+{\cal{O}}(N^{-2}),
\end{align*}
which implies that
\begin{equation*}
    \frac{\kappa_1}{2}=\log{(b+\sqrt{b^2+1})}+{\cal{O}}(N^{-2}).
\end{equation*}

\subsection{Proof of \eqref{Re_zeta_2} and \eqref{Im_zeta_2}}
\label{proof_of_the_asymptotics_of_zeta_2}

Let $\sin{\frac{E_2}{2}}=x_2+iy_2$. Then, from \eqref{sin_zeta_2/2}, we
have
\begin{equation}
    \label{x_s,2}
    x_{s,2}=a(\theta)\left\{-N-\frac{1}{2}\frac{e^{-2\theta}}{\left(2-e^{2\theta}\right)^2}N^{-1}+{\cal{O}}(N^{-3})\right\},
\end{equation}
\begin{equation}
    \label{y_s,2}
    y_{s,2}=a(\theta)\left\{1-\frac{1}{2}\frac{e^{-2\theta}}{\left(2-e^{2\theta}\right)^2}N^{-2}+{\cal{O}}(N^{-4})\right\},
\end{equation}
where $a=a(\theta)$ is defined by \eqref{a(theta)}. Also, we have
\begin{equation}
    \label{re_2}
    \sin{\frac{\eta_2}{2}}\cosh{\frac{\kappa_2}{2}}=x_{s,2},
\end{equation}
\begin{equation}
    \label{im_2}
    \cos{\frac{\eta_2}{2}}\sinh{\frac{\kappa_2}{2}}=y_{s,2},
\end{equation}
where $E_2=\eta_2+i\kappa_2$. From \eqref{y_s,2}, we have $y_{s,2}>0$ for
sufficiently large $N>0$, which implies that $\sinh{\frac{\kappa_2}{2}}>0$,
since we have already proved that $\cos{\frac{\eta_2}{2}}>0$ in Section
\ref{analytic_continuation}.

From \eqref{re_2} and \eqref{im_2}, we have
\begin{equation}
    \label{equation_cos^2_eta_2/2}
    (1-\cos^2{\frac{\eta_2}{2}})(\cos^2{\frac{\eta_2}{2}}+{y_{s,2}}^2)={x_{s,2}}^2\cos^2\frac{\eta_2}{2}.
\end{equation}
We put
\begin{equation}
    \label{t_2=cos^2_eta_2/2}
    t_2=\cos^2{\frac{\eta_2}{2}}.
\end{equation}
Then, from \eqref{equation_cos^2_eta_2/2}, we have
\begin{equation}
    \label{equation_t_2}
    {t_2}^2+({x_{s,2}}^2+{y_{s,2}}^2-1)t_2-{y_{s,2}}^2=0.
\end{equation}
By solving \eqref{equation_t_2}, we have
\begin{equation}
    \label{t_2}
    t_2=\frac{1}{2}\left\{-\left({x_{s,2}}^2+{y_{s,2}}^2-1\right)+\sqrt{\left({x_{s,2}}^2+{y_{s,2}}^2-1\right)^2+4{y_{s,2}}^2}\right\},
\end{equation}
since $t_2>0$. By using \eqref{x_s,2} and \eqref{y_s,2}, we can calculate
\begin{align*}
    {x_{s,2}}^2+{y_{s,2}}^2-1&=(2-e^{2\theta})N^2+\frac{e^{-2\theta}}{2-e^{2\theta}}+1-e^{2\theta}+{\cal{O}}(N^{-2})\\
    &={\cal{O}}(N^2).
\end{align*}
Noticing that $2-e^{2\theta}>0$ for $0<\theta<\frac{1}{2}\log{2}$, we have
\begin{equation}
    \label{x_s,2^2+y_s,2^2-1>0}
    {x_{s,2}}^2+{y_{s,2}}^2-1>0,
\end{equation}
\begin{equation}
    \label{x_s,2^2+y_s,2^2-1=O(N^-2)}
    {y_{s,2}}^2({x_{s,2}}^2+{y_{s,2}}^2-1)^{-2}={\cal{O}}(N^{-4}),
\end{equation}
for sufficiently large $N>0$. Therefore, by using \eqref{x_s,2^2+y_s,2^2-1>0}
and \eqref{x_s,2^2+y_s,2^2-1=O(N^-2)}, we have, from \eqref{t_2},
\begin{align}
    \label{asymptotic_t_2}
    \begin{split}
        t_2&=\frac{1}{2}\Bigl\{-\left({x_{s,2}}^2+{y_{s,2}}^2-1\right)\Bigr.\\
        &\quad\quad\
        \Bigl.+\left({x_{s,2}}^2+{y_{s,2}}^2-1\right)\left(1+2{y_{s,2}}^2\left({x_{s,2}}^2+{y_{s,2}}^2-1\right)^{-2}+{\cal{O}}(N^{-8})\right)\Bigr\}\\
        &={y_{s,2}}^2({x_{s,2}}^2+{y_{s,2}}^{2}-1)^{-1}+{\cal{O}}(N^{-6})\\
        &=N^{-2}+{\cal{O}}(N^{-4}).
    \end{split}
\end{align}
Since $\cos{\frac{\eta_2}{2}}>0$, we have, from \eqref{t_2=cos^2_eta_2/2} and
\eqref{asymptotic_t_2},
\begin{equation*}
    \cos{\frac{\eta_2}{2}}=N^{-1}+{\cal{O}}(N^{-3}).
\end{equation*}
Thus, we have
\begin{equation*}
    \eta_2=(2m_2+1)\pi+{\cal{O}}(N^{-1})
\end{equation*}
for some $m_2\in\mathbb{Z}$.

From \eqref{y_s,2} and \eqref{im_2}, we can calculate
\begin{align}
    \label{sinh_kapa_2/2}
    \begin{split}
        \sinh{\frac{\kappa_2}{2}}&=\frac{y_{s,2}}{\cos{\frac{\eta_2}{2}}}\\
        &=aN+{\cal{O}}(N^{-1}).
    \end{split}
\end{align}
We put
\begin{equation*}
    s_2=\sinh{\frac{\kappa_2}{2}}.
\end{equation*}
From \eqref{sinh_kapa_2/2}, we note that $s_2\rightarrow+\infty$ as
$N\rightarrow+\infty$. Then we have
\begin{align*}
    e^{\kappa_2/2}&=s_2+\sqrt{{s_2}^2+1}\\
    &=2s_2+\frac{1}{2}{s_2}^{-1}+{\cal{O}}({s_2}^{-3})\\
    &=2aN+{\cal{O}}(N^{-1}),
\end{align*}
which implies that
\begin{equation*}
    \frac{\kappa_2}{2}=\log{(2aN)}+{\cal{O}}(N^{-2}).
\end{equation*}


%
%
\def\thesection{Appendix \Alph{section}}
\section{}
\label{appendix_quasi_triangles_inequality}

\def\thesection{\Alph{section}}
\subsection{The proof of Lemma \ref{lemma_d_ij=<d_ii+d_ij}}

If $i=j$, these inequalities are reduced to the triangle inequality \eqref{triangle_inequality_d(n)}.

Assume that $(i,j)=(1,2)$. We prove the inequality
\eqref{d_ij=<d_ii+d_ij}. For $m=(m_1,m_2)$, $n=(n_1,n_2)\in\mathbb{Z}^2$, we
treat the following four cases:
\begin{enumerate}[{(}i{)}]
\item $m_1-n_1>0$, $m_2-n_2>0$,\label{m_1-n_1>0,m_2-n_2>0a}
\item $m_1-n_1\le{0}$, $m_2-n_2>0$,\label{m_1-n_1=<0,m_2-n_2>0}
\item $m_1-n_1\le{0}$, $m_2-n_2\le{0}$,\label{m_1-n_1=<0,m_2-n_2=<0}
\item $m_1-n_1>0$, $m_2-n_2\le{0}$.\label{m_1-n_1>0,m_2-n_2=<0}
\end{enumerate}

Let us consider the case \eqref{m_1-n_1=<0,m_2-n_2>0}. By taking the location
of $l=(l_1,l_2)\in\mathbb{Z}^2$ into consideration, we treat the following
seven cases:
\begin{enumerate}[{(}\ref{m_1-n_1=<0,m_2-n_2>0}.1{)}]
\item $(m_1-l_1)(m_2-l_2)\ge{0}$, $l_1-n_1\le{0}$, $l_2-n_2>0$,
    \label{(m_1-l_1)(m_2-l_2)=>0,l_1-n_1=<0,l_2-n_2>0}
\item $(m_1-l_1)(m_2-l_2)\le{0}$, $l_1-n_1\le{0}$, $l_2-n_2>0$,
    \label{(m_1-l_1)(m_2-l_2)=<0,l_1-n_1=<0,l_2-n_2>0}
\item $m_1-l_1\le{0}$, $m_2-l_2\ge{0}$, $l_1-n_1\le{0}$, $l_2-n_2\le{0}$,
    \label{m_1-l_1=<0,m_2-l_2=>0,l_1-n_1=<0,l_2-n_2=<0}
\item $m_1-l_1\ge{0}$, $m_2-l_2\ge{0}$, $l_1-n_1\le{0}$, $l_2-n_2\le{0}$,
    \label{m_1-l_1=>0,m_2-l_2=>0,l_1-n_1=<0,l_2-n_2=<0}
\item $m_1-l_1\le{0}$, $m_2-l_2\ge{0}$, $l_1-n_1>0$, $l_2-n_2>0$,
    \label{m_1-l_1=<0,m_2-l_2=>0,l_1-n_1>0,l_2-n_2>0}
\item $m_1-l_1\le{0}$, $m_2-l_2\le{0}$, $l_1-n_1>0$, $l_2-n_2>0$,
    \label{m_1-l_1=<0,m_2-l_2=<0,l_1-n_1>0,l_2-n_2>0}
\item $m_1-l_1\le{0}$, $m_2-l_2\ge{0}$, $l_1-n_1\le{0}$, $l_2-n_2\le{0}$.
    \label{m_1-l_1=<0,m_2-l_2=>0,l_1-n_1=<0,l_2-n_2=<0}
\end{enumerate}

Let us consider, for example, the case
(\ref{m_1-n_1=<0,m_2-n_2>0}.\ref{m_1-l_1=<0,m_2-l_2=>0,l_1-n_1>0,l_2-n_2>0}). Then
we have
\begin{align*}
    d_{12}(m-n)=&\max{\{\vert{m_1-n_1}\vert,\vert{m_2-n_2}\vert-1\}}\\
    \le&\max{\{\vert{m_1-l_1}\vert,\vert{m_2-l_2}\vert\}}+\max{\{\vert{l_1-n_1}\vert,\vert{l_2-n_2}\vert-1\}}.
\end{align*}
By the assumption, we have $\vert{l_2-n_2}\vert-1\ge{0}$, which implies
\begin{align*}
    d_{12}(m-n)\le&\max{\{\vert{m_1-l_1}\vert,\vert{m_2-l_2}\vert\}}+\vert{l_1-n_1}\vert+\vert{l_2-n_2}\vert-1\\
    =&d_{11}(m-l)+d_{12}(l-n).
\end{align*}

We can treat the other cases of \eqref{m_1-n_1=<0,m_2-n_2>0} similarly; also,
the cases \eqref{m_1-n_1>0,m_2-n_2>0a}, \eqref{m_1-n_1=<0,m_2-n_2=<0}, and
\eqref{m_1-n_1>0,m_2-n_2=<0}. We can also prove \eqref{d_ij=<d_ii+d_ij} for
$(i,j)=(2,1)$ by the similar arguments.

By using \eqref{d_ij=<d_ii+d_ij}, we have
\begin{align*}
    d_{ij}(m-n)=&d_{ji}(n-m)\\
    \le&d_{jj}(n-l)+d_{ji}(l-m)\\
    =&d_{ij}(m-l)+d_{jj}(l-n),
\end{align*}
which proves the inequality \eqref{d_ij=<d_ij+d_jj}.

\subsection{The proof of Lemma \ref{lemma_d_ii=<d_ij+d_ji+1}}

If $i=j$, these inequalities are reduced to the triangle inequality \eqref{triangle_inequality_d(n)}.

Assume $(i,j)=(1,2)$. For $m=(m_1,m_2)$, $n=(n_1,n_2)\in\mathbb{Z}^2$, we
treat the following eight cases:
\begin{enumerate}[{(}i{)}]
\item $m_1-n_1>0$, $m_2-n_2>0$,\label{m_1-n_1>0,m_2-n_2>0b}
\item $m_1-n_1<0$, $m_2-n_2>0$,\label{m_1-n_1<0,m_2-n_2>0}
\item $m_1-n_1<0$, $m_2-n_2<0$,\label{m_1-n_1<0,m_2-n_2<0}
\item $m_1-n_1>0$, $m_2-n_2<0$,\label{m_1-n_1>0,m_2-n_2<0}
\item $m_1=n_1$, $m_2-n_2>0$,\label{m_1=n_1,m_2-n_2>0}
\item $m_1=n_1$, $m_2-n_2<0$,\label{m_1=n_1,m_2-n_2<0}
\item $m_1-n_1>0$, $m_2=n_2$,\label{m_1-n_1>0,m_2=n_2}
\item $m_1-n_1<0$, $m_2=n_2$.\label{m_1-n_1<0,m_2=n_2}
\end{enumerate}

Let us consider the case \eqref{m_1-n_1<0,m_2-n_2>0}. We treat the following
nine cases, depending on the location of $l=(l_1,l_2)\in\mathbb{Z}^2$:
\begin{enumerate}[{(}\ref{m_1-n_1<0,m_2-n_2>0}.1{)}]
\item $l_1-m_1<0$, $l_2-n_2\ge0$, $m_1-l_1\le0$, $m_2-l_2>0$,
    \label{l_1-m_1<0,l_2-n_2=>0,m_1-l_1=<0,m_2-l_2>0}
\item $l_1-m_1<0$, $l_2-n_2\ge0$, $m_1-l_1\le0$, $m_2-l_2\le0$,
    \label{l_1-m_1<0,l_2-n_2=>0,m_1-l_1=<0,m_2-l_2=<0}
\item $l_1-m_1<0$, $l_2-n_2\ge0$, $m_1-l_1>0$, $m_2-l_2>0$,
    \label{l_1-m_1<0,l_2-n_2=>0,m_1-l_1>0,m_2-l_2>0}
\item $l_1-m_1<0$, $l_2-n_2\ge0$, $m_1-l_1>0$, $m_2-l_2\le0$,
    \label{l_1-m_1<0,l_2-n_2=>0,m_1=l_1>0,m_2-l_2=<0}
\item $l_1-m_1\ge0$, $l_2-n_2\ge0$, $m_1-l_1\le0$, $m_2-l_2>0$,
    \label{l_1-m_1=>0,l_2-n_2=>0,m_1-l_1=<0,m_2-l_2>0}
\item $l_1-m_1\ge0$, $l_2-n_2\ge0$, $m_1-l_1\le0$, $m_2-l_2\le0$,
    \label{l_1-m_1=>0,l_2-n_2=>0,m_1-l_1=<0,m_2-l_2=<0}
\item $l_1-m_1<0$, $l_2-n_2\ge0$, $m_1-l_1\le0$, $m_2-l_2>0$,
    \label{l_1-m_1<0,l_2-n_2=>0,m_1-l_1=<0,m_2-l_2>0}
\item $l_1-m_1<0$, $l_2-n_2<0$, $m_1-l_1>0$, $m_2-l_2>0$,
    \label{l_1-m_1<0,l_2-n_2<0,m_1-l_1>0,m_2-l_2>0}
\item $l_1-m_1\ge0$, $l_2-n_2<0$, $m_1-l_1\le0$, $m_2-l_2>0$.
    \label{l_1-m_1=>0,l_2-n_2<0,m_1-l_1=<0,m_2-l_2>0}
\end{enumerate}

Let us consider, for example, the case
(\ref{m_1-n_1<0,m_2-n_2>0}.\ref{l_1-m_1<0,l_2-n_2=>0,m_1-l_1>0,m_2-l_2>0}). Then
we have
\begin{align*}
    d_{11}(m-n)=&\max{\{\vert{m_1-n_1}\vert,\vert{m_2-n_2}\vert\}}\\
    =&\max{\{(n_1-l_1)+(l_1-m_1),\vert{m_2-n_2}\vert\}}.
\end{align*}
By the assumption, we have $l_1-m_1<0$ and $\vert{m_2-l_2}\vert-1\ge{0}$,
which implies that
\begin{align*}
    d_{11}(m-n)\le&\max{\{\vert{n_1-l_1}\vert,\vert{m_2-n_2}\vert\}}\\
    \le&\max{\{\vert{l_1-n_1}\vert-1,\vert{m_2-l_2}\vert+\vert{l_2-n_2}\vert-1\}}+1\\
    \le&(\vert{m_2-l_2}\vert-1)+\max{\{\vert{l_1-n_1}\vert-1,\vert{l_2-n_2}\vert\}}+1\\
    \le&d_{12}(m-l)+d_{21}(l-n)+1.
\end{align*}

We can treat the other cases of \eqref{m_1-n_1<0,m_2-n_2>0} similarly; also,
the cases \eqref{m_1-n_1>0,m_2-n_2>0b}, \eqref{m_1-n_1<0,m_2-n_2<0}, and
\eqref{m_1-n_1>0,m_2-n_2<0}.

About \eqref{m_1=n_1,m_2-n_2<0}, we treat the following six cases:
\begin{enumerate}[{(}\ref{m_1=n_1,m_2-n_2<0}.1{)}]
\item $l_1-n_1\ge0$, $l_2-n_2<0$, $m_1-l_1\le0$, $m_2-l_2>0$,
    \label{l_1-n_1=>0,l_2-n_2<0,m_1-l_1=<0,m_2-l_2>0}
\item $l_1-n_1\ge0$, $l_2-n_2<0$, $m_1-l_1\le0$, $m_2-l_2\le0$,
    \label{l_1-n_1=>0,l_2-n_2<0,m_1-l_1=<0,m_2-l_2=<0}
\item $l_1-n_1<0$, $l_2-n_2<0$, $m_1-l_1>0$, $m_2-l_2>0$,
    \label{l_1-n_1<0,l_2-n_2<0,m_1-l_1>0,m_2-l_2>0}
\item $l_1-n_1<0$, $l_2-n_2<0$, $m_1-l_1>0$, $m_2-l_2\le0$,
    \label{l_1-n_1<0,l_2-n_2<0,m_1-l_1>0,m_2-l_2=<0}
\item $l_1-n_1\ge0$, $l_2-n_2\ge0$, $m_1-l_1\le0$, $m_2-l_2\le0$,
    \label{l_1-n_1=>0,l_2-n_2=>0,m_1-l_1=<0,m_2-l_2=<0}
\item $l_1-n_1<0$, $l_2-n_2\ge0$, $m_1-l_1>0$, $m_2-l_2\le0$.
    \label{l_1-n_1<0,l_2-n_2=>0,m_1-l_1>0,m_2-l_2=<0}
\end{enumerate}

Let us consider, for example, the case
(\ref{m_1=n_1,m_2-n_2<0}.\ref{l_1-n_1=>0,l_2-n_2<0,m_1-l_1=<0,m_2-l_2>0}). Then
we have
\begin{align*}
    d_{11}(m-n)=&\vert{m_2-n_2}\vert\\
    =&(n_2-l_2)+(l_2-m_2).
\end{align*}
By the assumption, we have $l_2-m_2<0$, which implies
\begin{align*}
    d_{11}(m-n)\le&(\vert{l_2-n_2}\vert-1)+1\\
    \le&\max{\{\vert{l_1-n_1}\vert,\vert{l_2-n_2}\vert-1\}}+1\\
    =&d_{21}(l-n)+1\\
    \le&d_{12}(m-l)+d_{21}(l-n)+1.
\end{align*}

We can treat the other cases of \eqref{m_1=n_1,m_2-n_2<0} similarly; also, the
cases \eqref{m_1=n_1,m_2-n_2>0}, \eqref{m_1-n_1>0,m_2=n_2}, and
\eqref{m_1-n_1<0,m_2=n_2}.

The similar arguments prove the inequality for $(i,j)=(2,1)$.


%
%
\def\thesection{Appendix \Alph{section}}
\section{}
\label{appendix_asymptotics_of_scattering_amplitude}

\def\thesection{\Alph{section}}
\subsection{Proof of Lemma \ref{lemma_asymptotic_of_II_and_III}}

\begin{flushleft}
    {\underline{\it{Asymptotic behavior of $I\hspace{-.1em}I$}}}
\end{flushleft}

We split $I\hspace{-.1em}I$ into four parts:
$I\hspace{-.1em}I=I\hspace{-.1em}I_1+I\hspace{-.1em}I_2+I\hspace{-.1em}I_3+I\hspace{-.1em}I_4$,
where
\begin{equation*}
    I\hspace{-.1em}I_1=\sum_{n_2\ge{p+1}}\sum_{m_2\ge{p+1}},
    I\hspace{-.1em}I_2=\sum_{n_2\ge{p+1}}\sum_{m_2\le{p}},
    I\hspace{-.1em}I_3=\sum_{n_2\le{p}}\sum_{m_2\ge{p+1}},
    I\hspace{-.1em}I_4=\sum_{n_2\le{p}}\sum_{m_2\le{p}}.
\end{equation*}

About $I\hspace{-.1em}I_4$, it is a sum of the following terms:
\begin{equation}
    \label{overline_alpha_zeta_+/sqrt(8z^2+1)e_q_1_e(<P(n)|R(z)|P(m)>q_eta)12}
    \frac{\overline{\alpha}(\zeta_+(z,\theta))}{\sqrt{8z^2+1}}e^{in\zeta_+(z,\theta)}\hat{q}_1(n)e^{-im\zeta_-(z,\theta^{\prime})}\bigl(\langle\hat{P}(n)\vert\hat{R}(z)\vert\hat{P}(m)\rangle\hat{q}(m)\hat{\eta}^{(0)}(z,\theta^{\prime})\bigr)_{12},
\end{equation}
where $n_2\le{p}$ and $m_2\le{p}$. Then, by using
\eqref{asymptotic_of_adjoint_alpha_zeta_+/sqrt_8z^2+1},
\eqref{asymptotic_e^{zeta_+}}, \eqref{asymptotic_e^{zeta_-}}, and
\eqref{asymptotic_of_<P(n)|R(z)|P(m)>q(m)eta(z,theta)}, we have
$I\hspace{-.1em}I_4\sim{\cal{O}}(N^{2(n_2+m_2)-3})\sim{\cal{O}}(N^{4p-3})$.

About $I\hspace{-.1em}I_3$, we note that it is a sum of
\eqref{overline_alpha_zeta_+/sqrt(8z^2+1)e_q_1_e(<P(n)|R(z)|P(m)>q_eta)12}'s,
where $n_2\le{p}$ and $m_2\ge{p+1}$. By using
\eqref{estimate_for_<P(n)|R(z)|P(m)>q(m)_eta}, we have
\begin{align}
    \label{estimate_for_(<P(n)|R(z)|P(m)>q(m)_eta)_12}
    \begin{split}
        &\bigl(\langle\hat{P}(n)\vert\hat{R}(z)\vert\hat{P}(m)\rangle\hat{q}(m)\hat{\eta}(z,\theta^{\prime})\bigr)_{12}\\
        &={\cal{O}}(\langle{z}\rangle^{-2d(n-m)-1})\hat{q}_1(m)\frac{\alpha(\zeta_-(z,\theta^{\prime}))}{\sqrt{8z^2+1}}+{\cal{O}}(\langle{z}\rangle^{-2d_{12}(n-m)-2})\hat{q}_2(m).
    \end{split}
\end{align}
From Lemma \ref{lemma_d_ij(n)>=|n_2|}, we have
\begin{equation}
    \label{d_12(n-m)=>|n_2-m_2|:2.3}
    d_{12}(n-m)\ge\vert{n_2-m_2}\vert,
\end{equation}
since $n_2-m_2\le{0}$; also,
\begin{equation}
    \label{d(n-m)=>|n_2-m_2|:2.3}
    d(n-m)\ge\vert{n_2-m_2}\vert.
\end{equation}
Then, by using \eqref{asymptotic_of_alpha_zeta_-/sqrt_8z^2+1},
\eqref{d_12(n-m)=>|n_2-m_2|:2.3}, and \eqref{d(n-m)=>|n_2-m_2|:2.3}, we have
${\cal{O}}(N^{-2\vert{n_2-m_2}\vert-2})$ as the asymptotic behavior of
\eqref{estimate_for_(<P(n)|R(z)|P(m)>q(m)_eta)_12}. Moreover, by using
\eqref{asymptotic_of_adjoint_alpha_zeta_+/sqrt_8z^2+1},
\eqref{asymptotic_e^{zeta_+}}, and \eqref{asymptotic_e^{zeta_-}}, we have
$I\hspace{-.1em}I_3\sim{\cal{O}}(N^{2(n_2+m_2)-2\vert{n_2-m_2}\vert-3})$. Since
\begin{align}
    \label{2(n_2+m_2)-2|n_2-m_2|=<4p}
    \begin{split}
        2(n_2+m_2)-2\vert{n_2-m_2}\vert&=2((m_2-n_2)-\vert{n_2-m_2}\vert)+4n_2\\
        &\le4n_2,
    \end{split}
\end{align}
we have $I\hspace{-.1em}I_3\sim{\cal{O}}(N^{4p-3})$.

About $I\hspace{-.1em}I_2$, we note that
\begin{equation*}
    d_{12}(n-m)\ge\vert{n_2-m_2}\vert-1,
\end{equation*}
since $n_2-m_2\ge{0}$, which infers
$I\hspace{-.1em}I_2\sim{\cal{O}}(N^{4p-1})$ in the same way as
$I\hspace{-.1em}I_3$.

About $I\hspace{-.1em}I_1$, by using the resolvent equation
\begin{equation}
    \label{R(z)=R_>p(z)-R_>p(z)qR(z)}
    \hat{R}(z)=\hat{R}_{>p}(z)-\hat{R}_{>p}(z)\hat{q}_{\le{p}}\hat{R}(z),
\end{equation}
we split it into two parts:
$I\hspace{-.1em}I_1=I\hspace{-.1em}I_{1,1}-I\hspace{-.1em}I_{1,2}$, where
\begin{align*}
    I\hspace{-.1em}I_{1,1}=&\frac{\overline{\alpha}(\zeta_+(z,\theta))}{\sqrt{8z^2+1}}\sum_{n_2\ge{p+1}}e^{in\zeta_+(z,\theta)}\hat{q}_1(n)\sum_{m_2\ge{p+1}}e^{-im\zeta_-(z,\theta^{\prime})}\\
    &\cdot\bigl(\langle\hat{P}(n)\vert\hat{R}_{>p}(z)\vert\hat{P}(m)\rangle\hat{q}(m)\hat{\eta}^{(0)}(z,\theta^{\prime})\bigr)_{12},
\end{align*}
\begin{align*}
    I\hspace{-.1em}I_{1,2}=&\frac{\overline{\alpha}(\zeta_+(z,\theta))}{\sqrt{8z^2+1}}\sum_{n_2\ge{p+1}}e^{in\zeta_+(z,\theta)}\hat{q}_1(n)\sum_{m_2\ge{p+1}}e^{-im\zeta_-(z,\theta^{\prime})}\\
    &\cdot\bigl(\langle\hat{P}(n)\vert\hat{R}_{>p}(z)\hat{q}_{\le{p}}\hat{R}(z)\vert\hat{P}(m)\rangle\hat{q}(m)\hat{\eta}^{(0)}(z,\theta^{\prime})\bigr)_{12}.
\end{align*}

We know $\hat{R}_{>p}(z)$, since we have already computed $\hat{q}(n)$ for
$n_2>p$ by the assumption, which means that $I\hspace{-.1em}I_{1,1}$ is a
known term.

$I\hspace{-.1em}I_{1,2}$ is a sum of the following terms:
\begin{align*}
    &\frac{\overline{\alpha}(\zeta_+(z,\theta))}{\sqrt{8z^2+1}}e^{in\zeta_+(z,\theta)}\hat{q}_1(n)e^{-im\zeta_-(z,\theta^{\prime})}\\
    &\cdot\bigl(\langle\hat{P}(n)\vert\hat{R}_{>p}(z)\vert\hat{P}(k)\rangle\hat{q}(k)\langle\hat{P}(k)\vert\hat{R}(z)\vert\hat{P}(m)\rangle\hat{q}(m)\hat{\eta}^{(0)}(z,\theta^{\prime})\bigr)_{12},
\end{align*}
where $n_2\ge{p+1}$, $k_2\le{p}$, and $m_2\ge{p+1}$. By using
\eqref{resolvent_estimate_for_R(z)}, \eqref{q_n}, and
\eqref{estimate_for_<P(n)|R(z)|P(m)>q(m)_eta}, we have
\begin{align}
    \label{estimate_(bracket_q_bracket_q_eta)_12}
    \begin{split}
        &\bigl(\langle\hat{P}(n)\vert\hat{R}_{>p}(z)\vert\hat{P}(k)\rangle\hat{q}(k)\langle\hat{P}(k)\vert\hat{R}(z)\vert\hat{P}(m)\rangle\hat{q}(m)\hat{\eta}^{(0)}(z,\theta^{\prime})\bigr)_{12}\\
        &={\cal{O}}(\langle{z}\rangle^{-2d(n-k)-1})\hat{q}_1(k){\cal{O}}(\langle{z}\rangle^{-2d(k-m)-1})\hat{q}_1(m)\frac{\alpha(\zeta_-(z,\theta^{\prime}))}{\sqrt{8z^2+1}}\\
        &\quad+{\cal{O}}(\langle{z}\rangle^{-2d(n-k)-1})\hat{q}_1(k){\cal{O}}(\langle{z}\rangle^{-2d_{12}(k-m)-2})\hat{q}_2(m)\\
        &\quad+{\cal{O}}(\langle{z}\rangle^{-2d_{12}(n-k)-2})\hat{q}_2(k){\cal{O}}(\langle{z}\rangle^{-2d_{21}(k-m)-2})\hat{q}_1(m)\frac{\alpha(\zeta_-(z,\theta^{\prime}))}{\sqrt{8z^2+1}}\\
        &\quad+{\cal{O}}(\langle{z}\rangle^{-2d_{12}(n-k)-2})\hat{q}_2(k){\cal{O}}(\langle{z}\rangle^{-2d(k-m)-1})\hat{q}_2(m).
    \end{split}
\end{align}
From Lemma \ref{lemma_d_ij(n)>=|n_2|}, we have
\begin{equation}
    \label{d_12(n-k)=>|n_2-k_2|-1:2.1.2}
    d_{12}(n-k)\ge\vert{n_2-k_2}\vert-1,
\end{equation}
\begin{equation}
    \label{d_12(k-n)=>|k_2-n_2|:2.1.2}
    d_{12}(k-n)\ge\vert{k_2-n_2}\vert,
\end{equation}
\begin{equation}
    \label{d_21(k-m)=>|k_2-m_2|-1:2.1.2}
    d_{21}(k-m)\ge\vert{k_2-m_2}\vert-1,
\end{equation}
since $k_2-m_2<0$ and $n_2-k_2>0$; also,
\begin{equation}
    \label{d(n-k)=>|n_2-k_2|:2.1.2}
    d(n-k)\ge\vert{n_2-k_2}\vert,
\end{equation}
\begin{equation}
    \label{d(k-m)=>|k_2-m_2|:2.1.2}
    d(k-m)\ge\vert{k_2-m_2}\vert.
\end{equation}
Then, from Lemmas \ref{lemma_d_triangle_ineq}, \ref{lemma_d_ij=<d_ii+d_ij},
and \ref{lemma_d_ii=<d_ij+d_ji+1}, by using
\eqref{asymptotic_of_alpha_zeta_-/sqrt_8z^2+1},
\eqref{d_12(n-k)=>|n_2-k_2|-1:2.1.2}, \eqref{d_12(k-n)=>|k_2-n_2|:2.1.2},
\eqref{d_21(k-m)=>|k_2-m_2|-1:2.1.2}, \eqref{d(n-k)=>|n_2-k_2|:2.1.2}, and
\eqref{d(k-m)=>|k_2-m_2|:2.1.2}, we have
${\cal{O}}(N^{-2\vert{n_2-k_2}\vert-2\vert{k_2-m_2}\vert-1})$ as the
asymptotic behavior of
\eqref{estimate_(bracket_q_bracket_q_eta)_12}. Moreover, by using
\eqref{asymptotic_of_adjoint_alpha_zeta_+/sqrt_8z^2+1},
\eqref{asymptotic_e^{zeta_+}}, and \eqref{asymptotic_e^{zeta_-}}, we have
$I\hspace{-.1em}I_{1,2}\sim{\cal{O}}(N^{2(n_2+m_2)-2\vert{n_2-k_2}\vert-2\vert{k_2-m_2}\vert-2})$. Since
\begin{align}
    \label{2(n_2-m_2)-2|n_2-k_2|-2|k_2-m_2|=<4p}
    \begin{split}
        2(n_2-m_2)-2\vert{n_2-k_2}\vert-2\vert{k_2-m_2}\vert&=2((n_2-k_2)-\vert{n_2-k_2}\vert)\\
        &\quad+2((m_2-k_2)-\vert{m_2-k_2}\vert)+4k_2\\
        &\le4k_2,
    \end{split}
\end{align}
we have $I\hspace{-.1em}I_{1,2}\sim{\cal O}(N^{4p-2})$.

\begin{flushleft}
    {\underline{\it{Asymptotic behavior of
          $I\hspace{-.1em}I\hspace{-.1em}I$}}}
\end{flushleft}

We split $I\hspace{-.1em}I\hspace{-.1em}I$ into four parts:
$I\hspace{-.1em}I\hspace{-.1em}I=I\hspace{-.1em}I\hspace{-.1em}I_1+I\hspace{-.1em}I\hspace{-.1em}I_2+I\hspace{-.1em}I\hspace{-.1em}I_3+I\hspace{-.1em}I\hspace{-.1em}I_4$,
where
\begin{equation*}
    I\hspace{-.1em}I\hspace{-.1em}I_1=\sum_{n_2\ge{p+1}}\sum_{m_2\ge{p+1}},
    I\hspace{-.1em}I\hspace{-.1em}I_2=\sum_{n_2\ge{p+1}}\sum_{m_2\le{p}},
    I\hspace{-.1em}I\hspace{-.1em}I_3=\sum_{n_2\le{p}}\sum_{m_2\ge{p+1}},
    I\hspace{-.1em}I\hspace{-.1em}I_4=\sum_{n_2\le{p}}\sum_{m_2\le{p}}.
\end{equation*}

About $I\hspace{-.1em}I\hspace{-.1em}I_4$, it is a sum of the following terms:
\begin{equation}
    \label{e_q_2_e<P(n)|R(z)|P(m)>q(n)_eta_22}
    e^{in\zeta_+(z,\theta)}\hat{q}_2(n)e^{-im\zeta_-(z,\theta^{\prime})}\bigl(\langle\hat{P}(n)\vert\hat{R}(z)\vert\hat{P}(m)\rangle\hat{q}(m)\hat{\eta}^{(0)}(z,\theta^{\prime})\bigr)_{22},
\end{equation}
where $n_2\le{p}$ and $m_2\le{p}$. Then, by using
\eqref{asymptotic_e^{zeta_+}}, \eqref{asymptotic_e^{zeta_-}}, and
\eqref{asymptotic_of_<P(n)|R(z)|P(m)>q(m)eta(z,theta)}, we have
$I\hspace{-.1em}I\hspace{-.1em}I_4\sim{\cal{O}}(N^{2(n_2+m_2)-1})\sim{\cal{O}}(N^{4p-1})$.

About $I\hspace{-.1em}I\hspace{-.1em}I_3$, it is a sum of
\eqref{e_q_2_e<P(n)|R(z)|P(m)>q(n)_eta_22}'s, where $n_2\le{p}$ and
$m_2\ge{p+1}$. By using \eqref{estimate_for_<P(n)|R(z)|P(m)>q(m)_eta}, we have
\begin{align}
    \label{estimate_for_(<P(n)|R(z)|P(m)>q(m)_eta)_22}
    \begin{split}
        &\bigl(\langle\hat{P}(n)\vert\hat{R}(z)\vert\hat{P}(m)\rangle\hat{q}(m)\hat{\eta}^{(0)}(z,\theta^{\prime})\bigr)_{22}\\
        &={\cal{O}}(\langle{z}\rangle^{-2d_{21}(n-m)-2})\hat{q}_1(m)\frac{\alpha(\zeta_-(z,\theta^{\prime}))}{\sqrt{8z^2+1}}+{\cal{O}}(\langle{z}\rangle^{-2d(n-m)-1})\hat{q}_2(m).
    \end{split}
\end{align}
From Lemma \ref{lemma_d_ij(n)>=|n_2|}, we have
\begin{equation}
    \label{d_21(n-m)=>|n_2-m_2|-1:3.3}
    d_{21}(n-m)\ge\vert{n_2-m_2}\vert-1,
\end{equation}
since $n_2-m_2\le0$; also,
\begin{equation}
    \label{d(n-m)=>|n_2-m_2|:3.3}
    d(n-m)\ge\vert{n_2-m_2}\vert.
\end{equation}
Then, by using \eqref{asymptotic_of_alpha_zeta_-/sqrt_8z^2+1},
\eqref{d_21(n-m)=>|n_2-m_2|-1:3.3}, and \eqref{d(n-m)=>|n_2-m_2|:3.3}, we have
${\cal{O}}(N^{-2\vert{n_2-m_2}\vert-1})$ as the asymptotic behavior of
\eqref{estimate_for_(<P(n)|R(z)|P(m)>q(m)_eta)_22}. Moreover, by using
\eqref{asymptotic_e^{zeta_+}} and \eqref{asymptotic_e^{zeta_-}}, we have
$I\hspace{-.1em}I\hspace{-.1em}I_3\sim{\cal{O}}(N^{2(n_2+m_2)-2\vert{n_2-m_2}\vert-1})$. The
estimate \eqref{2(n_2+m_2)-2|n_2-m_2|=<4p} implies
$I\hspace{-.1em}I\hspace{-.1em}I_3\sim{\cal{O}}(N^{4p-1})$.

About $I\hspace{-.1em}I\hspace{-.1em}I_2$, we note that
\begin{equation*}
    d_{21}(n-m)\ge\vert{n_2-m_2}\vert,
\end{equation*}
since $n_2-m_2\ge0$, which infers
$I\hspace{-.1em}I\hspace{-.1em}I_2\sim{\cal{O}}(N^{4p-1})$ in the same way as
$I\hspace{-.1em}I\hspace{-.1em}I_3$.

About $I\hspace{-.1em}I\hspace{-.1em}I_1$, by using the resolvent estimate
\eqref{R(z)=R_>p(z)-R_>p(z)qR(z)}, we split it into two parts:
$I\hspace{-.1em}I\hspace{-.1em}I_1=I\hspace{-.1em}I\hspace{-.1em}I_{1,1}-I\hspace{-.1em}I\hspace{-.1em}I_{1,2}$,
where
\begin{align*}
    I\hspace{-.1em}I\hspace{-.1em}I_{1,1}=&\sum_{n_2\ge{p+1}}e^{in\zeta_+(z,\theta)}\hat{q}_2(n)\sum_{m_2\ge{p+1}}e^{-im\zeta_-(z,\theta^{\prime})}\\
    &\cdot\bigl(\langle\hat{P}(n)\vert\hat{R}_{>p}(z)\vert\hat{P}(m)\rangle\hat{q}(m)\hat{\eta}^{(0)}(z,\theta^{\prime})\bigr)_{22},
\end{align*}
\begin{align*}
    I\hspace{-.1em}I\hspace{-.1em}I_{1,2}=&\sum_{n_2\ge{p+1}}e^{in\zeta_+(z,\theta)}\hat{q}_2(n)\sum_{m_2\ge{p+1}}e^{-im\zeta_-(z,\theta^{\prime})}\\
    &\cdot\bigl(\langle\hat{P}(n)\vert\hat{R}_{>p}(z)\hat{q}_{\le{p}}\hat{R}(z)\vert\hat{P}(m)\rangle\hat{q}(m)\hat{\eta}^{(0)}(z,\theta^{\prime})\bigr)_{22}.
\end{align*}

$I\hspace{-.1em}I\hspace{-.1em}I_{1,1}$ is a known term as before.

$I\hspace{-.1em}I\hspace{-.1em}I_{1,2}$ is a sum of the following terms:
\begin{align*}
    &e^{in\zeta_+(z,\theta)}\hat{q}_2(n)e^{-im\zeta_-(z,\theta^{\prime})}\\
    &\cdot\bigl(\langle\hat{P}(n)\vert\hat{R}_{>p}(z)\vert\hat{P}(k)\rangle\hat{q}(k)\langle\hat{P}(k)\vert\hat{R}(z)\vert\hat{P}(m)\rangle\hat{q}(m)\hat{\eta}^{(0)}(z,\theta^{\prime})\bigr)_{22},
\end{align*}
where $n_2\ge{p+1}$, $k_2\le{p}$, and $m_2\ge{p+1}$. By using
\eqref{resolvent_estimate_for_R(z)}, \eqref{q_n}, and
\eqref{estimate_for_<P(n)|R(z)|P(m)>q(m)_eta}, we have
\begin{align}
    \label{estimate_(bracket_q_bracket_q_eta)_22}
    \begin{split}
        &\bigl(\langle\hat{P}(n)\vert\hat{R}_{>p}(z)\vert\hat{P}(k)\rangle\hat{q}(k)\langle\hat{P}(k)\vert\hat{R}(z)\vert\hat{P}(m)\rangle\hat{q}(m)\hat{\eta}^{(0)}(z,\theta^{\prime})\bigr)_{22}\\
        &={\cal{O}}(\langle{z}\rangle^{-2d_{21}(n-k)-2})\hat{q}_1(k){\cal{O}}(\langle{z}\rangle^{-2d(k-m)-1})\hat{q}_1(m)\frac{\alpha(\zeta_-(z,\theta^{\prime}))}{\sqrt{8z^2+1}}\\
        &\quad+{\cal{O}}(\langle{z}\rangle^{-2d_{21}(n-k)-2})\hat{q}_1(k){\cal{O}}(\langle{z}\rangle^{-2d_{12}(k-m)-2})\hat{q}_2(m)\\
        &\quad+{\cal{O}}(\langle{z}\rangle^{-2d(n-k)-1})\hat{q}_2(k){\cal{O}}(\langle{z}\rangle^{-2d_{21}(k-m)-2})\hat{q}_1(m)\frac{\alpha(\zeta_-(z,\theta^{\prime}))}{\sqrt{8z^2+1}}\\
        &\quad+{\cal{O}}(\langle{z}\rangle^{-2d(n-k)-1})\hat{q}_2(k){\cal{O}}(\langle{z}\rangle^{-2d(k-m)-1})\hat{q}_2(m).
    \end{split}
\end{align}
From Lemma \ref{lemma_d_ij(n)>=|n_2|}, we have
\begin{equation}
    \label{d_21(n-k)=>|n_2-k_2|:3.1.2}
    d_{21}(n-k)\ge\vert{n_2-k_2}\vert,
\end{equation}
\begin{equation}
    \label{d_21(k-m)=>|k_2-m_2|-1:3.1.2}
    d_{21}(k-m)\ge\vert{k_2-m_2}\vert-1,
\end{equation}
\begin{equation}
    \label{d_12(k-m)=>|k_2-m_2|:3.1.2}
    d_{12}(k-m)\ge\vert{k_2-m_2}\vert,
\end{equation}
since $n_2-k_2>0$ and $k_2-m_2<0$; also,
\begin{equation}
    \label{d(n-k)=>|n_2-k_2|:3.1.2}
    d(n-k)\ge\vert{n_2-k_2}\vert,
\end{equation}
\begin{equation}
    \label{d(k-m)=>|k_2-m_2|:3.1.2}
    d(k-m)\ge\vert{k_2-m_2}\vert.
\end{equation}
Then, by using \eqref{asymptotic_of_alpha_zeta_-/sqrt_8z^2+1},
\eqref{d_21(n-k)=>|n_2-k_2|:3.1.2}, \eqref{d_21(k-m)=>|k_2-m_2|-1:3.1.2},
\eqref{d_12(k-m)=>|k_2-m_2|:3.1.2}, \eqref{d(n-k)=>|n_2-k_2|:3.1.2}, and
\eqref{d(k-m)=>|k_2-m_2|:3.1.2}, we have
${\cal{O}}(N^{-2\vert{n_2-k_2}\vert-2\vert{k_2-m_2}\vert-2})$ as the
asymptotic behavior of
\eqref{estimate_(bracket_q_bracket_q_eta)_22}. Moreover, by using
\eqref{asymptotic_e^{zeta_+}} and \eqref{asymptotic_e^{zeta_-}}, we have
$I\hspace{-.1em}I\hspace{-.1em}I_{1,2}\sim{\cal{O}}(N^{2(n_2+m_2)-2\vert{n_2-k_2}\vert-2\vert{k_2-m_2}\vert-2})$. The
estimate \eqref{2(n_2-m_2)-2|n_2-k_2|-2|k_2-m_2|=<4p} implies
$I\hspace{-.1em}I\hspace{-.1em}I_{1,2}\sim{\cal{O}}(N^{4p-2})$.

\subsection{Proof of Lemma \ref{lemma_asymptotic_of_IV_and_V}}

\begin{flushleft}
    {\underline{\it{Asymptotic behavior of $I\hspace{-.1em}V$}}}
\end{flushleft}

We split $I\hspace{-.1em}V$ into four parts:
$I\hspace{-.1em}V=I\hspace{-.1em}V_1+I\hspace{-.1em}V_2+I\hspace{-.1em}V_3+I\hspace{-.1em}V_4$,
where
\begin{equation*}
    I\hspace{-.1em}V_1=\sum_{n_2\ge{p+1}}\sum_{m_2\ge{p+1}},
    I\hspace{-.1em}V_2=\sum_{n_2\ge{p+1}}\sum_{m_2\le{p}},
    I\hspace{-.1em}V_3=\sum_{n_2\le{p}}\sum_{m_2\ge{p+1}},
    I\hspace{-.1em}V_4=\sum_{n_2\le{p}}\sum_{m_2\le{p}}.
\end{equation*}

About $I\hspace{-.1em}V_4$, it is a sum of the following terms:
\begin{equation}
    \label{e^in_zeta_+q_1(n)e^imzeta_-<P(n)|R(z)|P(m)>q(m)eta^(0)(z,theta)}
    e^{in\zeta_+(z,\theta)}\hat{q}_1(n)e^{-im\zeta_-(z,\theta^{\prime})}\bigl(\langle\hat{P}(n)\vert\hat{R}(z)\vert\hat{P}(m)\rangle\hat{q}(m)\hat{\eta}^{(0)}(z,\theta^{\prime})\bigr)_{11},
\end{equation}
where $n_2\le{p}$ and $m_2\le{p}$. Then, by using
\eqref{asymptotic_e^{zeta_+}}, \eqref{asymptotic_e^{zeta_-}}, and
\eqref{asymptotic_of_<P(n)|R(z)|P(m)>q(m)eta(z,theta)}, we have
$I\hspace{-.1em}V_4\sim{\cal{O}}(N^{2(n_2+m_2)-1})={\cal{O}}(N^{4p-1})$.

About $I\hspace{-.1em}V_3$, it is a sum of
\eqref{e^in_zeta_+q_1(n)e^imzeta_-<P(n)|R(z)|P(m)>q(m)eta^(0)(z,theta)}'s,
where $n_2\le{p}$ and $m_2\ge{p+1}$. By using
\eqref{estimate_for_<P(n)|R(z)|P(m)>q(m)_eta}, we have
\begin{align}
    \label{estimate_for_(<P(n)|R(z)|P(m)>q(m)_eta)_11}
    \begin{split}
        &\bigl(\langle\hat{P}(n)\vert\hat{R}(z)\vert\hat{P}(m)\rangle\hat{q}(m)\hat{\eta}^{(0)}(z,\theta^{\prime})\bigr)_{11}\\
        &={\cal{O}}(\langle{z}\rangle^{-2d(n-m)-1})\hat{q}_1(m)+{\cal{O}}(\langle{z}\rangle^{-2d_{12}(n-m)-2})\hat{q}_2(m)\frac{\overline{\alpha}(\zeta_-(z,\theta^{\prime}))}{\sqrt{8z^2+1}}.
    \end{split}
\end{align}
From Lemma \ref{lemma_d_ij(n)>=|n_2|}, we have
\begin{equation}
    \label{d_12(n-m)=>|n_2-m_2|:4.3}
    d_{12}(n-m)\ge\vert{n_2-m_2}\vert,
\end{equation}    
since $n_2-m_2<0$; also,
\begin{equation}
    \label{d(n-m)=>|n_2-m_2|:4.3}
    d(n-m)\ge\vert{n_2-m_2}\vert.
\end{equation}
Then, by using \eqref{asymptotic_of_adjoint_alpha_zeta_-/sqrt_8z^2+1},
\eqref{d_12(n-m)=>|n_2-m_2|:4.3}, and \eqref{d(n-m)=>|n_2-m_2|:4.3}, we have
${\cal{O}}(N^{-2\vert{n_2-m_2}\vert-1})$ as the asymptotic behavior of
\eqref{estimate_for_(<P(n)|R(z)|P(m)>q(m)_eta)_11}. Moreover, by using
\eqref{asymptotic_e^{zeta_+}} and \eqref{asymptotic_e^{zeta_-}}, we have
$I\hspace{-.1em}V_3\sim{\cal{O}}(N^{2(n_2+m_2)-2\vert{n_2-m_2}\vert-1})$. The estimate
\eqref{2(n_2+m_2)-2|n_2-m_2|=<4p} implies
$I\hspace{-.1em}V_3\sim{\cal{O}}(N^{4p-1})$.

About $I\hspace{-.1em}V_2$, by using the resolvent equation
\begin{equation}
    \label{R(z)=R_>p,>p-1(z)-R_>p,>p-1(z)qR(z)}
    \hat{R}(z)=\hat{R}_{(>p,>p-1)}(z)-\hat{R}_{(>p,>p-1)}(z)\hat{q}_{(\le{p},\le{p-1})}\hat{R}(z),
\end{equation}
we split it into five parts:
$I\hspace{-.1em}V_2=I\hspace{-.1em}V_{2,1}+I\hspace{-.1em}V_{2,2,1}+I\hspace{-.1em}V_{2,2,2,1}-I\hspace{-.1em}V_{2,2,2,2,1}-I\hspace{-.1em}V_{2,2,2,2,2}$,
where
\begin{align*}
    I\hspace{-.1em}V_{2,1}=&\sum_{n_2\ge{p+1}}e^{in\zeta_+(z,\theta)}\hat{q}_1(n)\sum_{m_2\le{p-1}}e^{-im\zeta_-(z,\theta^{\prime})}\\
    &\cdot\bigl(\langle\hat{P}(n)\vert\hat{R}(z)\vert\hat{P}(m)\rangle\hat{q}(m)\hat{\eta}^{(0)}(z,\theta^{\prime})\bigr)_{11},
\end{align*}
\begin{align*}
    I\hspace{-.1em}V_{2,2,1}=&\sum_{n_2\ge{p+1}}e^{in\zeta_+(z,\theta)}\hat{q}_1(n)\sum_{m_2=p}e^{-im\zeta_-(z,\theta^{\prime})}\\
    &\cdot\bigl(\langle\hat{P}(n)\vert\hat{R}(z)\vert\hat{P}(m)\rangle
    \begin{pmatrix}
        \hat{q}_1(m)&0\\
        0&0
    \end{pmatrix}
    \hat{\eta}^{(0)}(z,\theta^{\prime})\bigr)_{11},
\end{align*}
\begin{align*}
    I\hspace{-.1em}V_{2,2,2,1}=&\sum_{n_2\ge{p+1}}e^{in\zeta_+(z,\theta)}\hat{q_1}(n)\sum_{m_2=p}e^{-im\zeta_-(z,\theta^{\prime})}\\
    &\cdot\bigl(\langle\hat{P}(n)\vert\hat{R}_{(>p,>p-1)}(z)\vert\hat{P}(m)\rangle
    \begin{pmatrix}
        0&0\\
        0&\hat{q}_2(m)
    \end{pmatrix}
    \hat{\eta}^{(0)}(z,\theta^{\prime})\bigr)_{11},
\end{align*}
\begin{align*}
    I\hspace{-.1em}V_{2,2,2,2,1}=&\sum_{n_2\ge{p+1}}e^{in\zeta_+(z,\theta)}\hat{q}_1(n)\sum_{m_2=p}e^{-im\zeta_-(z,\theta^{\prime})}\\
    &\cdot\bigl(\langle\hat{P}(n)\vert\hat{R}_{(>p,>p-1)}(z)\hat{q}_{\le{p-1}}\hat{R}(z)\vert\hat{P}(m)\rangle
    \begin{pmatrix}
        0&0\\
        0&\hat{q}_2(m)
    \end{pmatrix}
    \hat{\eta}^{(0)}(z,\theta^{\prime})\bigr)_{11},
\end{align*}
\begin{align*}
    I\hspace{-.1em}V_{2,2,2,2,2}=&\sum_{n_2\ge{p+1}}e^{in\zeta_+(z,\theta)}\hat{q}_1(n)\sum_{m_2=p}e^{-im\zeta_-(z,\theta^{\prime})}\\
    &\cdot\sum_{k_2=p}\bigl(\langle\hat{P}(n)\vert\hat{R}_{(>p,>p-1)}(z)\hat{P}(k)
    \begin{pmatrix}
        \hat{q}_1(k)&0\\
        0&0
    \end{pmatrix}
    \hat{P}(k)\hat{R}(z)\vert\hat{P}(m)\rangle\\
    &\quad\quad\cdot
    \begin{pmatrix}
        0&0\\
        0&\hat{q}_2(m)
    \end{pmatrix}
    \hat{\eta}^{(0)}(z,\theta^{\prime})\bigr)_{11}.
\end{align*}

$I\hspace{-.1em}V_{2,1}$ is a sum of
\eqref{e^in_zeta_+q_1(n)e^imzeta_-<P(n)|R(z)|P(m)>q(m)eta^(0)(z,theta)}'s,
where $n_2\ge{p+1}$ and $m_2\le{p-1}$. From Lemma \ref{lemma_d_ij(n)>=|n_2|},
we have
\begin{equation}
    \label{d_12(n-m)=>|n_2-m_2|-1:4.2.1}
    d_{12}(n-m)\ge\vert{n_2-m_2}\vert-1,
\end{equation}
since $n_2-m_2>0$. Then, by using
\eqref{asymptotic_of_adjoint_alpha_zeta_-/sqrt_8z^2+1} and
\eqref{d_12(n-m)=>|n_2-m_2|-1:4.2.1}, we have
$I\hspace{-.1em}V_{2,1}\sim{\cal{O}}(N^{4p-3})$ in the same way as
$I\hspace{-.1em}V_3$.

$I\hspace{-.1em}V_{2,2,1}$ is also a sum of
\eqref{e^in_zeta_+q_1(n)e^imzeta_-<P(n)|R(z)|P(m)>q(m)eta^(0)(z,theta)}'s,
where $n_2\ge{p+1}$ and $m_2=p$. Then, by noticing that $\hat{q}_2(m)=0$ for
$m_2=p$, we have $I\hspace{0.1em}V_{2,2,1}\sim{\cal{O}}(N^{4p-1})$ in the same
way as above.

We know $\hat{R}_{(>p,>p-1)}(z)$, since we have already computed $\hat{q}(n)$
for $n_2\ge{p+1}$ and $\hat{q}_2(n)$ for $n_2=p$, which implies that
$I\hspace{-.1em}V_{2,2,2,1}$ is a known term.

$I\hspace{-.1em}V_{2,2,2,2,1}$ is a linear combination of the
following terms:
\begin{align}
    \label{e_e_(<P(n)|R_{>p,>p-1}(z)|P(k)>q(k)<P(k)|R(z)|P(m)>q(m)_eta)_11}
    \begin{split}
        &e^{in\zeta_+(z,\theta)}e^{-im\zeta_-(z,\theta^{\prime})}\\
        &\cdot\bigl(\langle\hat{P}(n)\vert\hat{R}_{(>p,>p-1)}(z)\vert\hat{P}(k)\rangle\hat{q}(k)\langle\hat{P}(k)\vert\hat{R}(z)\vert\hat{P}(m)\rangle
        \begin{pmatrix}
            0&0\\
            0&1
        \end{pmatrix}
        \hat{\eta}^{(0)}(z,\theta^{\prime})\bigr)_{11},
    \end{split}
\end{align}
where $n_2\ge{p+1}$, $k_2\le{p-1}$, and $m_2=p$. By using
\eqref{estimate_for_<P(n)|R(z)|P(m)>q(m)_eta}, we have
\begin{align}
    \label{<P(n)|R_(>p,>p-1)(z)|P(k)>q(k)<P(k)|R(z)|P(m)>(0,0;0,1)_eta}
    \begin{split}
        &\bigl(\langle\hat{P}(n)\vert\hat{R}_{(>p,>p-1)}(z)\vert\hat{P}(k)\rangle\hat{q}(k)\langle\hat{P}(k)\vert\hat{R}(z)\vert\hat{P}(m)\rangle
        \begin{pmatrix}
            0&0\\
            0&1
        \end{pmatrix}
        \hat{\eta}^{(0)}(z,\theta^{\prime})\bigr)_{11}\\
        &={\cal{O}}(\langle{z}\rangle^{-2d(n-k)-1})\hat{q}_1(k){\cal{O}}(\langle{z}\rangle^{-2d_{12}(k-m)-2})\frac{\overline{\alpha}(\zeta_-(z,\theta^{\prime}))}{\sqrt{8z^2+1}}\\
        &\quad+{\cal{O}}(\langle{z}\rangle^{-2d_{12}(n-k)-2})\hat{q}_2(k){\cal{O}}(\langle{z}\rangle^{-2d(k-m)-1})\frac{\overline{\alpha}(\zeta_-(z,\theta^{\prime}))}{\sqrt{8z^2+1}}.
    \end{split}
\end{align}
From Lemma \ref{lemma_d_ij(n)>=|n_2|}, we have
\begin{equation}
    \label{d_12(k-m)=>|k_2-m_2|:4.2.2.2.2.1}
    d_{12}(k-m)\ge\vert{k_2-m_2}\vert,
\end{equation}
\begin{equation}
    \label{d_12(n-k)=>|n_2-k_2|-1:4.2.2.2.2.1}
    d_{12}(n-k)\ge\vert{n_2-k_2}\vert-1,
\end{equation}
since $k_2-m_2<0$ and $n_2-k_2>0$; also,
\begin{equation}
    \label{d(n-k)=>|n_2-k_2|:4.2.2.2.2.1}
    d(n-k)\ge\vert{n_2-k_2}\vert.
\end{equation}
Then, from Lemmas \ref{lemma_d_triangle_ineq}, \ref{lemma_d_ij=<d_ii+d_ij},
and \ref{lemma_d_ii=<d_ij+d_ji+1}, by using
\eqref{asymptotic_of_adjoint_alpha_zeta_-/sqrt_8z^2+1},
\eqref{d_12(k-m)=>|k_2-m_2|:4.2.2.2.2.1},
\eqref{d_12(n-k)=>|n_2-k_2|-1:4.2.2.2.2.1}, and
\eqref{d(n-k)=>|n_2-k_2|:4.2.2.2.2.1}, we have
${\cal{O}}(N^{-2\vert{n_2-k_2}\vert-2\vert{k_2-m_2}\vert})$ as the asymptotic
behavior of
\eqref{<P(n)|R_(>p,>p-1)(z)|P(k)>q(k)<P(k)|R(z)|P(m)>(0,0;0,1)_eta}. Moreover,
by using \eqref{asymptotic_e^{zeta_+}} and \eqref{asymptotic_e^{zeta_-}}, we
have
$I\hspace{-.1em}V_{2,2,2,2,1}\sim{\cal{O}}(N^{2(n_2+m_2)-2\vert{n_2-k_2}\vert-2\vert{k_2-m_2}\vert})$. The
estimate \eqref{2(n_2-m_2)-2|n_2-k_2|-2|k_2-m_2|=<4p} implies
$I\hspace{-.1em}V_{2,2,2,2,1}\sim{\cal{O}}(N^{4p-4})$.

$I\hspace{-.1em}V_{2,2,2,2,2}$ is a linear combination of
\eqref{e_e_(<P(n)|R_{>p,>p-1}(z)|P(k)>q(k)<P(k)|R(z)|P(m)>q(m)_eta)_11}'s,
where $n_2\ge{p+1}$, $k_2=p$, and $m_2=p$. Then, by noticing $\hat{q}_2(k)=0$
for $k_2=p$, we have $I\hspace{-.1em}V_{2,2,2,2,2}\sim{\cal{O}}(N^{4p-2})$ in
the same way as $I\hspace{-.1em}V_{2,2,2,2,1}$.

About $I\hspace{-.1em}V_1$, by using the resolvent equation
\eqref{R(z)=R_>p,>p-1(z)-R_>p,>p-1(z)qR(z)}, we split it into three parts:
$I\hspace{-.1em}V_1=I\hspace{-.1em}V_{1,1}-I\hspace{-.1em}V_{1,2,1}-I\hspace{-.1em}V_{1,2,2}$,
where
\begin{align*}
    I\hspace{-.1em}V_{1,1}=&\sum_{n_2\ge{p+1}}e^{in\zeta_+(z,\theta)}\hat{q}_1(n)\sum_{m_2\ge{p+1}}e^{-im\zeta_-(z,\theta^{\prime})}\\
    &\cdot\bigl(\langle\hat{P}(n)\vert\hat{R}_{(>p,>p-1)}(z)\vert\hat{P}(m)\rangle\hat{q}(m)\hat{\eta}^{(0)}(z,\theta^{\prime})\bigr)_{11},
\end{align*}
\begin{align*}
    I\hspace{-.1em}V_{1,2,1}=&\sum_{n_2\ge{p+1}}e^{in\zeta_+(z,\theta)}\hat{q}_1(n)\sum_{m_2\ge{p+1}}e^{-im\zeta_-(z,\theta^{\prime})}\\
    &\cdot\bigl(\langle\hat{P}(n)\vert\hat{R}_{(>p,>p-1)}(z)\hat{q}_{\le{p-1}}\hat{R}(z)\vert\hat{P}(m)\rangle\hat{q}(m)\hat{\eta}^{(0)}(z,\theta^{\prime})\bigr)_{11},
\end{align*}
\begin{align*}
    I\hspace{-.1em}V_{1,2,2}=&\sum_{n_2\ge{p+1}}e^{in\zeta_+(z,\theta)}\hat{q}_1(n)\sum_{m_2\ge{p+1}}e^{-im\zeta_-(z,\theta^{\prime})}\\
    &\cdot\sum_{k_2=p}\bigl(\langle\hat{P}(n)\vert\hat{R}_{(>p,>p-1)}(z)\hat{P}(k)
    \begin{pmatrix}
        \hat{q}_1(k)&0\\
        0&0
    \end{pmatrix}
    \hat{P}(k)\hat{R}(z)\vert\hat{P}(m)\rangle\\
    &\quad\quad\cdot\hat{q}(m)\hat{\eta}^{(0)}(z,\theta^{\prime})\bigr)_{11}.
\end{align*}

$I\hspace{-.1em}V_{1,1}$ is a known term as before.

$I\hspace{-.1em}V_{1,2,1}$ is a linear combination of
\eqref{e_e_(<P(n)|R_{>p,>p-1}(z)|P(k)>q(k)<P(k)|R(z)|P(m)>q(m)_eta)_11}'s,
where $n_2\ge{p+1}$, $k_2\le{p-1}$, and $m_2\ge{p+1}$. Then we have
$I\hspace{-.1em}V_{1,2,1}\sim{\cal{O}}(N^{4p-4})$ in the same way as
$I\hspace{-.1em}V_{2,2,2,2,1}$.

$I\hspace{-.1em}V_{1,2,2}$ is also a linear combination of
\eqref{e_e_(<P(n)|R_{>p,>p-1}(z)|P(k)>q(k)<P(k)|R(z)|P(m)>q(m)_eta)_11}'s,
where $n_2\ge{p+1}$, $k_2=p$, and $m_2\ge{p+1}$. Then, by noticing
$\hat{q}_2(k)=0$ for $k_2=p$, we have
$I\hspace{-.1em}V_{1,2,2}\sim{\cal{O}}(N^{4p-2})$ in the same way as above.

\begin{flushleft}
    {\underline{\it{Asymptotic behavior of $V$}}}
\end{flushleft}

We split $V$ into four parts: $V=V_1+V_2+V_3+V_4$, where
\begin{equation*}
    V_1=\sum_{n_2\ge{p}}\sum_{m_2\ge{p+1}},
    V_2=\sum_{n_2\ge{p}}\sum_{m_2\le{p}},
    V_3=\sum_{n_2\le{p-1}}\sum_{m_2\ge{p+1}},
    V_4=\sum_{n_2\le{p-1}}\sum_{m_2\le{p}}.
\end{equation*}

About $V_4$, it is a linear combination of the following terms:
\begin{equation}
    \label{alpha+_e_e_<P(n)|R(z)|P(m)>q_eta_21}
    \frac{\alpha(\zeta_+(z,\theta))}{\sqrt{8z^2+1}}e^{in\zeta_+(z,\theta)}e^{-im\zeta_-(z,\theta^{\prime})}\bigl(\langle\hat{P}(n)\vert\hat{R}(z)\vert\hat{P}(m)\rangle\hat{q}(m)\hat{\eta}^{(0)}(z,\theta^{\prime})\bigr)_{21},
\end{equation}
where $n_2\le{p-1}$ and $m_2\le{p}$. Then, by using
\eqref{asymptotic_of_alpha_zeta_+/sqrt_8z^2+1},
\eqref{asymptotic_e^{zeta_+}}, \eqref{asymptotic_e^{zeta_-}}, and
\eqref{asymptotic_of_<P(n)|R(z)|P(m)>q(m)eta(z,theta)}, we have
$V_4\sim{\cal{O}}(N^{2(n_2+m_2)+1})\sim{\cal{O}}(N^{4p-1})$.

About $V_3$, it is a linear combination of
\eqref{alpha+_e_e_<P(n)|R(z)|P(m)>q_eta_21}'s, where $n_2\le{p-1}$ and
$m_2\ge{p+1}$. By using \eqref{estimate_for_<P(n)|R(z)|P(m)>q(m)_eta}, we have
\begin{align}
    \label{estimate_for_(<P(n)|R(z)|P(m)>q(m)_eta)_21}
    \begin{split}
        &\bigl(\langle\hat{P}(n)\vert\hat{R}(z)\vert\hat{P}(m)\rangle\hat{q}(m)\hat{\eta}^{(0)}(z,\theta^{\prime})\bigr)_{21}\\
        &\sim{\cal{O}}(\langle{z}\rangle^{-2d_{21}(n-m)-2})\hat{q}_1(m)+{\cal{O}}(\langle{z}\rangle^{-2d(n-m)-1})\hat{q}_2(m)\frac{\overline{\alpha}(\zeta_-(z,\theta^{\prime}))}{\sqrt{8z^2+1}}.
    \end{split}
\end{align}
From Lemma \ref{lemma_d_ij(n)>=|n_2|}, we have
\begin{equation}
    \label{d_21(n-m)=>|n_2-m_2|-1:5.3}
    d_{21}(n-m)\ge\vert{n_2-m_2}\vert-1,
\end{equation}
since $n_2-m_2<0$; also,
\begin{equation}
    \label{d(n-m)=>|n_2-m_2|:5.3}
    d(n-m)\ge\vert{n_2-m_2}\vert.
\end{equation}
Then, by using \eqref{asymptotic_of_adjoint_alpha_zeta_-/sqrt_8z^2+1},
\eqref{d_21(n-m)=>|n_2-m_2|-1:5.3}, and \eqref{d(n-m)=>|n_2-m_2|:5.3}, we have
${\cal{O}}(N^{-2\vert{n_2-m_2}\vert})$ as the asymptotic behavior of
\eqref{estimate_for_(<P(n)|R(z)|P(m)>q(m)_eta)_21}. Moreover, by using
\eqref{asymptotic_e^{zeta_+}}, \eqref{asymptotic_e^{zeta_-}}, and
\eqref{asymptotic_of_alpha_zeta_+/sqrt_8z^2+1}, we have
$V_3\sim{\cal{O}}(N^{2(n_2+m_2)-2\vert{n_2-m_2}\vert+1})$. The estimate
\eqref{2(n_2+m_2)-2|n_2-m_2|=<4p} implies $V_3\sim{\cal{O}}(N^{4p-3})$.

About $V_2$, by using the resolvent equation
\eqref{R(z)=R_>p,>p-1(z)-R_>p,>p-1(z)qR(z)}, we split it into five parts:
$V_2=V_{2,1}+V_{2,2,1}+V_{2,2,2,1}-V_{2,2,2,2,1}-V_{2,2,2,2,2}$, where
\begin{align*}
    V_{2,1}=&\frac{\alpha(\zeta_+(z,\theta))}{\sqrt{8z^2+1}}\sum_{n_2\ge{p}}e^{in\zeta_+(z,\theta)}\hat{q}_2(n)\sum_{m_2\le{p-1}}e^{-m\zeta_-(z,\theta^{\prime})}\\
    &\cdot\bigl(\langle\hat{P}(n)\vert\hat{R}(z)\vert\hat{P}(m)\rangle\hat{q}(m)\hat{\eta}^{(0)}(z,\theta^{\prime})\bigr)_{21},
\end{align*}
\begin{align*}
    V_{2,2,1}=&\frac{\alpha(\zeta_+(z,\theta))}{\sqrt{8z^2+1}}\sum_{n_2\ge{p}}e^{in\zeta_+(z,\theta)}\hat{q}_2(n)\sum_{m_2=p}e^{-im\zeta_-(z,\theta^{\prime})}\\
    &\cdot\bigl(\langle\hat{P}(n)\vert\hat{R}(z)\vert\hat{P}(m)\rangle
    \begin{pmatrix}
        \hat{q}_1(m)&0\\
        0&0
    \end{pmatrix}
    \hat{\eta}^{(0)}(z,\theta^{\prime})\bigr)_{21},
\end{align*}
\begin{align*}
    V_{2,2,2,1}=&\frac{\alpha(\zeta_+(z,\theta))}{\sqrt{8z^2+1}}\sum_{n_2\ge{p}}e^{in\zeta_+(z,\theta)}\hat{q}_2(n)\sum_{m_2=p}e^{-im\zeta_-(z,\theta^{\prime})}\\
    &\cdot\bigl(\langle\hat{P}(n)\vert\hat{R}_{(>p,>p-1)}(z)\vert\hat{P}(m)\rangle
    \begin{pmatrix}
        0&0\\
        0&\hat{q}_2(m)
    \end{pmatrix}
    \hat{\eta}^{(0)}(z,\theta^{\prime})\bigr)_{21},
\end{align*}
\begin{align*}
    V_{2,2,2,2,1}=&\frac{\alpha(\zeta_+(z,\theta))}{\sqrt{8z^2+1}}\sum_{n_2\ge{p}}e^{in\zeta_+(z,\theta)}\hat{q}_2(n)\sum_{m_2=p}e^{-im\zeta_-(z,\theta^{\prime})}\\
    &\cdot\bigl(\langle{P}(n)\vert\hat{R}_{(>p,>p-1)}\hat{q}_{\le{}p-1}\hat{R}(z)\vert\hat{P}(m)\rangle
    \begin{pmatrix}
        0&0\\
        0&\hat{q}_2(m)
    \end{pmatrix}
    \hat{\eta}^{(0)}(z,\theta^{\prime})\bigr)_{21},
\end{align*}
\begin{align*}
    V_{2,2,2,2,2}=&\frac{\alpha(\zeta_+(z,\theta))}{\sqrt{8z^2+1}}\sum_{n_2\ge{p}}e^{in\zeta_+(z,\theta)}\hat{q}_2(n)\sum_{m_2=p}e^{-im\zeta_-(z,\theta^{\prime})}\\
    &\cdot\sum_{k_2=p}\bigl(\langle\hat{P}(n)\vert\hat{R}_{(>p,>p-1)}(z)\hat{P}(k)
    \begin{pmatrix}
        \hat{q}_1(k)&0\\
        0&0
    \end{pmatrix}
    \hat{P}(k)\hat{R}(z)\vert\hat{P}(m)\rangle\\
    &\quad\quad\ \cdot
    \begin{pmatrix}
        0&0\\
        0&\hat{q}_2(m)
    \end{pmatrix}
    \hat{\eta}^{(0)}(z,\theta^{\prime})\bigr)_{21}.
\end{align*}

$V_{2,1}$ is a linear combination of
\eqref{alpha+_e_e_<P(n)|R(z)|P(m)>q_eta_21}'s, where $n_2\ge{p}$ and
$m_2\le{p-1}$. From Lemma \ref{lemma_d_ij(n)>=|n_2|}, we have
\begin{equation}
    \label{d_21(n-m)=>|n_2-m_2|:5.2.1}
    d_{21}(n-m)\ge\vert{n_2-m_2}\vert,
\end{equation}
since $n_2-m_2>0$; also,
\begin{equation}
    \label{d(n-m)=>|n_2-m_2|:5.2.1}
    d(n-m)\ge\vert{n_2-m_2}\vert.
\end{equation}
Then, from \eqref{d_21(n-m)=>|n_2-m_2|:5.2.1} and
\eqref{d(n-m)=>|n_2-m_2|:5.2.1}, we have
$V_{2,1}\sim{\cal{O}}(N^{2(n_2+m_2)-2\vert{n_2-m_2}\vert+1})$ in the same say
as $V_3$. The estimate \eqref{2(n_2+m_2)-2|n_2-m_2|=<4p} implies
$V_{2,1}\sim{\cal{O}}(N^{4p-3})$.

$V_{2,2,1}$ is a sum of \eqref{alpha+_e_e_<P(n)|R(z)|P(m)>q_eta_21}'s, where
$n_2\ge{p}$ and $m_2=p$. Then, by noticing that $\hat{q}_2(m)=0$ for $m_2=p$,
we have $V_{2,2,1}\sim{\cal{O}}(N^{4p-1})$ in the same way as above.

$V_{2,2,2,1}$ is a known term as before.

$V_{2,2,2,2,1}$ is a linear combination of the following terms:
\begin{align}
    \label{alpha/z_e_e_(<P(n)|R_{>p,>p-1}(z)|P(k)>q(k)<P(k)|R(z)|P(m)>0,0,0,1_eta)_21}
    \begin{split}
        &\frac{\alpha(\zeta_+(z,\theta))}{\sqrt{8z^2+1}}e^{in\zeta_+(z,\theta)}e^{-im\zeta_-(z,\theta^{\prime})}\bigl(\langle\hat{P}(n)\vert\hat{R}_{(>p,>p-1)}(z)\vert\hat{P}(k)\rangle\\
        &\cdot\hat{q}(k)\langle\hat{P}(k)\vert\hat{R}(z)\vert\hat{P}(m)\rangle
        \begin{pmatrix}
            0&0\\
            0&1
        \end{pmatrix}
        \hat{\eta}^{(0)}(z,\theta^{\prime})\bigr)_{21},
    \end{split}
\end{align}
where $n_2\ge{p}$, $k_2\le{p-1}$, and $m_2=p$. By using
\eqref{resolvent_estimate_for_R(z)}, \eqref{q_n}, and
\eqref{estimate_for_<P(n)|R(z)|P(m)>q(m)_eta}, we have
\begin{align}
    \label{asymptotic_<P(n)|R_>p,>p-1(z)|P(k)>q(k)<P(k)|R(z)|P(m)>0,0,0,1_eta}
    \begin{split}
        &\bigl(\langle\hat{P}(n)\vert\hat{R}_{(>p,>p-1)}(z)\vert\hat{P}(k)\rangle\hat{q}(k)\langle\hat{P}(k)\vert\hat{R}(z)\vert\hat{P}(m)\rangle
        \begin{pmatrix}
            0&0\\
            0&1
        \end{pmatrix}
        \hat{\eta}^{(0)}(z,\theta^{\prime})\bigr)_{21}\\
        &\sim{\cal{O}}(\langle{z}\rangle^{-2d_{21}(n-k)-2})\hat{q}_1(k){\cal{O}}(\langle{z}\rangle^{-2d_{12}(k-m)-2})\frac{\overline{\alpha}(\zeta_-(z,\theta^{\prime}))}{\sqrt{8z^2+1}}\\
        &\quad+{\cal{O}}(\langle{z}\rangle^{-2d(n-k)-1})\hat{q}_2(k){\cal{O}}(\langle{z}\rangle^{-2d(k-m)-1})\frac{\overline{\alpha}(\zeta_-(z,\theta^{\prime}))}{\sqrt{8z^2+1}}.
    \end{split}
\end{align}
From Lemma \ref{lemma_d_ij(n)>=|n_2|}, we have
\begin{equation}
    \label{d_21(n-k)=>+n_2-k_2|:5.2.2.2.2.1}
    d_{21}(n-k)\ge\vert{n_2-k_2}\vert,
\end{equation}
\begin{equation}
    \label{d_12(k-m)=>|k_2-m_2|:5.2.2.2.2.1}
    d_{12}(k-m)\ge\vert{k_2-m_2}\vert,
\end{equation}
since $n_2-k_2>0$ and $k_2-m_2<0$; also,
\begin{equation}
    \label{d(n-k)=>|n_2-k_2|:5.2.2.2.2.1}
    d(n-k)\ge\vert{n_2-k_2}\vert,
\end{equation}
\begin{equation}
    \label{d(k-m)=>|k_2-m_2|:5.2.2.2.2.1}
    d(k-m)\ge\vert{k_2-m_2}\vert.
\end{equation}
Then, by using \eqref{asymptotic_of_adjoint_alpha_zeta_-/sqrt_8z^2+1},
\eqref{d_21(n-k)=>+n_2-k_2|:5.2.2.2.2.1},
\eqref{d_12(k-m)=>|k_2-m_2|:5.2.2.2.2.1},
\eqref{d(n-k)=>|n_2-k_2|:5.2.2.2.2.1}, and
\eqref{d(k-m)=>|k_2-m_2|:5.2.2.2.2.1}, we have
${\cal{O}}(N^{-2\vert{n_2-k_2}\vert-2\vert{k_2-m_2}\vert-1})$ as the asymptotic
behavior of
\eqref{asymptotic_<P(n)|R_>p,>p-1(z)|P(k)>q(k)<P(k)|R(z)|P(m)>0,0,0,1_eta}. Moreover,
by using \eqref{asymptotic_e^{zeta_+}}, \eqref{asymptotic_e^{zeta_-}}, and
\eqref{asymptotic_of_alpha_zeta_+/sqrt_8z^2+1}, we have
$V_{2,2,2,2,1}\sim{\cal{O}}(N^{2(n_2+m_2)-2\vert{n_2-k_2}\vert-2\vert{k_2-m_2}\vert})$.
The estimate \eqref{2(n_2-m_2)-2|n_2-k_2|-2|k_2-m_2|=<4p} implies
$V_{2,2,2,2,1}\sim{\cal{O}}(N^{4p-4})$.

$V_{2,2,2,2,2}$ is a linear combination of 
\eqref{alpha/z_e_e_(<P(n)|R_{>p,>p-1}(z)|P(k)>q(k)<P(k)|R(z)|P(m)>0,0,0,1_eta)_21}'s,
where $n_2\ge{p}$, $k_2=p$, and $m_2=p$. Then, by noticing $\hat{q}_2(k)=0$ for
$k_2=p$, we have $V_{2,2,2,2,2}\sim{\cal{O}}(N^{4p-2})$ in the same as way as
$V_{2,2,2,2,1}$.

About $V_1$, by using the resolvent equation
\eqref{R(z)=R_>p,>p-1(z)-R_>p,>p-1(z)qR(z)}, we split it into three parts:
$V_1=V_{1,1}-V_{1,2,1}-V_{1,2,2}$, where 
\begin{align*}
    V_{1,1}=&\frac{\alpha(\zeta_+(z,\theta))}{\sqrt{8z^2+1}}\sum_{n_2\ge{p}}e^{in\zeta_+(z,\theta)}\hat{q}_2(n)\sum_{m_2\ge{p+1}}e^{-im\zeta_-(z,\theta^{\prime})}\\
    &\cdot\bigl(\langle\hat{P}(n)\vert\hat{R}_{(>p,>p-1)}(z)\vert\hat{P}(m)\rangle\hat{q}(m)\hat{\eta}^{(0)}(z,\theta^{\prime})\bigr)_{21},
\end{align*}
\begin{align*}
    V_{1,2,1}=&\frac{\alpha(\zeta_+(z,\theta))}{\sqrt{8z^2+1}}\sum_{n_2\ge{p}}e^{in\zeta_+(z,\theta)}\hat{q}_2(n)\sum_{m_2\ge{p+1}}e^{-im\zeta_-(z,\theta^{\prime})}\\
    &\cdot\bigl(\langle\hat{P}(n)\vert\hat{R}_{(>p,>p-1)}(z)\hat{q}_{\le{p-1}}\hat{R}(z)\vert\hat{P}(m)\rangle\hat{q}(m)\hat{\eta}^{(0)}(z,\theta^{\prime})\bigr)_{21},
\end{align*}
\begin{align*}
    V_{1,2,2}=&\frac{\alpha(\zeta_+(z,\theta))}{\sqrt{8z^2+1}}\sum_{n_2\ge{p}}e^{in\zeta_+(z,\theta)}\hat{q}_2(n)\sum_{m_2\ge{p+1}}e^{-im\zeta_-(z,\theta^{\prime})}\\
    &\cdot\sum_{k_2=p}\bigl(\langle\hat{P}(n)\vert\hat{R}_{(>p,>p-1)}(z)\hat{P}(k)
    \begin{pmatrix}
        \hat{q}_1(k)&0\\
        0&0
    \end{pmatrix}
    \hat{P}(k)\hat{R}(z)\vert\hat{P}(m)\rangle\\
    &\quad\quad\cdot\hat{q}(m)\hat{\eta}^{(0)}(z,\theta^{\prime})\bigr)_{21}.
\end{align*}

$V_{1,1}$ is a known term as before.

$V_{1,2,1}$ is a sum of the following terms:
\begin{align}
    \label{asymptotic_alpha_e_e(<P(n)|R_>p,>p-1(z)|P(k)>q(k)<P(k)|R(z)|P(m)>q(m)_eta)_21}
    \begin{split}
        &\frac{\alpha(\zeta_+(z,\theta))}{\sqrt{8z^2+1}}e^{in\zeta_+(z,\theta)}e^{-im\zeta_-(z,\theta^{\prime})}\\&
        \cdot\bigl(\langle\hat{P}(n)\vert\hat{R}_{(>p,>p-1)}(z)\vert\hat{P}(k)\rangle\hat{q}(k)\langle\hat{P}(k)\vert\hat{R}(z)\vert\hat{P}(m)\rangle\hat{q}(m)\hat{\eta}^{(0)}(z,\theta^{\prime})\bigr)_{21},
    \end{split}
\end{align}
where $n_2\ge{p}$, $k_2\le{p-1}$, and $m_2\ge{p+1}$. By using
\eqref{resolvent_estimate_for_R(z)}, \eqref{q_n}, and
\eqref{estimate_for_<P(n)|R(z)|P(m)>q(m)_eta}, we have
\begin{align}
    \label{asymptotic_<P(n)|R_>p,>p-1(z)|P(k)>q(k)<P(k)|R(z)|P(m)>q(m)_eta}
    \begin{split}
        &\bigl(\langle\hat{P}(n)\vert\hat{R}_{(>p,>p-1)}(z)\vert\hat{P}(k)\rangle\hat{q}(k)\langle\hat{P}(k)\vert\hat{R}(z)\vert\hat{P}(m)\rangle\hat{q}(m)\hat{\eta}^{(0)}(z,\theta^{\prime})\bigr)_{21}\\
        &\sim{\cal{O}}(\langle{z}\rangle^{-2d_{21}(n-k)-2})\hat{q}_1(k){\cal{O}}(\langle{z}\rangle^{-2d(k-m)-1})\hat{q}_1(m)\\
        &\quad+{\cal{O}}(\langle{z}\rangle^{-2d_{21}(n-k)-2})\hat{q}_1(k){\cal{O}}(\langle{z}\rangle^{-2d_{12}(k-m)-2})\hat{q}_2(m)\frac{\overline{\alpha}(\zeta_-(z,\theta^{\prime}))}{\sqrt{8z^2+1}}\\
        &\quad+{\cal{O}}(\langle{z}\rangle^{-2d(n-k)-1})\hat{q}_2(k){\cal{O}}(\langle{z}\rangle^{-2d_{21}(k-m)-2})\hat{q}_1(m)\\
        &\quad+{\cal{O}}(\langle{z}\rangle^{-2d(n-k)-1})\hat{q}_2(k){\cal{O}}(\langle{z}\rangle^{-2d(k-m)-1})\hat{q}_2(m)\frac{\overline{\alpha}(\zeta_-(z,\theta^{\prime}))}{\sqrt{8z^2+1}}.
    \end{split}
\end{align}
From Lemma \ref{lemma_d_ij(n)>=|n_2|}, we have
\begin{equation}
    \label{d_21(n-k)=>|n_2-k_2|:5.1.2.1}
    d_{21}(n-k)\ge\vert{n_2-k_2}\vert,
\end{equation}
\begin{equation}
    \label{d_12(k-m)=>|k_2-m_2|:5.1.2.1}
    d_{12}(k-m)\ge\vert{k_2-m_2}\vert,
\end{equation}
\begin{equation}
    \label{d_21(k-m)=>|k_2-m_2|-1:5.1.2.1}
    d_{21}(k-m)\ge\vert{k_2-m_2}\vert-1,
\end{equation}
since $n_2-k_2>0$ and $k_2-m_2<0$; also,
\begin{equation}
    \label{d(n-k)=>|n_2-k_2|:5.1.2.1}
    d(n-k)\ge\vert{n_2-k_2}\vert,
\end{equation}
\begin{equation}
    \label{d(k-m)=>|k_2-m_2|:5.1.2.1}
    d(k-m)\ge\vert{k_2-m_2}\vert.
\end{equation}
Then, by using \eqref{asymptotic_of_adjoint_alpha_zeta_-/sqrt_8z^2+1},
\eqref{d_21(n-k)=>|n_2-k_2|:5.1.2.1}, \eqref{d_12(k-m)=>|k_2-m_2|:5.1.2.1},
\eqref{d_21(k-m)=>|k_2-m_2|-1:5.1.2.1}, \eqref{d(n-k)=>|n_2-k_2|:5.1.2.1}, and
\eqref{d(k-m)=>|k_2-m_2|:5.1.2.1}, we have
${\cal{O}}(N^{-2\vert{n_2-k_2}\vert-2\vert{k_2-m_2}\vert-1})$ as the
asymptotic behavior of
\eqref{asymptotic_<P(n)|R_>p,>p-1(z)|P(k)>q(k)<P(k)|R(z)|P(m)>q(m)_eta}.
Moreover, by using \eqref{asymptotic_of_alpha_zeta_+/sqrt_8z^2+1},
\eqref{asymptotic_e^{zeta_+}}, and \eqref{asymptotic_e^{zeta_-}}, we have
$V_{1,2,1}\sim{\cal{O}}(N^{2(n_2+m_2)-2\vert{n_2-k_2}\vert-2\vert{k_2-m_2}\vert})$.
The estimate \eqref{2(n_2-m_2)-2|n_2-k_2|-2|k_2-m_2|=<4p} implies
$V_{1,2,1}\sim{\cal{O}}(N^{4p-4})$.

$V_{1,2,2}$ is a linear combination of
\eqref{asymptotic_alpha_e_e(<P(n)|R_>p,>p-1(z)|P(k)>q(k)<P(k)|R(z)|P(m)>q(m)_eta)_21}'s,
where $n_2\ge{p}$, $k_2=p$, and $m_2\ge{p+1}$. Then, by noticing
$\hat{q}_2(k)=0$ for $k_2=p$, we have $V_{1,2,2}\sim{\cal{O}}(N^{4p-2})$ in
the same way as $V_{1,2,1}$.


\bibliography{paper}
\end{document}